\crefname{equation}{}{}
\crefname{figure}{Figure}{Figures}
\crefname{assumption}{Assumption}{Assumptions}
\crefname{condition}{Condition}{Conditions}
\theoremstyle{plain}
\newtheorem{theorem}{Theorem}[section]
\newtheorem{lemma}[theorem]{Lemma}
\newtheorem{corollary}[theorem]{Corollary}
\theoremstyle{definition}
\newtheorem{definition}[theorem]{Definition}
\newtheorem{assumption}[theorem]{Assumption}
\theoremstyle{remark}
\newtheorem{remark}[theorem]{Remark}
\newcommand{\real}{\mathbb{R}}
\DeclareMathOperator*{\argmin}{arg\,min}
\newcommand*\tageq{\refstepcounter{equation}\tag{\theequation}}
\newcommand{\sA}{\mathcal{A}}
\newcommand{\sK}{\mathcal{K}}
\newcommand{\sN}{\mathcal{N}}
\newcommand{\sO}{\mathcal{O}}
\newcommand{\sP}{\mathcal{P}}
\newcommand {\RR}  { {\mathbf{R}} }
\newcommand {\BB}  { {\mathbf{B}} }
\newcommand {\DD}  { {\mathbf{D}} }
\newcommand {\HH}  { {\mathbf{H}} }
\newcommand {\VV}  { {\mathbf{V}} }
\newcommand {\WW}  { {\mathbf{W}} }
\newcommand {\UU}  { {\mathbf{U}} }
\newcommand {\QQ}  { {\mathbf{Q}} }
\newcommand {\YY}  { {\mathbf{Y}} }
\newcommand{\eye}{\mathbf{I}}
\renewcommand {\aa}  { {\bf a} }
\newcommand {\ee}  { {\bf e} }
\newcommand {\bgg}  { {\bf g} }
\newcommand {\yy}  { {\bf y} }
\newcommand {\hh}  { {\bf h} }
\newcommand {\rr}  { {\bf r} }
\newcommand {\qq}  { {\bf q} }
\newcommand {\pp}  { {\bf p} }
\newcommand {\bss}  { {\bf s} }
\newcommand {\btt}  { {\bf t} }
\newcommand {\vv}  { {\bf v} }
\newcommand {\ww}  { {\bf w} }
\newcommand {\xx}  { {\bf x} }
\newcommand {\zz}  { {\bf z} }
\newcommand {\zero}  { {\bf 0} }
\newcommand {\one}  { {\bf 1} }
\newcommand {\HHk}  { {\HH_{k}} }
\newcommand {\bggk}  { {{\bgg}_{k}} }
\newcommand {\xxk}  { {{\xx}_{k}} }
\newcommand {\ppk}  { {{\pp}_{k}} }
\newcommand {\range}  { {\textnormal{Range}} }
\newcommand {\Null}  { {\textnormal{Null}} }
\newcommand {\Span}  { {\textnormal{Span}} }
\newcommand {\diag}  { {\textnormal{diag}} }
\newcommand{\hvec}[2]{
\begin{pmatrix} #1 & #2 \end{pmatrix}
}
\newcommand{\vvec}[2]{
\begin{pmatrix} #1 \\ #2 \end{pmatrix}
}
\newcommand*{\transpose}{%
	{\mathpalette\@transpose{}}%
}
\newcommand*{\@transpose}[2]{%
	% #1: math style
	% #2: unused
	\raisebox{\depth}{$\m@th#1\intercal$}%
}
\newcommand {\tbeta}  { {\tilde{\beta}} }
\newcommand {\talpha}  { {\tilde{\alpha}} }
\newcommand {\TT}  { {\mathbf{T}} }
\newcommand {\tTT}  { {\tilde{\mathbf{T}}} }
\newcommand {\tRR}  { {\tilde{\mathbf{R}}} }
\renewcommand{\vec}[1]{\ensuremath{\mathbf{#1}}}
\newcommand{\grad}{\ensuremath {\vec \nabla}}
\newcommand{\defeq}{\triangleq}
\setlist[enumerate]{leftmargin=*,wide=0em, noitemsep,nolistsep, label = {\bfseries \arabic*.}}
\setlist[itemize]{leftmargin=*,wide=0em, noitemsep,nolistsep}
\newcommand{\sI}{\mathcal{I}}
\newcommand{\sJ}{\mathcal{J}}
\newcommand{\Dtype}{\text{D}_\text{type}}
\newcommand{\SOL}{\Dtype = \text{SOL}}
\newcommand{\NPC}{\Dtype = \text{NPC}}
\newcommand*\dotprod[1]{\left\langle #1\right\rangle}
\newcommand*\vnorm[1]{\left\| #1\right\|}
\renewcommand\th{\textsuperscript{th}\xspace}
\icmltitlerunning{Inexact Newton-type Methods for Optimisation with Nonnegativity Constraints}
\begin{document}

\twocolumn[
\icmltitle{Inexact Newton-type Methods for Optimisation with Nonnegativity Constraints}

% It is OKAY to include author information, even for blind
% submissions: the style file will automatically remove it for you
% unless you've provided the [accepted] option to the icml2024
% package.

% List of affiliations: The first argument should be a (short)
% identifier you will use later to specify author affiliations
% Academic affiliations should list Department, University, City, Region, Country
% Industry affiliations should list Company, City, Region, Country

% You can specify symbols, otherwise they are numbered in order.
% Ideally, you should not use this facility. Affiliations will be numbered
% in order of appearance and this is the preferred way.
\icmlsetsymbol{equal}{*}

\begin{icmlauthorlist}
\icmlauthor{Oscar Smee}{yyy}
\icmlauthor{Fred Roosta}{yyy,xxx}
%\icmlauthor{}{sch}
%\icmlauthor{}{sch}
\end{icmlauthorlist}

\icmlaffiliation{yyy}{School of Mathematics and Physics, University of Queensland, Brisbane, Australia}
\icmlaffiliation{xxx}{ARC Training Centre for Information Resilience, Brisbane, Australia}
%\icmlaffiliation{comp}{Company Name, Location, Country}
%\icmlaffiliation{sch}{School of ZZZ, Institute of WWW, Location, Country}

\icmlcorrespondingauthor{Oscar Smee}{o.smee@uq.edu.au}
%\icmlcorrespondingauthor{Firstname2 Lastname2}{first2.last2@www.uk}

% You may provide any keywords that you
% find helpful for describing your paper; these are used to populate
% the "keywords" metadata in the PDF but will not be shown in the document
\icmlkeywords{Machine Learning, ICML}

\vskip 0.3in
]

% this must go after the closing bracket ] following \twocolumn[ ...

% This command actually creates the footnote in the first column
% listing the affiliations and the copyright notice.
% The command takes one argument, which is text to display at the start of the footnote.
% The \icmlEqualContribution command is standard text for equal contribution.
% Remove it (just {}) if you do not need this facility.

\printAffiliationsAndNotice{}  % leave blank if no need to mention equal contribution
%\printAffiliationsAndNotice{\icmlEqualContribution} % otherwise use the standard text.

\begin{abstract}
We consider solving large scale nonconvex optimisation problems with nonnegativity constraints. Such problems arise frequently in machine learning, such as nonnegative least-squares, nonnegative matrix factorisation, as well as problems with sparsity-inducing  regularisation. In such settings, first-order methods, despite their simplicity, can be prohibitively slow on ill-conditioned problems or become trapped near saddle regions, while most second-order alternatives involve non-trivially challenging subproblems. The two-metric projection framework, initially proposed by  \citet{ProjectionNewtonMethodsForOptimizationProblemsBertsekas}, alleviates these issues and achieves the best of both worlds by combining projected gradient steps at the boundary of the feasible region with Newton steps in the interior in such a way that feasibility can be maintained by simple projection onto the nonnegative orthant. We develop extensions of the two-metric projection framework, which by inexactly solving the subproblems as well as employing non-positive curvature directions, are suitable for large scale and nonconvex settings. We obtain state-of-the-art  convergence rates for various classes of non-convex problems and demonstrate competitive practical performance on a variety of problems.
\end{abstract}

\section{Introduction}\label{sec:introduction}

We consider high-dimensional problems of the form 
\begin{align}
    \min_{\xx \in \real^{d}} f(\xx), \quad \text{subject to } \quad \xx \geq \zero, \tageq\label{eqn:nonnegative constrained problem}
\end{align}
where $d \gg 1$ and $f:\real^d \to \real$ is twice continuously differentiable and possibly nonconvex function. Despite the simplicity of its formulation, such problems arise in many applications in science, engineering, and machine learning (ML). Typical examples in ML include nonnegative formulations of least-squares and matrix factorisation \citep{Lee1999LearningThePartsOfObjectsByNNMF, Lee2000AlgorithmsForNNMF,gillis2020NNMF}. Additionally, problems involving sparsity inducing regularisation such as $\ell_{1}$ norm, which are typically non-smooth, can be reformulated into a differentiable objective with nonnegativity constraints \citep{Schmidz2007FastOptimizationMethodsForL1Regularization}. 

Many methods have been developed to solve \cref{eqn:nonnegative constrained problem}. First-order methods \citep{lan2020first}, such as projected gradient descent, can be very simple to implement and as such are popular in ML. However, they come with well-known deficiencies, including relatively-slow convergence on ill-conditioned problems, sensitivity to hyper-parameter settings such as learning rate, and difficulty in escaping flat regions and saddle points. On the other hand, general purpose second-order algorithms, e.g., projected Newton method \citep{ ProjectedNewtonTypeMethodsInMachineLearningSchmidt,Lee2014ProximalNewtonTypeMethods} and interior point methods \citep{NocedalJorgeNO}, alleviate some of these issues such as susceptibility to ill-conditioning and/or stagnation near flat regions. However, due to not leveraging the simplicity of the constraint, this advantages come at the cost of introducing highly non-trivial and challenging subproblems. 

By exploiting the structure of the constraint in  \cref{eqn:nonnegative constrained problem}, \citet{ProjectionNewtonMethodsForOptimizationProblemsBertsekas} proposed the two-metric projection framework as a natural and simple adaptation of the classical Newton's method for unconstrained problems.  By judicious modification of the Hessian matrix, this framework can be effectively seen as projecting Newton's step onto the nonnegative orthant. This allows for the best of both worlds, blending the efficiency of classical Newton's method with the simplicity of projected gradient descent. Indeed, similar to the classical Newton's method, the subproblem amounts to solving a linear system, while like projected gradient-descent, the projection step is straightforward. 

\textbf{Contribution}. In this paper, we design, theoretically analyse, and empirically evaluate novel two-metric projection type algorithms (\cref{alg:newton-mr-two-metric,alg:newton-mr-two-metric-minimal-assumptions-case})  with desirable complexity guarantees for solving large scale and nonconvex optimisation problems with nonnegativity constraints \cref{eqn:nonnegative constrained problem}. Both \cref{alg:newton-mr-two-metric,alg:newton-mr-two-metric-minimal-assumptions-case} are Hessian-free in that the subproblems are solved inexactly using  the minimum residual (MINRES) method \citep{PaigeSaunders1975SolutionOfSparseIndefiniteSystemsOfLinearEquations} and only require Hessian-vector product evaluations. To achieve approximate first-order optimality (see \cref{defn:optimal point}), we leverage the theoretical properties of MINRES, as recently established in \cite{LiuMinres}, e.g., nonnegative curvature detection and monotonicity properties, and we show the following:
\begin{enumerate}[label = {\bfseries (\Roman*)}]
    \item Under minimal assumptions, \cref{alg:newton-mr-two-metric-minimal-assumptions-case} achieves global iteration complexity that matches those of first-order alternatives (\cref{thm:minimal assumptions convergence theorem}).
    \item Under stronger assumptions, \cref{alg:newton-mr-two-metric} enjoys a global iteration complexity guarantee with an improved rate that matches the state of the art for second-order methods (\cref{thm:first-order iteration complexity}).
    \item Both variants obtain competitive oracle complexities, i.e., the total number of gradient and Hessian-vector product evaluations (\cref{cor:first-order operational complexity,cor:first-order operational complexity minimal assumptions}). 
    \item Our approach enjoys fast local convergence guarantees (\cref{thm:active set identification,cor:local convergence guarantee}).  
    \item Our approach exhibit highly competitive empirical performance on several machine learning problems (\cref{sec:numerical results}).
\end{enumerate}
To our knowledge, the complexity guarantees outlined in this paper are the first to be established for two-metric projection type algorithms in nonconvex settings.

\textbf{Notation}. Vectors and matrices are denoted, respectively, by bold lowercase and uppercase letters. Denote the nonnegative orthant by $\real^d_+$. The open ball of radius $r$ around $\xx$ is denoted by $B(\xx, r) \defeq \{ \zz \in \real^d \ | \  \|\zz - \xx\| < r\}$. The inequalities,``$\geq$'' and ``$\leq$'', are often applied element-wise. Big-$\sO$ complexity is denoted by $\sO$ with hidden logarithmic factors indicated by $\tilde{\sO}$. Denote components of vectors by superscript and iteration counters as subscripts, e.g., $\xx^i_k$ is $i^\text{th}$ component of the $k^\text{th}$ iterate of $\xx$. As a natural extension, a set of indices in the superscript denotes the subvector corresponding to those components, e.g., letting $[d] = \{1, \ldots, d\}$, if $\sI \subseteq [d]$ and $\vv \in \real^d$ then $\vv^\sI = (\vv^i \ | \ i \in \sI) \in \real^{|\sI|}$. Let $\bgg(\xx) =\nabla f(\xx)$ and $\HH(\xx) = \nabla^2 f(\xx)$ denote the gradient and Hessian of $f$, respectively. Denote the $\delta_k$-active and $\delta_k$-inactive sets, respectively, by  
\begin{subequations} \label{eqn:inactive and active set definition}
    \begin{align} 
    \sA(\xx_k, \delta_k) &= \{ i \in [d] \mid 0 \leq \xx_k^i \leq \delta_k \}, \tageq\label{eqn:active set} \\
    \sI(\xx_k, \delta_k) &= \{ i \in [d] \mid \xx_k^i > \delta_k \}. \tageq\label{eqn:inactive set}
    \end{align} 
\end{subequations}
When the context is clear, we suppress the dependence on $\xx_k$ and $\delta_k$, e.g., $\bggk$ and $\HHk$ for $\bgg(\xxk)$ and $\HH(\xxk)$ and $\xx_k^\sI $ or $\xx_k^{\sI_k}$ instead of $\xx_k^{\sI(\xx_k, \delta_k)}$. We also denote $\HH_k^{\sI} = \{ (\HH_k)_{ij} \ | \ i,j \in \sI(\xx_k, \delta_k) \} $. 

\section{Background and Related Work}
We now briefly review related works for solving \cref{eqn:nonnegative constrained problem} and some essential background necessary for our presentation. 

\textbf{First-order Methods}. The projected gradient method \citep{lan2020first} is among the simplest techniques for solving optimisation problems involving convex constraints. Indeed, the projected gradient iteration for minimisation over a convex set $\Omega$ is simply given by $\xx_{k+1} = \sP_\Omega (\xx_k - \alpha_k \bgg_k)$ where $\sP_\Omega: \real^{d} \to \real^{d}$ is the orthogonal projection onto $\Omega$ defined by $\sP_\Omega(\xx) = \argmin_{\zz \in \Omega} \|\zz - \xx\|$. When $\alpha_k$ is chosen appropriately, e.g., via line search, the projected gradient method is known to converge under essentially the same conditions and at the same rate as the unconstrained variant  \citep{bertsekasNonlinearProgramming, beck2017FirstOrderMethodsInOptimization}. Many variations of this method have also been considered, e.g., spectral projected gradient \citep{Birgin2014SpectralProjectedGradientMethods}, proximal gradient \citep{Parikh2014ProximalAlgorithms,beck2017FirstOrderMethodsInOptimization}, and accelerated proximal gradient \citep{Nesterov2013GradientMethodsForMinimizingCompositeFunction, Beck2009FISTA} with its extensions to non-convex settings \citep{lin2020accelerated,li2017convergence}.

Of course, the effectiveness of the projected gradient method relies heavily on the computational cost associated with computing $\sP_\Omega(\xx)$. While this can be challenging for general convex sets, in the case of $\Omega = \real^d_+$, it is simply given by $[\sP(\xx)]^{i} = \xx^i$, if $\xx^i > 0$, and $[\sP(\xx)]^{i} = 0$, otherwise. Note that, for notational simplicity, we omit the dependence of $\sP$ on $\Omega$ in our context. Nonetheless, while the projected gradient method is a simple choice for solving \eqref{eqn:nonnegative constrained problem}, it shares the common drawbacks of first-order methods alluded to earlier, e.g., susceptibility to ill-conditioning. 

\textbf{Second-order Methods}. By incorporating Hessian information, second-order methods hold the promise to alleviate many of the well-known deficiencies of first-order alternatives, e.g., they are typically better suited to ill-conditioned problems \citep{xu2020second}. For constrained problems, generic projected (quasi) Newton methods involve iterations of the form $\xx_{k+1} = \xxk + \alpha_{k} \ppk$ where 
\begin{align}
    \label{eqn:proximal Newton method subproblem}
    \pp_{k} = \argmin_{\xx \in \Omega} \dotprod{\bgg_k, \pp} + \dotprod{\pp, \BB_k\pp}/2,
\end{align}
where $\alpha_{k}$ is an appropriately chosen step-size, e.g., backtracking line search, and $\BB_k$ captures some curvature information of $f$ at $\xx_k$ (and also potentially the step-length as in the proximal arc search). For $\BB_k = \eye $ we recover a projected gradient variant, whereas for $\BB_k = \HH_k$, or some approximation, we obtain projected (or more generally proximal) Newton-type methods \citep{schmidt2009optimizing,ProjectedNewtonTypeMethodsInMachineLearningSchmidt,becker2012quasi,Lee2014ProximalNewtonTypeMethods,shi2015large}. The main drawback of this framework is that the subproblem, \cref{eqn:proximal Newton method subproblem}, may no longer be a simple projection even when $\Omega$ is a simple, and one has to resort to an optimisation subroutine to (approximately) solve \cref{eqn:proximal Newton method subproblem}.

An alternative is the interior point framework \citep{NocedalJorgeNO}, where the constraints are directly integrated into the objective as ``barrier'' functions. While the subproblems in this framework amount to solving linear systems, to produce accurate solutions the barrier function must approach the constraint, which can lead to highly ill conditioned subproblems. Some recent works \citep{bian2014ComplexityAnalysisOfInteriorPoint,haeser2017OptimalityConditionAndComplexityForLinearlyConstrainedOptimizationWithoutDiffOnBoundary,LogBarrierNewtonCGOneillWright} consider interior point methods for \cref{eqn:nonnegative constrained problem}. In particular, in \citep{LogBarrierNewtonCGOneillWright}, capped Newton-CG with a preconditioned Hessian is used to optimise a log barrier augmented objective. Due to issues arising from increasingly ill-conditioned subproblems, the practical efficacy of this method seems to be inferior when compared to projection-based methods, including those of first-order \citep{XieWright2021ComplexityOfProjectedNewtonCG}.

The issue with the general purpose second-order methods discussed so far is that, unlike projected gradient, they do not leverage the simplicity of the nonnegativity constraints and the corresponding projection. In this light, a na\"{i}ve adaptation of the projected gradient would imply directly projecting the Newton step on the constraints, e.g., $\xx_{k+1} = \sP_\Omega (\xx_k - \alpha_k \HH_{k}^{-1}\bgg_k)$. Unfortunately, such a direct adaptation may lead to ascent directions for the objective function at the boundary. To that end, the \textit{two-metric projection (TMP) framework} \citep{ProjectionNewtonMethodsForOptimizationProblemsBertsekas, TwoMetricProjectionGafiniBertsekas} offers an ingenious solution. Specifically, at each iteration, the component indices, $[d]$, are divided into the approximately bound, $\sJ^+_k$, and free sets, $\sJ^-_k$, given by
\begin{align*}
    \sJ^+_k = \{ i \in [d] \mid  \xx_k^i\leq \delta, \bgg_k^i > 0 \}, \; \sJ^-_k =[d] \setminus \sJ_k^+. \tageq\label{eqn:bertsekas active and inactive set}
\end{align*}
where $\delta > 0$. A matrix, $\DD_k$, is then chosen to be ``diagonal'' with respect to set $\sJ_k^+$, that is,
\begin{align*}
    (\DD_k)_{ij} = 0, \;\; i \in \sJ_k^+, \;\; j \in [d] \setminus\{i\},  
\end{align*}
and the update is simply given by  
\begin{align*}
    \xx_{k+1} = \sP(\xx_k - \alpha_k \DD_k \bgg_k). \tageq\label{eqn:bertsekas two metric projection}
\end{align*}
It has been shown that TMP is asymptotically convergent under certain conditions and reasonable choices of $\DD_k$. For example, for strongly convex problems, the non-diagonal portion of $\DD_k$ can consist of the inverse of the Hessian submatrix corresponding to the indices in $\sJ_k^-$. In this case, \cref{eqn:bertsekas two metric projection} reduces to a scaled gradient in $\sJ_k^+$ and a Newton step in $\sJ_k^-$. \citet{ProjectionNewtonMethodsForOptimizationProblemsBertsekas} also shows that,  under certain conditions, TMP can preserve fast ``Newton like'' local convergence. Practically, TMP type algorithms has been successfully applied to a range of problems \citep{TwoMetricProjectionGafiniBertsekas,Schmidz2007FastOptimizationMethodsForL1Regularization,Sra2010TacklingBoxConstrainedOptimization,HaberComputationalMethodsinGeophysicalElectromagnetics,Kuang2012SymmetricNNMF,Cai2023FastProjectedNewtonLikeMethodForPrecisionMatrix}. In large scale and nonconvex settings, employing the Newton step as part of \cref{eqn:bertsekas two metric projection} may be infeasible or even undesirable. Indeed, not only can Hessian storage and inversion costs be prohibitive, the existence of negative curvature can lead to ascent directions.

With a view to eliminate the necessity of forming and inverting the Hessian, \citet{Sra2010TacklingBoxConstrainedOptimization} extend TMP to utilise a quasi-Newton update with asymptotic convergence guarantees in the convex setting. Also in this vein, \citet{XieWright2021ComplexityOfProjectedNewtonCG} considered ``projected Newton-CG'', which entails a combination of the projected gradient and the inexact Newton steps that preserve the simplicity of projection onto $\real^{d}_{+}$. In particular, Newton-CG steps are based on the capped CG procedure of \citet{RoyerWright2018NewtonCG}. Unfortunately, the gradient and Newton-CG steps are not taken simultaneously. Instead, the algorithm employs projected gradient steps across all components until optimality is attained in the approximately active set. Only at that point is the Newton-CG step applied in the approximately inactive set. This implies that the algorithm may take projected gradient steps at most iterations, potentially impeding its practical performance.

\textbf{Hessian-free Inexact Methods}. In high-dimensional settings, storing the Hessian matrix may be impractical. Moreover, an approximate direction can often be computed at a fraction of the cost of a full Newton step. In this context, Hessian-free inexact Newton-type algorithms leverage Krylov subspace methods \citep{SaadIterativeMF}, which are particularly well-suited for these scenarios. Krylov subspace solvers can recover a reasonable approximate direction in just a few iterations and only require access to the Hessian-vector product mapping, $\vv \mapsto \HH(\xx) \vv$. The computational cost of a Hessian-vector product is comparable to that of a gradient evaluation and does not require the explicit formation of $\HH$. Indeed, $\HH(\xx) \vv$ can be computed by obtaining the gradient of the map $\xx \mapsto \langle \bgg(\xx), \vv \rangle$ using automatic differentiation, leading to one additional back propagation compared to computing $\bgg(\xx)$.

\textbf{Complexity in Optimisation}. Recently, there has been a growing interest in obtaining global worst case \textit{iteration complexity} guarantees for optimisation methods, namely a bound on the number of iterates required for the algorithm to compute an approximate solution. For instance, in unconstrained and nonconvex settings,  gradient descent produces an approximate first-order optimal point satisfying $\| \bgg(\xx) \| \leq \epsilon_g $ in at most $\sO(\epsilon_g^{-2})$ iterations for objectives with Lipschitz continuous gradients \citep{Nesterov2013introductoryLecturesOnConvex}. This rate has been shown to be tight \citep{cartisComplexityOfSteepestDescent}. Without additional assumptions, similar rates have also been shown for second-order methods \cite{cartis2022evaluationComplexityOfAlgorithmsForNonconvexOptimization}. However, for objectives with both Lipschitz continuous gradient and Hessian, this rate can be improved to $\sO(\epsilon_g^{-3/2})$, which is also shown to be tight over a wide class of second-order algorithms \citep{cartis2011OptimalNewtonTypeMethodsForNonConvex}. Second-order methods which achieve this rate include cubic regularised Newton's method and its adaptive variants \citep{Nesterov2006CubicRegularisationOfNewtonsMethod,cartis2011AdaptiveCubicRegularisationPartI,cartis2011AdaptiveCubicRegularisationPartII,xu2020newton}, modified trust region based methods \citep{curtis2016ATrustRegionAlgorithmWithWorstCaseIterativeComplexity,curtis2021TrustRegionNewtonCG,curtis2023WorstCaseComplexityOfTRACE} and line search methods including Newton-CG \citep{RoyerWright2018NewtonCG} and Newton-MR \citep{LiuNewtonMR} as well as their inexact variants \citep{Yao2022InexactNewtonCG,lim2023complexityForNewtonMRUnderInexactHessian}. Many of the above works also provide explicit bounds on the \textit{operational complexity}, that is, a bound on the number of fundamental computational units (e.g. gradient evaluations, Hessian vector products) to obtain an approximate solution.

In the constrained setting, direct comparison between bounds is difficult due to differences in approximate optimality conditions; see discussion in \citet[Section 3]{XieWright2021ComplexityOfProjectedNewtonCG} for the bound constraint case. However, the algorithms in \citet{cartis2020SharpWorstCaseComplexityBoundsForNonConvexOptimizationWithInexpensiveConstraints,birgin2018OnRegularizationAndActiveSetMEthodsWithComplexity} achieve $\sO(\epsilon_g^{-3/2})$ for a first-order optimal point with certain types of constraints, which is shown to be tight in  \citet{cartis2020SharpWorstCaseComplexityBoundsForNonConvexOptimizationWithInexpensiveConstraints}. More specific to the bound constraint case, the Newton-CG log barrier method of \citet{LogBarrierNewtonCGOneillWright} achieves a complexity of $\sO(d\epsilon_g^{-1/2} + \epsilon_g^{-3/2})$, while the projected Newton-CG algorithm of \citet{XieWright2021ComplexityOfProjectedNewtonCG} obtains a rate of $\sO(\epsilon_g^{-3/2})$ under a set of approximate optimality conditions similar to this work.

\textbf{Optimality Conditions}. Recall that $\xx_*$ satisfies the first-order necessary conditions for \cref{eqn:nonnegative constrained problem} if
\begin{align}
\label{eqn:nonnegative KKT conditions}
 \xx_* \geq \zero, \quad \text{and} \quad  \begin{cases}
        [\grad f(\xx_*)]^i = 0, \ &\text{if } \xx_*^i > 0, \\
        [\grad f(\xx_*)]^i \geq 0, \ &\text{if } \xx_*^i = 0.
    \end{cases}
\end{align}
We seek a point which satisfies these conditions to some ``$\epsilon$'' tolerance. There are a number of ways to adapt \eqref{eqn:nonnegative KKT conditions} into an approximate condition \citep[Section 3]{XieWright2021ComplexityOfProjectedNewtonCG}. In this work we adopt \citet[Definition 1]{XieWright2021ComplexityOfProjectedNewtonCG}.  
\begin{definition}[$\epsilon$-Optimal Point]\label{defn:optimal point}
A point, $\xx$, is called $\epsilon$-approximate first-order optimal ($\epsilon$-FO) if 
\begin{subequations}\label{eqn:epsilon optimality conditions}
    \begin{align}
    &\bgg^i \geq - \sqrt{\epsilon}, \quad \forall i \in \sA(\xx, \sqrt{\epsilon}) \tageq\label{eqn:active gradient negativity condition}\\
    &\| \diag(\xx^\sA) \bgg^\sA \| \leq \epsilon, \tageq\label{eqn:active gradient norm termination condition} \\
    &\| \bgg^\sI \| \leq \epsilon. \tageq\label{eqn:inactive set termination condition}
\end{align} 
\end{subequations}
We take \cref{eqn:active gradient negativity condition} and \cref{eqn:active gradient norm termination condition} to be trivially satisfied if $\sA(\xx, \sqrt{\epsilon}) = \emptyset$ and similar for \cref{eqn:inactive set termination condition} if $\sI(\xx, \sqrt{\epsilon}) = \emptyset$.
\end{definition} 

This definition has been shown to be asymptotically exact.
\begin{lemma}{\citep[Lemma 1]{XieWright2021ComplexityOfProjectedNewtonCG}}
    Suppose that $\epsilon_k \downarrow 0$ and we have a sequence $\{\xx_k\}_{k=1}^\infty$ where each $\xx_k$ satisfies the corresponding $\epsilon_k$-FO optimality condition. If $\xx_k \to \xx_*$ then $\xx_*$ satisfies \eqref{eqn:nonnegative KKT conditions}.
\end{lemma}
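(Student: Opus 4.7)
The plan is to verify the KKT conditions \eqref{eqn:nonnegative KKT conditions} at $\xx_*$ by a simple limiting argument with a case split on each component. Feasibility is immediate: the very definition of $\sA(\xx_k, \sqrt{\epsilon_k})$ in \cref{eqn:active set} presupposes $\xx_k \geq \zero$, and closedness of $\real^d_+$ combined with $\xx_k \to \xx_*$ yields $\xx_* \geq \zero$.

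For the componentwise conditions, I would fix $i \in [d]$ and consider the value of $\xx_*^i$. If $\xx_*^i > 0$, then $\xx_k^i \to \xx_*^i > 0$ together with $\sqrt{\epsilon_k} \downarrow 0$ forces $\xx_k^i > \sqrt{\epsilon_k}$ for all sufficiently large $k$, and so eventually $i \in \sI(\xx_k, \sqrt{\epsilon_k})$. Invoking \cref{eqn:inactive set termination condition},
\begin{align*}
    |\bgg_k^i| \leq \vnorm{\bgg_k^{\sI(\xx_k, \sqrt{\epsilon_k})}} \leq \epsilon_k \longrightarrow 0,
\end{align*}
and continuity of $\bgg$ delivers $[\grad f(\xx_*)]^i = 0$, as required.

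If instead $\xx_*^i = 0$, the sets $\sI(\xx_k, \sqrt{\epsilon_k})$ and $\sA(\xx_k, \sqrt{\epsilon_k})$ partition $[d]$, so passing to a subsequence, index $i$ lies in a fixed one of the two sets for all $k$ along that subsequence. On any subsequence along which $i \in \sI(\xx_k, \sqrt{\epsilon_k})$, condition \cref{eqn:inactive set termination condition} again gives $\bgg_k^i \to 0$, so $[\grad f(\xx_*)]^i = 0 \geq 0$. On any subsequence along which $i \in \sA(\xx_k, \sqrt{\epsilon_k})$, condition \cref{eqn:active gradient negativity condition} gives $\bgg_k^i \geq -\sqrt{\epsilon_k}$, and letting $k \to \infty$ yields $[\grad f(\xx_*)]^i \geq 0$. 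Either way the second branch of \eqref{eqn:nonnegative KKT conditions} is satisfied at index $i$.

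No step should present a real obstacle: once the partition argument is in place, everything follows from continuity of $\bgg$, closedness of $\real^d_+$, and the fact that $\sA$ and $\sI$ exhaust $[d]$ on feasible points. I note in passing that condition \cref{eqn:active gradient norm termination condition}, although decisive for quantitative complexity bounds on a stationarity residual, plays no role in this purely qualitative asymptotic exactness statement.
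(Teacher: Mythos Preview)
The paper does not supply its own proof of this lemma; it is quoted verbatim from \citet[Lemma 1]{XieWright2021ComplexityOfProjectedNewtonCG} and left unproved here. Your argument is the standard one and is correct: the componentwise case split on $\xx_*^i$, combined with the observation that $i$ eventually lands in $\sI(\xx_k,\sqrt{\epsilon_k})$ whenever $\xx_*^i>0$, is exactly what is needed, and your subsequence treatment of the boundary case $\xx_*^i=0$ is sound (indeed one can avoid subsequences entirely by noting that in either set $\bgg_k^i \geq -\max\{\sqrt{\epsilon_k},\epsilon_k\}$). One small caveat: your feasibility justification is slightly loose, since \cref{eqn:active set} does not literally \emph{presuppose} $\xx_k\geq\zero$---an index with $\xx_k^i<0$ simply lies in neither $\sA$ nor $\sI$ and is unconstrained by \cref{eqn:epsilon optimality conditions}---so feasibility of the $\xx_k$ is really an implicit hypothesis inherited from the algorithmic context (and from the original statement in \citet{XieWright2021ComplexityOfProjectedNewtonCG}). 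With that understood, your proof is complete.
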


\section{Newton-MR Two-Metric Projection}
We now propose and theoretically study our extensions of the TMP framework, which involves simultaneously employing gradient and inexact Newton steps, which are, respectively, restricted to the active and inactive sets.

\subsection{MINRES and Its Properties}
The inexact Newton step is based on the recently proposed Newton-MR framework \cite{LiuNewtonMR,RoostaNewtonWithMinimumResidual}, where instead of CG, subproblems are approximately solved using the minimum residual (MINRES) method \citep{PaigeSaunders1975SolutionOfSparseIndefiniteSystemsOfLinearEquations}. Recall that the $t\th$ iteration of MINRES is formulated as 
\begin{align}
    \bss^{(t)} = \argmin_{\bss \in \sK_t(\HH, \bgg)}\| \HH\bss + \bgg \|^2. \label{eqn:MINRES subproblem}
\end{align}
where $\sK_t(\HH, \bgg) = \Span\{ \bgg, \HH\bgg, \ldots, \HH^{t-1} \bgg\}$ is the Krylov subspace of degree $t$ generated from $\HH$ and $\bgg$. On each iteration MINRES minimises the squared norm of the residual of the Newton system over the corresponding Krylov subspace. Note that, from an optimisation perspective, the residual itself can be viewed as the \textit{gradient} of the second-order Taylor approximation typically considered by second-order methods (e.g. Newton-CG), that is, $\rr \defeq -\HH\bss - \bgg = - \grad_\bss (\langle \bgg, \bss\rangle + \frac{1}{2} \langle \bss, \HH \bss \rangle)$. This highlights an advantage of MINRES over CG. Indeed, unlike CG, which aims to minimise the second order Taylor approximation, minimisation of the residual norm remains well defined even if $\HH$ is indefinite. For more theoretical and empirical comparisons between CG and MINRES, see  \citet{lim2024conjugate}.

Recently, \citet{LiuMinres} established several properties of MINRES that make it particularly well-suited for nonconvex settings. For example, to assess the availability of a nonpositive curvature (NPC) direction in MINRES, one merely needs to monitor the condition 
\begin{align*}
    \langle \rr^{(t-1)}, \HH \rr^{(t-1)} \rangle \leq 0, \tageq\label{eqn:NPC condition} 
\end{align*}
This condition is shown to be both necessary and sufficient for the existence of NPC directions in $\sK_t(\HH, \bgg)$ \citep[Theorem 3.3]{LiuMinres}. In addition, MINRES enjoys a natural termination condition in non-convex settings. More specifically, for any user specified tolerance $\eta > 0$, the termination condition
\begin{align}
    \| \HH \rr^{(t-1)} \| \leq  \eta \|\HH \bss^{(t-1)}\|, \tageq\label{eqn:MINRES termination tolerance}
\end{align}
is satisfied at some iteration. Note that the left hand side, $\HH \rr^{(t-1)}$, is simply the residual of the normal equation $\HH^{2} \bss = -\HH \bgg$. Condition \cref{eqn:MINRES termination tolerance} is particularly appealing in non-convex settings where we might have $\bgg \notin \range{(\HH)}$ and therefore $\|\rr\| > 0$ for all $\bss \in \real^d$. In this case a more typical termination condition $\| \rr^{(t-1)} \| \leq \eta$ may never be satisfied for a given $\eta > 0$. By contrast, \cref{eqn:MINRES termination tolerance} is applicable in all situations since $\| \HH \rr^{(t-1)} \| $ is guaranteed to monotonically decrease to zero, while $\| \HH \bss^{(t-1)}\|$ is monotonically increasing \citep[Lemma 3.11]{LiuInexactNewtonMR}. Remarkably, both Conditions \cref{eqn:NPC condition,eqn:MINRES termination tolerance} can be computed with a scalar update directly from the MINRES iterates without any additional Hessian-vector products; see \cref{lemma:MINRES scalar updates}. 

A Newton-MR step is computed by running MINRES until \cref{eqn:NPC condition} is detected, in which case $\rr^{(t-1)}$ is returned. Since $\rr^{(t-1)}$ is a nonpositive curvature direction, we label this case as a ``NPC'' step. Otherwise, when the termination condition \cref{eqn:MINRES termination tolerance} is satisfied, $\bss^{(t-1)}$ is returned. This step serves as an approximate solution to \cref{eqn:MINRES subproblem} and so we label this case as a ``SOL'' step. Let $ \pp $ denote the direction returned by negative curvature detecting MINRES. \citet{LiuMinres} shows that $ \pp $ serves as a direction of first and second-order descent for the function $ f $, namely $\dotprod{\pp, \bgg} < 0$ and $\dotprod{\pp,\bgg} + \dotprod{\pp,\HH \pp}/2 < 0$ \citep[Theorem 3.8]{LiuMinres}, as well as a direction of non-ascent for the norm of its gradient $ \vnorm{\bgg}^{2} $, that is, $\dotprod{\pp,\HH \bgg} < 0$ for a SOL step and $\dotprod{\pp,\HH \bgg} = 0$ for a NPC step \citep[Lemma 3.1]{LiuMinres}.

We include the full MINRES algorithm (\cref{alg:MINRES}) as well as some additional properties in \cref{apx:MINRES}.

\subsection{Global Convergence: Minimal Assumptions}
We first present a variant that is globally convergent under minimal assumptions. \cref{alg:newton-mr-two-metric-minimal-assumptions-case} is our simplest variant of the Newton-MR two-metric projection method. Recalling the definition of the $\delta_k$-active and $\delta_k$-inactive sets as in \cref{eqn:inactive and active set definition}\footnote{Note that \cref{eqn:inactive and active set definition} differs from \cref{eqn:bertsekas active and inactive set} as it does not include a gradient positivity condition. This helps with tractability of the global analysis but leads to a relatively smaller inactive set.}, \cref{alg:newton-mr-two-metric-minimal-assumptions-case} combines an active set gradient step (i.e., $\pp_k^\sA = - \bgg_k^{\sA}$) with an inactive set Newton-MR step (i.e., $\pp_k^\sI = \bss_k^\sI$, if $\SOL$, and $\pp_k^\sI = \rr_k^\sI$, if $\NPC$). In \cref{alg:newton-mr-two-metric-minimal-assumptions-case}, the curvature condition \cref{eqn:NPC condition} is considered with a positive tolerance, $\overline{\varsigma} = (d+1)\varsigma > 0$, i.e., $\langle \rr^{(t-1)}, \HH \rr^{(t-1)} \rangle \leq \overline{\varsigma} \| \rr ^{(t-1)}\|^2$. \cref{lemma:strong positive curvature certification} demonstrates that $\langle \rr^{(i)}, \HH \rr^{(i)} \rangle > \overline{\varsigma} \| \rr^{(i)}\|^2 $ for all $0 \leq i \leq t-1$ is a certificate that $\HH$ is $\varsigma$-strongly positive definite over $\sK_t(\HH, \bgg)$.

\begin{algorithm}[htbp]
    \begin{algorithmic}[1]
        \STATE \textbf{Input} Initial point $\xx_0 \geq \zero $, active set tol  $\{ \delta_k\}$, optimality tol  $\{ \epsilon_k \}$, MINRES inexactness tol  $\eta > 0$, NPC tol  $\overline{\varsigma} = (d+1)\varsigma$ for $ \varsigma > 0$, Line search parameter $\rho < 1/2$.
        \smallskip
        \FOR{$ k= 0, 1, \ldots$}
        \smallskip
        \STATE Update sets $\sA(\xx_k, \delta_k)$ and $\sI(\xx_k, \delta_k)$ as in \cref{eqn:inactive and active set definition}.
        \smallskip
        \IF{\cref{eqn:epsilon optimality conditions} is satisfied}
            \smallskip
            \STATE \textbf{Terminate}.
            \smallskip
        \ENDIF
        \smallskip
        \STATE $\ppk: \left\{\begin{array}{ll}
             \pp^{\sA}_k \gets - \bgg^{\sA}_k,&\\\\
            (\pp_k^\sI, \ \Dtype) \gets \text{MINRES}(\HH_k^\sI, \bgg_k^\sI, \eta, \overline{\varsigma})&
        \end{array}\right.$
        \smallskip
        \IF{ $\SOL$}
            \smallskip
            \STATE $\alpha_{k} \gets$ \cref{alg:back tracking line search} with $\alpha_0=1$ and \eqref{eqn:line search criteria}.
            \smallskip
        \ELSIF{$\NPC$}
            \smallskip
            \STATE $\alpha_{k} \gets$ \cref{alg:forward tracking line search} with $\alpha_0 = 1$ and \eqref{eqn:line search criteria}.
            \smallskip
        \ENDIF
        \STATE $\xx_{k+1} = \sP(\xx_k + \alpha_k \pp_k)$
        \ENDFOR
    \end{algorithmic}
    \caption{Newton-MR TMP (Minimal Assumptions)}
    \label{alg:newton-mr-two-metric-minimal-assumptions-case}
\end{algorithm}

Once the step direction is computed, the step size is selected with a line search criteria similar to that of \citet{ProjectionNewtonMethodsForOptimizationProblemsBertsekas}. Specifically, letting $\xx_k(\alpha) = \sP(\xx_k + \alpha\pp_k)$, we find $\alpha$ that, for some $\rho \in (0, 1/2)$, satisfies
\begin{equation}
    \begin{aligned}
        f(\xx_k(\alpha)) - f(\xx_k)\leq \rho \dotprod{\bgg_{k}^\sA, \sP(\xx_k^\sA + \alpha \pp_k^\sA) - \xx_k^\sA} \\ + \alpha \rho \dotprod{\bgg_{k}^\sI, \pp_k^\sI} ,
    \end{aligned} \tageq\label{eqn:line search criteria}
\end{equation}
Note that the term corresponding to the inexact set in \cref{eqn:line search criteria} is negative due to the descent properties of $\pp_k^\sI$ discussed earlier. On the other hand, the active set term in \cref{eqn:line search criteria} is negative due the descent properties of the gradient mapping \citep[Proposition 3.3.1]{bertsekasNonlinearProgramming}. This is crucial for our analysis as it allows us to consider the decrease in the inactive and active sets independently of each other. The two terms are unified since 
\begin{equation*}
    \begin{aligned}
    \langle \bgg_{k}, \sP(\xx_k + \alpha \pp_k) - \xx_k \rangle = \langle \bgg_{k}^\sA, \sP(\xx_k^\sA + \alpha \pp_k^\sA) - \xx_k^\sA \rangle \\ + \alpha \langle \bgg_{k}^\sI, \pp_k^\sI \rangle,
    \end{aligned} 
\end{equation*}
so long as $\alpha$ is chosen small enough. This is a direct consequence of $\sI(\xx_k, \delta_k)$ containing only \textit{strictly} feasible indices.

In \citet{LiuNewtonMR}, it was shown that when MINRES algorithm the returns an NPC step, the line search for $\alpha$ could run in a forward tracking mode (cf. \cref{alg:forward tracking line search}). In numerical experiments, it was demonstrated that the forward tracking line search was beneficial because it allowed for very large steps to be taken, particularly in flat regions where progress would otherwise be slow. Our theoretical analysis in \cref{apx:minimal assumptions convergence} demonstrates that a forward tracking line search can also be used in \cref{alg:newton-mr-two-metric-minimal-assumptions-case} for NPC type steps.

To analyse the global complexity of \cref{alg:newton-mr-two-metric-minimal-assumptions-case}, we only require  typical assumptions on Lipschitz continuity of the gradient and lower-boundedness of the objective.
\begin{assumption}
    \label{ass:LipschitzGradient}
    There exists $0 \leq L_{g} < \infty$ such that for all $\xx, \yy \in \real^d_+$, $\|\bgg(\xx) - \bgg(\yy) \| \leq L_{g} \|\xx - \yy\|.$
\end{assumption}
\begin{assumption}
    \label{ass:bounded below}
    We have $-\infty < f_* \leq f(\xx)$, $\forall \xx \in \real^d_+$.
\end{assumption}

With these minimal assumptions we can provide a guarantee of convergence of \cref{alg:newton-mr-two-metric-minimal-assumptions-case} in \cref{thm:minimal assumptions convergence theorem}, the proof of which we deferred to \cref{apx:minimal assumptions convergence}.
\begin{theorem}[Global Complexity of \cref{alg:newton-mr-two-metric-minimal-assumptions-case}]\label{thm:minimal assumptions convergence theorem}
    Let $\epsilon_g \in (0, 1)$ and $\varsigma > 0$. Under \cref{ass:LipschitzGradient,ass:bounded below}, if we choose $\delta_k =  \epsilon_k = \epsilon_g^{1/2}$ and $\overline{\varsigma}=(d+1)\varsigma$,   \cref{alg:newton-mr-two-metric-minimal-assumptions-case} produces an $\epsilon_g$-FO point in at most $\sO(\epsilon_g^{-2})$ iterations.  
\end{theorem}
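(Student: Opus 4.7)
The plan is to show that at each non-terminating iteration \cref{alg:newton-mr-two-metric-minimal-assumptions-case} produces an objective decrease of at least $\Omega(\epsilon_g^{2})$, and then to telescope using \cref{ass:bounded below} to conclude $K=\sO(\epsilon_g^{-2})$. The preliminary step is to establish a uniform lower bound $\alpha_k\geq\alpha_{\min}>0$ depending only on $L_g,\rho,\eta,\varsigma$. For a SOL iteration this follows from a standard Armijo analysis using \cref{ass:LipschitzGradient}: because $\sI_k$ consists only of strictly feasible coordinates, for all $\alpha$ small enough the inactive block stays feasible and the projection decomposes as
\[
\dotprod{\bggk,\sP(\xxk+\alpha\ppk)-\xxk}=\dotprod{\bgg_k^{\sA},\sP(\xx_k^{\sA}+\alpha\pp_k^{\sA})-\xx_k^{\sA}}+\alpha\dotprod{\bgg_k^{\sI},\pp_k^{\sI}},
\]
so the descent lemma combined with the non-positivity of both terms guarantees backtracking terminates with $\alpha_k\gtrsim 1/L_g$. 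For an NPC iteration the forward-tracking analysis from \citet{LiuNewtonMR} adapts in the same fashion, using the strict negative-curvature certificate provided by $\overline{\varsigma}=(d+1)\varsigma>0$ and \cref{lemma:strong positive curvature certification} to keep $\alpha_k$ bounded away from zero (possibly while being very large).

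At any non-terminal iterate, at least one of \cref{eqn:active gradient negativity condition,eqn:active gradient norm termination condition,eqn:inactive set termination condition} is violated, and I would case split. If \cref{eqn:inactive set termination condition} fails, then $\|\bgg_k^{\sI}\|>\epsilon_g^{1/2}$, and the MINRES descent properties recalled after \cref{eqn:MINRES termination tolerance} combined with $\|\HH_k^{\sI}\|\leq L_g$ give $-\dotprod{\bgg_k^{\sI},\pp_k^{\sI}}\gtrsim\|\bgg_k^{\sI}\|^{2}/L_g$ in the SOL branch and a comparable lower bound from \citep[Theorem~3.8]{LiuMinres} in the NPC branch. If \cref{eqn:active gradient negativity condition} fails there is an index $i^\star\in\sA_k$ with $\bgg_k^{i^\star}<-\epsilon_g^{1/4}$; for small enough $\alpha$ the projection is inactive at $i^\star$ so the active-side contribution is $\Omega(\epsilon_g^{1/2})$. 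If instead \cref{eqn:active gradient norm termination condition} fails, there is an $i^\star\in\sA_k$ with $\xx_k^{i^\star}|\bgg_k^{i^\star}|\gtrsim\epsilon_g/\sqrt{d}$, and whether the corresponding coordinate clips to zero (when $\bgg_k^{i^\star}>0$) or not (when $\bgg_k^{i^\star}<0$) the gradient-mapping inequality forces the active contribution to be at least $\Omega(\epsilon_g/\sqrt{d})$. Substituting any of these into \cref{eqn:line search criteria} with $\alpha_k\geq\alpha_{\min}$ yields $f(\xx_k)-f(\xx_{k+1})\geq c\,\epsilon_g^{2}$ for some $c$ depending on $L_g,\rho,\eta,\varsigma,d$, and telescoping with \cref{ass:bounded below} completes the argument.

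The main obstacle I anticipate is the NPC branch of the inactive case, where the forward-tracking line search may push $\alpha_k$ to be very large and extra active coordinates may saturate as the step is lengthened. I would have to argue carefully, using only $L_g$-Lipschitzness of the gradient and the monotonicity of the MINRES quantities from \citep[Lemma~3.11]{LiuInexactNewtonMR}, that both pieces of the right-hand side of \cref{eqn:line search criteria} remain controlled: the inactive contribution because NPC directions enforce quadratic decrease in $f$ along $\pp_k^{\sI}$, and the active contribution because the gradient-mapping inequality is directionally sharp even when several coordinates clip simultaneously. A subsidiary technical nuisance is the \cref{eqn:active gradient norm termination condition} sub-case, where the sufficient decrease has to be extracted near the step size at which coordinate $i^\star$ first reaches the boundary, rather than at $\alpha_{\min}$.
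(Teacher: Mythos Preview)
Your proposal has a genuine gap in the step-size analysis. You claim a uniform lower bound $\alpha_k\geq\alpha_{\min}\gtrsim 1/L_g$ depending only on $L_g,\rho,\eta,\varsigma$, but this is not achievable. The inactive indices are only $\delta_k$-strictly feasible, so as soon as $\alpha>\delta_k/\|\pp_k^{\sI}\|_\infty$ some inactive coordinate can clip at zero. The line search criterion \cref{eqn:line search criteria} is written with the \emph{unclipped} term $\alpha\langle\bgg_k^{\sI},\pp_k^{\sI}\rangle$ on the right-hand side, so once clipping occurs the descent-lemma argument you invoke no longer certifies the criterion. The correct lower bound (the paper's \cref{lemma:minimal assumptions line search termination}) is therefore
\[
\alpha_k\;\geq\;\min\Bigl\{\tfrac{2(1-\rho)}{L_g}\min\{1,\varsigma\},\ \tfrac{\delta_k}{\|\pp_k^{\sI}\|}\Bigr\},
\]
and the second term can be arbitrarily small when $\|\pp_k^{\sI}\|$ (hence $\|\bgg_k^{\sI}\|$) is large.

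This breaks your case split, because when several termination conditions fail simultaneously you are free to pick \emph{any} of them, and if you pick an active-set failure while $\|\bgg_k^{\sI}\|$ is large, the tiny step size wipes out the $\Omega(\epsilon_g)$ active contribution you want. The paper's remedy is to \emph{prioritise} the cases: if \cref{eqn:inactive set termination condition} fails, use the inactive decrease (\cref{lemma:minimal assumptions inactive set decrease}), where the product $\alpha_k\cdot\langle\bgg_k^{\sI},\pp_k^{\sI}\rangle$ is controlled via $\alpha_k\|\pp_k^{\sI}\|^2\geq\min\{c\|\pp_k^{\sI}\|^2,\delta_k\|\pp_k^{\sI}\|\}$ together with the lower bounds $\|\bss_k^{\sI}\|\geq C_{\varsigma,L_g}\|\bgg_k^{\sI}\|$ and $\|\rr_k^{\sI}\|\geq\eta/\sqrt{\eta^2+L_g^2}\,\|\bgg_k^{\sI}\|$. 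Only when \cref{eqn:inactive set termination condition} already holds (so $\|\pp_k^{\sI}\|\lesssim\epsilon_g$ and the $\delta_k/\|\pp_k^{\sI}\|$ term is harmless) does one turn to the active-set argument (\cref{lemma:minimal assumptions active set decrease}).

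Two smaller points: your thresholds are misread (failure of \cref{eqn:inactive set termination condition} gives $\|\bgg_k^{\sI}\|>\epsilon_g$, not $\epsilon_g^{1/2}$; failure of \cref{eqn:active gradient negativity condition} gives $\bgg_k^{i^\star}<-\epsilon_g^{1/2}$, not $-\epsilon_g^{1/4}$), and your treatment of \cref{eqn:active gradient norm termination condition} picks a single index and thereby incurs a $\sqrt{d}$ factor that the paper avoids by splitting the sum $\sum (\xx_k^i\bgg_k^i)^2$ according to whether $\alpha_k\bgg_k^i\geq \xx_k^i$. These are fixable, but the step-size issue above is structural.
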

\begin{remark}
    The ``big-$\sO$'' rate obtained in \cref{thm:minimal assumptions convergence theorem} hides a dependence on the problem constants and algorithm parameters $\rho$, $\varsigma$, $L_{g}$, $\eta$, which are, in particular, independent of $d$. However, the proof of \cref{thm:minimal assumptions convergence theorem} (and, indeed, \cref{thm:first-order iteration complexity}) implies that the worst case constant hidden by the big-$\sO$ notation could have an unfortunate dependence on the problem constants (e.g., $L_g^3$). This could suggest poor \textit{practical} performance despite the desirable dependence on $\epsilon_{g}$. However, as we show numerically in \cref{sec:numerical results}, such worst case analyses are rarely indicative of typical performance in practice.
\end{remark}

\subsection{Global Convergence: Improved Rate}
It is possible to modify \cref{alg:newton-mr-two-metric-minimal-assumptions-case} to improve upon the convergence rate of \cref{thm:minimal assumptions convergence theorem}, albeit under stronger assumptions.  
This is done in \cref{alg:newton-mr-two-metric} where, by appropriate use of curvature information, we can obtain an improved complexity rate. \cref{alg:newton-mr-two-metric} shares the same inactive/active sets, line search strategies, and projection based feasibility with \cref{alg:newton-mr-two-metric-minimal-assumptions-case}.There are, however, some main differences. A key distinction lies in the certification of \textit{strictly} positive curvature \cref{eqn:NPC condition} rather than strongly positive curvature, i.e., unlike \cref{alg:newton-mr-two-metric-minimal-assumptions-case} where we set $\overline{\varsigma} > 0$, in \cref{alg:newton-mr-two-metric} we set the NPC tolerance to $\overline{\varsigma} = 0$. Another notable difference is the introduction of \textit{Type II} steps. \textit{Type II} steps set the active portion of the step to zero and occur when the active set optimality conditions \cref{eqn:active gradient negativity condition,eqn:active gradient norm termination condition} are satisfied (otherwise \textit{Type I} steps, i.e., steps similar to \cref{alg:newton-mr-two-metric-minimal-assumptions-case}, are used) but the inactive set tolerance \cref{eqn:inactive set termination condition} is unsatisfied. Because the active set termination conditions are satisfied, removing the active portion of the step is not expected to significantly impede the algorithm's progress. By the same token, we can analyse \textit{Type II} steps using second-order curvature information, similar to the unconstrained Newton-MR algorithm, without having to account for the curvature related to the projected gradient portion of the step. Additionally, to achieve an improved rate over \cref{alg:newton-mr-two-metric-minimal-assumptions-case}, MINRES inexactness tolerance must scale with $\epsilon_k$ in  \cref{alg:newton-mr-two-metric}.

\begin{algorithm}[ht]
    \begin{algorithmic}[1]
       \STATE \textbf{Input} Initial point $\xx_0 \geq \zero $, active set tol  $\{ \delta_k\}$, optimality tol  $\{ \epsilon_k \}$, MINRES inexactness tol $\eta = \epsilon_k \theta$ and $\theta > 0$, Line search parameter $\rho < 1/2$, NPC tol  $\overline{\varsigma} = 0$.
        \smallskip
        \FOR{$ k= 0, 1, \ldots$}
        \smallskip
        \STATE Update sets $\sA(\xx_k, \delta_k)$ and $\sI(\xx_k, \delta_k)$ as in \cref{eqn:inactive and active set definition}.  
        \smallskip
        \IF{ $\sA(\xx_k, \delta_k) \neq \emptyset$ and (not \cref{eqn:active gradient negativity condition} or not \cref{eqn:active gradient norm termination condition}) }
            \smallskip
            \STATE Flag = \textit{Type I}.
            \smallskip
        \ELSIF{ $\sI(\xx_k, \delta_k) \neq \emptyset$ and not \cref{eqn:inactive set termination condition}}
            \smallskip
            \STATE Flag = \textit{Type II}.
            \smallskip
        \ELSE
            \smallskip
            \STATE \textbf{Terminate}.
            \smallskip
        \ENDIF
        \smallskip
        \STATE $\ppk: \left\{\begin{array}{ll}
             \pp^{\sA}_k \gets \left\{\begin{array}{ll}
             - \bgg^{\sA}_k,& \text{If Flag = \textit{Type I},}\\\\
            \zero, & \text{If Flag = \textit{Type II},}
        \end{array}\right. &\\\\
            \hspace{-1mm}(\pp_k^\sI, \ \Dtype) \gets \text{MINRES}(\HH_k^\sI, \bgg_k^\sI, \eta, \overline{\varsigma})&
        \end{array}\right.$
        \smallskip
        \IF{ $\SOL$}
            \smallskip
            \STATE $\alpha_{k} \gets$ \cref{alg:back tracking line search} with $\alpha_0=1$ and \eqref{eqn:line search criteria}.
            \smallskip
        \ELSIF{$\NPC$}
            \smallskip
            \STATE $\alpha_{k} \gets$ \cref{alg:forward tracking line search} with $\alpha_0=1$ and \eqref{eqn:line search criteria}.
            \smallskip
        \ENDIF
        \STATE $\xx_{k+1} = \sP(\xx_k + \alpha_k \pp_k)$
        \ENDFOR
    \end{algorithmic}
    \caption{Newton-MR TMP (Improved Rate)}
    \label{alg:newton-mr-two-metric}
\end{algorithm}

For our analysis, we need additional assumptions including the Lipschitz continuity of the Hessian.
\begin{assumption}
    \label{ass:LipschitzHessian}
    There exists $0 \leq L_{H} < \infty$ such that for all $\xx, \yy \in \real^d_+$, $\|\HH(\xx) - \HH(\yy) \| \leq L_{H} \|\xx - \yy\|$.
\end{assumption}
Additionally, we make some regularity assumptions on the output of the MINRES iterations.
\begin{assumption}\label{ass:ResidualLowerBoundRegularity}
    There exists a contant $\omega > 0 $, independent of $\xx$, such that the NPC direction from MINRES, $\pp = \rr^{(t-1)}$, satifies $\| \rr^{(t-1)} \| \geq \omega \| \bgg\|$.
\end{assumption}
We note that a lower bound for the relative residual is available directly \textit{prior to termination}. In fact, recall that if an NPC direction is returned, the termination condition \cref{eqn:MINRES termination tolerance} must not yet be satisfied. In this case, \cref{ass:LipschitzGradient,lemma:MINRES scalar updates} together imply that $\| \rr^{(t-1)} \| \geq \eta \| \bgg \|/\sqrt{\eta^2 + L_g^2}$. For \cref{alg:newton-mr-two-metric-minimal-assumptions-case}, this lower bound is directly utilised to establish convergence with no requirement for \cref{ass:ResidualLowerBoundRegularity}. However, for \cref{alg:newton-mr-two-metric}, $\eta$ depends on $\epsilon_k$, which could lead us to believe that the lower bound on the relative residual prior to termination does too. In particular, at first glance, this might suggest that the smaller the inexactness tolerance $\eta$, the more iterations MINRES is expected to perform before NPC detection. We argue that this is not the case. Firstly, an upper bound on the number of MINRES iterations until a NPC direction is encountered is \textit{independent} of the termination criteria $\eta$ \citet[Corollary 2]{LiuNewtonMR}. In fact, by construction, the MINRES iterates are independent of the termination tolerance $\eta$ and the magnitude of $\| \bgg\|$; see discussion and numerical examples around \citet[Assumption 4]{LiuNewtonMR}. Additionally, in the case where $\bgg \notin \range{(\HH)}$, we always have $\| \rr^{(t-1)} \| \geq \|(\eye - \HH\HH^{\dagger}) \bgg \|$, which is clearly independent of $\eta$. Together, these lines of argumentation constitute our justification for \cref{ass:ResidualLowerBoundRegularity}. 

Recall that \cref{alg:newton-mr-two-metric-minimal-assumptions-case} includes a manual verification of user specified strongly positive curvature over $\sK_t(\HH, \bgg)$ in $\SOL$ case, while \cref{alg:newton-mr-two-metric} only certifies strict positive curvature through the NPC condition \cref{eqn:NPC condition}. \citet{LiuMinres} demonstrated that as long as  the NPC condition \cref{eqn:NPC condition} has not been detected,  we have $\TT_t \succ \zero$ where $\TT_t \in \real^{t \times t}$ is the symmetric tridiagonal matrix obtained in the $t\th$ iteration of MINRES (see \cref{apx:MINRES} for more details). Our next assumption strengthens this notion. 
\begin{assumption}
    \label{ass:KrylovSubspaceRegularity}
    There exists $\sigma > 0$ such that for any $\xx_k$ in the sequence of SOL type iterates returned by \cref{alg:newton-mr-two-metric}, we have  $\TT_t  \succeq \sigma \eye$.  
\end{assumption}

\cref{ass:KrylovSubspaceRegularity} implies that, as long as the NPC condition \cref{eqn:NPC condition} has not been detected, for any $\vv \in \sK_t(\HH, \bgg)$ we have $\langle \vv, \HH \vv\rangle \geq \sigma \| \vv\|^2 $. \cref{ass:KrylovSubspaceRegularity} is satisfied by an objective function whose Hessian contains positive $\bgg$-relevant eigenvalues (eigenspaces not orthogonal to the gradient) uniformly separated from zero. A simple example is an under-determined least-squares problem. 

Together, \cref{ass:ResidualLowerBoundRegularity,ass:KrylovSubspaceRegularity} allow us to control the curvature of our step, which is necessary to obtain an improved rate over \cref{alg:newton-mr-two-metric-minimal-assumptions-case} using a Lipschitz Hessian upper bound. We now present the convergence result for \cref{alg:newton-mr-two-metric}. We defer the proof to \cref{sec:global convergence proofs}.
\begin{theorem}[Global Complexity of \cref{alg:newton-mr-two-metric}]\label{thm:first-order iteration complexity}
    Let $\epsilon_g \in (0, 1)$. Under \cref{ass:LipschitzGradient,ass:LipschitzHessian,ass:KrylovSubspaceRegularity,ass:ResidualLowerBoundRegularity,ass:bounded below}, if we choose $\delta_k =  \epsilon_k = \epsilon_g^{1/2}$,  Algorithm \ref{alg:newton-mr-two-metric} produces an $\epsilon_g$-FO point in at most $\sO(\epsilon_g^{-3/2})$ iterations.
\end{theorem}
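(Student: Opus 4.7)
My plan is to partition the non-terminating iterations of \cref{alg:newton-mr-two-metric} into Type I and Type II, establish a per-iteration decrease $f(\xxk) - f(\xxkk) \geq c\, \epsilon_g^{3/2}$ for a constant $c > 0$ depending only on the problem and algorithmic constants (in particular independent of $\epsilon_g$), and then use \cref{ass:bounded below} to bound the total iteration count by $(f(\xx_0) - f_*)/(c\,\epsilon_g^{3/2}) = \sO(\epsilon_g^{-3/2})$.

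\textbf{Type II iterations.} Here $\pp_k^{\sA} = \zero$, so \cref{eqn:line search criteria} reduces to $f(\xxkk) - f(\xxk) \leq \rho\,\alpha_k \langle \bgg_k^{\sI}, \pp_k^{\sI}\rangle$, and the Type II flag forces $\|\bgg_k^{\sI}\| > \epsilon_k = \epsilon_g^{1/2}$. In the SOL subcase, \cref{ass:KrylovSubspaceRegularity} gives $\langle \vv, \HH_k^{\sI} \vv\rangle \geq \sigma \|\vv\|^2$ for $\vv \in \sK_t(\HH_k^{\sI}, \bgg_k^{\sI})$, which forces the MINRES iterate $\bss_k^{\sI}$ to behave like an inexact Newton direction: in particular $-\langle \bgg_k^{\sI}, \bss_k^{\sI}\rangle \gtrsim \sigma\|\bss_k^{\sI}\|^2$ and $\|\bss_k^{\sI}\| \gtrsim \|\bgg_k^{\sI}\|/(L_g + \eta)$. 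Combining this with a third-order Taylor expansion under \cref{ass:LipschitzHessian} and the scaling $\eta = \theta \epsilon_k$ (so that the relative residual is controlled uniformly), I will show that the backtracking line search accepts a step size $\alpha_k$ bounded below independently of $\epsilon_g$, yielding a per-step decrease of order $\|\bgg_k^{\sI}\|^3 = \Omega(\epsilon_g^{3/2})$. In the NPC subcase, \cref{ass:ResidualLowerBoundRegularity} gives $\|\pp_k^{\sI}\| = \|\rr^{(t-1)}\| \geq \omega \|\bgg_k^{\sI}\|$ and the forward-tracking search \cref{alg:forward tracking line search} can drive $\alpha_k$ upward until a Lipschitz-Hessian upper bound is tight; a computation analogous to the unconstrained Newton-MR analysis of \citet{LiuNewtonMR} then yields decrease of order $\|\bgg_k^{\sI}\|^3 = \Omega(\epsilon_g^{3/2})$.

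\textbf{Type I iterations.} Here at least one of \cref{eqn:active gradient negativity condition} or \cref{eqn:active gradient norm termination condition} is violated, and both $\pp_k^\sA = -\bgg_k^{\sA}$ and the inactive MINRES step are taken. Because the two terms of \cref{eqn:line search criteria} are both non-positive, I can lower bound the total decrease by the contribution from whichever of the active or inactive parts is advantageous. If $\sI \neq \emptyset$ and the inactive-set condition also fails, the Type-II-style analysis above already delivers $\Omega(\epsilon_g^{3/2})$ decrease; otherwise, I will extract the full decrease from the active part via a gradient-mapping argument as in \citep[Proposition~3.3.1]{bertsekasNonlinearProgramming}. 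When \cref{eqn:active gradient negativity condition} fails, some coordinate satisfies $\bgg_k^i \leq -\sqrt{\epsilon_k}$ on $\sA$, so a projected move of length $\Omega(\sqrt{\epsilon_k})$ is feasible along coordinate $i$ and yields decrease of order $\epsilon_k = \epsilon_g^{1/2}$. When \cref{eqn:active gradient norm termination condition} fails, $\|\diag(\xx_k^{\sA})\bgg_k^{\sA}\| > \epsilon_k$ feeds directly into a lower bound on the norm of the restricted gradient mapping, producing decrease of order $\epsilon_k^2 = \epsilon_g$. Both easily exceed the required $\epsilon_g^{3/2}$. The uniform lower bound on the admissible line-search step size comes from Lipschitz continuity of the gradient (\cref{ass:LipschitzGradient}).

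\textbf{Main obstacle.} I expect the hardest part to be the Type I analysis: cleanly translating a violation of \cref{eqn:active gradient negativity condition} or \cref{eqn:active gradient norm termination condition} into an $\Omega(\epsilon_g^{3/2})$ decrease while controlling the non-linearity of $\sP$ inside the line-search criterion, and ensuring that the backtracking step size is bounded below by a constant independent of $\epsilon_g$. A secondary subtlety in the Type II SOL case is that the MINRES inexactness tolerance $\eta = \theta \epsilon_k$ shrinks with $\epsilon_g$, so the third-order Taylor bound must be combined with \cref{ass:KrylovSubspaceRegularity} in a way that keeps the admissible step size bounded below by an absolute constant, and not by a negative power of $\epsilon_g$ which would destroy the claimed rate.
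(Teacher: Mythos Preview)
There is a genuine gap in your Type~II SOL analysis. First, the threshold: failure of \cref{eqn:inactive set termination condition} gives only $\|\bgg_k^{\sI}\|>\epsilon_g$, not $\epsilon_g^{1/2}$ (in the algorithm's parametrisation $\epsilon_k=\epsilon_g^{1/2}$ is the tolerance for \cref{eqn:active gradient negativity condition}, while \cref{eqn:active gradient norm termination condition} and \cref{eqn:inactive set termination condition} use $\epsilon_k^2=\epsilon_g$). With the correct threshold, your mechanism --- $\alpha_k$ bounded below by an absolute constant together with $-\langle\bgg_k^{\sI},\bss_k^{\sI}\rangle\gtrsim\sigma\|\bss_k^{\sI}\|^2$ and $\|\bss_k^{\sI}\|\gtrsim\|\bgg_k^{\sI}\|$ --- delivers only $f(\xxk)-f(\xxkk)\gtrsim\|\bss_k^{\sI}\|^2$, which can be as small as $\epsilon_g^2$ when $\|\bgg_k^{\sI}\|$ is near $\epsilon_g$; there is no third power anywhere in these ingredients, so your ``$\|\bgg_k^{\sI}\|^3$'' does not arise. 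The missing idea is a \emph{lookahead}: when the unit step $\alpha_k=1$ is accepted, one bounds $\|\bss_k^{\sI}\|$ from below by $c_0\min\{\|\bgg_{k+1}^{\sI_k}\|/\epsilon_k,\epsilon_k\}$ via the Lipschitz--Hessian Taylor residual combined with the MINRES termination rule $\|\HH_k^{\sI}\rr_k^{\sI}\|\leq\theta\epsilon_k\|\HH_k^{\sI}\bss_k^{\sI}\|$ (this is exactly where the scaling $\eta=\theta\epsilon_k$ is used). If the next iterate still violates \cref{eqn:inactive set termination condition} one gets $\|\bss_k^{\sI}\|\gtrsim\epsilon_g^{1/2}$ and hence decrease $\gtrsim\epsilon_g\geq\epsilon_g^{3/2}$; if not, the next iteration is Type~I or terminal, so these ``bad'' Type~II steps number at most $|\sK_1|+1$ and require no per-step bound. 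Your NPC subcase is essentially fine once you correct the power to $3/2$ (forward tracking gives $\alpha_k\gtrsim\|\rr_k^{\sI}\|^{-1/2}$, hence decrease $\gtrsim\|\rr_k^{\sI}\|^{3/2}\gtrsim\omega^{3/2}\epsilon_g^{3/2}$).

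Your Type~I plan also needs repair. You cannot port the Type~II curvature analysis, because the step carries the projected-gradient component on $\sA$ and the relevant (first-order) step-size lower bound contains a term $\delta_k/\|\pp_k^{\sI}\|$ that is not uniformly bounded when $\|\bgg_k^{\sI}\|$ is large. The paper resolves this by splitting Type~I at $\|\bgg_k^{\sI}\|=\epsilon_k^{3/2}=\epsilon_g^{3/4}$: above this, the inactive term of \cref{eqn:line search criteria} alone gives $\gtrsim\min\{\|\bgg_k^{\sI}\|^2,\delta_k\|\bgg_k^{\sI}\|\}>\epsilon_g^{3/2}$; below it, $\|\pp_k^{\sI}\|$ is small enough that $\alpha_k$ is bounded away from zero and the active term (using the violated \cref{eqn:active gradient negativity condition} or \cref{eqn:active gradient norm termination condition}) yields $\gtrsim\epsilon_g\geq\epsilon_g^{3/2}$.
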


\begin{remark}
A direct corollary to \cref{thm:first-order iteration complexity,thm:minimal assumptions convergence theorem}, under some mild additional assumptions, is a bound on the operational complexity in terms of gradient and Hessian-vector product evaluations. In particular, to produce a $\epsilon_g$-FO point, the operation complexity for \cref{alg:newton-mr-two-metric-minimal-assumptions-case,alg:newton-mr-two-metric} is,  respectively, $\sO(\epsilon_g^{-2})$ and $\tilde{\sO}(\epsilon_g^{-3/2})$; see  \cref{apx:operational complexity}.
\end{remark}

\begin{remark}
In all our algorithms, each step includes the Newton-MR component. The integration of the gradient and Newton-MR step is feasible in our algorithm due to the properties of the MINRES iterates (\cref{lemma:SOL type step properties,lemma:NPC type step properties}), allowing for a more flexible analysis with only first-order information. In contrast, it appears that second-order information is crucial for achieving descent with the capped-CG procedure, a central aspect of \citet{XieWright2021ComplexityOfProjectedNewtonCG}. This constraint prevents the algorithm from taking a step simultaneously comprised of gradient and Newton-CG components.    
\end{remark}

\subsection{Local Convergence}

An advantage of the original TMP method of \citet{ProjectionNewtonMethodsForOptimizationProblemsBertsekas} is that we get fast local convergence, a property that is shared by many Newton-type methods. We now show that our algorithm, in a slightly modified form, also exhibits this property. The basis for the local convergence is the fact that, under certain conditions, projected gradient algorithms are capable of identifying the true set of active constraints in a \textit{finite} number of iterations. This result was first establish for projected gradient with bound constraints in \citet{Bertsekas1976GradientProjection} but has been extended to a variety of constraints \citep{burke1988OnTheIdentificationofActiveConstraints,Burke1990OnTheIdentificationofActiveConstraintsII,Wright1993IdentifiableSurfaces,Sun2019AreWeThereYet}. In the case of two-metric projection, once the active set is identified, the combined step reduces to an unconstrained Newton step in the inactive set.

For the analysis, we consider a ``local phase'' variant of \cref{alg:newton-mr-two-metric-minimal-assumptions-case}. Specifically, we maintain flexibility in defining the outer and inner termination conditions and tolerances, eliminate the strongly positive curvature validation, and only perform backtracking line search from $\alpha_0=1$ to ensure the step length remains bounded. The pseudo-code for this local phase version is given in \cref{alg:newton-mr-two-metric-local-phase} for completeness. To show that the active set is identified in finite number of iterations, we need non-degeneracy and second-order sufficiency assumptions, which are standard in this context.

\begin{assumption}\label{ass:local minima nondegeneracy}
    A local minima, $\xx_*$, is non-degenerate if $[\bgg(\xx_*)]^i  > 0$, $\forall i \in \sA(\xx_*, 0)$.  
\end{assumption}
\begin{assumption}\label{ass:local minima second-order sufficiency}
    A local minima, $\xx_*$, satisfies the second-order sufficiency condition if $0 <  \dotprod{\zz, \HH(\xx_*)\zz}$ for all $\zz \neq 0$ such that $\zz^i = 0$ if $i \in \sA(\xx_*, 0)$.
\end{assumption}
\begin{theorem}[Active Set Identification]\label{thm:active set identification}
Let $f$ satisfy \cref{ass:LipschitzGradient} and $\xx_*$ be a local minima satisfying \cref{ass:local minima second-order sufficiency,ass:local minima nondegeneracy}. Let $\{\xx_k\}$ be the sequence of iterates generated by \cref{alg:newton-mr-two-metric-local-phase} with $\delta$ chosen according to \cref{eqn:local convergence delta choice}. There exists $\Delta_{\text{actv}} > 0$ such that if $\xx_{\bar{k}} \in B(\xx_*, \Delta_{\text{actv}})$, then $\sA(\xx_k, \delta) = \sA(\xx_k, 0) = \sA(\xx_*, 0)$ for all $k \geq \bar{k} + 1$. 
\end{theorem}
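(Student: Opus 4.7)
The plan is to exploit the non-degeneracy condition (\cref{ass:local minima nondegeneracy}) together with the continuity of $\bgg$ provided by \cref{ass:LipschitzGradient}, first to identify $\sA(\xx_k,\delta)$ with $\sA(\xx_*,0)$ uniformly throughout a neighbourhood of $\xx_*$, and then to analyse a single step of \cref{alg:newton-mr-two-metric-local-phase} to argue that the projection forces the components indexed by $\sA(\xx_*,0)$ to be exactly zero. Set $c \defeq \min_{i \in \sA(\xx_*,0)} [\bgg(\xx_*)]^{i} > 0$, which is positive by \cref{ass:local minima nondegeneracy}, and $m \defeq \min_{i \notin \sA(\xx_*,0)} \xx_*^{i} > 0$; I interpret \cref{eqn:local convergence delta choice} as selecting $\delta < m/2$ with $2\delta/c$ no larger than the uniform line-search lower bound built below.

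By continuity of $\bgg$, pick $r_1 > 0$ such that $[\bgg(\xx)]^{i} \geq c/2$ for every $\xx \in B(\xx_*,r_1)$ and $i \in \sA(\xx_*,0)$. For any feasible $\xx \in B(\xx_*,\min(\delta,r_1))$, nonnegativity gives $0 \leq \xx^{i} < \delta$ whenever $i \in \sA(\xx_*,0)$, while for $i \notin \sA(\xx_*,0)$ the reverse triangle inequality yields $\xx^{i} > m - \delta > \delta$. Consequently $\sA(\xx,\delta) = \sA(\xx_*,0)$ throughout this ball, which establishes the first identity in the conclusion as soon as the iterates are shown to remain inside it.

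For the second identity, inspect one step of the algorithm from $\xx_{\bar{k}}$ in the ball above. For each $i \in \sA(\xx_*,0) = \sA(\xx_{\bar{k}},\delta)$ the active portion of the step is $\pp_{\bar{k}}^{i} = -[\bgg_{\bar{k}}]^{i} \leq -c/2$, so the projection produces $\xx_{\bar{k}+1}^{i} = \max\bigl(0,\, \xx_{\bar{k}}^{i} - \alpha_{\bar{k}}[\bgg_{\bar{k}}]^{i}\bigr) = 0$ as soon as $\alpha_{\bar{k}} \geq \xx_{\bar{k}}^{i}/[\bgg_{\bar{k}}]^{i}$, and hence certainly whenever $\alpha_{\bar{k}} \geq 2\delta/c$. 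The key obstacle is therefore to produce a uniform lower bound $\alpha_{\bar{k}} \geq \alpha_{\min} > 0$ on the backtracking step size meeting \cref{eqn:line search criteria}. I would derive this by combining the Lipschitz bound on $\bgg$ with the independent descent estimates for the two terms on the right-hand side of \cref{eqn:line search criteria}: the gradient-mapping term is bounded below using standard projected-gradient arguments, while the inactive-set term is bounded using the MINRES descent properties recalled around \cref{eqn:NPC condition}, both evaluated over the compact set $\overline{B(\xx_*,\Delta_{\text{actv}})} \cap \real^{d}_{+}$. With $2\delta/c \leq \alpha_{\min}$ enforced by \cref{eqn:local convergence delta choice}, this gives $\xx_{\bar{k}+1}^{i} = 0$ for every $i \in \sA(\xx_*,0)$.

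The conclusion then extends to all $k \geq \bar{k}+1$ by induction: once $\xx_{k}^{i} = 0$ and $[\bgg_{k}]^{i} \geq c/2$ for $i \in \sA(\xx_*,0)$, the same one-step argument re-projects that component to zero, and the continuity estimates of the second paragraph preserve $\sA(\xx_k,\delta) = \sA(\xx_*,0)$. The remaining technical task, where I expect the main difficulty to sit, is to choose $\Delta_{\text{actv}}$ small enough—in terms of $r_1$, $\delta$, $m$, $L_g$, and a uniform bound on $\|\pp_k\|$ near $\xx_*$ (which is small because $\|\bgg_{k}\| \to 0$ and, under \cref{ass:local minima second-order sufficiency}, the inactive Hessian block is well-conditioned near $\xx_*$)—so that the entire forward orbit $\{\xx_k\}_{k \geq \bar{k}}$ stays inside $B(\xx_*,\min(\delta,r_1))$, closing the induction.
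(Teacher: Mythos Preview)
Your outline tracks the paper's approach closely in its first two stages: showing $\sA(\xx,\delta)=\sA(\xx_*,0)$ throughout a small ball, and showing that one step then forces the active components to zero via the projection. Two points deserve attention.

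First, a minor correction: the condition \cref{eqn:local convergence delta choice} is simply $0<\delta<\tfrac12\min_{i\in\sI(\xx_*,0)}\xx_*^i$ and contains no relation between $\delta$, $c$, and the step-size lower bound. You do not need one. Since $\xx_*^i=0$ for $i\in\sA(\xx_*,0)$, you have $\xx_{\bar k}^i<\Delta_{\text{actv}}$, not merely $\xx_{\bar k}^i<\delta$; thus the requirement for projection to zero is $\alpha_{\bar k}\ge 2\Delta_{\text{actv}}/c$, and you may shrink $\Delta_{\text{actv}}$ rather than constrain $\delta$. This is exactly what the paper does.

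The substantive gap is in the forward-invariance step. You correctly flag it as the main difficulty, but your proposed resolution is circular: you argue that $\|\pp_k\|$ is small ``because $\|\bgg_k\|\to 0$'', yet $\|\bgg_k\|\to 0$ (or even $\|\bgg_k\|$ remaining small) is precisely what follows from the orbit staying in the ball, which is what you are trying to prove. Bounding one step by $\|\xx_{k+1}-\xx_k\|\le\alpha_k\|\pp_k\|$ cannot by itself prevent drift over many iterations. The paper's mechanism is different and is the missing idea: the line search guarantees $f(\xx_{k+1})\le f(\xx_k)$ unconditionally, and the second-order sufficiency assumption makes $f$ strongly convex on the subspace $X_*=\{\xx:\xx^i=0,\ i\in\sA(\xx_*,0)\}$ near $\xx_*$, so a sufficiently small \emph{sublevel set} of $f$ restricted to $X_*$ is forward invariant and contained in the desired ball. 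One then checks (via a one-step estimate $\|\xx_{k+1}-\xx_*\|\le(1+L_g/\mu)\|\xx_k-\xx_*\|$, valid once both $\xx_k,\xx_{k+1}\in X_*$) that iterates entering this sublevel set do not escape the ambient ball before the monotonicity argument takes over. Without this descent-plus-convexity device, the induction does not close.
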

We defer the proof to \cref{apx:local convergence}. Once the active set is identified, our method reduces to unconstrained Newton-MR on the inactive set. Local convergence is therefore a simple corollary of \cref{thm:active set identification}. 
\begin{corollary}[Local Convergence]\label{cor:local convergence guarantee}
    For $k \geq \bar{k} + 1$ (cf. \cref{thm:active set identification}), the convergence of \cref{alg:newton-mr-two-metric-local-phase} is driven by the local properties of the Newton-MR portion of the step.
\end{corollary}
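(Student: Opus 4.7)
The plan is to invoke \cref{thm:active set identification} and reduce the local behaviour to that of unconstrained Newton-MR restricted to the inactive coordinates. By \cref{thm:active set identification}, there exists $\bar{k}$ such that for all $k \geq \bar{k}+1$ the identified active set is exactly $\sA_* \defeq \sA(\xxs, 0)$, and correspondingly $\sI_* \defeq [d] \setminus \sA_*$ is the stable inactive set. I would first show that, possibly after shrinking the neighbourhood of $\xxs$, the iterates pin the active coordinates at zero. Indeed, by \cref{ass:local minima nondegeneracy} and continuity of $\bgg$, $[\bggk]^i > 0$ for every $i \in \sA_*$, so the active portion of the step, $\pp_k^{\sA} = -\bgg_k^{\sA}$, is non-positive while $\xxk^{\sA} \in [0,\delta]$; the projection onto $\real^d_+$ therefore gives $\xx_{k+1}^{\sA} = \zero$ for any $\alpha_k \in (0, 1]$.

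Next, I would argue that the inactive coordinates never migrate into the active region. Since $\xxs^i > 0$ for $i \in \sI_*$, continuity yields a constant $c > \delta$ with $\xxk^{\sI} \geq c \one$ in the neighbourhood. For $\alpha_k \in (0,1]$ and $\ppk^{\sI}$ bounded (which follows from \cref{ass:ResidualLowerBoundRegularity} together with the smoothness assumptions and the fact that $\bgg(\xxs)^{\sI} = \zero$ is small near $\xxs$), the projection leaves $\xx_{k+1}^{\sI} = \xxk^{\sI} + \alpha_k \ppk^{\sI}$ untouched and still $> \delta$. Consequently, for $k \geq \bar{k}+1$ the update on $\sI_*$ coincides exactly with the unconstrained Newton-MR iteration applied to the sub-problem defined by $\HH_k^{\sI_*}$ and $\bggk^{\sI_*}$, which by \cref{ass:local minima second-order sufficiency} has a positive-definite Hessian at $\xxs^{\sI_*}$.

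The final step is to invoke the existing local convergence theory of unconstrained Newton-MR \citep{LiuNewtonMR,RoostaNewtonWithMinimumResidual,LiuInexactNewtonMR} for the reduced problem $\tilde{f}(\yy) \defeq f(\yy; \xxs^{\sA_*})$ defined on $\sI_*$. To this end, I would verify that the line-search criterion \eqref{eqn:line search criteria} is satisfied by the unit step for all sufficiently large $k$: with $\xxk^{\sA} = \zero$ and $\ppk^{\sA} = -\bggk^{\sA}$, the active contribution in \eqref{eqn:line search criteria} reduces to $\langle \bggk^{\sA}, \sP(-\alpha \bggk^{\sA}) \rangle = 0$, so the line-search collapses to the usual Armijo condition on $\sI_*$. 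Since $\rho < 1/2$ and $\HH(\xxs)^{\sI_*} \succ \zero$, standard arguments give acceptance of $\alpha_k = 1$ eventually, and the local linear/superlinear rate of unconstrained Newton-MR transfers directly.

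The main obstacle I expect is the careful bookkeeping around the line search and the projection: ensuring that, for $k \geq \bar{k}+1$ in a sufficiently small neighbourhood, (i) the projection is inactive on $\sI_*$ for $\alpha_k \in (0,1]$, and (ii) the unit step is accepted so the pure unconstrained Newton-MR rate applies. Both follow from smoothness, \cref{ass:local minima second-order sufficiency,ass:local minima nondegeneracy}, and continuity, but require the neighbourhood to be chosen small enough that $\|\ppk^{\sI}\|$ is dominated by $c - \delta$ and that the second-order Taylor remainder does not destroy the Armijo inequality with slack $\rho < 1/2$.
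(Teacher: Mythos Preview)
Your proposal is correct and follows the same route as the paper: once \cref{thm:active set identification} fixes the active set, the iteration collapses to unconstrained Newton-MR on the inactive coordinates. The paper treats this corollary as essentially a one-line observation, so you are in fact supplying more detail than the paper does (including the line-search reduction and unit-step acceptance, which in the paper live in \cref{remark:local convergence} rather than in the corollary itself).

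Two small points to clean up. First, some of your work re-derives what \cref{thm:active set identification} already gives: for $k \geq \bar{k}+1$ the theorem asserts $\sA(\xxk,0)=\sA(\xxs,0)$, so $\xxk^{\sA_*}=\zero$ exactly, and your sentence ``$\xxk^{\sA}\in[0,\delta]$'' understates this. With $\xxk^{\sA_*}=\zero$ your claim that $\sP(\xxk^i-\alpha\bggk^i)=0$ for any $\alpha>0$ is then immediate from $\bggk^i>0$. Second, your appeal to \cref{ass:ResidualLowerBoundRegularity} to bound $\|\ppk^{\sI}\|$ is misplaced: in the local regime, \cref{ass:local minima second-order sufficiency} and continuity force $\HH_k^{\sI}$ to be uniformly positive definite on the inactive subspace, so MINRES never returns NPC and the relevant bound is the SOL estimate $\|\bss_k^{\sI}\|\leq \|\bgg_k^{\sI}\|/\mu$ from \cref{lemma:SOL type step properties}, not the residual lower bound. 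With those adjustments your argument matches the paper's.
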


\smallskip
\begin{remark} \label{remark:local convergence} The local convergence of Newton-MR is similar to that of other inexact Newton methods. Suppose that we use a relative residual tolerance, $\|\rr^{\sI} \| \leq \eta \| \bgg^\sI \| $, as the criteria for the MINRES termination. Under \cref{ass:local minima second-order sufficiency}, we know that $\HH(\xx_*)$ is positive definite on the inactive indices. Therefore, by applying \citet[Theorem 7.1 and 7.2]{NocedalJorgeNO}, we obtain a superlinear convergence if we choose $\eta = \sO(1)$ and let $\xx_k$ be close enough to $\xx_*$. If we choose $\eta=\sO(\| \bgg_k \|) $ and the Hessian is Lipschitz then we can improve the rate to quadratic.
\end{remark}

\smallskip
\begin{remark}
    A central ingredient in the projected Newton-CG of \citet{XieWright2021ComplexityOfProjectedNewtonCG} is the damping of the Hessian in the form of diagonal perturbation (i.e., $\HH + \epsilon\eye$) for all Newton-CG steps in the inactive set. While this facilitates an optimal global complexity, an unfortunate consequence, at least in theory, is that the algorithm no longer enjoys a guaranteed fast ``Newton-type'' local convergence rate. In other words, one can at best show linear rates in local regimes.
\end{remark}

\section{Numerical Experiments} \label{sec:numerical results}
We now compare the performance of our method for solving \cref{eqn:nonnegative constrained problem} with several alternatives using various convex and non-convex examples. Specifically, we consider \cref{alg:newton-mr-two-metric} (denoted by \textbf{MR}), projected Newton-CG (denoted by \textbf{CG}) as in \citet[Algorithm 1]{XieWright2021ComplexityOfProjectedNewtonCG}, and projected gradient with line search (denoted by \textbf{PG}) \cite{beck2017FirstOrderMethodsInOptimization}. For convex problems, we also include \textbf{FISTA} with line search  \cite{Beck2009FISTA}, while for non-convex settings, we compare against the proximal gradient with momentum and fine-tuned constant step size (denoted by \textbf{PGM}) from \citet[Algorithm 4.1]{lin2020accelerated}. We exclude proximal Newton methods due to the difficulty of solving its subproblems at each iteration. We also do not consider the Newton-CG log barrier method \citep{LogBarrierNewtonCGOneillWright} due to poor practical performance observed in  \citet{XieWright2021ComplexityOfProjectedNewtonCG}.

For all applicable methods we terminate according to \cref{eqn:epsilon optimality conditions} with $\epsilon_g = 10^{-8}$. Instead of the highly implementation dependent ``wall-clock'' time, here we plot the objective value against the number of \textit{oracle calls}, i.e., the number of equivalent function evaluations. For completeness, however, we also include plots of objective value against wall-clock time in \cref{apx:timing results}. The PyTorch \citep{pytorch} implementation for our experiments is available \href{https://github.com/oscar-99/ProjectedNewton}{here}. All experiments were performed on a GPU cluster. See \cref{apx:parameter settings} for further experimental details.

\subsection{Sparse Regularisation With $\ell_1$ Norm}
We first consider sparse regression using $\ell_1$-regularisation
\begin{align*}
    \min_{\xx \in \real^d} f(\xx) + \lambda \| \xx \|_1, \tageq\label{eqn:L1 objective}
\end{align*}
where $f$ is a smooth function. Although the objective function in \cref {eqn:L1 objective} is nonsmooth, it can be reformulated into a smooth optimisation problem with nonnegativity constraints; see \cref{apx:smooth L1 regularisation} for details. We consider two examples in this context.

\textbf{Multinomial Regression}. In \cref{fig:CIFAR multinomial,fig:MNIST logistic regression}, we consider convex multinomial regression with $C$ classes where  $f$ is given by \cref{eqn:multinomial regression objective}. The FISTA method is  applied directly to \cref{eqn:L1 objective}. While FISTA clearly outperforms the others, our method is competitive. Further simulations showing fast local convergence of our method on these examples are given in \cref{apx:numerical local convergence}.

\begin{figure}[ht]
    \centering
    \includegraphics[scale=0.5]{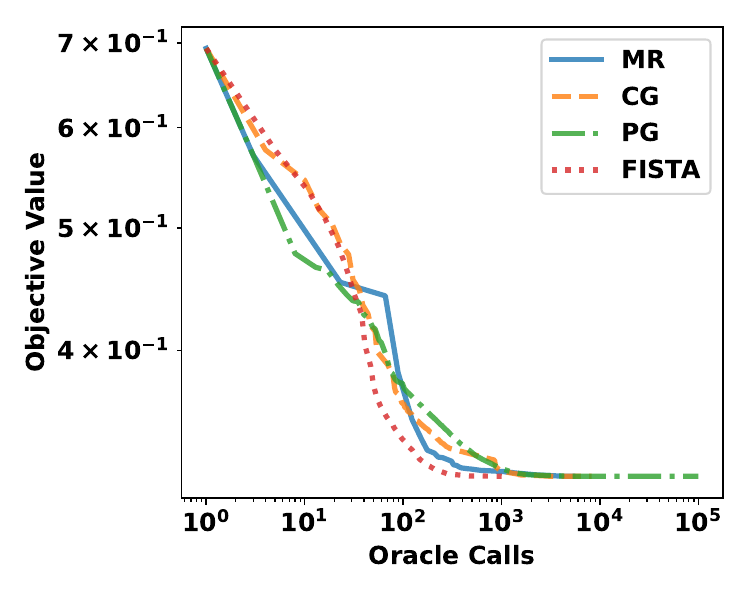}
    \caption{Logistic regression ($C=2$) on the binarised \texttt{MNIST} dataset \citep{Lecun1998MNIST} ($d=785$) with $\lambda = 10^{-3}$.}
    \label{fig:MNIST logistic regression}
\end{figure}

\begin{figure}[ht]
    \centering
    \includegraphics[scale=0.5]{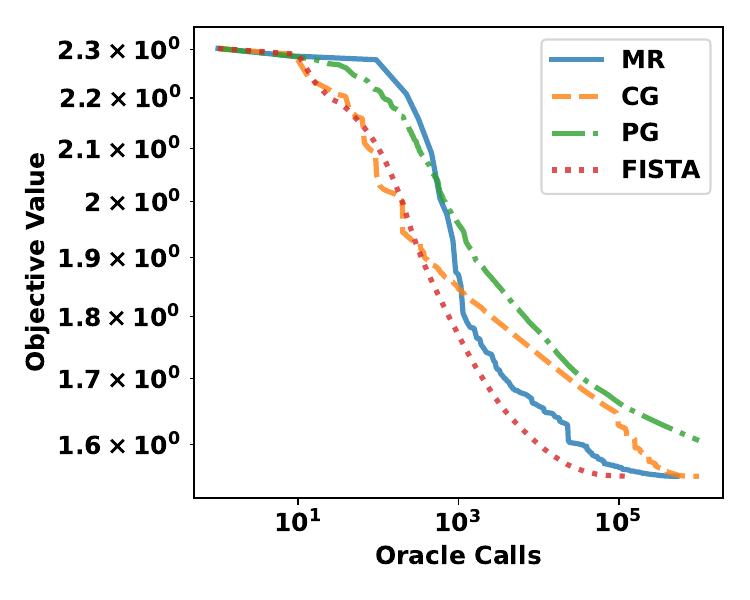}
    \caption{Multinomial regression ($C=10$) on \texttt{CIFAR10} dataset \citep{Krizhevsky2009CIFAR10} ($d=27,657$) with $\lambda = 10^{-4}$. }
    \label{fig:CIFAR multinomial}
\end{figure}

\textbf{Neural Network}. \cref{fig:MLP fashion MNIST} shows the results using a two layer neural network where $f$ is non-convex and defined by \cref{eqn:MLP objective function}. Again, PGM is applied directly to \cref{eqn:L1 objective} and its step size is fine-tuned for best performance. We once again observe superior performance of our method compared with the alternatives.

\begin{figure}
    \centering
    \includegraphics[scale=0.5]{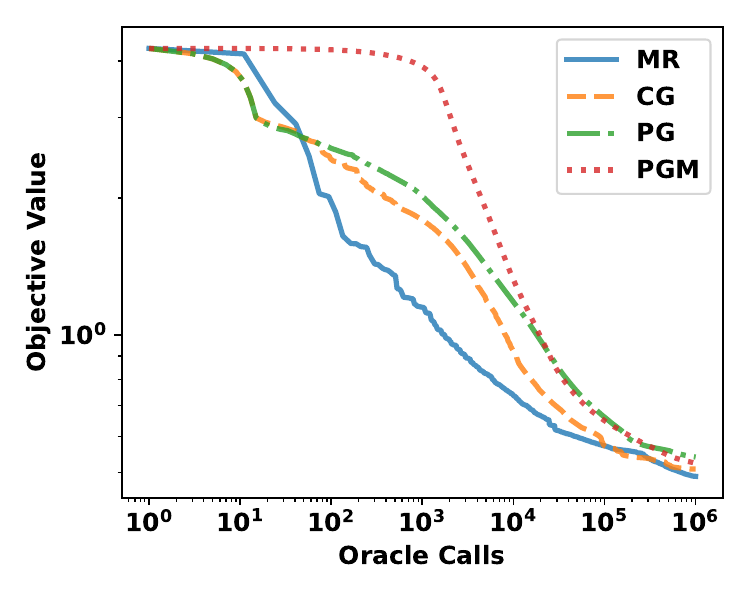}
    \caption{Training a two-layer neural network on the \texttt{Fashion MNIST} dataset \citep{xiao2017fashionmnist} ($d=89,610$) with $\lambda = 10^{-3}$.}
    \label{fig:MLP fashion MNIST}
\end{figure}

\subsection{Nonnegative Matrix Factorisation}

Given a nonnegative data matrix $\YY\in \real^{n \times m}_+$, nonnegative matrix factorisation (NNMF) aims to produce two low rank, say $r$, nonnegative matrices $\WW \in \real^{n \times r}_+$ and $\HH \in \real^{r \times m}_+$ such that $\YY \approx \WW \HH$. This can be formulated as
\begin{align*}
    \min_{\WW \geq 0, \ \HH \geq 0} D(\YY, \WW \HH)  + R_\lambda(\WW, \HH), \tageq\label{eqn:NNMF}
\end{align*}
where $D(\cdot, \cdot)$ is a `distance' and $R_\lambda(\cdot, \cdot)$ is a regularisation term. In \cref{fig:NNMF cosine text}, we consider a text dataset and cosine similarity based distance function, while in \cref{fig:NNMF nonconvex regularisation}, we use an image dataset and a Euclidean distance function with a nonconvex regulariser;  see \cref{apx:parameter settings} for details. Clearly, our method  outperforms all others across both problems.

\begin{figure}
    \centering
    \includegraphics[scale=0.5]{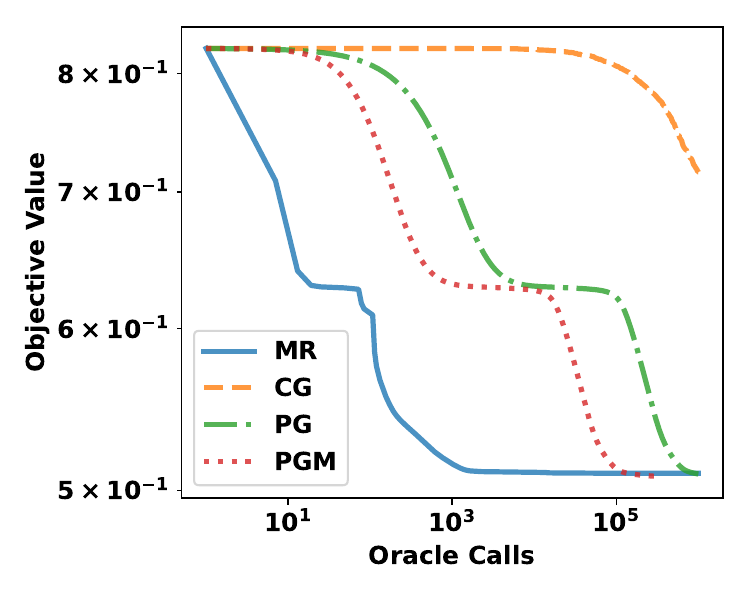}
    \caption{NNMF ($r=20$) with cosine distance on top 1000 TF-IDF features of the \texttt{20 Newsgroup} dataset \citep{20newsgroups} ($d=385,220$).}
    \label{fig:NNMF cosine text}
\end{figure}

\begin{figure}
    \centering
    \includegraphics[scale=0.5]{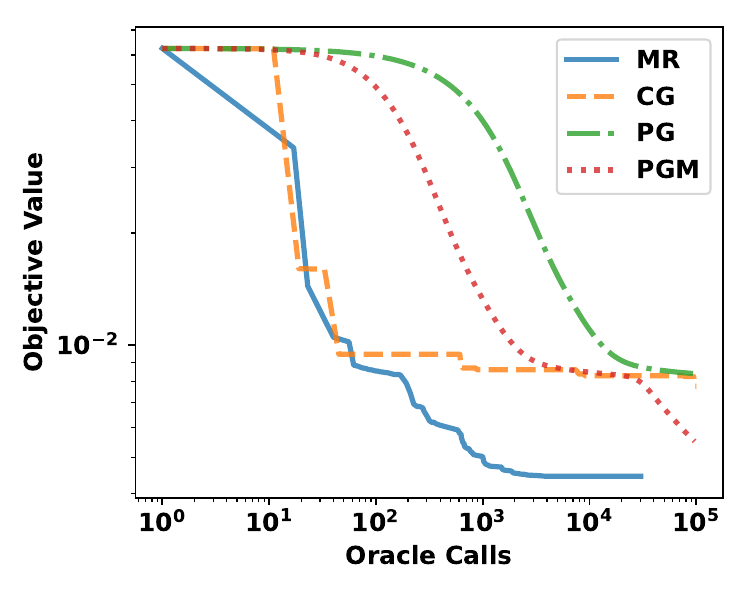}
    \caption{NNMF ($r=10$) with nonconvex TSCAD regulariser on the \texttt{Olivetti faces} dataset \citep{scikit-learn} ($d=44,960$). We used $a=3$ and $\lambda=10^{-4}$ for the TSCAD regulariser.}
    \label{fig:NNMF nonconvex regularisation}
\end{figure}

\section{Conclusions and Future Directions}
We developed Newton-MR variants of the two-metric projection framework. By inexactly solving the subproblems using MINRES as well as employing non-positive curvature directions, our proposed variants are suitable for large scale and nonconvex settings. We demonstrated that, under certain assumptions, the convergence rates of our methods match the state-of-the-art and showcased competitive practical performance across a variety of problems.

Possible avenues for future research include extensions to box constraints, variants with second-order complexity guarantees, and the  development of stochastic algorithms.

\section*{Acknowledgements}
This research was partially supported by the Australian Research Council through an Industrial Transformation Training Centre for Information Resilience (IC200100022).

\section*{Impact Statement}

This paper presents work whose goal is to advance the field of Machine Learning. There are many potential societal consequences of our work, none which we feel must be specifically highlighted here.

\bibliography{bibliography}

\begin{thebibliography}{72}
\providecommand{\natexlab}[1]{#1}
\providecommand{\url}[1]{\texttt{#1}}
\expandafter\ifx\csname urlstyle\endcsname\relax
  \providecommand{\doi}[1]{doi: #1}\else
  \providecommand{\doi}{doi: \begingroup \urlstyle{rm}\Url}\fi

\bibitem[Beck(2017)]{beck2017FirstOrderMethodsInOptimization}
Beck, A.
\newblock \emph{First-Order Methods in Optimization}.
\newblock MOS-SIAM Series on Optimization. Society for Industrial and Applied Mathematics, 2017.
\newblock ISBN 9781611974997.
\newblock URL \url{https://books.google.com.au/books?id=wrk4DwAAQBAJ}.

\bibitem[Beck \& Teboulle(2009)Beck and Teboulle]{Beck2009FISTA}
Beck, A. and Teboulle, M.
\newblock A fast iterative shrinkage-thresholding algorithm for linear inverse problems.
\newblock \emph{SIAM Journal on Imaging Sciences}, 2\penalty0 (1):\penalty0 183--202, 2009.
\newblock \doi{10.1137/080716542}.
\newblock URL \url{https://doi.org/10.1137/080716542}.

\bibitem[Becker \& Fadili(2012)Becker and Fadili]{becker2012quasi}
Becker, S. and Fadili, J.
\newblock {A quasi-Newton proximal splitting method}.
\newblock \emph{Advances in Neural Information Processing Systems}, 25, 2012.

\bibitem[Bernstein(2009)]{Bernstein2009MatrixMathematics}
Bernstein, D.~S.
\newblock \emph{Matrix Mathematics: Theory, Facts, and Formulas (Second Edition)}.
\newblock Princeton University Press, 2009.
\newblock ISBN 9780691140391.
\newblock URL \url{http://www.jstor.org/stable/j.ctt7t833}.

\bibitem[Bertsekas(1976)]{Bertsekas1976GradientProjection}
Bertsekas, D.~P.
\newblock On the {G}oldstein-{L}evitin-{P}olyak gradient projection method.
\newblock \emph{IEEE Transactions on Automatic Control}, 21\penalty0 (2):\penalty0 174--184, 1976.
\newblock \doi{10.1109/TAC.1976.1101194}.

\bibitem[Bertsekas(1982)]{ProjectionNewtonMethodsForOptimizationProblemsBertsekas}
Bertsekas, D.~P.
\newblock Projected {N}ewton methods for optimization problems with simple constraints.
\newblock \emph{SIAM Journal on Control and Optimization}, 20\penalty0 (2):\penalty0 221--246, 1982.
\newblock \doi{10.1137/0320018}.
\newblock URL \url{https://doi.org/10.1137/0320018}.

\bibitem[Bertsekas(1996)]{bertsekasConstrained}
Bertsekas, D.~P.
\newblock \emph{Constrained Optimization and {L}agrange Multiplier Methods}.
\newblock Athena Scientific, 1996.
\newblock ISBN 1-886529-04-30.

\bibitem[Bertsekas(1999)]{bertsekasNonlinearProgramming}
Bertsekas, D.~P.
\newblock \emph{Nonlinear Programming}.
\newblock Athena Scientific, Belmont, Mass., 2nd ed edition, 1999.
\newblock ISBN 1886529000.

\bibitem[Bian et~al.(2014)Bian, Chen, and Ye]{bian2014ComplexityAnalysisOfInteriorPoint}
Bian, W., Chen, X., and Ye, Y.
\newblock Complexity analysis of interior point algorithms for non-{L}ipschitz and nonconvex minimization.
\newblock \emph{Mathematical Programming}, 149:\penalty0 301--327, 02 2014.
\newblock \doi{10.1007/s10107-014-0753-5}.

\bibitem[Birgin \& Mart\'{\i}nez(2018)Birgin and Mart\'{\i}nez]{birgin2018OnRegularizationAndActiveSetMEthodsWithComplexity}
Birgin, E.~G. and Mart\'{\i}nez, J.~M.
\newblock On regularization and active-set methods with complexity for constrained optimization.
\newblock \emph{SIAM Journal on Optimization}, 28\penalty0 (2):\penalty0 1367--1395, 2018.
\newblock \doi{10.1137/17M1127107}.
\newblock URL \url{https://doi.org/10.1137/17M1127107}.

\bibitem[Birgin et~al.(2014)Birgin, Martínez, and Raydan]{Birgin2014SpectralProjectedGradientMethods}
Birgin, E.~G., Martínez, J.~M., and Raydan, M.
\newblock Spectral projected gradient methods: Review and perspectives.
\newblock \emph{Journal of Statistical Software}, 60\penalty0 (3):\penalty0 1–21, 2014.
\newblock \doi{10.18637/jss.v060.i03}.
\newblock URL \url{https://www.jstatsoft.org/index.php/jss/article/view/v060i03}.

\bibitem[Burke(1990)]{Burke1990OnTheIdentificationofActiveConstraintsII}
Burke, J.
\newblock On the identification of active constraints {II}: The nonconvex case.
\newblock \emph{SIAM Journal on Numerical Analysis}, 27\penalty0 (4):\penalty0 1081--1102, 1990.
\newblock ISSN 00361429.
\newblock URL \url{http://www.jstor.org/stable/2157700}.

\bibitem[Burke \& Mor\'{e}(1988)Burke and Mor\'{e}]{burke1988OnTheIdentificationofActiveConstraints}
Burke, J.~V. and Mor\'{e}, J.~J.
\newblock On the identification of active constraints.
\newblock \emph{SIAM Journal on Numerical Analysis}, 25\penalty0 (5):\penalty0 1197--1211, 1988.
\newblock \doi{10.1137/0725068}.
\newblock URL \url{https://doi.org/10.1137/0725068}.

\bibitem[Cai et~al.(2023)Cai, de~Miranda~Cardoso, Palomar, and Ying]{Cai2023FastProjectedNewtonLikeMethodForPrecisionMatrix}
Cai, J.-F., de~Miranda~Cardoso, J.~V., Palomar, D., and Ying, J.
\newblock Fast projected {N}ewton-like method for precision matrix estimation under total positivity.
\newblock In \emph{Advances in Neural Information Processing Systems}, volume~36, pp.\  73348--73370, 2023.
\newblock URL \url{https://proceedings.neurips.cc/paper_files/paper/2023/file/e878c8f38381d0964677fb9536c494ee-Paper-Conference.pdf}.

\bibitem[Cartis et~al.(2010)Cartis, Gould, and Toint]{cartisComplexityOfSteepestDescent}
Cartis, C., Gould, N. I.~M., and Toint, P.~L.
\newblock On the complexity of steepest descent, {N}ewton's and regularized {N}ewton's methods for nonconvex unconstrained optimization problems.
\newblock \emph{SIAM J. on Optimization}, 20\penalty0 (6):\penalty0 2833–2852, sep 2010.
\newblock ISSN 1052-6234.
\newblock \doi{10.1137/090774100}.
\newblock URL \url{https://doi.org/10.1137/090774100}.

\bibitem[Cartis et~al.(2011{\natexlab{a}})Cartis, Gould, and Toint]{cartis2011AdaptiveCubicRegularisationPartII}
Cartis, C., Gould, N., and Toint, P.
\newblock Adaptive cubic regularisation methods for unconstrained optimization. {P}art {II}: worst-case function- and derivative-evaluation complexity.
\newblock \emph{Mathematical programming}, 130\penalty0 (2):\penalty0 295--319, 2011{\natexlab{a}}.
\newblock ISSN 0025-5610.
\newblock \doi{10.1007/s10107-009-0337-y}.

\bibitem[Cartis et~al.(2011{\natexlab{b}})Cartis, Gould, and Toint]{cartis2011OptimalNewtonTypeMethodsForNonConvex}
Cartis, C., Gould, N., and Toint, P.
\newblock Optimal {N}ewton-type methods for nonconvex smooth optimization problems.
\newblock \emph{ERGO Technical Report}, 11-009, 2011{\natexlab{b}}.

\bibitem[Cartis et~al.(2011{\natexlab{c}})Cartis, Gould, and Toint]{cartis2011AdaptiveCubicRegularisationPartI}
Cartis, C., Gould, N.~I., and Toint, P.~L.
\newblock Adaptive cubic regularisation methods for unconstrained optimization. {P}art {I}: Motivation, convergence and numerical results.
\newblock \emph{Math. Program.}, 127\penalty0 (2):\penalty0 245–295, 2011{\natexlab{c}}.
\newblock ISSN 0025-5610.

\bibitem[Cartis et~al.(2020)Cartis, Gould, and Toint]{cartis2020SharpWorstCaseComplexityBoundsForNonConvexOptimizationWithInexpensiveConstraints}
Cartis, C., Gould, N. I.~M., and Toint, P.~L.
\newblock Sharp worst-case evaluation complexity bounds for arbitrary-order nonconvex optimization with inexpensive constraints.
\newblock \emph{SIAM Journal on Optimization}, 30\penalty0 (1):\penalty0 513--541, 2020.
\newblock \doi{10.1137/17M1144854}.
\newblock URL \url{https://doi.org/10.1137/17M1144854}.

\bibitem[Cartis et~al.(2022)Cartis, Gould, and Toint]{cartis2022evaluationComplexityOfAlgorithmsForNonconvexOptimization}
Cartis, C., Gould, N. I.~M., and Toint, P.~L.
\newblock \emph{Evaluation Complexity of Algorithms for Nonconvex Optimization: Theory, Computation and Perspectives}.
\newblock Society for Industrial and Applied Mathematics, Philadelphia, PA, 2022.
\newblock \doi{10.1137/1.9781611976991}.
\newblock URL \url{https://epubs.siam.org/doi/abs/10.1137/1.9781611976991}.

\bibitem[Clarke(1990)]{clarke1990optimization}
Clarke, F.~H.
\newblock \emph{{Optimization and Nonsmooth Analysis}}.
\newblock SIAM, 1990.

\bibitem[Curtis \& Wang(2023)Curtis and Wang]{curtis2023WorstCaseComplexityOfTRACE}
Curtis, F.~E. and Wang, Q.
\newblock Worst-case complexity of {TRACE} with inexact subproblem solutions for nonconvex smooth optimization.
\newblock \emph{SIAM Journal on Optimization}, 33\penalty0 (3):\penalty0 2191--2221, 2023.
\newblock \doi{10.1137/22M1492428}.
\newblock URL \url{https://doi.org/10.1137/22M1492428}.

\bibitem[Curtis et~al.(2016)Curtis, Robinson, and Samadi]{curtis2016ATrustRegionAlgorithmWithWorstCaseIterativeComplexity}
Curtis, F.~E., Robinson, D., and Samadi, M.
\newblock A trust region algorithm with a worst-case iteration complexity of $\mathcal{O}(\epsilon^{-3/2})$ for nonconvex optimization.
\newblock \emph{Mathematical Programming}, 162:\penalty0 1--32, 2016.
\newblock \doi{10.1007/s10107-016-1026-2}.

\bibitem[Curtis et~al.(2021)Curtis, Robinson, Royer, and Wright]{curtis2021TrustRegionNewtonCG}
Curtis, F.~E., Robinson, D.~P., Royer, C.~W., and Wright, S.~J.
\newblock Trust-region {N}ewton-{CG} with strong second-order complexity guarantees for nonconvex optimization.
\newblock \emph{SIAM Journal on Optimization}, 31\penalty0 (1):\penalty0 518--544, 2021.
\newblock \doi{10.1137/19M130563X}.
\newblock URL \url{https://doi.org/10.1137/19M130563X}.

\bibitem[Elfwing et~al.(2017)Elfwing, Uchibe, and Doya]{elfwing2017sigmoidweighted}
Elfwing, S., Uchibe, E., and Doya, K.
\newblock Sigmoid-weighted linear units for neural network function approximation in reinforcement learning.
\newblock \emph{arXiv preprint arXiv:1702.03118}, 2017.

\bibitem[Fan \& Li(2001)Fan and Li]{fan2001SCAD}
Fan, J. and Li, R.
\newblock Variable selection via nonconcave penalized likelihood and its oracle properties.
\newblock \emph{Journal of the American Statistical Association}, 96\penalty0 (456):\penalty0 1348--1360, 2001.
\newblock ISSN 01621459.
\newblock URL \url{http://www.jstor.org/stable/3085904}.

\bibitem[Gafni \& Bertsekas(1984)Gafni and Bertsekas]{TwoMetricProjectionGafiniBertsekas}
Gafni, E.~M. and Bertsekas, D.~P.
\newblock Two-metric projection methods for constrained optimization.
\newblock \emph{SIAM Journal on Control and Optimization}, 22\penalty0 (6):\penalty0 936--964, 1984.
\newblock \doi{10.1137/0322061}.
\newblock URL \url{https://doi.org/10.1137/0322061}.

\bibitem[Gillis(2014)]{gillis2014NonnegativeMatrixFactorisation}
Gillis, N.
\newblock The why and how of nonnegative matrix factorization.
\newblock \emph{arXiv preprint arXiv:1401.5226}, 2014.

\bibitem[Gillis(2020)]{gillis2020NNMF}
Gillis, N.
\newblock \emph{Nonnegative Matrix Factorization}.
\newblock SIAM, 2020.

\bibitem[Goodfellow et~al.(2016)Goodfellow, Bengio, and Courville]{goodfellow2016deep}
Goodfellow, I., Bengio, Y., and Courville, A.
\newblock \emph{Deep Learning}.
\newblock MIT press, 2016.

\bibitem[Haber(2014)]{HaberComputationalMethodsinGeophysicalElectromagnetics}
Haber, E.
\newblock \emph{Computational Methods in Geophysical Electromagnetics}.
\newblock SIAM, 2014.
\newblock \doi{https://doi.org/10.1137/1.9781611973808}.

\bibitem[Haeser et~al.(2017)Haeser, Liu, and Ye]{haeser2017OptimalityConditionAndComplexityForLinearlyConstrainedOptimizationWithoutDiffOnBoundary}
Haeser, G., Liu, H., and Ye, Y.
\newblock Optimality condition and complexity analysis for linearly-constrained optimization without differentiability on the boundary.
\newblock \emph{Mathematical Programming}, 178, 02 2017.
\newblock \doi{10.1007/s10107-018-1290-4}.

\bibitem[Kim et~al.(2010)Kim, Sra, and Dhillon]{Sra2010TacklingBoxConstrainedOptimization}
Kim, D., Sra, S., and Dhillon, I.~S.
\newblock Tackling box-constrained optimization via a new projected quasi-newton approach.
\newblock \emph{SIAM Journal on Scientific Computing}, 32\penalty0 (6):\penalty0 3548--3563, 2010.
\newblock \doi{10.1137/08073812X}.
\newblock URL \url{https://doi.org/10.1137/08073812X}.

\bibitem[Krizhevsky(2009)]{Krizhevsky2009CIFAR10}
Krizhevsky, A.
\newblock Learning multiple layers of features from tiny images.
\newblock Technical report, University of Toronto, 2009.

\bibitem[Kuang et~al.(2015)Kuang, Yun, and Park]{Kuang2012SymmetricNNMF}
Kuang, D., Yun, S., and Park, H.
\newblock Sym{NMF}: nonnegative low-rank approximation of a similarity matrix for graph clustering.
\newblock \emph{Journal of Global Optimization}, 62\penalty0 (3):\penalty0 545--574, July 2015.
\newblock \doi{10.1007/s10898-014-0247-2}.
\newblock URL \url{https://ideas.repec.org/a/spr/jglopt/v62y2015i3p545-574.html}.

\bibitem[Lan(2020)]{lan2020first}
Lan, G.
\newblock \emph{First-order and Stochastic Optimization Methods for Machine Learning}, volume~1.
\newblock Springer, 2020.

\bibitem[LeCun et~al.(1998)LeCun, Cortes, and Burges]{Lecun1998MNIST}
LeCun, Y., Cortes, C., and Burges, C.
\newblock The {MNIST} database of handwritten digits, 1998.
\newblock URL \url{http://yann.lecun.com/exdb/mnist/}.

\bibitem[Lee \& Seung(1999)Lee and Seung]{Lee1999LearningThePartsOfObjectsByNNMF}
Lee, D. and Seung, H.
\newblock Learning the parts of objects by non-negative matrix factorization.
\newblock \emph{Nature}, 401:\penalty0 788--91, 11 1999.
\newblock \doi{10.1038/44565}.

\bibitem[Lee \& Seung(2000)Lee and Seung]{Lee2000AlgorithmsForNNMF}
Lee, D. and Seung, H.~S.
\newblock Algorithms for non-negative matrix factorization.
\newblock In Leen, T., Dietterich, T., and Tresp, V. (eds.), \emph{Advances in Neural Information Processing Systems}, volume~13. MIT Press, 2000.
\newblock URL \url{https://proceedings.neurips.cc/paper_files/paper/2000/file/f9d1152547c0bde01830b7e8bd60024c-Paper.pdf}.

\bibitem[Lee et~al.(2014)Lee, Sun, and Saunders]{Lee2014ProximalNewtonTypeMethods}
Lee, J.~D., Sun, Y., and Saunders, M.~A.
\newblock Proximal {N}ewton-type methods for minimizing composite functions.
\newblock \emph{SIAM Journal on Optimization}, 24\penalty0 (3):\penalty0 1420--1443, 2014.
\newblock \doi{10.1137/130921428}.
\newblock URL \url{https://doi.org/10.1137/130921428}.

\bibitem[Li et~al.(2017)Li, Zhou, Liang, and Varshney]{li2017convergence}
Li, Q., Zhou, Y., Liang, Y., and Varshney, P.~K.
\newblock Convergence analysis of proximal gradient with momentum for nonconvex optimization.
\newblock In \emph{International Conference on Machine Learning}, pp.\  2111--2119. PMLR, 2017.

\bibitem[Lim \& Roosta(2023)Lim and Roosta]{lim2023complexityForNewtonMRUnderInexactHessian}
Lim, A. and Roosta, F.
\newblock Complexity guarantees for nonconvex {N}ewton-{MR} under inexact {H}essian information.
\newblock \emph{arXiv preprint arXiv:2308.09912}, 2023.

\bibitem[Lim et~al.(2024)Lim, Liu, and Roosta]{lim2024conjugate}
Lim, A., Liu, Y., and Roosta, F.
\newblock Conjugate direction methods under inconsistent systems.
\newblock \emph{arXiv preprint arXiv:2401.11714}, 2024.

\bibitem[Lin et~al.(2020)Lin, Li, and Fang]{lin2020accelerated}
Lin, Z., Li, H., and Fang, C.
\newblock \emph{Accelerated Optimization for Machine Learning}.
\newblock Springer, 2020.

\bibitem[Liu \& Roosta(2021)Liu and Roosta]{LiuInexactNewtonMR}
Liu, Y. and Roosta, F.
\newblock Convergence of {N}ewton-{MR} under inexact {H}essian information.
\newblock \emph{{SIAM} Journal on Optimization}, 31\penalty0 (1):\penalty0 59--90, 2021.
\newblock \doi{10.1137/19m1302211}.
\newblock URL \url{https://doi.org/10.1137%2F19m1302211}.

\bibitem[Liu \& Roosta(2022{\natexlab{a}})Liu and Roosta]{LiuMinres}
Liu, Y. and Roosta, F.
\newblock {MINRES}: From negative curvature detection to monotonicity properties.
\newblock \emph{SIAM Journal on Optimization}, 32\penalty0 (4):\penalty0 2636--2661, 2022{\natexlab{a}}.
\newblock \doi{10.1137/21M143666X}.
\newblock URL \url{https://doi.org/10.1137/21M143666X}.

\bibitem[Liu \& Roosta(2022{\natexlab{b}})Liu and Roosta]{LiuNewtonMR}
Liu, Y. and Roosta, F.
\newblock A {N}ewton-{MR} algorithm with complexity guarantees for nonconvex smooth unconstrained optimization.
\newblock \emph{arXiv preprint arXiv:2208.07095}, 2022{\natexlab{b}}.
\newblock \doi{10.48550/ARXIV.2208.07095}.
\newblock URL \url{https://arxiv.org/abs/2208.07095}.

\bibitem[Mitchell(1999)]{20newsgroups}
Mitchell, T.
\newblock {Twenty Newsgroups}.
\newblock UCI Machine Learning Repository, 1999.
\newblock {DOI}: https://doi.org/10.24432/C5C323.

\bibitem[Nesterov(2004)]{Nesterov2013introductoryLecturesOnConvex}
Nesterov, Y.
\newblock \emph{Introductory Lectures on Convex Optimization: A Basic Course}.
\newblock Applied Optimization. Springer US, 2004.
\newblock ISBN 978-1-4020-7553-7.
\newblock \doi{https://doi.org/10.1007/978-1-4419-8853-9}.

\bibitem[Nesterov(2013)]{Nesterov2013GradientMethodsForMinimizingCompositeFunction}
Nesterov, Y.
\newblock Gradient methods for minimizing composite functions.
\newblock \emph{Mathematical Programming}, 140\penalty0 (1):\penalty0 125--161, 2013.
\newblock \doi{https://doi.org/10.1007/s10107-012-0629-5}.

\bibitem[Nesterov \& Polyak(2006)Nesterov and Polyak]{Nesterov2006CubicRegularisationOfNewtonsMethod}
Nesterov, Y. and Polyak, B.
\newblock Cubic regularization of {N}ewton method and its global performance.
\newblock \emph{Math. Program.}, 108:\penalty0 177--205, 08 2006.
\newblock \doi{10.1007/s10107-006-0706-8}.

\bibitem[Nocedal \& Wright(2006)Nocedal and Wright]{NocedalJorgeNO}
Nocedal, J. and Wright, S.~J.
\newblock \emph{Numerical Optimization}.
\newblock Springer Series in Operations Research and Financial Engineering. Springer New York, New York, NY, second edition edition, 2006.
\newblock ISBN 9780387303031.

\bibitem[O’Neill \& Wright(2020)O’Neill and Wright]{LogBarrierNewtonCGOneillWright}
O’Neill, M. and Wright, S.~J.
\newblock {A log-barrier {N}ewton-{CG} method for bound constrained optimization with complexity guarantees}.
\newblock \emph{IMA Journal of Numerical Analysis}, 41\penalty0 (1):\penalty0 84--121, 04 2020.
\newblock ISSN 0272-4979.
\newblock \doi{10.1093/imanum/drz074}.
\newblock URL \url{https://doi.org/10.1093/imanum/drz074}.

\bibitem[Paige \& Saunders(1975)Paige and Saunders]{PaigeSaunders1975SolutionOfSparseIndefiniteSystemsOfLinearEquations}
Paige, C.~C. and Saunders, M.~A.
\newblock Solution of sparse indefinite systems of linear equations.
\newblock \emph{SIAM Journal on Numerical Analysis}, 12:\penalty0 617--629, 9 1975.
\newblock ISSN 0036-1429.
\newblock \doi{10.1137/0712047}.

\bibitem[Parikh \& Boyd(2014)Parikh and Boyd]{Parikh2014ProximalAlgorithms}
Parikh, N. and Boyd, S.
\newblock Proximal algorithms.
\newblock \emph{Found. Trends Optim.}, 1\penalty0 (3):\penalty0 127–239, jan 2014.
\newblock ISSN 2167-3888.
\newblock \doi{10.1561/2400000003}.
\newblock URL \url{https://doi.org/10.1561/2400000003}.

\bibitem[Paszke et~al.(2019)Paszke, Gross, Massa, Lerer, Bradbury, Chanan, Killeen, Lin, Gimelshein, Antiga, Desmaison, Kopf, Yang, DeVito, Raison, Tejani, Chilamkurthy, Steiner, Fang, Bai, and Chintala]{pytorch}
Paszke, A., Gross, S., Massa, F., Lerer, A., Bradbury, J., Chanan, G., Killeen, T., Lin, Z., Gimelshein, N., Antiga, L., Desmaison, A., Kopf, A., Yang, E., DeVito, Z., Raison, M., Tejani, A., Chilamkurthy, S., Steiner, B., Fang, L., Bai, J., and Chintala, S.
\newblock {P}y{T}orch: {A}n imperative style, high-performance deep learning library.
\newblock In Wallach, H., Larochelle, H., Beygelzimer, A., d\textquotesingle Alch\'{e}-Buc, F., Fox, E., and Garnett, R. (eds.), \emph{Advances in Neural Information Processing Systems}, volume~32. Curran Associates, Inc., 2019.
\newblock URL \url{https://proceedings.neurips.cc/paper_files/paper/2019/file/bdbca288fee7f92f2bfa9f7012727740-Paper.pdf}.

\bibitem[Pearlmutter(1994)]{pearlmutter1994fast}
Pearlmutter, B.~A.
\newblock {Fast exact multiplication by the Hessian}.
\newblock \emph{Neural computation}, 6\penalty0 (1):\penalty0 147--160, 1994.

\bibitem[Pedregosa et~al.(2011)Pedregosa, Varoquaux, Gramfort, Michel, Thirion, Grisel, Blondel, Prettenhofer, Weiss, Dubourg, Vanderplas, Passos, Cournapeau, Brucher, Perrot, and Duchesnay]{scikit-learn}
Pedregosa, F., Varoquaux, G., Gramfort, A., Michel, V., Thirion, B., Grisel, O., Blondel, M., Prettenhofer, P., Weiss, R., Dubourg, V., Vanderplas, J., Passos, A., Cournapeau, D., Brucher, M., Perrot, M., and Duchesnay, E.
\newblock Scikit-learn: Machine learning in {P}ython.
\newblock \emph{Journal of Machine Learning Research}, 12:\penalty0 2825--2830, 2011.

\bibitem[Roosta et~al.(2022)Roosta, Liu, Xu, and Mahoney]{RoostaNewtonWithMinimumResidual}
Roosta, F., Liu, Y., Xu, P., and Mahoney, M.~W.
\newblock {N}ewton-{MR}: Inexact {N}ewton method with minimum residual sub-problem solver.
\newblock \emph{EURO Journal on Computational Optimization}, 10:\penalty0 100035, 2022.
\newblock ISSN 2192-4406.
\newblock \doi{https://doi.org/10.1016/j.ejco.2022.100035}.
\newblock URL \url{https://www.sciencedirect.com/science/article/pii/S2192440622000119}.

\bibitem[Royer et~al.(2018)Royer, O'Neill, and Wright]{RoyerWright2018NewtonCG}
Royer, C.~W., O'Neill, M., and Wright, S.~J.
\newblock A {N}ewton-{CG} algorithm with complexity guarantees for smooth unconstrained optimization.
\newblock \emph{arXiv preprint arXiv:1803.02924}, 3 2018.

\bibitem[Saad(2003)]{SaadIterativeMF}
Saad, Y.
\newblock \emph{Iterative Methods for Sparse Linear Systems}.
\newblock SIAM, 2nd edition, 2003.
\newblock ISBN 9780898715347.

\bibitem[Schmidt et~al.(2007)Schmidt, Fung, and Rosales]{Schmidz2007FastOptimizationMethodsForL1Regularization}
Schmidt, M., Fung, G., and Rosales, R.
\newblock Fast optimization methods for {L}1 regularization: A comparative study and two new approaches.
\newblock In Kok, J.~N., Koronacki, J., Mantaras, R. L.~d., Matwin, S., Mladeni{\v{c}}, D., and Skowron, A. (eds.), \emph{Machine Learning: ECML 2007}, pp.\  286--297, Berlin, Heidelberg, 2007. Springer Berlin Heidelberg.
\newblock ISBN 978-3-540-74958-5.

\bibitem[Schmidt et~al.(2009)Schmidt, Berg, Friedlander, and Murphy]{schmidt2009optimizing}
Schmidt, M., Berg, E., Friedlander, M., and Murphy, K.
\newblock {Optimizing costly functions with simple constraints: A limited-memory projected quasi-{n}ewton algorithm}.
\newblock In \emph{Artificial intelligence and statistics}, pp.\  456--463. PMLR, 2009.

\bibitem[Schmidt et~al.(2011)Schmidt, Kim, and Sra]{ProjectedNewtonTypeMethodsInMachineLearningSchmidt}
Schmidt, M., Kim, D., and Sra, S.
\newblock {Projected {N}ewton-type Methods in Machine Learning}.
\newblock In \emph{{Optimization for Machine Learning}}. The MIT Press, 09 2011.
\newblock ISBN 9780262298773.
\newblock \doi{10.7551/mitpress/8996.003.0013}.
\newblock URL \url{https://doi.org/10.7551/mitpress/8996.003.0013}.

\bibitem[Shi \& Liu(2015)Shi and Liu]{shi2015large}
Shi, Z. and Liu, R.
\newblock {Large scale optimization with proximal stochastic Newton-type gradient descent}.
\newblock In \emph{Machine Learning and Knowledge Discovery in Databases: European Conference, ECML PKDD 2015, Porto, Portugal, September 7-11, 2015, Proceedings, Part I 15}, pp.\  691--704. Springer, 2015.

\bibitem[Sun et~al.(2019)Sun, Jeong, Nutini, and Schmidt]{Sun2019AreWeThereYet}
Sun, Y., Jeong, H., Nutini, J., and Schmidt, M.
\newblock Are we there yet? {M}anifold identification of gradient-related proximal methods.
\newblock In Chaudhuri, K. and Sugiyama, M. (eds.), \emph{Proceedings of the Twenty-Second International Conference on Artificial Intelligence and Statistics}, volume~89 of \emph{Proceedings of Machine Learning Research}, pp.\  1110--1119. PMLR, 16--18 Apr 2019.
\newblock URL \url{https://proceedings.mlr.press/v89/sun19a.html}.

\bibitem[Wright(1993)]{Wright1993IdentifiableSurfaces}
Wright, S.~J.
\newblock Identifiable surfaces in constrained optimization.
\newblock \emph{SIAM Journal on Control and Optimization}, 31\penalty0 (4):\penalty0 1063--17, 07 1993.
\newblock Copyright - Copyright] © 1993 Society for Industrial and Applied Mathematics; Last updated - 2023-12-04.

\bibitem[Xiao et~al.(2017)Xiao, Rasul, and Vollgraf]{xiao2017fashionmnist}
Xiao, H., Rasul, K., and Vollgraf, R.
\newblock Fashion-{MNIST}: a novel image dataset for benchmarking machine learning algorithms.
\newblock \emph{arXiv preprint arXiv:1708.07747}, 2017.

\bibitem[Xie \& Wright(2023)Xie and Wright]{XieWright2021ComplexityOfProjectedNewtonCG}
Xie, Y. and Wright, S.~J.
\newblock Complexity of a projected {N}ewton-{CG} method for optimization with bounds.
\newblock \emph{Mathemetical Programming}, July 2023.
\newblock \doi{https://doi.org/10.1007/s10107-023-02000-z}.

\bibitem[Xu et~al.(2020{\natexlab{a}})Xu, Roosta, and Mahoney]{xu2020newton}
Xu, P., Roosta, F., and Mahoney, M.~W.
\newblock {Newton-type methods for non-convex optimization under inexact Hessian information}.
\newblock \emph{Mathematical Programming}, 184\penalty0 (1-2):\penalty0 35--70, 2020{\natexlab{a}}.

\bibitem[Xu et~al.(2020{\natexlab{b}})Xu, Roosta, and Mahoney]{xu2020second}
Xu, P., Roosta, F., and Mahoney, M.~W.
\newblock {Second-order optimization for non-convex machine learning: An empirical study}.
\newblock In \emph{Proceedings of the 2020 SIAM International Conference on Data Mining}, pp.\  199--207. SIAM, 2020{\natexlab{b}}.

\bibitem[Yao et~al.(2022)Yao, Xu, Roosta, Wright, and Mahoney]{Yao2022InexactNewtonCG}
Yao, Z., Xu, P., Roosta, F., Wright, S.~J., and Mahoney, M.~W.
\newblock {Inexact Newton-CG algorithms with complexity guarantees}.
\newblock \emph{IMA Journal of Numerical Analysis}, 43\penalty0 (3):\penalty0 1855--1897, 08 2022.
\newblock ISSN 0272-4979.
\newblock \doi{10.1093/imanum/drac043}.
\newblock URL \url{https://doi.org/10.1093/imanum/drac043}.

\end{thebibliography}
\bibliographystyle{icml2024}

%%%%%%%%%%%%%%%%%%%%%%%%%%%%%%%%%%%%%%%%%%%%%%%%%%%%%%%%%%%%%%%%%%%%%%%%%%%%%%%
%%%%%%%%%%%%%%%%%%%%%%%%%%%%%%%%%%%%%%%%%%%%%%%%%%%%%%%%%%%%%%%%%%%%%%%%%%%%%%%
% APPENDIX
%%%%%%%%%%%%%%%%%%%%%%%%%%%%%%%%%%%%%%%%%%%%%%%%%%%%%%%%%%%%%%%%%%%%%%%%%%%%%%%
%%%%%%%%%%%%%%%%%%%%%%%%%%%%%%%%%%%%%%%%%%%%%%%%%%%%%%%%%%%%%%%%%%%%%%%%%%%%%%%
\newpage
\appendix
\onecolumn

\section{MINRES and Newton-MR}\label{apx:MINRES}

In this section, for completeness, we discuss MINRES (\cref{alg:MINRES}) and provide some of its fundamental  properties. We note that our presentation is essentially that of \citet[Appendix A]{LiuNewtonMR} as the notation and implementation is well adapted to our setting. Recall that  MINRES  combines the Lanczos process, a QR decomposition, and an updating formula to iteratively solve a symmetric linear least-squares problem of the form 
\begin{align*}
    \min_{\bss \in \real^d } \| \HH \bss + \bgg \|^2.
\end{align*}
We now discuss each of these aspects in detail.

\paragraph{Lanczos Process.} Recall that, starting from $\vv_1 = \bgg/\| \bgg \| $, after $t$ iterations of the Lanczos process, the Lanczos vectors $\{\vv_1, \vv_{2}, \ldots, \vv_{t+1}\}$, form a basis for the Krylov subspace $\sK_{t+1}(\HH, \bgg)$. Collecting these vectors into an orthogonal matrix
\begin{align*}
    \VV_{t+1} = [\vv_1, \ldots \vv_{t+1}] \in \real^{d \times (t+1)},
\end{align*}
we can write 
\begin{align*}
    \HH \VV_t = \VV_{t+1} \tilde{\TT}_t,
\end{align*}
where $\tilde{\TT}_t \in \real^{(t+1), t}$ is an upper Hessenberg matrix of the form
\begin{align*}
    \TT_t = \begin{pmatrix}
        \talpha_1 & \tbeta_2 & & & \\
        \tbeta_2 & \talpha_2 & \tbeta_3 & & \\
        & \tilde{\beta}_3 & \talpha_3 & \ddots &  \\
        & & \ddots & \ddots  &  \tbeta_t   \\
        & & & \tbeta_t & \talpha_t 
    \end{pmatrix}, \ \tilde{\TT}_t \defeq \begin{pmatrix}
        \TT_t \\
        \tilde{\beta}_{t+1} \ee_t^\transpose
    \end{pmatrix}.
\end{align*}
This relation yields the underlying update process of the MINRES iterations for $t \geq 2$ as,
\begin{align*}
    \HH \vv_t = \tbeta_t \vv_{t-1} + \talpha_t \vv_t + \tbeta_{t+1} \vv_{t+1}.
\end{align*}
The Lanczos process terminates when $\tbeta_{t+1} = 0$. We remark that computing an expansion of the basis requires a single Hessian-vector product, $\HH \vv_t$. The basis for the Krylov subspace allows us to significantly simplify \cref{eqn:MINRES subproblem}. Indeed, let $\bss_t$ be a solution to \cref{eqn:MINRES subproblem} at iteration $t$. By $\bss_t \in \sK_t(\HH, \bgg)$, we have $\bss_t = \VV_t \yy_t$ for some $\yy_t \in \real^t$. Hence, the residual can be written as 
\begin{align*}
    \rr_t = - \bgg - \HH \bss_t = - \bgg - \HH \VV_t \yy_t = - \bgg - \VV_{t+1} \tilde{\TT}_t \yy_t = -\VV_{t+1}(\| \bgg \| \ee_1 + \tilde{\TT}_t \yy_t).  
\end{align*}
In the final equality, we applied the orthogonality of the basis vectors and $\vv_1 = \bgg/\| \bgg\|$. Applying this expression to \cref{eqn:MINRES subproblem} and using the orthogonality of $\VV_{t+1}$, we obtain the reduced tridiagonal least-squares problem 
\begin{align*}
    \min_{\yy_{t} \in \real^t} \vnorm{ \tbeta_1\ee_1 + \tTT_t \yy_t}, \tageq\label{eqn:tridiagonal least-squares problem}
\end{align*}
where $\tilde{\beta_1} = \| \bgg \| $.

\paragraph{QR Factorisation.} The next step in the MINRES procedure is to solve \cref{eqn:tridiagonal least-squares problem} by computing the full QR factorisation $\QQ_t \tTT_t = \tRR_t$ where $\QQ_t \in \real^{(t+1) \times (t+1)}$ and $\tRR_t \in \real^{(t+1) \times t}$. Because $\tTT_t$ is already close to being upper triangular, we form the QR factorisation using a series of Householder reflections to annihilate the sub-diagonal elements. Each Householder reflection affects only two rows of $\tTT_t$. We can summarise the effect of two successive Householder reflections for $3 \leq i \leq t-1$ as  
\begin{align*}
    \begin{pmatrix}
        1 & 0 & 0 \\
        0 & c_{i-1} & s_{i-1} \\
        0 & s_{i-1} & -c_{i-1} 
    \end{pmatrix}
    \begin{pmatrix}
        c_{i-2} & s_{i-2} & 0 \\
        s_{i-2} & -c_{i-2} & 0 \\
        0 & 0 & 1
    \end{pmatrix}
    \begin{pmatrix}
        \gamma_{i-2} & \delta_{i-1} & 0 & 0 \\
        \tbeta_{i-1} & \talpha_{i-1} & \tbeta_i & 0 \\
        0 & \tbeta_i & \talpha_i & \tbeta_{i+1}
    \end{pmatrix} \\
    = \begin{pmatrix}
        1 & 0 & 0 \\
        0 & c_{i-1} & s_{i-1} \\
        0 & s_{i-1} & -c_{i-1} 
    \end{pmatrix}
    \begin{pmatrix}
        \gamma_{i-2}^{[2]} & \delta_{i-1}^{[2]} & \epsilon_i & 0 \\
        0 & \gamma_{i-1} & \delta_i & 0 \\
        0 & \tbeta_i & \talpha_i & \tbeta_{i+1}
    \end{pmatrix} \\
    = \begin{pmatrix}
        \gamma_{i-2}^{[2]} & \delta_{i-1}^{[2]} & \epsilon_i & 0 \\
        0 & \gamma_{i-1}^{[2]} & \delta_i^{[2]} & \epsilon_{i+1} \\
        0 & 0 & \gamma_i & \delta_{i+1}
    \end{pmatrix}, \\
\end{align*}
where for $1 \leq j \leq t$ we have
\begin{align*}
    c_j = \frac{\gamma_j}{\gamma_j^{[2]}}, \quad s_j = \frac{\tbeta_{j+1}}{\gamma_j^{[2]}}, \quad \gamma_j^{[2]} = \sqrt{(\gamma_j)^2 + \tbeta_{j+1}^2} = c_j \gamma_j + s_j \tbeta_{j+1}.
\end{align*}
We therefore form $\QQ_t$ as a product of the Householder reflection matrices
\begin{align*}
    \QQ_t = \prod_{i=1}^t \QQ_{i, i+1}, \quad \QQ_{i, i+1} \defeq \begin{pmatrix}
        \eye_{i-1} & & &  \\
         & c_t & s_t & \\
         & s_t & - c_t & \\
         & & & \eye_{t-i}
    \end{pmatrix}.
\end{align*}
It is also clear that $\tRR_t$ is given by
\begin{align*}
    \RR_t \defeq \begin{pmatrix}
        \gamma_{1}^{[2]} & \delta_{2}^{[2]} & \epsilon_3 & &  \\
         & \gamma_{2}^{[2]} & \delta_3^{[2]} &  \ddots & \\
         &  & \ddots & \ddots & \epsilon_t \\ 
         & & & \gamma_{t-1}^{[2]} &  \delta_{t}^{[2]} \\
        & & & &  \gamma_{t}^{[2]}
    \end{pmatrix} , \quad \tRR_t = \begin{pmatrix}
        \RR_t \\
        \zero^\transpose
    \end{pmatrix}.
\end{align*}
Applying $\QQ_t$ to $\tbeta_1 \ee_1$, we obtain
\begin{align*}
    \QQ_t \tbeta_1 \ee_1 = \tbeta_1 \begin{pmatrix}
        c_1 \\
        s_1 c_2 \\ 
        \vdots \\
        s_1 s_2 \cdots s_{t-1} c_t \\
        s_{1} s_2 \cdots s_{t-1} s_t
    \end{pmatrix} \defeq
    \begin{pmatrix}
        \tau_1 \\
        \tau_2 \\
        \ldots 
        \tau_t \\
        \phi_t
    \end{pmatrix} \defeq
    \begin{pmatrix}
        \btt_t \\
        \phi_t
    \end{pmatrix}.
\end{align*}
Applying the QR factorisation to solve \cref{eqn:tridiagonal least-squares problem} gives
\begin{align*}
    \min_{\yy_t} \vnorm{ \tbeta_1 \ee_1 + \tTT_t \yy_t} &= \min_{\yy_t} \vnorm{ \QQ_t^\transpose ( \tbeta_1 \QQ_t \ee_1 + \QQ_t\tTT_t \yy_t)} \\
    &= \min_{\yy_t} \vnorm{\vvec{\btt_t}{\phi_t} + \vvec{\RR_t}{\zero^\transpose} \yy_t}.
\end{align*}
An immediate implication of this result is $\phi_t = \| \rr_t\| $.

\paragraph{Update.} The key to the computational efficiency of MINRES is the existence of vector update formula, which eliminates the requirement to form or store the matrices involved in the Lanczos and QR factorisation processes, i.e., $\VV_t$, $\QQ_t$, $\tRR_t$, and $\tTT_t$. Define $\WW_t$ from the upper triangular system $\WW_t \RR_t = \VV_t$ as 
\begin{align*}
    \begin{pmatrix}
        \vv_1 & \vv_2 & \ldots & \vv_t 
    \end{pmatrix} = \begin{pmatrix}
        \ww_1 & \ww_2 & \ldots & \ww_t 
    \end{pmatrix}
    \begin{pmatrix}
        \gamma_{1}^{[2]} & \delta_{2}^{[2]} & \epsilon_3 & &  \\
         & \gamma_{2}^{[2]} & \delta_3^{[2]} &  \ddots & \\
         &  & \ddots & \ddots & \epsilon_t \\ 
         & & & \gamma_{t-1}^{[2]} &  \delta_{t}^{[2]} \\
        & & & &  \gamma_{t}^{[2]}
    \end{pmatrix}. \tageq\label{eqn:RW = V system}
\end{align*}
By reading off \cref{eqn:RW = V system}, we see that 
\begin{align*}
    \vv_t = \epsilon_t \ww_{t-2} + \delta_t^{[2]} \ww_{t-1} + \gamma_t^{[2]} \ww_t.
\end{align*}
The computation for the MINRES iterate can now be written as
\begin{align*}
    \bss_t = \VV_t \yy_t = \WW_t \RR_t \yy_t  = \WW_t \btt_t = \hvec{\WW_{t-1}}{\ww_t} \vvec{\btt_{t-1}}{\tau_t} = \bss_{t-1} + \tau_t \ww_t, 
\end{align*}
where we set $\bss_0 = 0$. With this result in mind, we give the full MINRES method in \cref{alg:MINRES}. 
We remark that, in \cref{alg:MINRES}, we have also included steps for verifying the inexactness condition \cref{eqn:MINRES termination tolerance} (\cref{alg:MINRES}-\hyperref[eq:Inexactness_Step_Minres]{Line 10}) as well as certifying $\langle \rr_t, \HH \rr_t \rangle  \geq \vartheta \| \rr_t \|^2$ for some user specified $\vartheta \geq 0$ (\hyperref[eq:NOC_Step_Minres]{\cref{alg:MINRES}-Line 7}). 

\begin{algorithm}[ht]
    \begin{algorithmic}[1]
        \STATE \textbf{Input} Hessian $\HH$, gradient $\bgg$, inexactness tolerance $\eta > 0$, and NPC tolerance $\vartheta \geq 0$. 
        \STATE $\phi_0 = \tilde{\beta}_0 = \| \bgg\|$, $\rr_0 = - \bgg$, $\vv_1 = \rr_0/\phi_0$, $\vv_0 = \bss_0 = \ww_0 = \ww_{-1} = 0$.
        \STATE $s_0 = 0$, $c_0 = -1$, $\delta_1 = \tau_0 = 0$, $t=1$.
        \WHILE{ \text{True}}
            \STATE $\qq_t = \HH \vv_t$, $\tilde{\alpha}_t = \langle \vv_t, \qq_t \rangle$, $\qq_t = \qq_t - \tilde{\beta}_t \vv_{t-1}$, $\qq_t = \qq_t - \tilde{\alpha}_t \vv_t$, $\tilde{\beta}_{t+1} = \| \qq_{t} \| $.
            \STATE $ \begin{pmatrix}
                \delta_t^{[2]} & \epsilon_{t+1} \\
                \gamma_t & \delta_{t+1}
            \end{pmatrix} = \begin{pmatrix}
                c_{t-1} & s_{t-1} \\
                s_{t-1} & -c_{t-1}
            \end{pmatrix}\begin{pmatrix}
                \delta_t & 0 \\
                \tilde{\alpha}_t & \tilde{\beta}_{t+1}
            \end{pmatrix}$
            \IF{ $-c_{t-1} \gamma_t \leq \vartheta$} \label{eq:NOC_Step_Minres}
                \STATE \textbf{return} ($\rr_{t-1}$, $\text{D}_{\text{type}} = \text{NPC}$).
            \ENDIF
            \IF{$ \phi_{t-1} \sqrt{\gamma_t^2 + \delta_{t+1}^2} \leq \eta \sqrt{\phi_0^2 - \phi_{t-1}^2}$} \label{eq:Inexactness_Step_Minres}
                \STATE \textbf{return} ($\bss_{t-1}$, $\text{D}_{\text{type}} = \text{SOL}$).
            \ENDIF
            \STATE $\delta_t^{[2]} = \sqrt{\gamma_t^2 + \tilde{\beta}_{t+1}^2}$.
            \IF{ $\delta_t^{[2]} \neq 0$}
                \STATE $c_t = \gamma_t/\gamma_t^{[2]}$, $s_t = \tilde{\beta}_{t+1}/\gamma_t^{[2]}$, $\tau_t = c_t \phi_{t-1}$, $\phi_t = s_t \phi_{t-1}$.
                \STATE $\ww_t = (\vv_t - \gamma_t^{[2]} \ww_{t-1} - \epsilon_t \ww_{t-2})/\gamma_t^{[2]}$, $\bss_t = \bss_{t-1} + \tau_t \ww_t$.
                \IF{$\tilde{\beta}_{t+1} \neq 0$}
                    \STATE $\vv_{t+1} = \qq_t/\tilde{\beta}_{t+1}$, $\rr_t = s_t^2 \rr_{t-1} - \phi_t c_t \vv_{t+1}$.
                \ENDIF
            \ELSE
                \STATE $c_t = 0$, $s_t = 1$, $\tau_t =0$, $\phi_t = \phi_{t-1}$, $\rr_t = \rr_{t-1}$, $\bss_{t} = \bss_{t-1}$.
            \ENDIF
            \STATE $t \gets t+1$.
        \ENDWHILE
    \end{algorithmic}
    \caption{MINRES($\HH$, $\bgg$, $\eta$, $\vartheta$)}
    \label{alg:MINRES}
\end{algorithm}

We now collect several properties of the MINRES for reference; see  \citet{LiuMinres,LiuNewtonMR} for more  details and properties. Firstly, we give some scalar expressions for the quantities of interest in \cref{eqn:NPC condition,eqn:MINRES termination tolerance} in the MINRES algorithm

\begin{lemma}[MINRES scalar updates]\label{lemma:MINRES scalar updates}
    We have the following
    \begin{subequations}
    \begin{align}
        \| \rr^{(t)} \| &= \phi_t \label{eqn:r norm scalar} \\ 
        \langle \rr^{(t-1)}, \HH \rr^{(t-1)} \rangle &= -c_{t-1} \gamma_t \| \rr^{(t-1)}\|^2 , \label{eqn:r curvature scalar} \\ 
        \| \HH \bss^{(t-1)} \| &= \sqrt{\phi_0^2 - \phi_{t-1}^2}, \tageq\label{eqn:Hs norm scalar} \\
        \| \HH \rr^{(t-1)} \| &= \phi_{t-1} \sqrt{\gamma_t^2 + \delta_{t+1}^2}. \tageq\label{eqn:Hr norm scalar}
    \end{align}  
    \end{subequations}
\end{lemma}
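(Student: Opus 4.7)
The plan is to express each of the four scalar quantities directly in terms of the Lanczos/QR recurrences built up in \cref{apx:MINRES}. For \cref{eqn:r norm scalar}, applying $\QQ_t$ to the reduced least-squares problem \cref{eqn:tridiagonal least-squares problem} decomposes the objective as $\|\btt_t + \RR_t \yy_t\|^2 + \phi_t^2$; the minimiser $\yy_t = -\RR_t^{-1}\btt_t$ annihilates the first term, leaving $\rr^{(t)} = -\phi_t \VV_{t+1}\QQ_t^\transpose \ee_{t+1}$ and hence $\|\rr^{(t)}\| = \phi_t$ by orthonormality of $\VV_{t+1}$. For \cref{eqn:Hs norm scalar}, I would invoke the Galerkin-type optimality of $\bss^{(t-1)}$: since $\bss^{(t-1)}$ minimises $\|\HH\bss + \bgg\|^2$ over $\sK_{t-1}(\HH,\bgg)$ and $\bss^{(t-1)} \in \sK_{t-1}$, the normal equations give $\langle \rr^{(t-1)}, \HH\bss^{(t-1)}\rangle = 0$. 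Expanding $\bgg = -\rr^{(t-1)} - \HH\bss^{(t-1)}$ then yields the Pythagorean identity $\phi_0^2 = \|\bgg\|^2 = \phi_{t-1}^2 + \|\HH\bss^{(t-1)}\|^2$.

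The crux lies in \cref{eqn:r curvature scalar,eqn:Hr norm scalar}, both of which I would derive from a compact closed form for $\HH\rr^{(t-1)}$. Using $\rr^{(t-1)} = -\phi_{t-1}\VV_t \QQ_{t-1}^\transpose \ee_t$ together with the Lanczos relation $\HH\VV_t = \VV_t \TT_t + \tilde{\beta}_{t+1}\vv_{t+1}\ee_t^\transpose$, one obtains
\begin{align*}
\HH\rr^{(t-1)} = -\phi_{t-1}\bigl(\VV_t \TT_t \QQ_{t-1}^\transpose \ee_t + \tilde{\beta}_{t+1}\bigl(\ee_t^\transpose \QQ_{t-1}^\transpose \ee_t\bigr)\vv_{t+1}\bigr).
\end{align*}
The pivotal step is to establish $\TT_t \QQ_{t-1}^\transpose \ee_t = \gamma_t \ee_t$. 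Tracking the Givens rotations composing $\QQ_{t-1}$, each one zeros the targeted subdiagonal of $\tilde{\TT}_{t-1}$, so $\QQ_{t-1}\TT_t$ is upper triangular with bottom row exactly $\gamma_t \ee_t^\transpose$ (its $(t,t)$ entry is precisely the value just before the yet-unapplied rotation $\QQ_{t,t+1}$). Exploiting $\TT_t = \TT_t^\transpose$ then gives $\TT_t \QQ_{t-1}^\transpose \ee_t = (\QQ_{t-1}\TT_t)^\transpose \ee_t = \gamma_t \ee_t$. Combined with $\ee_t^\transpose \QQ_{t-1}^\transpose \ee_t = -c_{t-1}$ (immediate from $\QQ_{t-1}\ee_t = s_{t-1}\ee_{t-1} - c_{t-1}\ee_t$) and the recurrence $\delta_{t+1} = -c_{t-1}\tilde{\beta}_{t+1}$, this yields
\begin{align*}
\HH\rr^{(t-1)} = -\phi_{t-1}\bigl(\gamma_t \vv_t + \delta_{t+1}\vv_{t+1}\bigr).
\end{align*}
Identity \cref{eqn:Hr norm scalar} is then immediate from orthonormality of $\{\vv_t,\vv_{t+1}\}$, while \cref{eqn:r curvature scalar} follows by reading off the $\vv_t$-coefficient of $\rr^{(t-1)} = -\phi_{t-1}\VV_t\QQ_{t-1}^\transpose \ee_t$, which equals $\phi_{t-1}c_{t-1}$, its $\vv_{t+1}$-coefficient being zero.

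The main obstacle is the pivotal identity $\TT_t \QQ_{t-1}^\transpose \ee_t = \gamma_t \ee_t$, since a direct expansion of $\QQ_{t-1}^\transpose \ee_t$ into its Lanczos-level components would be tedious. The symmetry trick $\TT_t = \TT_t^\transpose$ sidesteps this by reducing the problem to identifying only the last row of $\QQ_{t-1}\TT_t$, after which the remaining manipulations reduce to elementary scalar algebra on the recursive definitions of the Givens parameters $c_i, s_i, \gamma_i, \delta_{i+1}$.
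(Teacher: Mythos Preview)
Your argument is correct and self-contained. The paper's own proof simply cites \cref{eqn:r norm scalar} as following from the construction of MINRES and defers the remaining three identities to an external reference (\citet[Lemma 11]{LiuNewtonMR}), so you have supplied considerably more detail than the paper itself. Your derivation of the closed form $\HH\rr^{(t-1)} = -\phi_{t-1}(\gamma_t\vv_t + \delta_{t+1}\vv_{t+1})$ via the symmetry trick $\TT_t\QQ_{t-1}^\transpose\ee_t = (\QQ_{t-1}\TT_t)^\transpose\ee_t$ is clean: it reduces the computation to identifying the bottom row of $\QQ_{t-1}\TT_t$, which is immediate once one recognises that the first $t-1$ columns form $\tilde{\RR}_{t-1}$ (with zero last row) and the $(t,t)$ entry is precisely the pre-rotation diagonal value $\gamma_t$. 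The identities $\ee_t^\transpose\QQ_{t-1}^\transpose\ee_t = -c_{t-1}$ and $\delta_{t+1} = -c_{t-1}\tilde{\beta}_{t+1}$ likewise check out against the recurrences in \cref{apx:MINRES}. Your Pythagorean argument for \cref{eqn:Hs norm scalar} via the normal-equation orthogonality $\langle \rr^{(t-1)}, \HH\bss^{(t-1)}\rangle = 0$ is also correct and arguably more transparent than a scalar-recurrence approach.
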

\begin{proof}
    \cref{eqn:r norm scalar} follows from the construction of the MINRES algorithm. The proof of \cref{eqn:Hr norm scalar,eqn:r curvature scalar,eqn:Hs norm scalar} is given in \citet[Lemma 11]{LiuNewtonMR}.
\end{proof}

Next we give some helpful properties of the SOL and NPC steps.

\begin{lemma}[$\text{D}_{\text{type}} = \text{SOL}$] \label{lemma:SOL type step properties}
    Any iterate of MINRES, $\bss^{(t)}$, satisfies  
    \begin{align*}
        \| \HH \bss^{(t)} \| \leq \| \bgg \|, \tageq\label{eqn:Hessian step gradient bound}
    \end{align*}
    and 
    \begin{align*}
        \langle \bss^{(t)}, \HH\bgg  \rangle \leq 0. \tageq\label{eqn:descent for ||g||}
    \end{align*}
    Suppose that negative curvature has not been detected up to iteration $t$. Then, 
    \begin{align*}
        \langle \bss^{(t)} , \bgg \rangle \leq -\langle \bss^{(t)}, \HH \bss^{(t)} \rangle. \tageq\label{eqn:SOL MINRES descent curvature condition}
    \end{align*}
    Further, consider \cref{ass:LipschitzGradient} and suppose there exists some $\varrho > 0$ such that for any $\vv \in \sK_t(\HH, \bgg)$ we have $\langle \vv, \HH \vv\rangle \geq \varrho \| \vv \|^2 $. Then, 
    \begin{align*}
        C_{\varrho,L_g}   \| \bgg \|  \leq \| \bss^{(t)} \| \leq \frac{\| \bgg \|}{\varrho},  \tageq\label{eqn:SOL step gradient related}
    \end{align*}
    where $C_{\varrho,L_g}   \defeq \varrho/L_g^2$.
\end{lemma}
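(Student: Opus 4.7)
The proof handles the four claims in turn. The common thread is the MINRES optimality condition for the subproblem \cref{eqn:MINRES subproblem}, together with the Lanczos factorisation $\HH \VV_t = \VV_{t+1} \tTT_t$ and, for the last two items, the fact that the absence of a nonpositive-curvature direction up to iteration $t$ guarantees $\TT_t \succ \zero$ by \citet[Theorem 3.3]{LiuMinres}.

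For \cref{eqn:Hessian step gradient bound} and \cref{eqn:descent for ||g||}, both are immediate consequences of a single identity. Since $\bss^{(t)}$ minimises $\|\HH\bss + \bgg\|^2$ over $\sK_t(\HH,\bgg)$, the residual $\rr^{(t)} = -\HH\bss^{(t)} - \bgg$ is orthogonal to $\HH \sK_t$. Taking the test direction $\vv = \bss^{(t)} \in \sK_t$ yields
\begin{align*}
\| \HH\bss^{(t)}\|^2 \;=\; -\langle \HH\bss^{(t)}, \bgg\rangle \;=\; -\langle \bss^{(t)}, \HH\bgg\rangle,
\end{align*}
where the last equality uses symmetry of $\HH$. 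Cauchy--Schwarz on the first equality gives \cref{eqn:Hessian step gradient bound}, and non-negativity of the left-hand side gives \cref{eqn:descent for ||g||}.

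For \cref{eqn:SOL MINRES descent curvature condition}, the plan is to pass to Lanczos coordinates. Writing $\bss^{(t)} = \VV_t \yy_t$ with $\bgg = -\phi_0 \vv_1$ and $\phi_0 = \|\bgg\|$, the orthonormality of $\VV_{t+1}$ together with $\HH \VV_t = \VV_{t+1} \tTT_t$ reduces the claim $\langle \bss^{(t)}, \bgg\rangle + \langle \bss^{(t)}, \HH\bss^{(t)}\rangle \leq 0$ to
\begin{align*}
\yy_t^\top \TT_t \yy_t \;\leq\; \phi_0\, y_t^1.
\end{align*}
Under the no-NPC hypothesis, $\TT_t \succ \zero$, so $\TT_t$ is invertible. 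The MINRES normal equation $(\TT_t^2 + \tbeta_{t+1}^2 \ee_t \ee_t^\top)\yy_t = \phi_0 \TT_t \ee_1$ can be rearranged to $\TT_t \yy_t = \phi_0 \ee_1 - \tbeta_{t+1}^2 y_t^t \TT_t^{-1}\ee_t$, yielding the exact gap
\begin{align*}
\phi_0 y_t^1 - \yy_t^\top \TT_t \yy_t \;=\; \tbeta_{t+1}^2\, y_t^t\, \bigl(\TT_t^{-1}\yy_t\bigr)^t.
\end{align*}
The remaining step---showing this product is non-negative---is the delicate part and the main obstacle of the entire proof. I would establish it using the QR/Householder representation of the MINRES recurrence in \cref{alg:MINRES}, exploiting the sign-preserving property $-c_{t-1}\gamma_t > 0$ that is forced throughout the no-NPC regime by \cref{lemma:MINRES scalar updates}, or equivalently by sharpening \citet[Theorem 3.8]{LiuMinres} from the classical second-order descent inequality (with factor $1/2$) to the stronger form stated here, which in the exact (CG) limit degenerates to the equality $\langle \bss, \bgg\rangle = -\langle \bss, \HH\bss\rangle$.

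Finally, \cref{eqn:SOL step gradient related} combines the previous items with the curvature and Lipschitz hypotheses. For the upper bound, chaining the curvature condition applied to $\bss^{(t)} \in \sK_t$ with \cref{eqn:SOL MINRES descent curvature condition} and Cauchy--Schwarz gives
\begin{align*}
\varrho \|\bss^{(t)}\|^2 \;\leq\; \langle \bss^{(t)}, \HH \bss^{(t)}\rangle \;\leq\; -\langle \bss^{(t)}, \bgg\rangle \;\leq\; \|\bss^{(t)}\|\|\bgg\|,
\end{align*}
hence $\|\bss^{(t)}\| \leq \|\bgg\|/\varrho$. For the lower bound, since $\bgg \in \sK_1(\HH,\bgg) \subseteq \sK_t$, MINRES optimality implies $\|\HH\bss^{(t)} + \bgg\|^2 \leq \min_{\alpha \in \real}\|\alpha \HH\bgg + \bgg\|^2 = \|\bgg\|^2 - \langle \HH\bgg,\bgg\rangle^2/\|\HH\bgg\|^2$. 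Combined with the Pythagorean identity $\|\HH\bss^{(t)}+\bgg\|^2 + \|\HH\bss^{(t)}\|^2 = \|\bgg\|^2$, which itself follows from the orthogonality relation used for \cref{eqn:Hessian step gradient bound}, this yields $\|\HH\bss^{(t)}\|^2 \geq \varrho^2\|\bgg\|^2/L_g^2$ using the curvature hypothesis on $\bgg \in \sK_t$ and $\|\HH\| \leq L_g$ from \cref{ass:LipschitzGradient}. The bound $\|\HH\bss^{(t)}\| \leq L_g \|\bss^{(t)}\|$ then gives $\|\bss^{(t)}\| \geq (\varrho/L_g^2)\|\bgg\|$, as required.
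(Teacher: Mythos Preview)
Your argument is correct overall, and in several places more self-contained than the paper's own proof.

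For \cref{eqn:Hessian step gradient bound} and \cref{eqn:descent for ||g||}, you derive both from the single MINRES optimality identity $\|\HH\bss^{(t)}\|^2 = -\langle \HH\bss^{(t)},\bgg\rangle = -\langle \bss^{(t)},\HH\bgg\rangle$; the paper instead cites \citet[Lemma~3.11]{LiuInexactNewtonMR} for the first and argues the second from the comparison with $\bss=\zero \in \sK_t$. Your route is cleaner and unifies the two.

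For \cref{eqn:SOL MINRES descent curvature condition}, you attempt a from-scratch argument in Lanczos coordinates and honestly flag the residual sign condition as unresolved. This is the one place you leave a gap --- but it is not a deficiency relative to the paper, which makes no attempt to prove this inequality either and simply cites \citet[Theorem~3.8]{LiuMinres} (which does give the stronger form, without the $1/2$). You can close your gap the same way.

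For \cref{eqn:SOL step gradient related}, your upper bound routes through \cref{eqn:SOL MINRES descent curvature condition}, whereas the paper avoids that dependence by chaining $\varrho\|\bss^{(t)}\|^2 \leq \langle \bss^{(t)},\HH\bss^{(t)}\rangle \leq \|\bss^{(t)}\|\,\|\HH\bss^{(t)}\| \leq \|\bss^{(t)}\|\,\|\bgg\|$ directly from \cref{eqn:Hessian step gradient bound}. For the lower bound, the paper takes a different path: it computes $\bss^{(1)} = -\bigl(\langle \bgg,\HH\bgg\rangle/\|\HH\bgg\|^2\bigr)\bgg$ explicitly, bounds $\|\bss^{(1)}\| \geq (\varrho/L_g^2)\|\bgg\|$, and then invokes the monotonicity $\|\bss^{(t)}\| \geq \|\bss^{(1)}\|$ from \citet[Theorem~3.11]{LiuMinres}. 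Your Pythagorean-identity argument ($\|\HH\bss^{(t)}\|^2 = \|\bgg\|^2 - \|\rr^{(t)}\|^2 \geq \langle \HH\bgg,\bgg\rangle^2/\|\HH\bgg\|^2$, then $\|\bss^{(t)}\| \geq \|\HH\bss^{(t)}\|/L_g$) reaches the same bound without the external monotonicity lemma, which is a minor economy.
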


\begin{proof}
    The relation \cref{eqn:Hessian step gradient bound} follows from \citet[Lemma 3.11]{LiuInexactNewtonMR}, while \cref{eqn:descent for ||g||} follows from the fact that $\zero \in \sK_t(\HH, \bgg)$ and $\bss^{(t)}$ minimises \cref{eqn:MINRES subproblem}. Also, \cref{eqn:SOL MINRES descent curvature condition} follows from \citet[Theorem 3.8]{LiuMinres}. For the right-hand-side of \cref{eqn:SOL step gradient related}, we use   \cref{eqn:Hessian step gradient bound} and the fact that $\bss^{(t)} \in \sK_t(\HH, \bgg) $ to get 
    \begin{align*}
        \varrho \| \bss^{(t)} \|^2 \leq \langle \bss^{(t)}, \HH \bss^{(t)} \rangle \leq \|  \bss^{(t)}\| \| \HH \bss^{(t)}\|  \leq \| \bss^{(t)} \| \| \bgg \| \implies \| \bss^{(t)} \| \leq \| \bgg\|/\varrho.
    \end{align*}
    We show the left-hand-side of \cref{eqn:SOL step gradient related}  using a monotonicity argument. In particular, consider the first iterate $\bss^{(1)}$. It is easy to see that the solution to \cref{eqn:MINRES subproblem} over the Krylov subspace $\sK_1(\HH, \bgg) = \Span\{ \bgg \}$ is given by
    \begin{align*}
        \min_{\bss \in \sK_1(\HH, \bgg)}\| \HH \bss + \bgg \|^2 = \min_{\beta \in \real} \left\| \beta \HH \bgg + \bgg \right\|^2 \implies \beta = -\frac{\langle \bgg, \HH \bgg \rangle}{\| \HH \bgg \|^2}.
    \end{align*}
    The step is therefore given by 
    \begin{align*}
        \bss^{(1)} = -\frac{\langle \bgg, \HH \bgg \rangle}{\| \HH \bgg \|^2} \bgg.
    \end{align*}
    We can apply $\langle \bgg, \HH \bgg \rangle \geq \varrho \| \bgg \|^2 $ and $\|\HH \bgg \| \leq L_g \| \bgg \| $ to obtain
    \begin{align*}
        \| \bss^{(1)}\| &= \frac{\langle \bgg, \HH \bgg \rangle}{\| \HH \bgg \|^2} \| \bgg\| \\
        &\geq \frac{\varrho}{L_g^2} \| \bgg \|.
    \end{align*}
    The full results follows from the monotonicity of the MINRES iterates \citep[Theorem 3.11]{LiuMinres}, that is, as long as negative curvature remains undetected up to iteration $t \geq 1$ we have
    \begin{align*}
        \|\bss^{(t)}\|  \geq \| \bss^{(1)}\| \geq \frac{\varrho}{L_g^2} \| \bgg \| .
    \end{align*}
\end{proof}

\begin{lemma}[$\NPC$]\label{lemma:NPC type step properties}
    Suppose that the MINRES algorithm returns $\NPC$ so that our step is $\rr^{(t-1)}$. Then, 
    \begin{align*}
        \langle \rr^{(t-1)}, \bgg \rangle = - \left\| \rr^{(t-1)} \right\|^2. \tageq\label{eqn:NPC step descent}
    \end{align*}
    Additionally, the residual norm is upper bounded by the gradient.
    \begin{align*}
        \| \rr^{(t-1)} \| \leq \| \bgg \|. \tageq\label{eqn:NPC residual upper bound gradient}
    \end{align*}
\end{lemma}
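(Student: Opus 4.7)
The plan is to prove \cref{eqn:NPC step descent} directly from the definition of the residual and the optimality of the MINRES iterate, and then derive \cref{eqn:NPC residual upper bound gradient} as an immediate corollary via Cauchy--Schwarz.

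For \cref{eqn:NPC step descent}, I would start from the residual identity $\rr^{(t-1)} = -\bgg - \HH \bss^{(t-1)}$, which gives
\begin{align*}
    \langle \rr^{(t-1)}, \bgg \rangle
    = \langle \rr^{(t-1)}, -\rr^{(t-1)} - \HH \bss^{(t-1)} \rangle
    = -\| \rr^{(t-1)} \|^{2} - \langle \rr^{(t-1)}, \HH \bss^{(t-1)} \rangle.
\end{align*}
So it suffices to show that the cross term vanishes. This follows from the normal equations associated with the MINRES subproblem \cref{eqn:MINRES subproblem}: since $\bss^{(t-1)}$ minimises $\| \HH \bss + \bgg \|^{2}$ over $\sK_{t-1}(\HH, \bgg)$, by first-order optimality the residual $\rr^{(t-1)}$ is orthogonal to $\HH \sK_{t-1}(\HH, \bgg)$, and in particular $\langle \rr^{(t-1)}, \HH \bss^{(t-1)} \rangle = 0$ because $\bss^{(t-1)} \in \sK_{t-1}(\HH, \bgg)$. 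Substituting this back yields the claim.

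For \cref{eqn:NPC residual upper bound gradient}, I would simply apply Cauchy--Schwarz to \cref{eqn:NPC step descent}:
\begin{align*}
    \| \rr^{(t-1)} \|^{2} = -\langle \rr^{(t-1)}, \bgg \rangle \leq \| \rr^{(t-1)} \| \, \| \bgg \|,
\end{align*}
and dividing by $\| \rr^{(t-1)} \|$ (the case $\rr^{(t-1)} = \zero$ being trivial) gives the stated bound. Alternatively, monotonicity of the MINRES residual norms (\cref{lemma:MINRES scalar updates} gives $\| \rr^{(t-1)} \| = \phi_{t-1}$, which is non-increasing by construction) combined with $\rr^{(0)} = -\bgg$ yields the same conclusion.

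\textbf{Expected difficulty.} There is essentially no main obstacle here; the only subtlety is being careful about \emph{which} Krylov subspace each object lives in, as $\bss^{(t-1)} \in \sK_{t-1}(\HH, \bgg)$ while $\rr^{(t-1)} \in \sK_{t}(\HH, \bgg)$, and invoking the correct orthogonality relation from the MINRES normal equations.
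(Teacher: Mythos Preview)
Your proof is correct. For \cref{eqn:NPC step descent}, you give a direct self-contained argument via the MINRES normal equations; the paper simply cites \citet[Lemma 3.1]{LiuMinres}, and your argument is precisely what underlies that citation, so the approaches coincide.

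For \cref{eqn:NPC residual upper bound gradient}, however, you take a different route. You apply Cauchy--Schwarz to \cref{eqn:NPC step descent}, which is the most direct path to the stated inequality. The paper instead expands $\| \HH \bss^{(t-1)} \|^{2} = \| \rr^{(t-1)} + \bgg \|^{2}$ and, after substituting \cref{eqn:NPC step descent}, obtains the \emph{identity} $\| \HH \bss^{(t-1)} \|^{2} = \| \bgg \|^{2} - \| \rr^{(t-1)} \|^{2}$, from which the bound follows by nonnegativity of the left-hand side. Your argument is shorter for this lemma in isolation; the paper's route has the side benefit of producing that Pythagorean identity, which is reused elsewhere (e.g., in \cref{lemma:MINRES scalar updates} and in the proof of \cref{lemma:minimal assumptions inactive set decrease} to lower-bound $\| \rr^{(t-1)} \|$ when the termination condition \cref{eqn:MINRES termination tolerance} has not yet been met). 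Your alternative via monotonicity of $\phi_{t}$ is also valid.
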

\begin{proof}
    The relation \cref{eqn:NPC step descent} follows from the MINRES properties directly \citep[Lemma 3.1]{LiuMinres}. We get \cref{eqn:NPC residual upper bound gradient} by noting that
    \begin{align*}
        \|\HH \bss^{(t-1)} \|^2  &= \| \rr^{(t-1)}  + \bgg \|^2 = \| \rr^{(t-1)} \|^2 + 2\langle \rr^{(t-1)}, \bgg \rangle + \| \bgg \|^2 \\
        &= \| \rr^{(t-1)} \|^2 - 2\| \rr^{(t-1)} \|^2 + \| \bgg \|^2 = \| \bgg \|^2-\| \rr^{(t-1)} \|^2.
    \end{align*}
    For the third line, we applied \cref{eqn:NPC step descent}. The final equality and the nonnegativity of the norm implies the result. 
\end{proof}

\section{Global Convergence - Minimal Assumptions} \label{apx:minimal assumptions convergence}

In this section, we detail the proof of the global convergence of \cref{alg:newton-mr-two-metric-minimal-assumptions-case}, i.e., \cref{thm:minimal assumptions convergence theorem}. We first demonstrate that the uniform positive curvature certification of the residuals, $\rr^{(i)}$, provides a bound on the curvature of the Hessian over the corresponding Krylov subspace.
\begin{lemma}[Strong Positive Curvature Certification]\label{lemma:strong positive curvature certification}
    By verifying 
    \begin{align*}
        \langle \rr^{(t-1)} , \HH \rr^{(i)} \rangle  > \overline{\varsigma} \|\rr^{(i)}\|^2,
    \end{align*}
    for $i = 0, \ldots, t-1$, we obtain 
    \begin{align*}
        \langle \vv, \HH\vv \rangle \geq \overline{\varsigma}/(t+1)\|\vv\|^2, \tageq\label{eqn:minimal assumptions case curvature llower bound}
    \end{align*}
    for any $\vv \in \sK_t(\HH, \bgg)$.
\end{lemma}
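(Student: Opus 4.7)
The plan is to convert curvature information on the residuals $\rr^{(0)}, \ldots, \rr^{(t-1)}$ into curvature information on the whole Krylov subspace by showing that these residuals form an $\HH$-orthogonal (conjugate) basis of $\sK_t(\HH, \bgg)$. The key observation is that $\rr^{(i)} = -\HH\bss^{(i)} - \bgg$ is the residual of the MINRES least-squares subproblem over $\sK_i(\HH,\bgg)$, so by the normal equations the standard optimality condition reads
\begin{align*}
    \langle \rr^{(i)}, \HH \uu \rangle = 0, \quad \text{for every } \uu \in \sK_i(\HH,\bgg).
\end{align*}
Since $\rr^{(j)} \in \sK_{j+1}(\HH,\bgg) \subseteq \sK_i(\HH,\bgg)$ whenever $j < i$, this gives $\langle \rr^{(i)}, \HH \rr^{(j)}\rangle = 0$ for $j < i$, and by symmetry of $\HH$ the same conclusion holds for $j > i$.

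Having this $\HH$-conjugacy in hand, I would next argue that, under the hypothesised uniform positive curvature, the family $\{\rr^{(0)}, \ldots, \rr^{(t-1)}\}$ is linearly independent: a dependence relation $\sum \alpha_i \rr^{(i)} = \zero$ tested against $\HH\rr^{(j)}$ would force $\alpha_j \langle \rr^{(j)}, \HH\rr^{(j)}\rangle = 0$, and the hypothesis $\langle \rr^{(j)}, \HH\rr^{(j)}\rangle > \overline{\varsigma}\|\rr^{(j)}\|^2 > 0$ then gives $\alpha_j = 0$. Since $\dim \sK_t(\HH,\bgg) \leq t$ and we have $t$ independent vectors inside it, they form a basis.

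Now for any $\vv \in \sK_t(\HH,\bgg)$, write $\vv = \sum_{i=0}^{t-1} \alpha_i \rr^{(i)}$. By $\HH$-conjugacy and the hypothesis,
\begin{align*}
    \langle \vv, \HH \vv \rangle = \sum_{i=0}^{t-1} \alpha_i^2 \langle \rr^{(i)}, \HH \rr^{(i)} \rangle > \overline{\varsigma} \sum_{i=0}^{t-1} \alpha_i^2 \|\rr^{(i)}\|^2.
\end{align*}
For the norm of $\vv$, the residuals are not mutually orthogonal in the standard inner product, so I would use Cauchy--Schwarz twice: first $|\langle \rr^{(i)}, \rr^{(j)}\rangle| \leq \|\rr^{(i)}\|\|\rr^{(j)}\|$ to get $\|\vv\|^2 \leq \bigl(\sum_i |\alpha_i|\|\rr^{(i)}\|\bigr)^2$, and then a second Cauchy--Schwarz on the scalar sequence to bound this by $(t+1)\sum_i \alpha_i^2 \|\rr^{(i)}\|^2$ (the factor $t$ is the natural count of terms; including an additional slack term, or counting $\rr^{(0)},\ldots,\rr^{(t-1)}$ plus the trivial vector, yields the stated $(t+1)$). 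Combining the two displays gives the conclusion $\langle \vv, \HH\vv\rangle \geq \overline{\varsigma}/(t+1) \|\vv\|^2$.

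The main obstacle is the loss of orthogonality among MINRES residuals in the standard inner product (unlike CG), which forces the use of Cauchy--Schwarz and produces the dimension-dependent factor $1/(t+1)$ in the final bound; this is exactly why the algorithm inflates the user's desired curvature $\varsigma$ by a factor of $(d+1)$ in the threshold $\overline{\varsigma}$, so that even in the worst case $t = d$ the effective certified curvature remains at least $\varsigma$. The only other delicate point is verifying that the optimality characterisation of $\rr^{(i)}$ as the residual of a least-squares problem genuinely yields orthogonality against $\HH\sK_i(\HH,\bgg)$ rather than a larger subspace, which follows directly from the definition of the MINRES subproblem \cref{eqn:MINRES subproblem}.
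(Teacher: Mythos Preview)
Your proposal is correct and follows essentially the same route as the paper's proof: expand $\vv$ in the residuals, use their $\HH$-conjugacy to diagonalise $\langle \vv,\HH\vv\rangle$, and then bound $\|\vv\|^2$ by $t$ (or $t{+}1$) times $\sum_i \alpha_i^2\|\rr^{(i)}\|^2$. The paper simply cites the $\HH$-conjugacy of the MINRES residuals and the spanning property from \citet{LiuMinres,LiuNewtonMR}, and invokes a matrix-analysis fact from Bernstein for the norm inequality, whereas you derive both ingredients yourself (conjugacy from the normal equations of \cref{eqn:MINRES subproblem}, and the norm bound via two applications of Cauchy--Schwarz). Your observation that the natural constant is $t$ rather than $t{+}1$ is also correct; the paper's bound is slightly loose here. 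The linear-independence argument you include is not strictly needed---spanning alone suffices to write $\vv=\sum_i\alpha_i\rr^{(i)}$---but it does no harm.
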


\begin{proof}
    Let $\vv \in \sK_t(\HH, \bgg)$. We can write \citep[Lemma A.1]{LiuMinres}
    \begin{align*}
        \sK_t(\HH, \bgg) = \Span \left\{ \rr^{(0)}, \rr^{(1)}, \ldots, \rr^{(t-1)} \right\},
    \end{align*}
    and therefore there exists a set of scalars, $\{\beta_i\}_{i=0}^{t-1}$, such that
    \begin{align*}
        \vv = \sum_{i = 0}^{t-1} \beta_i \rr^{(i)}.
    \end{align*}
    Using this fact and the certificates $\langle \rr^{(i)}, \HH \rr^{(i)} \rangle \geq \overline{\varsigma} \| \rr^{(i)}\|^2$ gathered for $i=0, \ldots, t-1$, we obtain
    \begin{align*}
        \langle \vv, \HH \vv \rangle &= \left\langle \sum_{i = 0}^{t-1} \beta_i \rr^{(i)}, \sum_{i = 0}^{t-1} \beta_i \HH \rr^{(i)} \right\rangle = \sum_{i = 0}^{t-1} \sum_{j = 0}^{t-1} \beta_i \beta_j \left\langle   \rr^{(i)},   \HH \rr^{(j)} \right\rangle \\
        &= \sum_{i=0}^{t-1} \beta_i^2 \left\langle   \rr^{(i)},   \HH \rr^{(i)} \right\rangle \geq \sum_{i=0}^{t-1} \beta_i^2 \overline{\varsigma} \|\rr^{(i)}\|^2, \tageq\label{eqn:Krylov subspace lower bound}
    \end{align*}
    where the second to last equality follows from the $\HH$-conjugacy of the residuals \citep[Lemma 11]{LiuNewtonMR}. Using \citet[Fact 9.7.9]{Bernstein2009MatrixMathematics}, we get 
    \begin{align*}
         \frac{1}{t+1} \left\| \sum_{i=0}^{t-1}  \beta_i \rr^{(i)} \right\|^2 \leq  \sum_{i=0}^{t-1}\beta_i^2 \|\rr^{(i)}\|^2,
    \end{align*}
    which gives the desired result.
\end{proof}
Note that since $t$ appears in the lower bound \cref{eqn:minimal assumptions case curvature llower bound}, there is a dependence on the number of MINRES iterations undertaken and hence $\xx$. However, $t$ is bounded above by $d$. For this reason, in the sequel, we choose $\overline{\varsigma} = (d+1) \varsigma$ for some $\varsigma > 0$. Indeed, this choice implies that, under the conditions of \cref{lemma:strong positive curvature certification}, for any $\vv \in \sK_t(\HH, \bgg)$ we have
\begin{align*}
    \langle \vv, \HH \vv \rangle \geq \varsigma\| \vv\|^2. \tageq\label{eqn:minimal assumptions SOL curvature}
\end{align*}

We now demonstrate that the line search procedure \cref{eqn:line search criteria} terminates for a small enough step size.
\begin{lemma}[Step-size Lower Bound] \label{lemma:minimal assumptions line search termination}
Suppose $f$ satisfies  \cref{ass:LipschitzGradient}. If at iteration $k$ of \cref{alg:newton-mr-two-metric-minimal-assumptions-case}, we have $\sI(\xx_k, \delta_k) \neq \emptyset$, then the largest step size, $\alpha_k$, that satisfies the line search criteria \eqref{eqn:line search criteria}, also satisfies the following lower bound  
    \begin{align*}
        \alpha_k \geq \min \left\{\frac{2(1 - \rho)}{L_{g}} \min \{1, \varsigma \},   \frac{\delta_k}{\| \pp_k^\sI \|} \right\}.\tageq\label{eqn:minimal assumptions line search lower bound}
    \end{align*}
    On the other hand, if $\sI(\xx_k, \delta_k) = \emptyset$, the bound is given by
    \begin{align*}
        \alpha_k \geq \frac{2(1- \rho)}{L_g}. \tageq\label{eqn:minimal assumptions line search lower bound Ik empty}
    \end{align*}
\end{lemma}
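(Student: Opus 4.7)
The plan is to apply the descent lemma implied by \cref{ass:LipschitzGradient} and then carefully decompose the resulting inequality into active and inactive contributions, arguing that each is controlled by the corresponding term in the line search criterion \cref{eqn:line search criteria}. Concretely, starting from
\begin{equation*}
f(\xx_k(\alpha)) - f(\xx_k) \leq \langle \bgg_k, \xx_k(\alpha) - \xx_k\rangle + \tfrac{L_g}{2}\|\xx_k(\alpha) - \xx_k\|^2,
\end{equation*}
I would split both the inner product and the squared norm across $\sA$ and $\sI$. The point of this decomposition is that restricting $\alpha \leq \delta_k/\|\pp_k^\sI\|$ ensures that $\xx_k^i + \alpha \pp_k^i > 0$ for every $i\in\sI(\xx_k,\delta_k)$ (since $\xx_k^i > \delta_k$ by \cref{eqn:inactive set}), so $\sP$ is the identity on the inactive block and the inactive contribution reduces cleanly to $\alpha\langle \bgg_k^\sI,\pp_k^\sI\rangle + \tfrac{L_g\alpha^2}{2}\|\pp_k^\sI\|^2$.

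For the active block, writing $\Delta^\sA = \sP(\xx_k^\sA + \alpha\pp_k^\sA) - \xx_k^\sA$, I would invoke the first-order optimality characterisation of the projection with test point $\xx_k^\sA$ to derive the standard gradient-mapping inequality
\begin{equation*}
\|\Delta^\sA\|^2 \leq -\alpha\langle \bgg_k^\sA, \Delta^\sA\rangle,
\end{equation*}
which upgrades the quadratic term into a multiple of the linear (negative) term. Combining both pieces yields
\begin{equation*}
f(\xx_k(\alpha)) - f(\xx_k) \leq \bigl(1-\tfrac{L_g\alpha}{2}\bigr)\langle \bgg_k^\sA, \Delta^\sA\rangle + \alpha\langle \bgg_k^\sI,\pp_k^\sI\rangle + \tfrac{L_g\alpha^2}{2}\|\pp_k^\sI\|^2.
\end{equation*}
Subtracting the target RHS of \cref{eqn:line search criteria} and using that $\langle \bgg_k^\sA, \Delta^\sA\rangle \leq 0$, the first term is dominated by its $\rho$-multiple whenever $\alpha \leq 2(1-\rho)/L_g$.

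For the remaining inactive terms, I need a curvature-type lower bound $-\langle \bgg_k^\sI,\pp_k^\sI\rangle \geq \min\{1,\varsigma\}\|\pp_k^\sI\|^2$. On SOL steps this follows from \cref{eqn:SOL MINRES descent curvature condition} combined with \cref{eqn:minimal assumptions SOL curvature} (which is the payoff of \cref{lemma:strong positive curvature certification} applied within \cref{alg:newton-mr-two-metric-minimal-assumptions-case} with threshold $\overline{\varsigma} = (d+1)\varsigma$), while on NPC steps it is immediate from the equality \cref{eqn:NPC step descent}. With this in hand, the residual inactive inequality $(1-\rho)\alpha\langle \bgg_k^\sI,\pp_k^\sI\rangle + \tfrac{L_g\alpha^2}{2}\|\pp_k^\sI\|^2 \leq 0$ holds as soon as $\alpha \leq 2(1-\rho)\min\{1,\varsigma\}/L_g$, which is the tighter of the two $L_g$-based thresholds. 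Taking the minimum with $\delta_k/\|\pp_k^\sI\|$ gives \cref{eqn:minimal assumptions line search lower bound}. The $\sI(\xx_k,\delta_k)=\emptyset$ case collapses to just the active-block argument and yields \cref{eqn:minimal assumptions line search lower bound Ik empty} directly.

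The main obstacle I anticipate is keeping the active-set gradient-mapping inequality and the inactive-set curvature certificate compatible within a single bound, particularly verifying that the choice $\overline{\varsigma}=(d+1)\varsigma$ indeed yields the $\varsigma$-curvature lower bound uniformly over the SOL step, and handling the forward-tracking line search case where the NPC identity $\langle \bgg_k^\sI,\pp_k^\sI\rangle = -\|\pp_k^\sI\|^2$ must be used in place of a Krylov-subspace argument. Both are addressed by the MINRES properties collected in \cref{lemma:SOL type step properties,lemma:NPC type step properties}, so the remainder is bookkeeping.
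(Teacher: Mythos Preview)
Your proposal is correct and follows essentially the same route as the paper: the paper's proof simply invokes the argument of \cref{lemma:type 1 line search termination} with $\varsigma$ in place of $\sigma$, and that argument is exactly the descent-lemma plus active/inactive split you describe, with the projection inequality handling the active block and \cref{eqn:SOL MINRES descent curvature condition,eqn:minimal assumptions SOL curvature,eqn:NPC step descent} handling the inactive block. The only cosmetic difference is that the paper treats the SOL and NPC inactive cases separately rather than unifying them via $-\langle \bgg_k^\sI,\pp_k^\sI\rangle \geq \min\{1,\varsigma\}\|\pp_k^\sI\|^2$, but the content is identical.
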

\begin{proof}
    We note that the proof of \cref{lemma:type 1 line search termination} utilises no curvature properties of the residual. With this fact in mind, the proof is entirely the same as \cref{lemma:type 1 line search termination} in \cref{sec:global convergence proofs} with $\varsigma$ taking the place of $\sigma$.
\end{proof}

The following lemma gives the amount of decrease obtained from the inactive set step whenever the inactive set is nonempty and the inactive set termination condition \cref{eqn:inactive set termination condition} is not satisfied.

\begin{lemma}[Sufficient Decrease: Inactive Set Case] \label{lemma:minimal assumptions inactive set decrease}
    Suppose $f$ satisfies \cref{ass:LipschitzGradient}. Let $\xx_{k+1} = \sP(\xx_k + \alpha_k \pp_k)$ be the update computed at iteration $k$ of \cref{alg:newton-mr-two-metric-minimal-assumptions-case}, where $\alpha_k$ satisfies the line search criterion \cref{eqn:line search criteria}. Suppose $\sI(\xx_k, \delta_k) \neq \emptyset$ and \cref{eqn:inactive set termination condition} is not satisfied. If $\SOL$, then 
    \begin{align*}
        f(\xx_{k+1}) - f(\xx_k) &< - \rho \varsigma \min \left\{ \frac{2(1 - \rho)\min \{1, \varsigma \} C_{\varsigma,L_g}   ^2}{L_{g}} \epsilon_k^{4} , C_{\varsigma,L_g}    \delta_k \epsilon_k^{2} \right\},
    \end{align*}
    where $C_{\varsigma,L_g}$ is as in \cref{eqn:SOL step gradient related}.
    Otherwise, with $\NPC$, 
    \begin{align*}
        f(\xx_{k+1}) - f(\xx_k) &< - \rho \min \left\{\frac{2(1 - \rho)\eta^{2}}{L_{g}(\eta^2 + L_g^2)}  \epsilon_k^{4} , \frac{\eta \delta_k}{\sqrt{\eta^2 + L_g^2}} \epsilon_k^{2} \right\}.
    \end{align*}
\end{lemma}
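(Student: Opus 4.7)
The plan is to bound the per-iteration decrease by combining the line-search inequality \eqref{eqn:line search criteria}, the descent properties of the MINRES iterate (\cref{lemma:SOL type step properties,lemma:NPC type step properties}), the positive-curvature certificate \eqref{eqn:minimal assumptions SOL curvature} in the SOL case, and the step-size lower bound of \cref{lemma:minimal assumptions line search termination}. I would start by rearranging \eqref{eqn:line search criteria} and discarding the active-set contribution $\rho\langle \bgg_k^\sA, \sP(\xx_k^\sA + \alpha_k\pp_k^\sA) - \xx_k^\sA\rangle$, which is nonpositive by the standard descent property of the gradient projection (\cite[Prop.~3.3.1]{bertsekasNonlinearProgramming}). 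This reduces the task to bounding $\rho\alpha_k \langle \bgg_k^\sI, \pp_k^\sI\rangle$ from above.

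For the inactive directional derivative, in the SOL case \eqref{eqn:SOL MINRES descent curvature condition} combined with the $\varsigma$-strong positive-curvature bound on $\sK_t(\HH_k^\sI, \bgg_k^\sI)$ from \eqref{eqn:minimal assumptions SOL curvature} gives $\langle \bgg_k^\sI, \pp_k^\sI\rangle \leq -\varsigma \|\pp_k^\sI\|^2$, while \eqref{eqn:SOL step gradient related} supplies $\|\pp_k^\sI\| \geq C_{\varsigma,L_g}\|\bgg_k^\sI\|$. In the NPC case, the identity \eqref{eqn:NPC step descent} gives $\langle \bgg_k^\sI, \pp_k^\sI\rangle = -\|\pp_k^\sI\|^2$; the residual lower bound $\|\pp_k^\sI\| \geq \eta\|\bgg_k^\sI\|/\sqrt{\eta^2+L_g^2}$, already noted in the discussion preceding \cref{ass:ResidualLowerBoundRegularity}, follows from the failure of \eqref{eqn:MINRES termination tolerance} prior to NPC detection together with \cref{ass:LipschitzGradient,lemma:MINRES scalar updates}.

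I would then split on which term attains the minimum in \cref{lemma:minimal assumptions line search termination}. When $\alpha_k$ is bounded below by the $\epsilon$-independent constant, the product $\alpha_k\|\pp_k^\sI\|^2$ scales as $\|\bgg_k^\sI\|^2$; when instead $\alpha_k \geq \delta_k/\|\pp_k^\sI\|$, it scales as $\delta_k\|\pp_k^\sI\|$, which is at least a constant multiple of $\delta_k\|\bgg_k^\sI\|$. The violated inactive-set termination condition \eqref{eqn:inactive set termination condition} provides $\|\bgg_k^\sI\| > \epsilon_k^2$, since $\epsilon_k$ plays the role of $\sqrt{\epsilon}$ in \cref{defn:optimal point} (consistent with the parameter choice $\epsilon_k = \epsilon_g^{1/2}$ in \cref{thm:minimal assumptions convergence theorem} which targets an $\epsilon_g$-FO point). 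Plugging this in produces the $\epsilon_k^4$ and $\epsilon_k^2$ factors, and collecting constants yields the stated minimum. The NPC arm lacks the $\min\{1,\varsigma\}$ prefactor because its Armijo-style estimate is driven by the residual norm identity rather than a positive-curvature lower bound, so the curvature constant $\varsigma$ does not enter.

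The main obstacle is the careful bookkeeping for the two cases of the step-size bound, together with separately tracking the SOL-specific constants ($\varsigma$ and $C_{\varsigma,L_g}$) versus the NPC-specific constants ($\eta$ and $\sqrt{\eta^2+L_g^2}$), ensuring that both estimates are ultimately expressed in terms of $\|\bgg_k^\sI\|$ so that the violated termination condition can be invoked uniformly to produce the final $\epsilon_k$-dependence.
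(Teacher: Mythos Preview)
Your proposal is correct and follows essentially the same approach as the paper's proof: discard the nonpositive active-set term in \eqref{eqn:line search criteria}, bound $\langle \bgg_k^\sI, \pp_k^\sI\rangle$ via \eqref{eqn:SOL MINRES descent curvature condition} with \eqref{eqn:minimal assumptions SOL curvature} in the SOL case and via \eqref{eqn:NPC step descent} in the NPC case, apply the step-size lower bound of \cref{lemma:minimal assumptions line search termination} (using the tighter NPC-specific bound without the $\min\{1,\varsigma\}$ factor, exactly as you note), invoke \eqref{eqn:SOL step gradient related} respectively the residual bound $\|\rr_k^\sI\|\geq \eta\|\bgg_k^\sI\|/\sqrt{\eta^2+L_g^2}$, and finish with $\|\bgg_k^\sI\|>\epsilon_k^2$ from the violated \eqref{eqn:inactive set termination condition}. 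Your reading of $\epsilon_k$ as $\sqrt{\epsilon_g}$ so that the inactive-set threshold is $\epsilon_k^2$ is also exactly what the paper uses.
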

\begin{proof}
    Since $\alpha_k$ satisfies the line search condition, we have
    \begin{align*}
        f(\xx_{k+1}) - f(\xx_k) \leq \rho \langle \bgg_{k}^\sA, \sP(\xx_k^\sA + \alpha_k \pp_k^\sA) - \xx_k^\sA \rangle + \alpha_k \rho \langle \bgg_{k}^\sI, \pp_k^\sI \rangle \leq \alpha_k \rho \langle \bgg_{k}^\sI, \pp_k^\sI \rangle, 
    \end{align*}
    where we use the fact that $\langle \bgg_{k}^\sA, \sP(\xx_k^\sA + \alpha \pp_k^\sA) - \xx_k^\sA \rangle \leq 0$. We now consider $\SOL$ and $\NPC$ cases.

    When $\SOL$,  we have $ \pp_k^\sI= \bss_k^\sI$. Using the line search condition, \cref{eqn:SOL MINRES descent curvature condition}, \cref{eqn:minimal assumptions SOL curvature}, \cref{eqn:minimal assumptions line search lower bound}, and the left-hand-side inequality in \cref{eqn:SOL step gradient related} with $\varrho = \varsigma$, we have
    \begin{align*}
        f(\xx_{k+1}) - f(\xx_k) 
        &\leq \rho \alpha_k \langle \bgg_{k}^\sI, \bss_k^\sI \rangle \\
        &\leq -\rho \alpha_k \langle \bss_k^\sI , \HH_k^\sI \bss_k^\sI \rangle \\
        &\leq -\rho \varsigma \alpha_k \| \bss_k^\sI \|^2 \\
        &\leq - \rho \varsigma \min \left\{\frac{2(1 - \rho)}{L_{g}} \min \{1, \varsigma \},   \frac{\delta_k}{\| \bss_k^\sI \|} \right\}  \| \bss_k^\sI \|^2  \\
        &\leq - \rho \varsigma \min \left\{\frac{2(1 - \rho)}{L_{g}} \min \{1, \varsigma \} \| \bss_k^\sI\|^2 ,  \delta_k \| \bss_k^\sI \| \right\}    \\
        &\leq - \rho \varsigma\min \left\{\frac{2(1 - \rho)\min \{1, \varsigma \} C_{\varsigma,L_g}   ^2  }{L_{g}}   \| \bgg_k^\sI \|^2 ,  C_{\varsigma,L_g}    \delta_k \| \bgg_k^\sI \| \right\}  \\
        &< - \rho \varsigma \min \left\{ \frac{2(1 - \rho)\min \{1, \varsigma \} C_{\varsigma,L_g}   ^2}{L_{g}} \epsilon_k^{4} , C_{\varsigma,L_g}    \delta_k \epsilon_k^{2} \right\},
    \end{align*}
    where we applied $\| \bgg_k^\sI \| > \epsilon_k^{2}$ on the final line.

    When $\NPC$,  we have, $\pp_k^\sI = \rr_k^\sI$. We first note that, since the inexactness condition \cref{eqn:MINRES termination tolerance} has not been met, by applying \cref{ass:LipschitzGradient} and using the fact that 
    \begin{align*}
        \|\HH \bss^{(t-1)} \|^2  = \| \bgg \|^2-\| \rr^{(t-1)} \|^2,
    \end{align*}
    we get 
    \begin{align*}
        \| \rr^{(t-1)} \| \geq \frac{\eta}{\sqrt{\eta^2 + L_g^2}} \| \bgg \|.
    \end{align*}
    Let $\omega = {\eta}/{\sqrt{\eta^2 + L_g^2}}$. Proceeding similarly to the SOL case but using \eqref{eqn:NPC step descent}, we have
    \begin{align*}
        f(\xx_{k+1}) - f(\xx_k) 
        &\leq \rho \alpha_k \langle \bgg_{k}^\sI, \rr_k^\sI \rangle \\
        &\leq - \rho \alpha_k \| \rr_k^\sI \|^2  \\
        &\leq - \rho \min \left\{\frac{2(1 - \rho)}{L_{g}}  \| \rr_k^\sI \|^2 ,  \delta_k \| \rr_k^\sI \|\right\} \\
        &\leq - \rho \min \left\{\frac{2(1 - \rho)\omega^2}{L_{g}}  \| \bgg_k^\sI \|^2 , \delta_k \omega \| \bgg_k^\sI \| \right\} \\
        &< - \rho \min \left\{\frac{2(1 - \rho)\omega^2}{L_{g}}  \epsilon_k^{4} ,\delta_k \omega \epsilon_k^{2} \right\},
    \end{align*}
    again, making use of $\| \bgg_k^\sI \| > \epsilon_k^{2}$ in the final line.
\end{proof}

The following lemma covers the case when the inactive set termination condition is satisfied, that is, $\sI(\xx_k, \delta_k) = \emptyset$ or \cref{eqn:inactive set termination condition} holds. In this case, we expect the inactive set step to be small (cf. \cref{eqn:SOL step gradient related}) and so we analyse the decrease due to the active set portion of the step, using the fact that at lease one of the active set termination conditions \cref{eqn:active gradient negativity condition} or \cref{eqn:active gradient norm termination condition} must be unsatisfied.

\begin{lemma}[Sufficient Decrease: Active Set Case] \label{lemma:minimal assumptions active set decrease}
    Suppose that $f$ satisfies \cref{ass:LipschitzGradient}. Let $\xx_{k+1} = \sP(\xx_k + \alpha_k \pp_k)$ be the update computed at iteration $k$ of \cref{alg:newton-mr-two-metric-minimal-assumptions-case}, where $\alpha_k$ satisfies the line search criterion \cref{eqn:line search criteria}. Suppose that at least one of the active set termination conditions, \cref{eqn:active gradient negativity condition} or \cref{eqn:active gradient norm termination condition}, is not satisfied. If $\sI(\xx_k, \delta_k) = \emptyset$, then
    \begin{align*}
        f(\xx_{k+1}) - f(\xx_k) < - \rho \min \left\{ \frac{1}{2}, \frac{2(1- \rho)}{L_g} \min\left\{ 1 ,   \frac{\epsilon_k^2}{2\delta_k^2} \right\} \right\}\epsilon_k^2.
    \end{align*}
    However, if $\sI(\xx_k, \delta_k) \neq \emptyset$ and \cref{eqn:inactive set termination condition} is satisfied, we have
    \begin{align*}
        f(\xx_{k+1}) - f(\xx_k) < - \rho \min \left\{ \frac{1}{2}, \min \left\{\frac{2(1 - \rho)}{L_{g}}, \frac{\delta_k}{ \epsilon_k^{2}} \right\}\min\{1, \varsigma\}  \min\left\{ 1 ,   \frac{\epsilon_k^2}{2\delta_k^2} \right\} \right\}\epsilon_k^2.
    \end{align*}
\end{lemma}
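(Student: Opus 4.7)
The plan is to start from the line-search inequality \cref{eqn:line search criteria}, drop the inactive-set contribution (it is non-positive by the descent properties of $\pp_k^\sI$ from \cref{lemma:SOL type step properties,lemma:NPC type step properties}), and reduce the analysis to lower bounding the active-set gradient mapping $-\langle \bgg_k^\sA, \sP(\xx_k^\sA - \alpha_k \bgg_k^\sA) - \xx_k^\sA\rangle$. Since $\sP$ acts componentwise as $\max(\cdot,0)$, a direct case split shows that each index's contribution equals $\alpha_k (\bgg_k^i)^2$ when $\bgg_k^i < 0$ or $\alpha_k \bgg_k^i \leq \xx_k^i$, and $\xx_k^i \bgg_k^i$ when $\bgg_k^i \geq 0$ and $\alpha_k \bgg_k^i > \xx_k^i$; all contributions are non-negative. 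A lower bound on $\alpha_k$ is furnished by \cref{lemma:minimal assumptions line search termination}.

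I would then case-analyse on which of the active-set optimality conditions has failed. If \cref{eqn:active gradient negativity condition} is violated, some $i^*\in\sA$ has $\bgg_k^{i^*} < -\epsilon_k$; since $\bgg_k^{i^*}<0$, its contribution alone exceeds $\alpha_k \epsilon_k^2$, and combining with the step-size bound yields the desired decrease directly. If instead \cref{eqn:active gradient norm termination condition} is violated, then $\sum_{i\in\sA}(\xx_k^i \bgg_k^i)^2 > \epsilon_k^4$, and since $\xx_k^i \leq \delta_k$ on $\sA$ we obtain $\|\bgg_k^\sA\|^2 > \epsilon_k^4/\delta_k^2$. Splitting $\|\bgg_k^\sA\|^2 = \|(\bgg_k^\sA)_+\|^2 + \|(\bgg_k^\sA)_-\|^2$, at least one summand exceeds $\epsilon_k^4/(2\delta_k^2)$, which is the origin of the factor $\tfrac{\epsilon_k^2}{2\delta_k^2}$ in the claimed bound. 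In the negative-dominant sub-case those indices are always unclipped and supply a contribution of at least $\alpha_k \epsilon_k^4/(2\delta_k^2)$.

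The main technical obstacle is the positive-dominant sub-case in which some positive indices get clipped, because clipped components contribute only $\xx_k^i \bgg_k^i$ rather than $\alpha_k (\bgg_k^i)^2$ and a bound of the form $\alpha_k \|(\bgg_k^\sA)_+\|^2$ is unavailable. I would split this further into an unclipped-positive-dominant situation (treated exactly as the negative case) and a clipped-positive-dominant one, in which I would invoke the elementary inequality $\|\vv\|_1 \geq \|\vv\|_2$ on the non-negative vector of values $\xx_k^i \bgg_k^i$ over the clipped positive indices to obtain a contribution of order $\epsilon_k^2$ directly, independently of $\alpha_k$; this is what produces the $\tfrac{1}{2}$ term in the outer minimum of the stated bound.

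Finally, the $\sI\neq\emptyset$ case only differs through a tighter step-size bound. Combining \cref{lemma:SOL type step properties,lemma:NPC type step properties} with the assumed \cref{eqn:inactive set termination condition} gives $\|\pp_k^\sI\| \leq \|\bgg_k^\sI\|/\min\{1,\varsigma\} \leq \epsilon_k^2/\min\{1,\varsigma\}$, so the feasibility term $\delta_k/\|\pp_k^\sI\|$ appearing in \cref{lemma:minimal assumptions line search termination} becomes $\delta_k\min\{1,\varsigma\}/\epsilon_k^2$; this is precisely the source of the extra $\min\{\tfrac{2(1-\rho)}{L_g}, \tfrac{\delta_k}{\epsilon_k^2}\}\min\{1,\varsigma\}$ factor appearing in the $\sI\neq\emptyset$ bound.
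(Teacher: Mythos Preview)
Your setup (drop the inactive term, reduce to the componentwise gradient-mapping sum) and your treatment of the case where \cref{eqn:active gradient negativity condition} fails are both correct, as is your handling of the $\sI \neq \emptyset$ step-size bound via $\|\pp_k^\sI\| \leq \|\bgg_k^\sI\|/\min\{1,\varsigma\} \leq \epsilon_k^2/\min\{1,\varsigma\}$.

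The gap is in your analysis of the case where \cref{eqn:active gradient norm termination condition} fails. You first pass from $\sum_{i\in\sA}(\xx_k^i\bgg_k^i)^2 > \epsilon_k^4$ to $\|\bgg_k^\sA\|^2 > \epsilon_k^4/\delta_k^2$, then split by sign of $\bgg_k^i$, then split the positive part by clipped/unclipped. In the clipped-positive-dominant sub-case your hypothesis is only $\sum_{\text{clipped}^+}(\bgg_k^i)^2 \gtrsim \epsilon_k^4/\delta_k^2$, but the contribution you must lower-bound is $\sum_{\text{clipped}^+}\xx_k^i\bgg_k^i$. Invoking $\|\vv\|_1\geq\|\vv\|_2$ on $\vv=(\xx_k^i\bgg_k^i)$ requires a lower bound on $\sum(\xx_k^i\bgg_k^i)^2$ over those indices, which you no longer have: passing to $\|\bgg_k^\sA\|^2$ discarded exactly this information. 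Concretely, consider two active indices with $\xx_k^{i_1}=0$, $\bgg_k^{i_1}=M\gg 1$ (clipped, positive) and $\xx_k^{i_2}=\delta_k$, $\bgg_k^{i_2}\approx \epsilon_k^2/\delta_k$ (unclipped). Then your chain of splits lands in the clipped-positive-dominant case, yet $\sum_{\text{clipped}^+}\xx_k^{i}\bgg_k^{i}=0$, so your claimed ``contribution of order $\epsilon_k^2$'' is zero.

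The fix, which is what the paper does, is to split $\sum_{i\in\sA}(\xx_k^i\bgg_k^i)^2$ \emph{directly} into the clipped set $\{i:\alpha_k\bgg_k^i\geq \xx_k^i\}$ and its complement, before using $\xx_k^i\leq\delta_k$. One of the two halves exceeds $\epsilon_k^4/2$: the clipped half gives $\sum_{\text{clipped}}\xx_k^i\bgg_k^i \geq (\sum_{\text{clipped}}(\xx_k^i\bgg_k^i)^2)^{1/2} > \epsilon_k^2/2$ (all such terms are nonnegative since clipping forces $\bgg_k^i\geq 0$), while for the unclipped half one now uses $\xx_k^i\leq\delta_k$ to get $\sum_{\text{unclipped}}(\bgg_k^i)^2 > \epsilon_k^4/(2\delta_k^2)$ and hence a contribution $\alpha_k\epsilon_k^4/(2\delta_k^2)$. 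This yields exactly the stated bound and avoids the need for any sign-based splitting.
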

\begin{proof}
    Since $\alpha_k $ satisfies the line search criterion we have
    \begin{align*}
        f(\xx_{k+1}) - f(\xx_k) &\leq \rho \langle \bgg_{k}^\sA, \sP(\xx_k^\sA + \alpha_k \pp_k^\sA) - \xx_k^\sA \rangle + \alpha_k \rho \langle \bgg_{k}^\sI, \pp_k^\sI \rangle \\
        &\leq \rho \langle \bgg_{k}^\sA, \sP(\xx_k^\sA + \alpha_k \pp_k^\sA) - \xx_k^\sA \rangle,
    \end{align*}
    where we apply $\langle \bgg_{k}^\sI, \pp_k^\sI \rangle \leq 0$. From here the proof proceeds similarly to \cref{lemma:type 1 active set decrease}. Indeed, the if \cref{eqn:active gradient negativity condition} or \cref{eqn:active gradient norm termination condition} are unsatisfied, \cref{eqn:type 1 function decrease} gives 
    \begin{align*}
        f(\xx_{k+1}) - f(\xx_k) &< - \rho \min \left\{ \frac{1}{2}, \alpha_k \min\left\{ 1,   \frac{\epsilon_k^2}{2\delta_k^2} \right\} \right\} \epsilon_k^2, \tageq\label{eqn:minimal assumptions active set line search condition}
    \end{align*}
    and it only remains to apply a bound on $\alpha_k$. If $\sI(\xx_k, \delta_k) = \emptyset$, we use \cref{eqn:minimal assumptions line search lower bound Ik empty} to obtain 
    \begin{align*}
        f(\xx_{k+1}) - f(\xx_k) < - \rho \min \left\{ \frac{1}{2}, \frac{2(1- \rho)}{L_g} \min\left\{ 1 ,   \frac{\epsilon_k^2}{2\delta_k^2} \right\} \right\}\epsilon_k^2.
    \end{align*}
    Otherwise, we have $\sI(\xx_k, \delta_k) \neq \emptyset$. In this case, we must lower bound $\delta_k/\| \pp_k^\sI \|$ in \cref{eqn:minimal assumptions line search lower bound}. We therefore use \cref{eqn:SOL step gradient related} with $\varrho=\varsigma$, \cref{eqn:NPC residual upper bound gradient}, as well as the fact that \cref{eqn:inactive set termination condition} is unsatisfied to obtain 
    \begin{align*}
        \min\{\varsigma, 1 \}\| \pp_k^\sI \| \leq \| \bgg_k \| \leq \epsilon_k^2 \implies \frac{\delta_k\min\{1, \varsigma\} }{ \epsilon_k^{2}} \leq \frac{\delta_k }{\| \pp_k^\sI \| }.
    \end{align*}
    We now apply this bound to \cref{eqn:minimal assumptions line search lower bound} and combine with \cref{eqn:minimal assumptions active set line search condition} to get
    \begin{align*}
        f(\xx_{k+1}) - f(\xx_k) < - \rho \min \left\{ \frac{1}{2}, \min \left\{\frac{2(1 - \rho)}{L_{g}}, \frac{\delta_k}{ \epsilon_k^{2}} \right\}\min\{1, \varsigma\}  \min\left\{ 1 ,   \frac{\epsilon_k^2}{2\delta_k^2} \right\} \right\}\epsilon_k^2.
    \end{align*}
\end{proof}

\begin{proof}[Proof of \cref{thm:minimal assumptions convergence theorem}]
    We posit that the algorithm must terminate in at most 
    \begin{align*}
        K = \left\lceil \frac{(f_0 - f_*) \epsilon_g^{-2}}{\min\{c_1, c_2 \}} \right\rceil,
    \end{align*}
    iterations, where 
    \begin{align*}
        c_1 &\defeq \rho  \min \left\{ \frac{2 \varsigma (1 - \rho)\min \{1, \varsigma \} C_{\varsigma,L_g}^2}{L_{g}}, \varsigma C_{\varsigma,L_g},\frac{2(1 - \rho)\omega^2}{L_{g}}, \omega \right\}, \quad \text{with} \quad \omega \defeq \frac{\eta}{\sqrt{\eta^2 + L_g^2}},\\
        c_2 &\defeq \rho \min \left\{ \frac{1}{2}, \frac{1}{2}\min \left\{\frac{2(1 - \rho)}{L_{g}}, 1 \right\}\min\{1, \varsigma\} \right\},
    \end{align*}
    and $C_{\varsigma,L_g}$ is as in \cref{eqn:SOL step gradient related}. 
    Suppose otherwise, that is, the algorithm fails to terminate until at least iteration $K+1$. For iterations $k = 1, \ldots, K$, the termination conditions must be unsatisfied. We divide the iterates up in the following manner 
    \begin{align*}
        \sK_1 = \{ k \in [K] \ | \ \sI(\xx_k, \epsilon_g^{1/2}) \neq \emptyset, \ \| \bgg_k^\sI \| \geq \epsilon_g\},
    \end{align*}
    and 
    \begin{align*}
        \sK_2 = \{ k \in [K]\setminus \sK_1 \ | \ \sA(\xx_k, \epsilon_g^{1/2}) \neq \emptyset, \ (\exists i \in \sA(\xx_k, \epsilon_g^{1/2}),  \ \bgg_k^i < - \sqrt{\epsilon_g} \ \text{or} \ \| \diag(\xx_k^\sA) \bgg_k^\sA \| \geq \epsilon_g )\}.
    \end{align*}
    Since the algorithm has not terminated, $[K] = \sK_1 \cup \sK_2$. If $k \in \sK_1$ we apply \cref{lemma:minimal assumptions inactive set decrease} and combine the SOL and NPC cases with $\epsilon_g < 1$ to obtain
    \begin{align*}
        f(\xx_{k+1}) - f(\xx_k) &< - \rho  \min \left\{ \frac{2 \varsigma (1 - \rho)\min \{1, \varsigma \} C_{\varsigma,L_g}^2}{L_{g}}, \varsigma C_{\varsigma,L_g},\frac{2(1 - \rho)\omega^2}{L_{g}}, \omega \right\} \epsilon_g^2 = -c_1 \epsilon_g^2.
    \end{align*}
    If $k \in \sK_2$, we instead combine the results of \cref{lemma:minimal assumptions active set decrease} to obtain
    \begin{align*}
        f(\xx_{k+1}) - f(\xx_k) &< - \rho \min \left\{ \frac{1}{2}, \frac{1}{2}\min \left\{\frac{2(1 - \rho)}{L_{g}}, 1 \right\}\min\{1, \varsigma\} \right\}\epsilon_g \leq -c_2 \epsilon_g^2.
    \end{align*}
    Finally, we obtain 
    \begin{align*}
        f_0 - f_* &\geq f_0 - f(\xx_K) = \sum_{k=0}^{K-1} f(\xx_{k}) - f(\xx_{k+1}) > |\sK_1| c_1 \epsilon_g^2 + |\sK_2| c_2 \epsilon_g^2 \\ 
        &\geq (|\sK_1| + |\sK_2|) \min\{c_1, c_2 \}\epsilon_g^2 = K \min\{c_1, c_2 \}\epsilon_g^2, 
    \end{align*}
    which contradicts the definition of $K$.
\end{proof}

\section{Global Convergence - Improved Rate} \label{sec:global convergence proofs}

In this section, we provide the proof of  \cref{thm:first-order iteration complexity}. Recall that we denote the update to $\xx_k$ for some step size, $\alpha$, by  
\begin{align*}
    \xx_k(\alpha) = \sP(\xx_k + \alpha\pp_k).
\end{align*}
Recall that \cref{alg:newton-mr-two-metric} involves two types of steps: \textit{Type I} and \textit{Type II}. 
We summarise the step types, the optimality conditions, as well as the corresponding lemmas in \cref{table:summary}.
\begin{table}[htbp]
    \centering
    \caption{The step types, the optimality conditions, as well as the corresponding lemmas involved in the proof of \cref{thm:first-order iteration complexity}.\label{table:summary}}
    \bigskip
    
    \scriptsize
    \begin{tabular}{|c|c|c| c|c|c| }
        \hline
         Type & Termination condition & Active Step & Inactive Step & Step size & Sufficient Decrease  \\ \hline
         \textit{I} & $\sA\neq\emptyset$ and (not \eqref{eqn:active gradient negativity condition} or not \eqref{eqn:active gradient norm termination condition})  & Gradient & Newton-MR & \cref{lemma:type 1 line search termination} & \cref{lemma:type 1 active set decrease,lemma:type 1 inactive set decrease} \\ \hline
         
         \textit{II} & ($\sA = \emptyset$ or (\cref{eqn:active gradient negativity condition,eqn:active gradient norm termination condition})) and ($\sI \neq \emptyset$ and (not \eqref{eqn:inactive set termination condition}))  & None & Newton-MR & \cref{lemma:type 2 line search termination} & \cref{lemma:SOL alpha=1 gradient lower bound,lemma:type 2 sufficient decrease} \\ \hline
    \end{tabular}
     
\end{table}

Our first three lemmas (\cref{lemma:type 1 line search termination,lemma:type 1 active set decrease,lemma:type 1 inactive set decrease}) will demonstrate that \textit{Type I} steps produce sufficient decrease in the function value.  The analysis of \textit{Type I} steps builds off of \citet{XieWright2021ComplexityOfProjectedNewtonCG} which demonstrated that projected gradient can achieve good progress (in terms of guaranteed decrease) when the active termination conditions \cref{eqn:active gradient negativity condition,eqn:active gradient norm termination condition} are unsatisfied. However, unlike \citet{XieWright2021ComplexityOfProjectedNewtonCG}, which only uses a first-order step, we also incorporate second-order update in the form of Newton-MR step in the inactive set of indices. 

As shown in \cref{lemma:type 1 line search termination}, combining the steps in this manner suggests that the lower bound on the step size may depend inversely on the length of the Newton-MR step. This, in turn, could lead to small step sizes, if the Newton-MR step is large. We deal with this issue by splitting our analysis into two cases. The first case (\cref{lemma:type 1 inactive set decrease}) deals with large gradients on the inactive set where we expect good progress due to the corresponding large Newton-MR step on the inactive set (cf.\ \cref{eqn:SOL step gradient related}). By contrast, the second case (\cref{lemma:type 1 active set decrease}) deals with small gradients on the inactive set where we can expect to see small inactive set steps (cf.\ \cref{eqn:SOL step gradient related}) and therefore lower bounded step sizes. In this way, we trade off the convergence due to the inactive and active sets to always ensure sufficient decrease at the required rate.

Recall that \cref{ass:LipschitzGradient} implies that, for any $\yy, \xx \in \real^d_+$, we have
\begin{align*}
    f(\yy) \leq f(\xx) + \langle \grad f(\xx), \yy - \xx \rangle + \frac{L_g}{2} \| \xx -\yy\|^2. \tageq\label{eqn:Lipschitz gradient upper bound}
\end{align*}
We now give the proof of \cref{lemma:type 1 line search termination,lemma:type 1 active set decrease,lemma:type 1 inactive set decrease}.
\begin{lemma}[\textit{Type I} Step: Step-size Lower Bound] \label{lemma:type 1 line search termination} 
Assume that $f$ satisfies \cref{ass:LipschitzGradient,ass:KrylovSubspaceRegularity}. Suppose a \textit{Type I} step is taken at iteration $k$ of \cref{alg:newton-mr-two-metric}. If $\sI(\xx_k, \delta_k) \neq \emptyset$ ,then the largest step size which satisfies the line search criteria \eqref{eqn:line search criteria}, $\alpha_k$, satisfies the following lower bound  
    \begin{align*}
        \alpha_k \geq \min \left\{\frac{2(1 - \rho)}{L_{g}} \min \{1, \sigma \},   \frac{\delta_k}{\| \pp_k^\sI \|} \right\}.\tageq\label{eqn:type 1 line search lower bound}
    \end{align*}
    However, if $\sI(\xx_k, \delta_k) = \emptyset$, then
    \begin{align*}
        \alpha_k \geq \frac{2(1- \rho)}{L_g}. \tageq\label{eqn:type 1 line search lower bound Ik empty}
    \end{align*}
\end{lemma}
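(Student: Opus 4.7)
The plan is to apply the Lipschitz gradient descent inequality \cref{eqn:Lipschitz gradient upper bound} to the candidate point $\xx_k(\alpha)$ and then reduce the line-search condition \eqref{eqn:line search criteria} to a single inequality of the form (quadratic Lipschitz term) $\leq (1-\rho)$ (negative first-order term). Since the first-order side of \eqref{eqn:line search criteria} splits cleanly into an active-set and an inactive-set contribution, the strategy is to control these two quadratic pieces separately and then take the most restrictive bound on $\alpha$.

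First I would enforce $\alpha \leq \delta_k/\|\pp_k^\sI\|$, which ensures that for every $i \in \sI(\xx_k,\delta_k)$ one has $\xx_k^i > \delta_k \geq \alpha \|\pp_k^\sI\| \geq \alpha |\pp_k^i|$, so that the projection acts trivially on the inactive coordinates. Consequently $\xx_k(\alpha)^\sI - \xx_k^\sI = \alpha\pp_k^\sI$ and $\|\xx_k(\alpha) - \xx_k\|^2$ cleanly decomposes. To handle the inactive quadratic term $\alpha^2\|\pp_k^\sI\|^2$, I would invoke (a) the SOL case, where \cref{eqn:SOL MINRES descent curvature condition} combined with \cref{ass:KrylovSubspaceRegularity} yields $\langle \bss_k^\sI, \bgg_k^\sI\rangle \leq -\sigma \|\bss_k^\sI\|^2$, and (b) the NPC case, where \cref{eqn:NPC step descent} gives $\|\rr_k^\sI\|^2 = -\langle \rr_k^\sI, \bgg_k^\sI\rangle$. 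Together these yield the unified bound $\|\pp_k^\sI\|^2 \leq -\langle \bgg_k^\sI,\pp_k^\sI\rangle/\min\{1,\sigma\}$.

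For the active set, since $\pp_k^\sA = -\bgg_k^\sA$, I would analyse the projection component-wise. For each $i \in \sA(\xx_k,\delta_k)$ either (i) $\xx_k^i - \alpha \bgg_k^i \geq 0$, in which case the contributions to $\|\xx_k(\alpha)^\sA - \xx_k^\sA\|^2$ and to $-\langle \bgg_k^\sA, \xx_k(\alpha)^\sA - \xx_k^\sA\rangle$ are $\alpha^2(\bgg_k^i)^2$ and $\alpha (\bgg_k^i)^2$ respectively; or (ii) $\xx_k^i - \alpha\bgg_k^i < 0$, which forces $\bgg_k^i > 0$ together with $\xx_k^i < \alpha \bgg_k^i$, and the respective contributions are $(\xx_k^i)^2$ and $\xx_k^i \bgg_k^i$, with the former bounded by $\alpha$ times the latter via $\xx_k^i < \alpha \bgg_k^i$. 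Summing over $\sA$ gives the key pointwise identity $\|\xx_k(\alpha)^\sA - \xx_k^\sA\|^2 \leq -\alpha \langle \bgg_k^\sA, \xx_k(\alpha)^\sA - \xx_k^\sA\rangle$.

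Plugging these two estimates into the descent inequality reduces \eqref{eqn:line search criteria} to the two conditions $L_g\alpha/2 \leq (1-\rho)$ (from the active part) and $L_g\alpha/(2\min\{1,\sigma\}) \leq (1-\rho)$ (from the inactive part), both implied by $\alpha \leq 2(1-\rho)\min\{1,\sigma\}/L_g$. Combined with the projection restriction $\alpha \leq \delta_k/\|\pp_k^\sI\|$, this establishes \cref{eqn:type 1 line search lower bound}; specialising to $\sI(\xx_k,\delta_k) = \emptyset$, where the projection restriction is vacuous and only the active-set analysis is invoked, yields \cref{eqn:type 1 line search lower bound Ik empty}. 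The main obstacle is the pointwise active-set analysis, since it requires carefully relating the clipped projection to the gradient mapping; the crucial observation that circumvents this is that whenever clipping occurs the inequality $\xx_k^i < \alpha \bgg_k^i$ comes for free and automatically supplies the $\alpha$-factor needed for the descent estimate.
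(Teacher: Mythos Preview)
Your proposal is correct and follows essentially the same route as the paper: restrict $\alpha \leq \delta_k/\|\pp_k^\sI\|$ to remove the projection on $\sI$, apply the Lipschitz descent inequality, and then bound the active and inactive quadratic terms separately using, respectively, the projection--gradient relation and the MINRES descent properties \eqref{eqn:SOL MINRES descent curvature condition}/\eqref{eqn:NPC step descent} with \cref{ass:KrylovSubspaceRegularity}. The only stylistic difference is that you derive the active-set estimate $\|\xx_k(\alpha)^\sA - \xx_k^\sA\|^2 \leq -\alpha\langle \bgg_k^\sA, \xx_k(\alpha)^\sA - \xx_k^\sA\rangle$ by a coordinate-wise case split, whereas the paper invokes it directly as the standard projection inequality $\|\sP(\xx) - \sP(\yy)\|^2 \leq \langle \xx - \yy, \sP(\xx) - \sP(\yy)\rangle$ applied with $\yy = \xx_k^\sA$; your case analysis is simply a hands-on proof of that same inequality for the nonnegative orthant.
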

\begin{proof}
    If $\sI(\xx_k, \delta_k) \neq \emptyset$, suppose
    \begin{align*}
        \alpha \leq \frac{\delta_k}{\| \pp_k^\sI \|} \leq \frac{\delta_k}{\| \pp_k^\sI \|_\infty},
    \end{align*}
    so that for each $i \in \sI(\xx_k, \delta_k) $ we have $\sP(\xx_k^i + \alpha \pp_k^i) = \xx_k^i + \alpha \pp_k^i$. The Lipschitz gradient upper bound \cref{eqn:Lipschitz gradient upper bound} yields
    \begin{align*}
         f(\xx_k(\alpha))  &\leq f(\xx_k) + \langle \bgg_k, \sP(\xx_k + \alpha \pp_k) - \xx_k  \rangle + \frac{L_{g}}{2} \| P(\xx_k + \alpha \pp_k) - \xx_k \|^2  \\ 
        &= f(\xx_k) +  \langle \bgg_k^\sA, P(\xx_k^\sA - \alpha \bgg_k^\sA) - \xx_k^\sA \rangle  + \alpha \langle \bgg_k^\sI, \pp_k^\sI  \rangle + \frac{L_{g}}{2} \| \sP(\xx_k^\sA - \alpha \bgg_k^\sA) - \xx_k^\sA\|^2 +  \frac{L_{g} \alpha^2}{2} \| \pp_k^\sI \|^2.  
    \end{align*}
    It is clear from this bound that the line search will terminate for any $\alpha$ such that
    \begin{align*}
        \langle \bgg_k^\sA, P(\xx_k^\sA - \alpha \bgg_k^\sA) - \xx_k^\sA \rangle  + \alpha \langle \bgg_k^\sI, \pp_k^\sI  \rangle + \frac{L_{g}}{2} \| \sP(\xx_k^\sA - \alpha \bgg_k^\sA) - \xx_k^\sA\|^2 +  \frac{L_{g} \alpha^2}{2} \| \pp_k^\sI \|^2 \\
        - \rho \left( \langle \bgg_{k}^\sA, \sP(\xx_k^\sA - \alpha \bgg_k^\sA) - \xx_k^\sA \rangle + \alpha \langle \bgg_{k}^\sI, \pp_k^\sI \rangle  \right), \tageq\label{eqn:stage1-termination-condition}
    \end{align*}
    is nonpositive. Starting with the active set terms of \eqref{eqn:stage1-termination-condition}. We use the projection inequality $\| \sP(\xx) - \sP(\yy) \|^2 \leq \langle \xx - \yy, \sP(\xx) - \sP(\yy) \rangle$ combined with the feasibility of $\xx_k^\sA$ (which implies $\sP(\xx_k^\sA) = \xx_k^\sA$) to obtain
    \begin{align*}
        &(1- \rho) \langle \bgg_k^\sA, \sP(\xx_k^\sA - \alpha \bgg_k^\sA) - \xx_k^\sA \rangle + \frac{L_{g}}{2} \| P(\xx_k^\sA - \alpha \bgg_k^\sA) - \xx_k^\sA\|^2 \\
        &\leq (1- \rho) \langle \bgg_k^\sA, \sP(\xx_k^\sA - \alpha \bgg_k^\sA) - \xx_k^\sA \rangle - \frac{\alpha L_g}{2} \langle  \bgg_k^\sA,  \sP(\xx_k^\sA - \alpha \bgg_k^\sA) - \xx_k^\sA \rangle  \\
        &\leq \left( (1- \rho) - \frac{\alpha L_g}{2} \right) \langle \bgg_k^\sA, \sP(\xx_k^\sA - \alpha \bgg_k^\sA) - \xx_k^\sA \rangle.
    \end{align*}
    By $\langle \bgg_k^\sA, \sP(\xx_k^\sA - \alpha \bgg_k^\sA) - \xx_k^\sA \rangle \leq 0$, the active terms of \cref{eqn:stage1-termination-condition} are nonpositive if
    \begin{align*}
        (1- \rho) - \frac{\alpha L_g}{2} \geq 0 \implies \alpha \leq \frac{2(1 - \rho)}{L_g}. 
    \end{align*}
    If $\sI(\xx_k, \delta_k) = \emptyset$, then \eqref{eqn:type 1 line search lower bound Ik empty} follows directly from this bound.
    
    Now we consider the inactive terms of \eqref{eqn:stage1-termination-condition}. If $\SOL$, i.e., $\pp_k^\sI = \bss_k^\sI$, we apply \eqref{eqn:SOL MINRES descent curvature condition} and \cref{ass:KrylovSubspaceRegularity} to obtain
    \begin{align*}
        \alpha(1 - \rho) \langle \bgg_k^\sI, \bss_k^\sI \rangle + \frac{L_g \alpha^2}{2} \| \bss_k^\sI \|^2 &\leq \alpha \left( -(1-\rho) \langle \bss_k^\sI , \HH_k \bss_k^\sI\rangle + \frac{\alpha L_g }{2} \| \bss_k^\sI \|^2 \right)  \\
        &\leq \alpha \left( - (1-\rho) \sigma  \| \bss_k^\sI\|^2  + \frac{\alpha L_g }{2} \| \bss_k^\sI\|^2\right) \\
        &= \alpha \left( - (1-\rho) \sigma    + \frac{\alpha L_g }{2} \right)\| \bss_k^\sI \|^2.
    \end{align*}
    This upper bound will be negative for any step size satisfying
    \begin{align*}
        \alpha \leq \frac{2\sigma(1-\rho)}{L_g}.
    \end{align*}
    If $\NPC$, i.e., $\pp_k^\sI = \rr_k^\sI$, we apply \eqref{eqn:NPC step descent} to obtain
    \begin{align*}
        \alpha(1 - \rho) \langle \bgg_k^\sI, \rr_k^\sI \rangle + \frac{L_g \alpha^2}{2} \| \rr_k^\sI \|^2 &\leq -\alpha(1 - \rho)\| \rr_k^\sI\|^2 + \frac{L_g \alpha}{2}  \| \rr_k^\sI\|^2 \\
        &= \alpha \left( - (1 - \rho) + \frac{L_g \alpha}{2} \right)\| \rr_k^\sI\|^2,
    \end{align*}
    which is negative when 
    \begin{align*}
        \alpha \leq \frac{2(1- \rho)}{L_g}.
    \end{align*}
    If both the inactive and active terms of \cref{eqn:stage1-termination-condition} are nonpositive then the line search will certainly terminate. Collecting the bounds on the step size, we can see that the largest $\alpha_k$ which satisfies the line search criteria also satisfies the following lower bound
    \begin{align*}
        \alpha_k \geq \min \left\{\frac{2(1 - \rho)}{L_{g}} \min \{1, \sigma\},   \frac{\delta_k}{\| \pp_k^\sI \|}\right\}.
    \end{align*}
\end{proof}

\begin{lemma}[\textit{Type I} Step: Inactive Set Decrease] \label{lemma:type 1 inactive set decrease}
    Assume that $f$ satisfies \cref{ass:LipschitzGradient,ass:KrylovSubspaceRegularity,ass:ResidualLowerBoundRegularity}. Suppose that a \textit{Type I} step is taken at iteration k of \cref{alg:newton-mr-two-metric} but both $\sI(\xx_k, \delta_k) \neq \emptyset$ and $\| \bgg_k^\sI \| > \epsilon_k^{3/2} $. Let $\alpha_k$ be the largest step size satisfying the line search condition \eqref{eqn:line search criteria} so that $\xx_{k+1} = \sP(\xx_k + \alpha_k \pp_k )$. If $\SOL$ then
    \begin{align*}
        f(\xx_{k+1}) - f(\xx_k) < - \rho \sigma \min \left\{ \frac{2(1 - \rho)\min \{1, \sigma \} C_{\sigma,L_g}  ^2}{L_{g}} \epsilon_k^{3} , C_{\sigma,L_g}   \delta_k \epsilon_k^{3/2} \right\}.
    \end{align*}
    Otherwise, if $\NPC$,
    \begin{align*}
        f(\xx_{k+1}) - f(\xx_k) < -\rho \min \left\{\frac{2(1 - \rho)\omega^2}{L_{g}}  \epsilon_k^{3} , \omega \delta_k\epsilon_k^{3/2} \right\}.
    \end{align*}
\end{lemma}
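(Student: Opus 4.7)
My plan is to mirror the structure of the earlier \cref{lemma:minimal assumptions inactive set decrease} from the minimal-assumptions analysis, adapted to the improved-rate setting. The three substantive changes are: (i) the uniform positive-definite constant on the Krylov subspace is now $\sigma$ from \cref{ass:KrylovSubspaceRegularity}, rather than the per-iteration $\varsigma$ obtained from the explicit certification; (ii) the NPC residual lower bound is now the uniform $\omega$ from \cref{ass:ResidualLowerBoundRegularity}, rather than the pre-termination bound $\eta/\sqrt{\eta^2+L_g^2}$; (iii) the gradient threshold is $\|\bgg_k^\sI\| > \epsilon_k^{3/2}$, which after squaring produces $\epsilon_k^3$ in the dominant term of the bound.

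The starting point, identical to the minimal-assumptions proof, is the line search criterion \cref{eqn:line search criteria}, from which I would drop the active-set term using the fact that the gradient mapping is a descent direction for the active set, namely $\langle \bgg_k^\sA, \sP(\xx_k^\sA + \alpha_k \pp_k^\sA) - \xx_k^\sA\rangle \leq 0$. This yields
\[
f(\xx_{k+1}) - f(\xx_k) \;\leq\; \alpha_k\rho\,\langle \bgg_k^\sI, \pp_k^\sI\rangle.
\]

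For the SOL case I would substitute $\pp_k^\sI = \bss_k^\sI$ and apply the chain of inequalities \cref{eqn:SOL MINRES descent curvature condition} followed by \cref{ass:KrylovSubspaceRegularity} to obtain $\langle \bgg_k^\sI, \bss_k^\sI\rangle \leq -\langle \bss_k^\sI, \HH_k^\sI \bss_k^\sI\rangle \leq -\sigma\|\bss_k^\sI\|^2$. Plugging in the step-size lower bound from \cref{lemma:type 1 line search termination} and simplifying gives
\[
f(\xx_{k+1}) - f(\xx_k) \;\leq\; -\rho\sigma\min\!\left\{\tfrac{2(1-\rho)\min\{1,\sigma\}}{L_g}\|\bss_k^\sI\|^2,\;\delta_k\|\bss_k^\sI\|\right\}.
\]
The left-hand side of \cref{eqn:SOL step gradient related}, with $\varrho=\sigma$ (which is justified by \cref{ass:KrylovSubspaceRegularity}), converts both $\|\bss_k^\sI\|^2$ and $\|\bss_k^\sI\|$ into lower bounds involving $C_{\sigma,L_g}\|\bgg_k^\sI\|$ and $C_{\sigma,L_g}^2\|\bgg_k^\sI\|^2$, respectively. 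Applying the hypothesis $\|\bgg_k^\sI\| > \epsilon_k^{3/2}$ in the last step yields the claimed SOL bound with $\epsilon_k^3$ in one branch and $\epsilon_k^{3/2}$ in the other.

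For the NPC case I would substitute $\pp_k^\sI = \rr_k^\sI$ and use \cref{eqn:NPC step descent} to replace $\langle \bgg_k^\sI, \rr_k^\sI\rangle$ by $-\|\rr_k^\sI\|^2$. The same step-size bound from \cref{lemma:type 1 line search termination} combined with \cref{ass:ResidualLowerBoundRegularity}, which gives $\|\rr_k^\sI\| \geq \omega\|\bgg_k^\sI\|$ with a uniform $\omega > 0$, reduces everything to a minimum of $\omega^2\|\bgg_k^\sI\|^2$ and $\omega\delta_k\|\bgg_k^\sI\|$, after which the hypothesis $\|\bgg_k^\sI\| > \epsilon_k^{3/2}$ produces the stated bound. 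The main bookkeeping point, rather than a conceptual obstacle, is ensuring the $\min\{1,\sigma\}$ factor from the step-size bound combines correctly with the $\sigma$ from the curvature bound so that the final SOL expression exactly matches $\frac{2(1-\rho)\min\{1,\sigma\}C_{\sigma,L_g}^2}{L_g}\epsilon_k^3$; I would also need to verify that the $\delta_k/\|\pp_k^\sI\|$ branch of the step size is the binding one only in a regime where the matching $\delta_k\|\pp_k^\sI\|$ term in the resulting bound is lower-bounded by $C_{\sigma,L_g}\delta_k\epsilon_k^{3/2}$ (respectively $\omega\delta_k\epsilon_k^{3/2}$), which follows directly from the same left-hand side of \cref{eqn:SOL step gradient related} and \cref{ass:ResidualLowerBoundRegularity}.
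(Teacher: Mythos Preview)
Your proposal is correct and follows essentially the same approach as the paper's proof: start from the line search criterion, drop the nonpositive active-set term, then in the SOL case chain \cref{eqn:SOL MINRES descent curvature condition}, \cref{ass:KrylovSubspaceRegularity}, the step-size bound of \cref{lemma:type 1 line search termination}, and the left inequality of \cref{eqn:SOL step gradient related} with $\varrho=\sigma$; in the NPC case use \cref{eqn:NPC step descent}, the same step-size bound, and \cref{ass:ResidualLowerBoundRegularity}, finishing both cases with $\|\bgg_k^\sI\| > \epsilon_k^{3/2}$.
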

\begin{proof}
    Line search criterion and the negativity of $\langle \bgg_{k}^\sA, \sP(\xx_k^\sA - \alpha_k \bgg_k^\sA) - \xx_k^\sA \rangle$ implies 
    \begin{align*}
        f(\xx_{k+1}) - f(\xx_k) &\leq \rho  \langle \bgg_{k}^\sA, \sP(\xx_k^\sA - \alpha_k \bgg_k^\sA) - \xx_k^\sA \rangle + \alpha_k \rho \langle \bgg_{k}^\sI, \pp_k^\sI \rangle  \leq \rho \alpha_k \langle \bgg_{k}^\sI, \pp_k^\sI \rangle. \tageq\label{eqn:type 1 inactive set line search upper bound}
    \end{align*}
    We now divide into two cases, depending on the step type selected by MINRES.
    
    If $\SOL$, then $ \pp_k^\sI= \bss_k^\sI$. Using the line search condition \cref{eqn:type 1 inactive set line search upper bound}, \cref{eqn:SOL MINRES descent curvature condition}, \cref{ass:KrylovSubspaceRegularity}, the lower bound on $\alpha_k$ from \cref{lemma:type 1 line search termination},  and the left-hand-side inequality of \cref{eqn:SOL step gradient related} with $\varrho=\sigma$, we have
    \begin{align*}
        f(\xx_{k+1}) - f(\xx_k) 
        &\leq \rho \alpha_k \langle \bgg_{k}^\sI, \bss_k^\sI \rangle \\
        &\leq -\rho \alpha_k \langle \bss_k^\sI , \HH_k^\sI \bss_k^\sI \rangle \\
        &\leq -\rho \sigma \alpha_k \| \bss_k^\sI \|^2 \\
        &\leq - \rho \sigma \min \left\{\frac{2(1 - \rho)}{L_{g}} \min \{1, \sigma \},   \frac{\delta_k}{\| \bss_k^\sI \|} \right\}  \| \bss_k^\sI \|^2  \\
        &\leq - \rho \sigma \min \left\{\frac{2(1 - \rho)}{L_{g}} \min \{1, \sigma \} \| \bss_k^\sI\|^2 ,  \delta_k \| \bss_k^\sI \| \right\}    \\
        &\leq - \rho \sigma \min \left\{\frac{2(1 - \rho)\min \{1, \sigma \} C_{\sigma,L_g}  ^2  }{L_{g}}   \| \bgg_k^\sI \|^2 ,  C_{\sigma,L_g}   \delta_k \| \bgg_k^\sI \| \right\}  \\
        &< - \rho \sigma \min \left\{ \frac{2(1 - \rho)\min \{1, \sigma \} C_{\sigma,L_g}  ^2}{L_{g}} \epsilon_k^{3} , C_{\sigma,L_g}   \delta_k \epsilon_k^{3/2} \right\},
    \end{align*}
    where for the last inequality, we used the fact that $\| \bgg_k^\sI \| > \epsilon_k^{3/2}$.
    
    If $\NPC$, then $\pp_k^\sI = \rr_k^\sI$. We use \cref{eqn:type 1 inactive set line search upper bound}, but apply \eqref{eqn:NPC step descent} and Assumption \ref{ass:ResidualLowerBoundRegularity} to get
    \begin{align*}
        f(\xx_{k+1}) - f(\xx_k) 
        &\leq \rho \alpha_k \langle \bgg_{k}^\sI, \rr_k^\sI \rangle \\
        &\leq - \rho \alpha_k \| \rr_k^\sI \|^2  \\
        &\leq - \rho \min \left\{\frac{2(1 - \rho)}{L_{g}}  \| \rr_k^\sI \|^2 ,  \delta_k \| \rr_k^\sI \|\right\} \\
        &\leq - \rho \min \left\{\frac{2(1 - \rho)\omega^2}{L_{g}}  \| \bgg_k^\sI \|^2 , \delta_k \omega \| \bgg_k^\sI \| \right\} \\
        &< - \rho \min \left\{\frac{2(1 - \rho)\omega^2}{L_{g}}  \epsilon_k^{3} ,\delta_k \omega \epsilon_k^{3/2} \right\},
    \end{align*}
    again, making use of $\| \bgg_k^\sI \| > \epsilon_k^{3/2}$ in the final line.
\end{proof}

\begin{lemma}[\textit{Type I} Step: Sufficient Reduction] \label{lemma:type 1 active set decrease}
    Assume that $f$ satisfies \cref{ass:LipschitzGradient,ass:KrylovSubspaceRegularity}. Suppose that a \textit{Type I} step is taken on iteration $k$ of \cref{alg:newton-mr-two-metric} so that $\sA(\xx_k, \delta_k) \neq \emptyset$ and either \cref{eqn:active gradient negativity condition} or \cref{eqn:active gradient norm termination condition} is unsatisfied. Let $\alpha_k$ be the largest step size satisfying the line search condition \eqref{eqn:line search criteria} so that $\xx_{k+1} = \sP(\xx_k + \alpha_k \pp_k )$. If $\sI(\xx_k, \delta_k) \neq \emptyset$ and $\| \bgg_k^\sI \| \leq \epsilon_k^{3/2}$, then
    \begin{align*}
        f(\xx_{k+1}) - f(\xx_k) <  - \rho \min \left\{ \frac{1}{2}, \min \{1, \sigma\} \min \left\{\frac{2(1 - \rho)}{L_{g}} , \frac{\delta_k}{ \epsilon_k^{3/2}} \right\} \min\left\{ 1 ,   \frac{\epsilon_k^2}{2\delta_k^2} \right\} \right\}\epsilon_k^2.
    \end{align*}
    Otherwise, if $\sI(\xx_k, \delta_k) = \emptyset$,
    \begin{align*}
        f(\xx_{k+1}) - f(\xx_k) < -\rho \min \left\{ \frac{1}{2},  \frac{2(1- \rho)}{L_g} \min \left\{ 1, \frac{\epsilon_k^2}{2\delta_k^2} \right\} \right\}\epsilon_k^2.
    \end{align*}
\end{lemma}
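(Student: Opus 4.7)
The plan is to mirror the proof of \cref{lemma:minimal assumptions active set decrease}, with the single new ingredient being the translation of the hypothesis $\|\bgg_k^\sI\|\le \epsilon_k^{3/2}$ into an upper bound on $\|\pp_k^\sI\|$. I would start from the line search condition \cref{eqn:line search criteria} and discard the inactive-set term using $\langle \bgg_k^\sI, \pp_k^\sI\rangle \le 0$ (which follows from \cref{eqn:SOL MINRES descent curvature condition} in the $\SOL$ case and \cref{eqn:NPC step descent} in the $\NPC$ case), giving
\begin{align*}
    f(\xx_{k+1}) - f(\xx_k) \le \rho \, \langle \bgg_k^\sA,\, \sP(\xx_k^\sA - \alpha_k \bgg_k^\sA) - \xx_k^\sA \rangle.
\end{align*}
The core is then to establish the intermediate ``gradient-mapping'' inequality
\begin{align*}
    -\langle \bgg_k^\sA,\, \sP(\xx_k^\sA - \alpha_k \bgg_k^\sA) - \xx_k^\sA \rangle \;\ge\; \min\left\{\frac{1}{2},\; \alpha_k \min\left\{1, \frac{\epsilon_k^2}{2\delta_k^2}\right\}\right\}\epsilon_k^2,
\end{align*}
after which the two displayed bounds of the lemma follow by substituting the appropriate $\alpha_k$ lower bound from \cref{lemma:type 1 line search termination}.

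For the gradient-mapping inequality, I would split on which of \cref{eqn:active gradient negativity condition,eqn:active gradient norm termination condition} fails. If \cref{eqn:active gradient negativity condition} fails, there is some $i^\star\in\sA(\xx_k,\delta_k)$ with $\bgg_k^{i^\star}<-\epsilon_k$; since $\bgg_k^{i^\star}<0$, projection is inactive at $i^\star$, so this coordinate alone contributes $\alpha_k(\bgg_k^{i^\star})^2 > \alpha_k\epsilon_k^2$ while all other coordinates contribute nonnegatively. If instead \cref{eqn:active gradient norm termination condition} fails, then $\sum_{i\in\sA}(\xx_k^i \bgg_k^i)^2 > \epsilon_k^4$. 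Setting $z_i := |\xx_k^i \bgg_k^i|$, a coordinatewise computation of $\sP$ combined with $\xx_k^i\le\delta_k$ gives the per-index lower bound $\min\{\alpha_k z_i^2/\delta_k^2,\, z_i\}$ across the three sign/magnitude regimes of $\bgg_k^i$. Partitioning $\sA$ into $\sA_s = \{i : z_i \le \delta_k^2/\alpha_k\}$ and $\sA_l = \sA\setminus\sA_s$, and writing $S = \sum_{\sA_s} z_i^2$ and $L = \sum_{\sA_l} z_i^2$, the total is bounded below by $\alpha_k S/\delta_k^2 + \sqrt{L}$, where the $\sqrt{L}$ term uses the elementary inequality $\bigl(\sum_{\sA_l} z_i\bigr)^2 \ge \sum_{\sA_l} z_i^2$ valid for nonnegative summands. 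Since $S + L > \epsilon_k^4$, case analysis on whether $S$ or $L$ exceeds $\epsilon_k^4/2$ delivers the required bound.

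With the intermediate inequality in hand, the two displayed bounds follow by direct substitution. If $\sI(\xx_k,\delta_k)=\emptyset$, \cref{eqn:type 1 line search lower bound Ik empty} gives $\alpha_k \ge 2(1-\rho)/L_g$. If $\sI(\xx_k,\delta_k)\ne\emptyset$, the hypothesis $\|\bgg_k^\sI\|\le\epsilon_k^{3/2}$, combined with the right-hand inequality of \cref{eqn:SOL step gradient related} (with $\varrho=\sigma$) in the $\SOL$ case and with \cref{eqn:NPC residual upper bound gradient} in the $\NPC$ case, yields $\|\pp_k^\sI\|\le \epsilon_k^{3/2}/\min\{1,\sigma\}$; hence $\delta_k/\|\pp_k^\sI\|\ge \min\{1,\sigma\}\,\delta_k/\epsilon_k^{3/2}$, and \cref{eqn:type 1 line search lower bound} simplifies to $\alpha_k \ge \min\{1,\sigma\}\min\{2(1-\rho)/L_g,\, \delta_k/\epsilon_k^{3/2}\}$.

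The main obstacle is the Case B coordinatewise argument: unifying the three regimes of $(\bgg_k^i,\xx_k^i)$ into a single per-coordinate lower bound, and aggregating across $\sA$ without any $|\sA|$ dependence. The latter hinges on the inequality $\bigl(\sum z_i\bigr)^2 \ge \sum z_i^2$ for nonnegative $z_i$, which turns the $\ell_2^2$ mass of the ``large'' subset into an $\ell_1$ contribution $\sqrt{L}$ and is what closes the argument. Everything else is a routine combination of \cref{lemma:type 1 line search termination}, \cref{eqn:SOL step gradient related}, and \cref{eqn:NPC residual upper bound gradient}.
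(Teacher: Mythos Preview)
Your proposal is correct and follows essentially the same approach as the paper: drop the inactive-set term via $\langle \bgg_k^\sI,\pp_k^\sI\rangle\le 0$, case-split on which of \cref{eqn:active gradient negativity condition,eqn:active gradient norm termination condition} fails, perform a coordinatewise projection computation, aggregate via $(\sum z_i)^2\ge\sum z_i^2$, and finally substitute the step-size bounds from \cref{lemma:type 1 line search termination} together with $\|\pp_k^\sI\|\le\epsilon_k^{3/2}/\min\{1,\sigma\}$. The only cosmetic difference is that the paper partitions $\sA$ by whether the projection is active (i.e., $\alpha_k\bgg_k^i\gtrless\xx_k^i$), whereas you first derive the uniform per-index lower bound $\min\{\alpha_k z_i^2/\delta_k^2,\,z_i\}$ and then partition by which branch of the $\min$ is attained; both partitions yield the same intermediate inequality \cref{eqn:type 1 function decrease}.
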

\begin{proof}
    Since $\alpha_k$ satisfies the line search sufficient decrease condition, the negativity of $\langle \bgg_{k}^\sI, \pp_k^\sI \rangle$, implied by \cref{eqn:SOL MINRES descent curvature condition,eqn:NPC step descent}, gives
    \begin{align*}
        f(\xx_{k+1}) - f(\xx_k) &\leq \rho \left( \langle \bgg_{k}^\sA, \sP(\xx_k^\sA - \alpha_k \bgg_k^\sA) - \xx_k^\sA \rangle + \alpha_k \langle \bgg_{k}^\sI, \pp_k^\sI \rangle \right) \\
        &\leq \rho\langle \bgg_{k}^\sA, \sP(\xx_k^\sA - \alpha \bgg_k^\sA) - \xx_k^\sA \rangle \\
        &= \rho \sum_{i \in \sA(\xx_k, \delta_k)} \bgg_k^i (\sP(\xx_k^i - \alpha \bgg_k^i) - \xx_k^i). \tageq\label{eqn:type 1 line search sum} 
    \end{align*}
    The analysis proceeds depending on which optimality condition is unsatisfied.
    
    Case 1 \cref{eqn:active gradient negativity condition}: $\bgg_k^i < -\epsilon_k$ for some $i \in \sA(\xx_k, \delta_k)$. In this case we can see that 
    \begin{align*}
        \bgg_k^i( \sP( \xx_k^i - \alpha_k \bgg_k^i) - \xx_k^i) = - \alpha_k (\bgg_k^i)^2  < - \alpha_k \epsilon_k^2.
    \end{align*}
    We immediately see from the term wise nonpositivity of \cref{eqn:type 1 line search sum} that 
    \begin{align*}
        f(\xx_{k+1}) - f(\xx_k) < -\rho \alpha_k\epsilon_k^2.
    \end{align*}
    Case 2 \cref{eqn:active gradient norm termination condition}: Continuing from \cref{eqn:type 1 line search sum} we obtain 
    \begin{align*}
        f(\xx_{k+1}) - f(\xx_k) &\leq \rho \sum_{i \in \sA(\xx_k, \delta_k)} \bgg_k^i (\sP(\xx_k^i - \alpha \bgg_k^i) - \xx_k^i) \\
        &= \rho \left( \sum_{\substack{i \in \sA(\xx_k, \delta_k) \ \\ \ \alpha_k \bgg_k^i\geq \xx_k^i }} -\bgg_k^i\xx_k^i +  \sum_{\substack{i \in \sA(\xx_k, \delta_k) \ \\ \ \alpha_k \bgg_k^i <  \xx_k^i} } -\alpha_k (\bgg_k^i)^2 \right). \tageq\label{eqn:type 1 line search upper bound}
    \end{align*}
    Note each sum in \cref{eqn:type 1 line search upper bound} is \textit{term-wise} negative. Since $\| \diag(\xx_k^\sA) \bgg_k^\sA\| > \epsilon_k^2$, we have 
    \begin{align*}
        \epsilon_k^4 &< \| \diag(\xx_k^\sA) \bgg_k^\sA \|^2 = \left( \sum_{\substack{i \in \sA(\xx_k, \delta_k) \ \\ \ \alpha_k \bgg_k^i\geq \xx_k^i} } (\bgg_k^i\xx_k^i)^2 +  \sum_{\substack{i \in \sA(\xx_k, \delta_k) \ \\ \ \alpha_k \bgg_k^i <  \xx_k^i} } (\xx_k^i\bgg_k^i)^2 \right).
    \end{align*}
    This implies two possible cases: either
    \begin{align*}
        \frac{\epsilon_k^4}{2} < \sum_{\substack{i \in \sA(\xx_k, \delta_k) \ \\ \ \alpha \bgg_k^i \geq \xx_k^i} } (\xx_k^i \bgg_k^i)^2 \implies \frac{\epsilon_k^2}{2} < \sum_{\substack{i \in \sA(\xx_k, \delta_k) \ \\ \ \alpha \bgg_k^i \geq \xx_k^i} } \xx_k^i \bgg_k^i,
    \end{align*}
    or
    \begin{align*}
        \frac{\epsilon_k^4}{2} < \sum_{\substack{i \in \sA(\xx_k, \delta_k)  \ \\ \  \alpha \bgg_k^i < \xx_k^i} } (\xx_k^i \bgg_k^i)^2 \leq \sum_{\substack{i \in \sA(\xx_k, \delta_k)  \ \\ \  \alpha \bgg_k^i < \xx_k^i} } (\delta_k \bgg_k^i)^2 \implies \frac{\epsilon_k^4}{2\delta_k^2} < \sum_{\substack{i \in \sA(\xx_k, \delta_k)  \ \\ \  \alpha \bgg_k^i < \xx_k^i} } (\bgg_k^i)^2.
    \end{align*}
    In either case, the negativity of each term of \cref{eqn:type 1 line search upper bound} implies  
    \begin{align*}
        f(\xx_{k+1}) - f(\xx_k) < - \rho \min \left\{ \frac{\epsilon_k^2}{2},  \frac{\alpha_k\epsilon_k^4}{2\delta_k^2} \right\} .
    \end{align*}
    Combining with Case 1 gives 
    \begin{align*}
        f(\xx_{k+1}) - f(\xx_k) &< - \rho \min \left\{ \frac{\epsilon_k^2}{2}, \alpha_k \epsilon_k^2,   \frac{\alpha_k\epsilon_k^4}{2\delta_k^2} \right\} \\
        &= - \rho \min \left\{ \frac{1}{2}, \alpha_k ,   \frac{\alpha_k\epsilon_k^2}{2\delta_k^2} \right\}\epsilon_k^2. \tageq\label{eqn:type 1 function decrease}
    \end{align*}
    If $\sI(\xx_k, \delta_k) = \emptyset$, we apply \eqref{eqn:type 1 line search lower bound Ik empty} to obtain 
    \begin{align*}
        f(\xx_{k+1}) - f(\xx_k) < - \rho \min \left\{ \frac{1}{2},  \frac{2(1- \rho)}{L_g} \min \left\{ 1, \frac{\epsilon_k^2}{2\delta_k^2} \right\} \right\}\epsilon_k^2.
    \end{align*}

    On the other hand, if $\sI(\xx_k, \delta_k) \neq \emptyset$, the lower bound for $\alpha_k$ in \eqref{eqn:type 1 line search lower bound} depends inversely on the inactive portion of the step $\| \pp_k^\sI\| $. The step size can therefore become small if $\| \pp_k^\sI \|$ is too large. To avoid this, we will make use of the fact that the gradient is bounded. In particular, by combining the right inequality of \eqref{eqn:SOL step gradient related} and \eqref{eqn:NPC residual upper bound gradient}, we obtain
    \begin{align*}
        \min\{1, \sigma\} \| \pp_k^\sI \| \leq \| \bgg_k^\sI \| \leq  \epsilon_k^{3/2},
    \end{align*} 
    which implies   
    \begin{align*}
          \frac{\delta_k\min\{1, \sigma\} }{ \epsilon_k^{3/2}} \leq \frac{\delta_k }{\| \pp_k^\sI \| }.
    \end{align*}
    Imposing this on the step size lower bound \eqref{eqn:type 1 line search lower bound} gives
    \begin{align*}
        \alpha_k \geq \min \{1, \sigma \} \min \left\{\frac{2(1 - \rho)}{L_{g}} , \frac{\delta_k}{\epsilon_k^{3/2}} \right\}.
    \end{align*}
    The decrease is therefore given by
    \begin{align*}
        f(\xx_{k+1}) - f(\xx_k) &< - \rho\min \left\{ \frac{1}{2}, \alpha_k ,   \frac{\alpha_k\epsilon_k^2}{2\delta_k^2} \right\}\epsilon_k^2 \\
        &\leq - \rho\min \left\{ \frac{1}{2}, \alpha_k \min\left\{ 1 ,   \frac{\epsilon_k^2}{2\delta_k^2} \right\} \right\}\epsilon_k^2 \\
        &\leq - \rho \min \left\{ \frac{1}{2}, \min \{1, \sigma\} \min \left\{\frac{2(1 - \rho)}{L_{g}} , \frac{\delta_k}{ \epsilon_k^{3/2}} \right\} \min\left\{ 1 ,   \frac{\epsilon_k^2}{2\delta_k^2} \right\} \right\}\epsilon_k^2.
    \end{align*} 
\end{proof}

The next three lemmas (\cref{lemma:type 2 line search termination,lemma:SOL alpha=1 gradient lower bound,lemma:type 2 sufficient decrease}) demonstrate the sufficient decrease of \textit{Type II} steps. Recall that a \textit{Type II} steps occurs once active set optimality is reached. \textit{Type II} steps are taken until the inactive set optimality \cref{eqn:inactive set termination condition} is satisfied (termination) or a new index falls into the active set and disrupts active set optimality, in which case we resume \textit{Type I} steps. The \textit{Type II} step consists of only a Newton-MR step in the inactive indices (no step is taken in the active indices). Indeed, a \textit{Type II} direction can be written (with possible reordering of indices) as  
\begin{align*}
    \xx_k(\alpha) -\xx_k = \vvec{0}{\sP(\xx_k^\sI + \alpha \pp_k^\sI) - \xx_k^\sI}.
\end{align*}
Eliminating the active portion of the step allows us to leverage a ``second-order analysis'' of the inactive indices without having to account for the curvature of the projected gradient portion of the step. Indeed, the analysis of the algorithm reverts to essentially that of unconstrained Newton-MR \citep{LiuNewtonMR}, with some minor modifications to account for the projection. Specifically, with possible reordering of the indices, we partition the Hessian into four blocks as 
\begin{align*}
    \HH_k = \begin{pmatrix}
        \HH_k^\sA  & \HH_k^O \\
        \HH_k^O & \HH_k^\sI
    \end{pmatrix},
\end{align*}
where $\HH_k^\sA$ and $\HH_k^\sI$ are the sub matrices corresponding to the active and inactive indices respectively and $\HH_k^O$ is the remaining off diagonal blocks of the Hessian. Under the Lipschitz Hessian condition (\cref{ass:LipschitzHessian}) and using $\alpha \leq \delta_k/\| \pp_k^\sI \| $ so that $\sP(\xx_k^\sI + \alpha \pp_k^\sI) = \xx_k^\sI + \alpha \pp_k^\sI$, we can write 
\begin{align*}
    f(\xx_k(\alpha)) &\leq f(\xx_k) +  \left \langle \begin{pmatrix}
        \bgg^\sA_k\\
        \bgg^\sI_k
    \end{pmatrix}, \begin{pmatrix}
        0 \\
        \alpha \pp_k^\sI
    \end{pmatrix} \right\rangle + \frac{1}{2}\left\langle \begin{pmatrix}
        0\\
        \alpha \pp_k^\sI
    \end{pmatrix} , \begin{pmatrix}
        \HH_k^\sA  & \HH_k^O \\
        \HH_k^O & \HH_k^\sI
    \end{pmatrix}  \begin{pmatrix}
        0\\
        \alpha\pp_k^\sI
    \end{pmatrix} \right \rangle  +  \frac{\alpha^3 L_{H}}{6} \left \| \begin{pmatrix}
        0\\
        \pp_k^\sI
    \end{pmatrix} \right\|^3 \\
    &= f(\xx_k) + \alpha \langle \bgg_k^\sI, \pp_k^\sI \rangle + \frac{\alpha^2}{2} \langle \pp_k^\sI, \HH_k^\sI \pp_k^\sI \rangle + \frac{\alpha^3 L_{H} }{6} \| \pp_k^\sI\|^3. \tageq\label{eqn:lipschitz upper bound of type 2 step}
\end{align*}
Our first lemma uses the expansion in \cref{eqn:lipschitz upper bound of type 2 step} to show that the largest step size satisfying the line search criterion is lower bounded.

\begin{lemma}[\textit{Type II} Step: Step-size Lower Bound] \label{lemma:type 2 line search termination}
    Assume that $f$ satisfies \cref{ass:LipschitzHessian}. If Algorithm \ref{alg:newton-mr-two-metric} selects a \textit{Type II} step at iteration $k$ and MINRES returns $\NPC$, then for the largest step size, $\alpha_k$, satisfying the line search criterion \cref{eqn:line search criteria}, we must have
    \begin{align*}
        \alpha_k \geq \min\left\{\sqrt{\frac{6 (1-\rho)}{ L_{H} \| \rr_k^\sI \| }}, \frac{\delta_k}{\| \rr_k^\sI \|}  \right\}. \tageq\label{eqn:type 2 NPC line search termination}
    \end{align*}
    Otherwise, if $\SOL$ and Assumption \ref{ass:KrylovSubspaceRegularity} holds, then 
    \begin{align*}
        \alpha_k \geq \min\left\{1, \sqrt{\frac{3 \sigma(1 - 2 \rho)}{L_{H} \| \bss_k^\sI \| } }, \frac{\delta_k}{\| \bss_k^\sI \|}  \right\}. \tageq\label{eqn:type 2 SOL line search termination}
    \end{align*}
\end{lemma}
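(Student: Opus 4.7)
The Type II direction has $\pp_k^\sA = \zero$, so the active-set term $\langle \bgg_k^\sA, \sP(\xx_k^\sA + \alpha \pp_k^\sA) - \xx_k^\sA\rangle$ in the line search criterion \cref{eqn:line search criteria} is identically zero. Thus the line search reduces to requiring $f(\xx_k(\alpha)) - f(\xx_k) \leq \alpha \rho \langle \bgg_k^\sI, \pp_k^\sI\rangle$. For any $\alpha \leq \delta_k/\|\pp_k^\sI\|$ (hence $\alpha \leq \delta_k/\|\pp_k^\sI\|_\infty$), strict feasibility of the inactive coordinates forces $\sP(\xx_k^\sI + \alpha \pp_k^\sI) = \xx_k^\sI + \alpha \pp_k^\sI$, so the Lipschitz-Hessian Taylor expansion \cref{eqn:lipschitz upper bound of type 2 step} applies. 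Substituting this upper bound into the line search condition, it suffices to find $\alpha$ making the polynomial
\begin{equation*}
\Phi(\alpha) \;\defeq\; (1-\rho)\alpha\,\langle \bgg_k^\sI, \pp_k^\sI\rangle + \tfrac{\alpha^2}{2}\langle \pp_k^\sI, \HH_k^\sI \pp_k^\sI\rangle + \tfrac{\alpha^3 L_H}{6}\|\pp_k^\sI\|^3
\end{equation*}
nonpositive.

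For the NPC case, I would apply \cref{lemma:NPC type step properties}, giving $\langle \bgg_k^\sI, \rr_k^\sI\rangle = -\|\rr_k^\sI\|^2$, together with the defining curvature condition $\langle \rr_k^\sI, \HH_k^\sI \rr_k^\sI\rangle \leq 0$ that triggered NPC termination (with $\overline{\varsigma}=0$ in \cref{alg:newton-mr-two-metric}). These discard the middle term of $\Phi(\alpha)$ upward and reduce the sufficient condition to $-(1-\rho)\alpha\|\rr_k^\sI\|^2 + \tfrac{\alpha^3 L_H}{6}\|\rr_k^\sI\|^3 \leq 0$, i.e.\ $\alpha \leq \sqrt{6(1-\rho)/(L_H\|\rr_k^\sI\|)}$. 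Combining with the feasibility bound $\alpha \leq \delta_k/\|\rr_k^\sI\|$ yields \cref{eqn:type 2 NPC line search termination}.

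For the SOL case, I would use the Newton-MR descent property \cref{eqn:SOL MINRES descent curvature condition} to bound the first term by $\langle \bgg_k^\sI, \bss_k^\sI\rangle \leq -\langle \bss_k^\sI, \HH_k^\sI\bss_k^\sI\rangle$, so that
\begin{equation*}
\Phi(\alpha) \;\leq\; \alpha\left(\tfrac{\alpha}{2} - (1-\rho)\right)\langle \bss_k^\sI, \HH_k^\sI \bss_k^\sI\rangle + \tfrac{\alpha^3 L_H}{6}\|\bss_k^\sI\|^3.
\end{equation*}
Restricting to $\alpha \leq 1$ gives $\alpha/2 - (1-\rho) \leq -(1-2\rho)/2$, which is strictly negative because $\rho < 1/2$; combined with \cref{ass:KrylovSubspaceRegularity} (applicable since SOL was returned, so NPC was never detected) this yields $\langle \bss_k^\sI, \HH_k^\sI \bss_k^\sI\rangle \geq \sigma\|\bss_k^\sI\|^2$. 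Hence $\Phi(\alpha) \leq -\alpha\tfrac{1-2\rho}{2}\sigma\|\bss_k^\sI\|^2 + \tfrac{\alpha^3 L_H}{6}\|\bss_k^\sI\|^3$, which is nonpositive whenever $\alpha \leq \sqrt{3\sigma(1-2\rho)/(L_H\|\bss_k^\sI\|)}$. Combining with $\alpha \leq 1$ and the feasibility bound $\alpha \leq \delta_k/\|\bss_k^\sI\|$ yields \cref{eqn:type 2 SOL line search termination}.

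The main obstacle is the SOL case algebra: unlike NPC, the curvature term $\langle \bss_k^\sI, \HH_k^\sI \bss_k^\sI\rangle$ is strictly positive, so one must pay careful attention to how the restriction $\alpha \leq 1$ together with $\rho < 1/2$ flips the combined coefficient of the quadratic and linear contributions into a negative quantity large enough (in $\sigma$) to absorb the cubic Lipschitz-Hessian remainder. The NPC case, by contrast, is essentially a direct computation once the NPC descent identity and NPC curvature inequality are invoked.
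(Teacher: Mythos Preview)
Your proposal is correct and follows essentially the same approach as the paper: reduce the Type II line search to the inactive-set inequality, invoke the Lipschitz-Hessian expansion \cref{eqn:lipschitz upper bound of type 2 step} under the feasibility restriction $\alpha \leq \delta_k/\|\pp_k^\sI\|$, and then handle the NPC and SOL cases via the respective descent/curvature properties. Your SOL algebra is actually slightly cleaner than the paper's---you apply \cref{eqn:SOL MINRES descent curvature condition} once to the full $(1-\rho)$ coefficient and then bound $\alpha/2 - (1-\rho) \leq -(1-2\rho)/2$ under $\alpha \leq 1$, whereas the paper first bounds $\alpha^2/2 \leq \alpha/2$, splits $(1-\rho) = (1/2 - \rho) + 1/2$, and invokes \cref{eqn:SOL MINRES descent curvature condition} twice; both routes arrive at the identical inequality $-\alpha\tfrac{1-2\rho}{2}\sigma\|\bss_k^\sI\|^2 + \tfrac{\alpha^3 L_H}{6}\|\bss_k^\sI\|^3 \leq 0$.
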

\begin{proof}
    We have already seen that, if $\alpha \leq \delta_k/\| \pp_k^\sI \| $, \eqref{eqn:lipschitz upper bound of type 2 step} holds. From \cref{eqn:line search criteria}, the line search is satisfied for any $\alpha$ such that 
    \begin{align*}
        f(\xx_k(\alpha)) - f(\xx_k) - \rho \alpha \langle \bgg_k^\sI, \pp_k^\sI \rangle \leq 0.
    \end{align*}
    We now consider $\SOL$ and $\NPC$ cases. Let $\SOL$ so that $\pp_k^\sI = \bss_k^\sI$. Applying \eqref{eqn:lipschitz upper bound of type 2 step}, $\alpha \leq 1$, the MINRES curvature condition \eqref{eqn:SOL MINRES descent curvature condition} and \cref{ass:KrylovSubspaceRegularity} we have 
    \begin{align*}
        f(\xx_k(\alpha)) - f(\xx_k) - \rho \alpha \langle \bgg_k, \bss_k^\sI \rangle  
        &\leq \alpha \langle \bgg_k^\sI, \bss_k^\sI \rangle + \frac{\alpha^2}{2} \langle \bss_k^\sI, \HH_k^\sI \bss_k^\sI \rangle + \frac{\alpha^3 L_{H} }{6} \| \bss_k^\sI\|^3 - \rho \alpha \langle \bgg_k, \bss_k^\sI \rangle \\ 
        &\leq \alpha (1 - \rho) \langle \bgg_k^\sI, \bss_k^\sI \rangle + \frac{\alpha}{2} \langle \bss_k^\sI, \HH_k^\sI \bss_k^\sI \rangle + \frac{\alpha^3 L_{H} }{6} \| \bss_k^\sI \|^3 \\
        &= \alpha \left(\frac{1}{2} - \rho \right) \langle \bgg_k^\sI, \bss_k^\sI \rangle + \frac{\alpha}{2} (\langle \bgg_k^\sI, \bss_k^\sI \rangle +\langle \bss_k^\sI, \HH_k^\sI \bss_k^\sI \rangle) + \frac{\alpha^3 L_{H} }{6} \| \bss_k^\sI\|^3 \\
        &\leq \alpha \left(\frac{1}{2} - \rho \right)\langle \bgg_k^\sI, \bss_k^\sI \rangle + \frac{\alpha^3 L_{H} }{6} \| \bss_k^\sI\|^3\\
        &\leq -\alpha \left(\frac{1}{2} - \rho \right) \langle \bss_k^\sI , \HH^\sI_k \bss_k^\sI  \rangle + \frac{\alpha^3 L_{H} }{6} \| \bss_k^\sI \|^3 \\
        &\leq -\alpha \left(\frac{1}{2} - \rho \right)\sigma \| \bss_k^\sI  \|^2 + \frac{\alpha^3 L_{H} }{6} \| \bss_k^\sI \|^3\\
        &= \alpha  \left( -\left(\frac{1}{2} - \rho \right) \sigma + \frac{\alpha^2 L_{H} }{6} \| \bss_k^\sI \| \right) \| \bss_k^\sI \|^2.
    \end{align*}
    It can be seen that this upper bound is nonpositive if 
    \begin{align*}
        -\left(\frac{1}{2} - \rho\right) \sigma + \frac{\alpha^2 L_{H} }{6} \| \bss_k^\sI \|  \leq 0 \implies \alpha \leq  \sqrt{\frac{3 \sigma(1 - 2 \rho)}{L_{H} \| \bss_k^\sI \| } }.
    \end{align*}
    Collecting the bounds on $\alpha$, the largest step size that satisfies the line search condition can be lower bounded as 
    \begin{align*}
        \alpha_k \geq \min\left\{1, \sqrt{\frac{3 \sigma(1 - 2 \rho)}{L_{H} \| \bss_k^\sI \| } }, \frac{\delta_k}{\| \bss_k^\sI \|}  \right\}.
    \end{align*}

    Now let $\NPC$ so that $\pp_k^\sI = \rr_k^\sI$. Applying the negative curvature of $\rr_k^\sI$, \eqref{eqn:NPC step descent} and  \eqref{eqn:lipschitz upper bound of type 2 step}
    \begin{align*}
        f(\xx_k(\alpha)) - f(\xx_k) - \rho \alpha \langle \bgg_k, \rr_k^\sI \rangle  
        &\leq \alpha \langle \bgg_k^\sI, \rr_k^\sI \rangle + \frac{\alpha^2}{2} \langle \rr_k^\sI, \HH_k^\sI \rr_k^\sI\rangle + \frac{\alpha^3 L_{H} }{6} \| \rr_k^\sI\|^3 - \rho \alpha \langle \bgg_k, \rr_k^\sI \rangle \\
        &\leq \alpha (1 - \rho) \langle \bgg_k^\sI, \rr_k^\sI \rangle + \frac{\alpha^3 L_{H} }{6} \| \rr_k^\sI\|^3 \\
        &\leq -\alpha (1- \rho) \| \rr_k^\sI \|^2 + \frac{\alpha^3 L_{H} }{6} \| \rr_k^\sI\|^3 \\
        &= \alpha \left( -(1- \rho) + \frac{\alpha^2 L_{H} }{6} \| \rr_k^\sI\| \right) \| \rr_k^\sI\|^2.
    \end{align*}
    This upper bound is nonpositive if 
    \begin{align*}
         -(1- \rho) + \frac{\alpha^2 L_{H} }{6} \| \rr_k^\sI\| \leq 0 \implies \alpha \leq \sqrt{\frac{6 (1-\rho)}{ L_{H} \| \rr_k^\sI \| }}.
    \end{align*}
    Therefore the largest step size that satisfies the line search condition, in the NPC case, is lower bounded as 
    \begin{align*}
        \alpha_k \geq \min\left\{\sqrt{\frac{6 (1-\rho)}{ L_{H} \| \rr_k^\sI \| }}, \frac{\delta_k}{\| \rr_k^\sI \|}  \right\}.
    \end{align*}
\end{proof}

From \cref{lemma:type 2 line search termination} we can see that, for a judicious choice of $\delta_k$, the step size is inversely scaling with the step length, except for the $\alpha_k =1$ in $\SOL$ case. This inverse scaling is key to obtaining an improved rate. We therefore deal with the $\alpha_k = 1$ case separately. Indeed, in \cref{lemma:SOL alpha=1 gradient lower bound} we show that if $\alpha_k=1$ with $\SOL$ the step length must be lower bounded by norm of the gradient of the next iterate (over the same inactive set). This lemma is similar to the result in \citet[Lemma 7]{LiuNewtonMR}, we include it for completeness.
\begin{lemma}\label{lemma:SOL alpha=1 gradient lower bound}
    Suppose \cref{alg:newton-mr-two-metric} selects a \textit{Type II} step at iteration $k$ with $\SOL$ and $\alpha_k=1$, that is, an update of the form
    \begin{align*}
        \xx_{k+1} = \xx_k + \vvec{0}{\bss_k^\sI},
    \end{align*}
    with possible reordering. Under \cref{ass:LipschitzGradient,ass:LipschitzHessian,ass:KrylovSubspaceRegularity}, we have
    \begin{align*}
        \| \bss_k^{{\sI_k}} \| \geq c_0 \min\left\{ \left\| \bgg_{k+1}^{\sI_k} \right\|/\epsilon_k, \epsilon_k \right\},
    \end{align*}
    where 
    \begin{align*}
        c_0 \defeq \frac{ 2 \sigma  }{ \theta L_g   + \sqrt{ \theta^2 L_g^2 + 2 L_{H}\sigma^2} }.
    \end{align*}
\end{lemma}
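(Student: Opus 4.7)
\textbf{Proof plan for Lemma \ref{lemma:SOL alpha=1 gradient lower bound}.} The strategy is to relate $\|\bss_k^{\sI_k}\|$ to $\|\bgg_{k+1}^{\sI_k}\|$ by (i) using the MINRES SOL termination condition together with \cref{ass:KrylovSubspaceRegularity} to bound $\|\rr_k^{\sI_k}\|$ linearly in $\|\bss_k^{\sI_k}\|$, (ii) using the Lipschitz Hessian assumption to bound $\|\bgg_{k+1}^{\sI_k}\|$ by $\|\rr_k^{\sI_k}\|$ plus a quadratic remainder in $\|\bss_k^{\sI_k}\|$, and then (iii) solving the resulting quadratic inequality and performing a case split to produce the clean constant $c_0$.

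First I would establish $\|\rr_k^{\sI_k}\|\leq (\epsilon_k\theta L_g/\sigma)\,\|\bss_k^{\sI_k}\|$. Since MINRES exited via the SOL branch with tolerance $\eta=\epsilon_k\theta$, condition \cref{eqn:MINRES termination tolerance} gives $\|\HH_k^{\sI}\rr_k^{\sI}\|\le \eta\,\|\HH_k^{\sI}\bss_k^{\sI}\|$. Since the MINRES residuals span the Krylov subspace, $\rr_k^{\sI}\in\sK_t(\HH_k^{\sI},\bgg_k^{\sI})$, and \cref{ass:KrylovSubspaceRegularity} yields $\langle\rr_k^{\sI},\HH_k^{\sI}\rr_k^{\sI}\rangle\geq\sigma\|\rr_k^{\sI}\|^2$, which by Cauchy--Schwarz implies $\|\HH_k^{\sI}\rr_k^{\sI}\|\geq\sigma\|\rr_k^{\sI}\|$. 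Combining with $\|\HH_k^{\sI}\bss_k^{\sI}\|\leq L_g\|\bss_k^{\sI}\|$ (which follows from \cref{ass:LipschitzGradient}) gives the claim.

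Next I would use \cref{ass:LipschitzHessian} to obtain $\|\bgg_{k+1}^{\sI_k}\|\leq\|\rr_k^{\sI_k}\|+\tfrac{L_H}{2}\|\bss_k^{\sI_k}\|^2$. The key observation is that $\xx_{k+1}-\xx_k$ has support only on $\sI_k$, so the standard integral representation of $\bgg_{k+1}-\bgg_k$ combined with Lipschitz continuity of $\HH$ gives
\begin{equation*}
\|\bgg_{k+1}^{\sI_k}-\bgg_k^{\sI_k}-\HH_k^{\sI_k}\bss_k^{\sI_k}\|\leq\tfrac{L_H}{2}\|\bss_k^{\sI_k}\|^2,
\end{equation*}
and the identity $\rr_k^{\sI_k}=-\HH_k^{\sI_k}\bss_k^{\sI_k}-\bgg_k^{\sI_k}$ finishes this step. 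Inserting the bound from the previous paragraph and writing $u=\|\bss_k^{\sI_k}\|$ produces the quadratic inequality $\tfrac{L_H}{2}u^2+\tfrac{\epsilon_k\theta L_g}{\sigma}u\geq\|\bgg_{k+1}^{\sI_k}\|$, whose positive root, after rationalising, yields
\begin{equation*}
u\geq\frac{2\sigma\|\bgg_{k+1}^{\sI_k}\|}{\epsilon_k\theta L_g+\sqrt{\epsilon_k^2\theta^2L_g^2+2L_H\sigma^2\|\bgg_{k+1}^{\sI_k}\|}}.
\end{equation*}

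The final step, and the main obstacle, is to extract the stated min form. I would split on the relative size of $\|\bgg_{k+1}^{\sI_k}\|$ and $\epsilon_k^2$. When $\|\bgg_{k+1}^{\sI_k}\|\leq\epsilon_k^2$, the square root is bounded above by $\epsilon_k\sqrt{\theta^2L_g^2+2L_H\sigma^2}$, and factoring $\epsilon_k$ out of the denominator gives $u\geq c_0\|\bgg_{k+1}^{\sI_k}\|/\epsilon_k$. When $\|\bgg_{k+1}^{\sI_k}\|>\epsilon_k^2$, set $a=\|\bgg_{k+1}^{\sI_k}\|/\epsilon_k^2>1$ and observe that the map $a\mapsto a/(\theta L_g+\sqrt{\theta^2L_g^2+2L_H\sigma^2 a})$ is monotonically increasing, so its value at $a\geq 1$ is at least its value at $a=1$, giving $u\geq c_0\epsilon_k$. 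The two cases combine into $u\geq c_0\min\{\|\bgg_{k+1}^{\sI_k}\|/\epsilon_k,\epsilon_k\}$, which is the claim; the only nontrivial piece is the monotonicity check, which is immediate from differentiating in $a$.
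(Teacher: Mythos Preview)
Your proposal is correct and follows essentially the same approach as the paper's proof: both combine the MINRES SOL termination condition with \cref{ass:KrylovSubspaceRegularity} to bound $\|\rr_k^{\sI_k}\|$ linearly in $\|\bss_k^{\sI_k}\|$, use the Lipschitz Hessian remainder to obtain the quadratic inequality, solve for the positive root, and then perform the same case split on whether $\|\bgg_{k+1}^{\sI_k}\|/\epsilon_k^2$ exceeds $1$. The paper phrases the second case by dividing numerator and denominator by $\|\bgg_{k+1}^{\sI_k}\|/\epsilon_k^2$ rather than invoking monotonicity explicitly, but the content is identical.
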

\begin{proof}
    Since $(\rr_k^\sI)^{(t-1)} = - \HH_k^\sI (\bss_k^\sI)^{(t-1)} - \bgg_k^\sI \in \sK_t(\HH_k^\sI, \bgg_k^\sI)$ and NPC has not been detected, \cref{ass:KrylovSubspaceRegularity} implies 
    \begin{align*}
        \sigma \| \rr_k^\sI \|^2 \leq \langle \rr_k^\sI, \HH_k^\sI \rr_k^\sI \rangle \leq \| \rr_k^\sI \| \| \HH_k^\sI \rr_k^\sI \| \implies \| \rr_k^\sI \| \leq \frac{\| \HH_k^\sI \rr_k^\sI \|}{\sigma}. \tageq\label{eqn:residual Hr upper bound}
    \end{align*}
    For clarity, in the sequel we make the dependence of inactive set on the iteration explicit. Consider
    \begin{align*}
        \bgg_{k+1}^{\sI_k} = \left( \pdv{f(\xx_{k+1})}{\xx^i} \ | \ i \in \sI(\xx_k, \delta_k) \right),
    \end{align*}
    that is, the indices of the gradient evaluated at $\xx_{k+1}$ corresponding to the inactive set at $\xx_k$. This portion of the \textit{next} gradient ``lives'' in the same subset of the indices as $\bgg_k^{\sI_k}$. The mean value theorem therefore implies that 
    \begin{align*}
    \bgg_{k+1}^{\sI_k} - \bgg_k^{\sI_k}  - \HH_k^{\sI_k} \bss_k^{\sI_k} &= \int_0^1 \left( \HH \left(\xx_k + t \vvec{0}{\bss_k^\sI}\right)^{\sI_k} - \HH_k^{\sI_k} \right)\bss_k^{\sI_k} \, dt.
    \end{align*}
    \cref{ass:LipschitzHessian} implies
    \begin{align*}
        \left\| \bgg_{k+1}^{\sI_k} - \bgg_k^{\sI_k}  - \HH_k^{\sI_k} \bss_k^{\sI_k} \right\| \leq \frac{L_{H}}{2}\| \bss_k^{\sI_k} \| .
    \end{align*}
    Using this bound, \cref{eqn:MINRES termination tolerance}, and \cref{eqn:residual Hr upper bound}, we obtain
    \begin{align*}
        \left\| \bgg_{k+1}^{\sI_k} \right\| &= \left\| \bgg_{k+1}^{\sI_k} - \bgg_k^{\sI_k} - \HH_k^{\sI_k} \bss_k^{\sI_k} - \rr_k^{\sI_k} \right\| \\
        &\leq \left\| \bgg_{k+1}^{\sI_k} - \bgg_k^{\sI_k} - \HH_k^{\sI_k} \bss_k^{\sI_k} \right\|  + \| \rr_k^{\sI_k} \| \\
        &\leq \frac{L_{H}}{2} \| \bss_k^{\sI_k} \|^2  +  \frac{\| \HH_k^{\sI_k} \rr_k^{\sI_k}\|}{\sigma} \\
        &\leq \frac{L_{H}}{2} \| \bss_k^{\sI_k} \|^2  +  \frac{\theta  \epsilon_k \| \HH_k^{\sI_k} \bss_k^{\sI_k}\|}{\sigma} \\
         &\leq \frac{L_{H}}{2} \| \bss_k^{\sI_k} \|^2  +  \frac{\theta \epsilon_k L_g \| \bss_k^{\sI_k}\|}{\sigma},
    \end{align*}
    where the second to last line follows from the MINRES termination condition in \cref{alg:newton-mr-two-metric} and the last line follows from \cref{ass:LipschitzGradient}. Rearranging this expression, we obtain a quadratic inequality in $\| \bss_k^{\sI_k}\|$ as
    \begin{align*}
         0 &\leq L_{H} \sigma \| \bss_k^{\sI_k} \|^2  + 2 \theta \epsilon_k L_g \|\bss_k^{\sI_k}\| - 2\sigma \left\| \bgg_{k+1}^{\sI_k} \right\|.
    \end{align*}
    We can bound $\| \bss_k^{\sI_k} \| $ by the positive root of this quadratic as
    \begin{align*}
        \left\| \bss_k^{\sI_k} \right\| &\geq  \frac{-2 \theta \epsilon_k L_g  + \sqrt{4 \theta^2 \epsilon_k^2 L_g^2 + 8 L_{H}\sigma^2\| \bgg_{k+1}^{\sI_k} \| }}{2L_{H}\sigma} \\
        &= \left( \frac{- \theta  L_g  + \sqrt{ \theta^2 L_g^2 + 2 L_{H}\sigma^2\| \bgg_{k+1}^{\sI_k} \| /\epsilon_k^2 }}{L_{H}\sigma} \right) \epsilon_k \\
        &= \left( \frac{ \theta^2 L_g^2  - \left(  \theta^2 L_g^2 + 2 L_{H}\sigma^2 \| \bgg_{k+1}^{\sI_k} \|/\epsilon_k^2 \right) }{L_{H}\sigma \left( -\eta L_g  - \sqrt{ \theta^2 L_g^2 + 2 L_{H}\sigma^2 \| \bgg_{k+1}^{\sI_k} \|/\epsilon_k^2} \right)} \right) \epsilon_k \\
        &= \left( \frac{ 2 \sigma \| \bgg_{k+1}^{\sI_k} \|/\epsilon_k^2 }{ L_g  \theta  + \sqrt{ \theta^2 L_g^2 + 2 L_{H}\sigma^2 \| \bgg_{k+1}^{\sI_k} \|/\epsilon_k^2} } \right) \epsilon_k .
    \end{align*}
    We now consider two cases. If $\| \bgg_{k+1}^{\sI_k} \|/\epsilon_k^2 > 1$
    \begin{align*}
        \frac{ 2 \sigma \| \bgg_{k+1}^{\sI_k} \|/\epsilon_k^2 }{ \theta L_g + \sqrt{ \theta^2 L_g^2 + 2 L_{H}\sigma^2 \| \bgg_{k+1}^{\sI_k} \|/\epsilon_k^2} } 
        &= \frac{ 2 \sigma }{ \theta L_g\epsilon_k^2/\| \bgg_{k+1}^{\sI_k} \|  + \sqrt{ \theta^2 L_g^2 \epsilon_k^4/\| \bgg_{k+1}^{\sI_k} \|^2  +  2L_{H}\sigma^2 \epsilon_k^2/\| \bgg_{k+1}^{\sI_k} \| }} \\
        &\geq \frac{ 2 \sigma }{\theta L_g + \sqrt{ \theta^2 L_g^2  +  2L_{H}\sigma^2  }} .
    \end{align*}
    On the other hand, if $\| \bgg_{k+1}^{\sI_k} \|/\epsilon_k^2 \leq 1$
    \begin{align*}
        L_g \theta + \sqrt{ \theta^2 L_g^2 +  2 L_{H} \sigma^2\| \bgg_{k+1}^{\sI_k} \|/\epsilon_k^2}  \leq  L_g \theta + \sqrt{ \theta^2 L_g^2 +  2 L_{H} \sigma^2}.
    \end{align*}
    Together, these cases imply that 
    \begin{align*}
        \left\| \bss_k^{\sI_k} \right\| &= \left( \frac{ 2 \sigma \| \bgg_{k+1}^{\sI_k} \|/\epsilon_k^2 }{ L_g  \theta   + \sqrt{ \theta^2 L_g^2 + 2 L_{H}\sigma^2 \| \bgg_{k+1}^{\sI_k} \|/\epsilon_k^2} } \right) \epsilon_k \\
        &\geq \frac{ 2 \sigma  }{ L_g \theta   + \sqrt{ \theta^2 L_g^2 + 2 L_{H}\sigma^2} } \min\left\{ \left\| \bgg_{k+1}^{\sI_k} \right\|/\epsilon_k^2, 1 \right\} \epsilon_k.
    \end{align*}
\end{proof}

We now demonstrate the sufficient decrease of the \textit{Type II} step.
\begin{lemma}[\textit{Type II} Step: Sufficient Decrease] \label{lemma:type 2 sufficient decrease}
    Assume that $f$ satisfies \cref{ass:LipschitzGradient,ass:LipschitzHessian}. Suppose that a \text{Type II} step is taken on iteration $k$ of \cref{alg:newton-mr-two-metric} (i.e., $\sI(\xx_k, \delta_k) \neq \emptyset$ and $\| \bgg_k^\sI \| > \epsilon_k^2$). Let $\xx_{k+1} = \sP(\xx_k + \alpha_k \pp_k)$ where $\alpha_k$ is the largest step size satisfying the termination condition \eqref{eqn:line search criteria} (cf. \cref{lemma:type 2 line search termination}). Suppose that MINRES returns $\SOL$ and \cref{ass:KrylovSubspaceRegularity} is satisfied. Then, if $\|\bgg_{k+1}^\sI\| > 0$, we have 
    \begin{align*}
        f(\xx_{k+1}) - f(\xx_k) <  - \rho \sigma \min\left\{\sqrt{\frac{3 \sigma(1 - 2 \rho)}{L_{H}  } }C_{\sigma,L_g}  ^{3/2} \epsilon_k^3  , C_{\sigma,L_g}   \delta_k \epsilon_k^2, \frac{c_0^2 \left\| \bgg_{k+1}^{\sI} \right\|^2}{2\epsilon_k^2}, \frac{c_0^2\epsilon_k^2}{2} \right\}.
    \end{align*}
    where $c_0$ is defined in \cref{lemma:SOL alpha=1 gradient lower bound}. Note that if $\|\bgg_{k+1}^\sI\| =0$ strict inequality must be replaced with ``$\leq$''. On the other hand, if $\NPC$ and \cref{ass:ResidualLowerBoundRegularity} is satisfied, then
    \begin{align*}
        f(\xx_{k+1}) - f(\xx_k) < - \rho \min\left\{\sqrt{\frac{6 (1-\rho)}{ L_{H} }} \omega^{3/2} \epsilon_k^3, \omega \delta_k  \epsilon_k^2  \right\}.
    \end{align*}
\end{lemma}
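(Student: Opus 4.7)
The plan is to start from the line search criterion \cref{eqn:line search criteria}, which for a \textit{Type II} step reduces (since $\pp_k^{\sA} = \zero$) to
\begin{equation*}
    f(\xx_{k+1}) - f(\xx_k) \leq \rho\, \alpha_k \langle \bgg_k^{\sI}, \pp_k^{\sI} \rangle.
\end{equation*}
I would then split on whether MINRES returns $\SOL$ or $\NPC$, and in each sub-case combine two ingredients: (i) a lower bound on $-\langle \bgg_k^{\sI}, \pp_k^{\sI}\rangle$ in terms of $\|\pp_k^{\sI}\|^2$ (from the descent/curvature properties of MINRES), and (ii) the step-size lower bound from \cref{lemma:type 2 line search termination}.

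For the $\NPC$ branch, I would use \cref{eqn:NPC step descent} to get $\langle \bgg_k^{\sI}, \rr_k^{\sI}\rangle = -\|\rr_k^{\sI}\|^2$, so the bound becomes $-\rho\alpha_k\|\rr_k^{\sI}\|^2$. Then I would substitute the two candidate values in \cref{eqn:type 2 NPC line search termination}: the case $\alpha_k = \sqrt{6(1-\rho)/(L_H\|\rr_k^{\sI}\|)}$ gives a $\|\rr_k^{\sI}\|^{3/2}$ factor, while $\alpha_k = \delta_k/\|\rr_k^{\sI}\|$ gives a $\delta_k\|\rr_k^{\sI}\|$ factor. In both, I would invoke \cref{ass:ResidualLowerBoundRegularity} together with the assumption $\|\bgg_k^\sI\| > \epsilon_k^2$ to convert $\|\rr_k^{\sI}\| \geq \omega\|\bgg_k^{\sI}\| > \omega\epsilon_k^2$, yielding the stated NPC bound by taking a minimum over the two cases.

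For the $\SOL$ branch, I would instead use \cref{eqn:SOL MINRES descent curvature condition} and \cref{ass:KrylovSubspaceRegularity} to get $\langle \bgg_k^{\sI}, \bss_k^{\sI}\rangle \leq -\langle \bss_k^{\sI}, \HH_k^{\sI}\bss_k^{\sI}\rangle \leq -\sigma\|\bss_k^{\sI}\|^2$, which bounds the decrease by $-\rho\sigma\alpha_k\|\bss_k^{\sI}\|^2$. Then I would take the three cases of \cref{eqn:type 2 SOL line search termination}. When $\alpha_k = \sqrt{3\sigma(1-2\rho)/(L_H\|\bss_k^{\sI}\|)}$ or $\alpha_k = \delta_k/\|\bss_k^{\sI}\|$, I can apply the left-hand inequality of \cref{eqn:SOL step gradient related} with $\varrho = \sigma$, so $\|\bss_k^{\sI}\| \geq C_{\sigma,L_g}\|\bgg_k^{\sI}\| > C_{\sigma,L_g}\epsilon_k^2$, producing the first two terms of the minimum. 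The third step-size case $\alpha_k = 1$ is the most delicate: here $C_{\sigma,L_g}\|\bgg_k^{\sI}\|$ is not enough since the lower bound on $\|\bss_k^{\sI}\|^2$ no longer scales with $\epsilon_k$, and this is where \cref{lemma:SOL alpha=1 gradient lower bound} becomes essential.

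The main obstacle, therefore, is cleanly handling the $\alpha_k = 1$ sub-case in the $\SOL$ branch. I would deploy \cref{lemma:SOL alpha=1 gradient lower bound} to obtain $\|\bss_k^{\sI}\| \geq c_0\min\{\|\bgg_{k+1}^{\sI}\|/\epsilon_k,\,\epsilon_k\}$, square, and use $\min\{a,b\}^2 = \min\{a^2,b^2\}$ to get $\|\bss_k^{\sI}\|^2 \geq c_0^2 \min\{\|\bgg_{k+1}^{\sI}\|^2/\epsilon_k^2,\,\epsilon_k^2\}$, which accounts for the last two entries of the overall minimum (with the factor $1/2$ absorbing the strict-vs-non-strict inequality when $\|\bgg_{k+1}^{\sI}\| > 0$). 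Taking the minimum across all three $\alpha_k$ cases then yields the $\SOL$ bound, and the remark on $\|\bgg_{k+1}^{\sI}\| = 0$ follows because \cref{lemma:SOL alpha=1 gradient lower bound} collapses to a non-strict inequality in that degenerate case.
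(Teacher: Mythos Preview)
Your proposal is correct and follows essentially the same route as the paper's proof: start from the line-search inequality (which for \textit{Type II} reduces to the inactive term only), bound $-\langle \bgg_k^\sI,\pp_k^\sI\rangle$ by $\sigma\|\bss_k^\sI\|^2$ (SOL) or $\|\rr_k^\sI\|^2$ (NPC), and then feed in the step-size lower bounds of \cref{lemma:type 2 line search termination}. The only stylistic difference is that the paper makes the SOL case-split explicit as ``$\alpha_k<1$'' versus ``$\alpha_k=1$'' (the latter being exactly when \cref{lemma:SOL alpha=1 gradient lower bound} applies, since backtracking starts at $\alpha_0=1$), whereas you phrase it as ``three cases of the bound''; the logic is the same, and your handling of the $1/2$ factor to recover strict inequality when $\|\bgg_{k+1}^\sI\|>0$ matches the paper exactly.
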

\begin{proof}
    If $\SOL$, $\pp_k^\sI = \bss_k^\sI$. Combining the line search sufficient decrease \eqref{eqn:line search criteria}, the descent condition for the SOL step \eqref{eqn:SOL MINRES descent curvature condition} and \cref{ass:KrylovSubspaceRegularity}, we obtain
    \begin{align*}
        f(\xx_{k+1}) - f(\xx_k) 
        &\leq \alpha_k \rho  \langle \bss_k^\sI, \bgg_k^\sI \rangle \\
        &\leq - \alpha_k \rho  \langle \bss_k^\sI, \HH_k^\sI \bss_k^\sI \rangle \\ 
        &\leq - \alpha_k \rho \sigma \| \bss_k^\sI \|^2.
    \end{align*}
    Since $\alpha_k \leq 1$, if the step size returned by the line search satisfies $\alpha_k < 1$, then we must have  
    \begin{align*}
         \min\left\{\sqrt{\frac{3 \sigma(1 - 2 \rho)}{L_{H} \| \bss_k^\sI \| } }, \frac{\delta_k}{\| \bss_k^\sI \|}  \right\} \leq \alpha_k,
    \end{align*}
    as otherwise \eqref{eqn:type 2 SOL line search termination} would imply $\alpha_k \geq 1$. Therefore, by applying \cref{eqn:SOL step gradient related} with $\varrho= \sigma$, we obtain
    \begin{align*}
        f(\xx_{k+1}) - f(\xx_k) 
        &\leq - \rho \sigma \min\left\{\sqrt{\frac{3 \sigma(1 - 2 \rho)}{L_{H} \| \bss_k^\sI \| } }, \frac{\delta_k}{\| \bss_k^\sI \|}  \right\} \| \bss_k^\sI \|^2  \\
        &\leq - \rho \sigma \min\left\{\sqrt{\frac{3 \sigma(1 - 2 \rho)}{L_{H}  } } \| \bss_k^\sI \|^{3/2} , \delta_k\| \bss_k^\sI \|  \right\}  \\
        &\leq - \rho \sigma \min\left\{\sqrt{\frac{3 \sigma(1 - 2 \rho)}{L_{H}  } } C_{\sigma,L_g}  ^{3/2}\| \bgg_k^\sI \|^{3/2}  , C_{\sigma,L_g}   \delta_k\| \bgg_k^\sI \|  \right\} \\
        &< - \rho \sigma \min\left\{\sqrt{\frac{3 \sigma(1 - 2 \rho)}{L_{H}  } }C_{\sigma,L_g}  ^{3/2} \epsilon_k^3  , C_{\sigma,L_g}   \delta_k \epsilon_k^2   \right\},
    \end{align*}
    on the last line we use the fact that by assumption, $\| \bgg_k^\sI \| > \epsilon_k^2$. If the step size $\alpha_k = 1$ is selected by the line search, we can use Lemma \ref{lemma:SOL alpha=1 gradient lower bound} to obtain 
   \begin{align*}
       \| \bss_k^\sI\| \geq c_0 \min\left\{ \left\| \bgg_{k+1}^{\sI} \right\|/\epsilon_k, \epsilon_k \right\},
   \end{align*}
   which implies 
   \begin{align*}
        f(\xx_{k+1}) - f(\xx_k) 
        &\leq - \rho \sigma \| \bss_k^\sI \|^2  \\
        &<  - \frac{\rho \sigma c_0^2}{2} \min\left\{ \left\| \bgg_{k+1}^{\sI} \right\|^2/\epsilon_k^2, \epsilon_k^2 \right\} .
    \end{align*}
    If $\| \bgg_{k+1}^{\sI} \| =0$, the strict inequality must be replaced with ``$\leq$''. Combining the bounds we obtain the result.
    
    If $\NPC$, $\pp_k^\sI = \rr_k^\sI$. The line search condition \eqref{eqn:line search criteria}, the step size lower bound \eqref{eqn:type 2 NPC line search termination}, \eqref{eqn:NPC step descent} and Assumption \ref{ass:ResidualLowerBoundRegularity} imply  
    \begin{align*}
        f(\xx_{k+1}) - f(\xx_k) &\leq \alpha_k \rho \langle \rr_k^\sI, \bgg_k^\sI \rangle \\
        &\leq - \alpha_k \rho \| \rr_k^\sI \|^2 \\
        &\leq - \rho \min\left\{\sqrt{\frac{6 (1-\rho)}{ L_{H} \| \rr_k^\sI \| }}, \frac{\delta_k}{\| \rr_k^\sI \|}  \right\} \| \rr_k^\sI \|^2 \\
        &\leq - \rho \min\left\{\sqrt{\frac{6 (1-\rho)}{ L_{H} }} \| \rr_k^\sI \|^{3/2}, \delta_k \| \rr_k^\sI \|  \right\} \\
        &\leq - \rho \min\left\{\sqrt{\frac{6 (1-\rho)}{ L_{H} }} \omega^{3/2} \| \bgg_k^\sI \|^{3/2}, \delta_k \omega \| \bgg_k^\sI \|  \right\} \\
        &< - \rho \min\left\{\sqrt{\frac{6 (1-\rho)}{ L_{H} }} \omega^{3/2} \epsilon_k^3, \omega \delta_k  \epsilon_k^2  \right\},
    \end{align*}
    where the final inequality follows from the non-termination condition.
\end{proof}

We are finally ready to prove \cref{thm:first-order iteration complexity}.

\begin{proof}[Proof of \cref{thm:first-order iteration complexity}]
    Let $f^0 = f(\xx_0)$. We posit that the algorithm terminates in
    \begin{align*}
        K \defeq \left\lceil \frac{2(f^0 - f_*)\epsilon_g^{3/2}}{\min \{ c_{(1, 1)}, c_{(1, 2,)}, c_{(2, 2)} \} }   + 1 \right\rceil,
    \end{align*}
    iterations where $c_{(1, 1)}$, $c_{(1, 2,)}$ and $ c_{(2, 2)} $ are constants that will be defined later. Suppose, to the contrary, that the termination conditions is unsatisfied until at least iteration $K+1$. Then for iterations $k = 0, \ldots, K$ at least one of the termination \cref{eqn:active gradient negativity condition,eqn:active gradient norm termination condition,eqn:inactive set termination condition} conditions must be unsatisfied. We divide the \textit{Type I} iterations
    \begin{align*}
        \sK_1 = \{ k \in [K] \ | \ \sA(\xx_k, \epsilon_g^{1/2}) \neq \emptyset, \quad  \text{and} \quad  (\exists i \in \sA(\xx_k, \epsilon_g^{1/2}),  \quad  \bgg_k^i < - \sqrt{\epsilon_g}, \quad \text{or} \quad \| \diag(\xx_k^\sA) \bgg_k^\sA \| \geq \epsilon_g )\},
    \end{align*}
    into two sets
    \begin{align*}
        \sK_{1, 1} &= \sK_1 \cap \{ k \in [K] \ | \ \| \bgg_k^\sI\| > \epsilon_g^{3/4}, \quad \text{and} \quad \sI(\xx_k, \epsilon_g^{1/2}) \neq \emptyset \}, \\ 
        \sK_{1, 2} &= \sK_1 \cap \{ k \in [K] \ | \ \| \bgg_k^\sI\| \leq \epsilon_g^{3/4}, \quad \text{or} \quad \sI(\xx_k, \epsilon_g^{1/2} )=\emptyset \}.
    \end{align*}
    For the \textit{Type II} iterations $\sK_2 = [K] \setminus \sK_1$ we have $\sI(\xx_k, \epsilon_g^{1/2}) \neq \emptyset$ and $\|\bgg_k^{\sI_k} \| > \epsilon_g$. We divide them as follows
    \begin{align*}
        \sK_{2, 1} &= \sK_2 \cap \{ k \in [K] \ | \ \sI(\xx_{k+1}, \epsilon_g^{1/2}) \neq \emptyset, \quad \text{and} \quad  \| \bgg_{k+1}^{\sI_{k+1}} \| > \epsilon_g  \}, \\
        \sK_{2, 2} &= \sK_2 \cap \{ k \in [K] \ | \ \sI(\xx_{k+1}, \epsilon_g^{1/2}) = \emptyset, \quad \text{or} \quad \| \bgg_{k+1}^{\sI_{k+1}} \| \leq \epsilon_g  \}.
    \end{align*}
    We now restate the results obtained for per-iteration decrease.
    
    \textbf{\textit{Type I} step}. For $k \in \sK_{1, 1}$, Lemma \ref{lemma:type 1 inactive set decrease} applies and by combining the NPC and SOL cases and using $\epsilon_g<1$ we obtain, 
    \begin{align*}
        f(\xx_{k+1}) - f(\xx_k) &< - \min \left\{ c_{(1, 1)}^a \epsilon_g^{3/2} , c_{(1, 1)}^b \epsilon_g^{5/4} \right\} \leq - \min \left\{ c_{(1, 1)}^a, c_{(1, 1)}^b \right\} \epsilon_g^{3/2}
        = -c_{(1, 1)}\epsilon_g^{3/2}, \tageq\label{eqn:11 decrease}
    \end{align*}
    where 
    \begin{align*}
        c_{(1, 1)}^a &\defeq \rho\min\left\{ \frac{2(1 - \rho)\omega^2 }{L_{g}}, \frac{2(1 - \rho)\min \{1, \sigma \} \sigma C_{\sigma,L_g}  ^2 }{L_{g}} \right\}, \\
        c_{(1, 1)}^b &\defeq \rho \min\left\{\omega, \sigma C_{\sigma,L_g}   \right\}, \quad \text{and} \quad \ c_{(1, 1)} \defeq \min\{c_{(1, 1)}^a, c_{(1, 1)}^b \}.   
    \end{align*}

    \bigskip 

    For $k \in \sK_{1, 2}$, \cref{lemma:type 1 active set decrease} applies. Indeed, for $\sI(\xx_k, \epsilon_g^{1/2}) \neq \emptyset$ we obtain a decrease 
     \begin{align*}
        f(\xx_{k+1}) - f(\xx_k) &<  - \frac{\rho}{2} \min \left\{ 1, \min \{1, \sigma\} \min \left\{\frac{2(1 - \rho)}{L_{g}} , \frac{1}{ \epsilon_g^{1/4}} \right\} \right\}\epsilon_g \\
        &\leq  - \frac{\rho}{2} \min \left\{ 1, \min \{1, \sigma\} \min \left\{\frac{2(1 - \rho)}{L_{g}} , 1 \right\} \right\}\epsilon_g^{3/2},
    \end{align*}
    where on the second line we used $\epsilon_g < 1$. The decrease in the case where $\sI(\xx_k, \epsilon_g^{1/2}) = \emptyset$ is given by 
    \begin{align*}
        f(\xx_{k+1}) - f(\xx_k) < - \frac{\rho}{2} \min \left\{ 1,  \frac{2(1- \rho)}{L_g} \right\}\epsilon_g^{3/2}.
    \end{align*}
    Combining these results we obtain
    \begin{align*}
        f(\xx_{k+1}) - f(\xx_k) < - c_{(1,2)} \epsilon_g^{3/2}, \tageq\label{eqn:12 decrease}
    \end{align*}
    where
    \begin{align*}
        c_{(1, 2)} \defeq  \frac{\rho}{2}  \min\left\{1,  \min\{1 , \sigma \} \min\left\{ \frac{2(1- \rho)}{L_g}, 1 \right\} \right\}.
    \end{align*}

    \bigskip
    
    \textbf{\textit{Type II} Step}:  For $k \in \sK_{2, 1}$, we can apply $\| \bgg_{k+1}^{\sI_{k+1}} \| > \epsilon_g$ to further refine the bound for the SOL case. Note that because $\delta_k = \delta_{k+1} = \epsilon_g^{1/2}$ and a \textit{Type II} step is taken $\sI\left(\xx_{k+1}, \epsilon_g^{1/2}\right) \subseteq \sI\left(\xx_k, \epsilon_g^{1/2}\right)$. Indeed, if $i \in \sA\left(\xx_k, \epsilon_g^{1/2} \right)$ we have $\pp_k^i = 0$ and hence $\xx_{k+1}^i = \xx_k^i \leq \epsilon_g^{1/2}$ and $ \sA \left(\xx_k, \epsilon_g^{1/2}\right) \subseteq \sA \left(\xx_{k+1}, \epsilon_g^{1/2}\right) $. Together these results imply that
    \begin{align*}
        \epsilon_g < \| \bgg_{k+1}^{\sI_{k+1}} \| \leq \| \bgg_{k+1}^{\sI_k} \|.
    \end{align*} 
    With $\SOL$ and using $\epsilon_g < 1$, \cref{lemma:type 2 sufficient decrease} implies
    \begin{align*}
        f(\xx_{k+1}) - f(\xx_k) &< - \rho \sigma \min\left\{\sqrt{\frac{3 \sigma(1 - 2 \rho)C_{\sigma,L_g}  ^3}{L_{H}  } }\epsilon_g^{3/2} ,C_{\sigma,L_g}   \epsilon_g^{3/2}, \frac{c_0^2}{2} , \frac{c_0^2 \epsilon_g}{2}, \right\} \\    
        &\leq - \rho \sigma \min\left\{\sqrt{\frac{3 \sigma(1 - 2 \rho)C_{\sigma,L_g}  ^3}{L_{H}  } }  , C_{\sigma,L_g}    , \frac{c_0^2 }{2} \right\}\epsilon_g^{3/2}.
    \end{align*}
    With $\NPC$, this becomes 
    \begin{align*}
        f(\xx_{k+1}) - f(\xx_k) < - \rho \min\left\{\sqrt{\frac{6 (1-\rho)}{ L_{H} }} \omega^{3/2} , \omega \right\} \epsilon_g^{3/2},
    \end{align*}
    and so by combining the $\NPC$ and $\SOL$ cases, we have
    \begin{align*}
        f(\xx_{k+1}) - f(\xx_{k}) &< - c_{(2, 2)} \epsilon_g^{3/2}, \tageq\label{eqn:22 decrease}
    \end{align*}
    where
    \begin{align*}
        c_{(2, 2)} \defeq \rho \min\left\{\sqrt{\frac{3 \sigma^3 (1 - 2 \rho) C_{\sigma,L_g}  ^3}{L_{H}} }  , \sigma C_{\sigma,L_g}    , \frac{\sigma c_0^2 }{2}, \sqrt{\frac{6 (1-\rho) \omega^3}{ L_{H} }} , \omega \right\}.
    \end{align*}

    For $k \in \sK_{2, 2}$, the lower bound for the next gradient norm is no longer available. However, due to \cref{lemma:type 1 line search termination}, we have at least
    \begin{align*}
        f(\xx_{k+1}) - f(\xx_k) \leq 0.
    \end{align*}
    Additionally, due to the non-termination of the algorithm, $k \in \sK_{2,2}$ implies $k+1 \in \sK_{1}$ unless $k = K$, in which case $K+1$ could be the iteration the algorithm terminates. We can therefore write 
    \begin{align*}
        | \sK_{(2,1)} | \leq | \sK_1| + 1.
    \end{align*}
    We now bound the total decrease in terms of the number of iterations that must have occurred using \cref{eqn:11 decrease,eqn:12 decrease,eqn:22 decrease}
    \begin{align*}
        f^0 - f^* &\geq f^0 - f(\xx_{K+1})  \\
         &= \sum_{k=0}^K f(\xx_{k}) - f(\xx_{k+1}) \\
         &= \sum_{k \in \sK_{(1, 1)}} f(\xx_k) - f(\xx_{k+1}) + \sum_{k \in \sK_{(1, 2)}} f(\xx_k) - f(\xx_{k+1})\\
         &+ \sum_{k \in \sK_{(2, 1)}} f(\xx_k) - f(\xx_{k+1}) + \sum_{k \in \sK_{(2, 2)}} f(\xx_k) - f(\xx_{k+1}) \\
         &> \sum_{k \in \sK_{(1, 1)}} c_{(1,1)} \epsilon_g^{3/2} + \sum_{k \in \sK_{(1, 2)}} + c_{(1,2)} \epsilon_g^{3/2} + \sum_{k \in \sK_{(2, 2)}} c_{(2,2)} \epsilon_g^{3/2} \\
         &= |\sK_{(1, 1)}|c_{(1,1)}\epsilon_g^{3/2} + |\sK_{(1, 2)}|c_{(1,2)}\epsilon_g^{3/2} + |\sK_{(2, 2)}|c_{(2,2)}\epsilon_g^{3/2}.
    \end{align*}
    Since each term is positive, we get 
    \begin{align*}
        |\sK_{(1, 1)}| < \frac{ (f^0 - f^*)\epsilon_g^{-3/2} }{c_{(1,1)}}, \ |\sK_{(1, 2)}| < \frac{ (f^0 - f^*)\epsilon_g^{-3/2} }{c_{(1,2)}}, \ |\sK_{(2, 2)}| < \frac{ (f^0 - f^*)\epsilon_g^{-3/2} }{c_{(2,2)}}.
    \end{align*}
    Hence, if we add up the total number of iterations that must have been taken
    \begin{align*}
        K &= |\sK_{(1, 1)}| + |\sK_{(1, 2)}| + |\sK_{(2, 1)}| + |\sK_{(2, 2)}| \\
        &\leq 2(|\sK_{(1, 1)}| + |\sK_{(1, 2)}|) + |\sK_{(2, 2)}| + 1 \\
        &< \frac{2(f^0 - f^*)\epsilon_g^{3/2}}{c_{(1, 1)}} + \frac{2(f^0 - f^*)\epsilon_g^{3/2}}{c_{(1, 2)}} + \frac{(f^0 - f^*)\epsilon_g^{3/2}}{c_{(2, 2)}}  + 1 \\
        &\leq \left\lceil \frac{2(f^0 - f^*)\epsilon_g^{3/2}}{\min \{ c_{(1, 1)}, c_{(1, 2,)}, c_{(2, 2)} \} }   + 1\right\rceil\\
        &= K ,
    \end{align*}
    we arrive at a contradiction.
\end{proof}

\section{Operational Complexity} \label{apx:operational complexity}

The results in this section are corollaries of \cref{thm:minimal assumptions convergence theorem} and \cref{thm:first-order iteration complexity} and the MINRES iteration bounds in \citet{LiuNewtonMR}. The following definitions are included from \citep{LiuNewtonMR} for completeness.

Let $\Psi(\HH, \bgg)$ denote the set of $\bgg$-relevant eigenvalues\footnote{Eigenvalues outside of $\Psi(\HH, \bgg)$ are essentially ``invisible'' to the Krylov subspace built out of products of $\HH$ and $\bgg$.}, that is, the eigenvalues whose eigenspace is \textit{not} orthogonal to $\bgg$. Denote $\psi = |\Psi(\HH, \bgg)|$ and let $\psi_-$, $\psi_0$ and $\psi_+$ be the number of negative, zero and positive $\bgg$-relevant eigenvalues so that $\psi = \psi_- + \psi_0 + \psi_+$. We impose the following order on the eigenvalues 
\begin{align*}
    \lambda_1 > \lambda_2 > \ldots > \lambda_{\psi_+} > 0 > \lambda_{\psi_+ + \psi_0 + 1} > \ldots > \lambda_{\psi}.
\end{align*}
Denote by $\UU_i$ the matrix with columns which form an orthonormal basis of the $i^\text{th}$ eigenspace with the convention that the leading column is the only column onto which the gradient has nonzero projection. For $1 \leq i \leq \psi_+$ and $\psi_+ + \psi_0 + 1 \leq j \leq \psi$, define the following matrices
\begin{align*}
    \UU_{i+} = [\UU_1 \ldots \UU_{i} ], \ \UU_{j-} = [\UU_{j}, \ldots, \UU_{\psi}].
\end{align*}
The columns of $\UU_{i+}$ represent the eigenspaces of the $i$ most positive $\bgg$-relevant eigenvalues, while $\UU_{j-}$ represents the eigenspaces corresponding to the $j$ most negative $\bgg$-relevant eigenvalues. As a special case, let $\UU_+ = \UU_{\psi+}$ and $\UU_- = \UU_{(\psi_+ + \psi_0+ 1) -}$. Finally, let  
\begin{align*}
    \UU = [\UU_+, \UU_-].
\end{align*}
We now state a key assumption for the result.

\begin{assumption}{\citep[Assumption 5]{LiuNewtonMR}}\label{ass:uniform minres operational bound}
    There exists $\tau > 0$ and $L_g^2/(L_g^2 + \eta^2) < \nu \leq 1$ such that for any $\xx \in \real^d_+$ with $\bgg \notin \Null{(\HH)}$ at least one of the following statements (i)-(iii) must hold

    (i) If $\psi_+ \geq 1$ and $\psi_i \geq 1$ then there exists $1 \leq i \leq \psi_+$ and $\psi_{+} + \psi_0 + 1 \leq j \leq \psi$ such that 
    \begin{align*}
        \min\{ \lambda_i, -\lambda_j \} &\geq \tau, \\
        \| (\UU_{i+}\UU_{i+}^\transpose + \UU_{j-}\UU_{j-}^\transpose) \bgg \|^2 &\geq \nu \| \UU \UU^\transpose \bgg \|^2.
    \end{align*}

    (ii) If $\psi_+ \geq 1$ then there exists $1 \leq i \leq \psi_+$ such that 
    \begin{align*}
        \lambda_i &\geq \tau, \\
        \| \UU_{i+} \UU_{i+}^\transpose \bgg \|^2 &\geq \nu \| \UU \UU^\transpose \bgg\|^2.
    \end{align*}

    (iii) If $\psi_- \geq 1$ then there exists some $\psi_+ + \psi_0 +1 \leq j \leq \psi$ such that
    \begin{align*}
        -\lambda_j &\geq \tau, \\
        \| \UU_{j-} \UU_{j-}^\transpose \bgg \|^2  &\geq \nu \| \UU \UU^\transpose \bgg\|^2.
    \end{align*}
\end{assumption}

Recall that $\UU\UU^\transpose$, $\UU_{j-} \UU_{j-}^\transpose$ and $\UU_{i+} \UU_{i+}^\transpose$ represent a projection onto corresponding eigenspaces. Each part of \cref{ass:uniform minres operational bound} has a natural interpretation as a requirement that there is at least one \textit{large enough} magnitude $\bgg$-relevant eigenvalue for which the projection of the gradient onto the corresponding eigenspaces is not \textit{too small}. This is a significant relaxation of a more conventional uniform bound on the magnitude of the eigenvalues. For example, a uniform bound on the smallest magnitude eigenvalues 
\begin{align*}
      \min\{ \lambda_{\psi_+}, -\lambda_{\psi_+ + \psi_0 + +1}\} \geq \tau,
\end{align*}
immediately implies \cref{ass:uniform minres operational bound}-(i) with $i = \psi_+$, $j = \psi_+ + \psi_0 + 1$ and $\nu=1$. See \citet[Assumption 5]{LiuNewtonMR} for further discussion of this assumption.

\cref{ass:uniform minres operational bound} allows us to bound the number of Hessian vector products that are required for MINRES to satisfy \cref{eqn:MINRES termination tolerance}. Indeed, assuming $\bgg \notin \Null(\HH)$, we appeal to \citet[Eqn.\ (20)]{LiuNewtonMR} to bound the number of iterations until the MINRES termination tolerance \cref{eqn:MINRES termination tolerance} is satisfied as 
\begin{align*}
    T_{\text{SOL}} = \min \left\{  \left\lceil \frac{\sqrt{L_g/\mu}}{4} \log\left( 4/\left( \frac{\eta^2}{L_g^2 + \eta^2} - (1- \nu) \right)\right)  + 1\right\rceil , g \right\},
\end{align*}
where $g$ denotes the grade of $\bgg$ with respect to $\HH$ \citep[Definition 1.3]{LiuMinres}. We note that $T_{\text{SOL}}$ has a logarithmic dependence on the inexactness rolernance, $\eta$.

On the other hand if $\psi_- \geq 1 $ and \cref{ass:uniform minres operational bound}-(iii) holds, we appeal to \citet[Eqn.\ (19)]{LiuNewtonMR} to bound the iterations required to obtain a NPC direction as
\begin{align*}
    T_{\text{NPC}}= \min\left\{ \max \left\{ \left\lceil \left( \frac{\sqrt{2(L_g + \mu)/\mu}}{4}\right)\log\left( \frac{2(L_g + \mu)(1 - \nu)}{\mu\nu} \right)  + 1 \right\rceil , 1 \right\}, g\right\}.
\end{align*}
When $\nu = 1$, it is clear from the statement of \cref{ass:uniform minres operational bound}-(iii) that all $\bgg$-relevant eigenvalues are negative, which implies that negative curvature is detected at the very first iteration, i.e., $T_{\text{NPC}} = 1$. If we adopt the convention that $T_{\text{NPC}} = \infty$ when $\psi_- = 0$ or \cref{ass:uniform minres operational bound}-(iii) is unsatisfied we bound the number of MINRES iterations as $T = \min\{T_{\text{NPC}} , T_{\text{SOL}} \}$. If $\bgg \in \Null(\HH)$ then $\bgg$ is declared a zero curvature direction at the very first iteration. We now prove the operational complexity results. 

\begin{corollary}[First Order Operational Complexity \cref{alg:newton-mr-two-metric-minimal-assumptions-case}] \label{cor:first-order operational complexity minimal assumptions}
    Under the conditions of \cref{thm:minimal assumptions convergence theorem} and \cref{ass:uniform minres operational bound}, the total number of gradient evaluations and Hessian vector products in \cref{alg:newton-mr-two-metric-minimal-assumptions-case} to obtain an $\epsilon_g$-FO point is $\sO(\epsilon_g^{-2})$, for $d$ sufficiently large. 
\end{corollary}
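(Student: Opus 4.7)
The plan is to combine the outer-iteration bound of \cref{thm:minimal assumptions convergence theorem} with a per-iteration bound on the number of gradient evaluations and Hessian-vector products. Since the outer-iteration count is $\mathcal{O}(\epsilon_g^{-2})$, it suffices to show that each iteration of \cref{alg:newton-mr-two-metric-minimal-assumptions-case} incurs a uniformly bounded number of such operations, provided $d$ is large enough.

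Per outer iteration, the algorithm requires exactly one gradient evaluation (to form $\bggk$); the back- and forward-tracking line searches consume only function evaluations, not gradients or Hessian-vector products. Hessian-vector products appear exclusively inside MINRES, at a rate of one per inner iteration. Hence the per-iteration operational cost is $1 + T_k$, where $T_k$ is the number of MINRES iterations executed at step $k$, and it remains to bound $T_k$ uniformly in $\epsilon_g$.

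To this end, I would apply \cref{ass:uniform minres operational bound} (restricted to $\HH_k^\sI$ and $\bggk^\sI$) together with the MINRES iteration bounds established in \citet{LiuNewtonMR}, which yield $T_k \le \min\{T_{\text{SOL}}, T_{\text{NPC}}\}$ with both quantities depending only on the fixed constants $L_g$, $\mu$, $\eta$, $\nu$, $\tau$ and on the Krylov grade, itself bounded by $d$. Crucially, in \cref{alg:newton-mr-two-metric-minimal-assumptions-case} the MINRES inexactness tolerance $\eta$ is a fixed constant independent of $\epsilon_g$ (in contrast to \cref{alg:newton-mr-two-metric}, where $\eta$ scales with $\epsilon_k$), so the dimension-free components of the bounds do not deteriorate as $\epsilon_g \to 0$. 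Choosing $d$ sufficiently large that the dimension-free bound dominates the grade-based cap then gives $T_k = \mathcal{O}(1)$ uniformly in $\epsilon_g$, and multiplying by the $\mathcal{O}(\epsilon_g^{-2})$ outer-iteration complexity yields the stated rate.

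The main subtlety to verify is that the strictly positive NPC threshold $\overline{\varsigma} = (d+1)\varsigma > 0$ used in \cref{alg:newton-mr-two-metric-minimal-assumptions-case}, in place of the threshold $0$ analysed in \citet{LiuNewtonMR}, only makes the NPC termination \emph{easier} to trigger; hence the MINRES iteration count can only be smaller than under the standard analysis, and the bound $T_k \le \min\{T_{\text{SOL}}, T_{\text{NPC}}\}$ remains valid. The degenerate case $\bggk^\sI \in \Null(\HH_k^\sI)$ is handled by MINRES returning $\bggk^\sI$ itself as a zero-curvature NPC direction at the very first inner iteration, so it contributes only $T_k = 1$ to the count.
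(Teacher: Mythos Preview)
Your proposal is correct and follows essentially the same approach as the paper: multiply the $\mathcal{O}(\epsilon_g^{-2})$ outer-iteration bound from \cref{thm:minimal assumptions convergence theorem} by a per-iteration cost of one gradient plus $T_k$ Hessian-vector products, and use the MINRES iteration bounds of \citet{LiuNewtonMR} under \cref{ass:uniform minres operational bound} together with the fact that $\eta$ in \cref{alg:newton-mr-two-metric-minimal-assumptions-case} is independent of $\epsilon_g$ to conclude $T_k = \mathcal{O}(1)$ for $d$ large. Your additional remarks on the positive NPC threshold $\overline{\varsigma}$ only accelerating termination and on the degenerate case $\bggk^\sI \in \Null(\HH_k^\sI)$ are sound and make explicit points the paper leaves to the surrounding discussion.
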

\begin{proof}
    Due to \cref{thm:minimal assumptions convergence theorem}, the total number of outer iterations is $\sO(\epsilon_g^{-2})$. To obtain the operational result we simply need to count the total number of gradient evaluations and Hessian vector products \textit{per iteration}. The work required for each step of \cref{alg:newton-mr-two-metric-minimal-assumptions-case} is equivalent to the number of MINRES iterations (i.e. Hessian vector product) plus a single gradient evaluation. In the case of \cref{alg:newton-mr-two-metric-minimal-assumptions-case} the termination tolerance $\eta$ has no dependence on $\epsilon_g$. Considering the discussion above, for sufficiently large $d$, we bound the number of Hessian vector products as $\sO(1)$. The conclusion follows from the fact that $\sO(\epsilon_g^{-2}) (1 + \sO(1)) \in \sO(\epsilon_g^{-2})$.
\end{proof}

\begin{corollary}[First Order Operational Complexity \cref{alg:newton-mr-two-metric}] \label{cor:first-order operational complexity}
    Under the conditions of \cref{thm:first-order iteration complexity} and \cref{ass:uniform minres operational bound}, the total number of gradient evaluations and Hessian vector products in \cref{alg:newton-mr-two-metric} to obtain an $\epsilon_g$-FO point is $\Tilde{\sO}(\epsilon_g^{-3/2})$,  for $d$ sufficiently large. 
\end{corollary}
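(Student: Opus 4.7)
The plan is to follow the same structure as the proof of \cref{cor:first-order operational complexity minimal assumptions}: multiply the outer iteration count from \cref{thm:first-order iteration complexity} by a uniform per-call bound on the Hessian--vector products incurred by MINRES, together with the one gradient evaluation per outer iteration. \cref{thm:first-order iteration complexity} already supplies the $\sO(\epsilon_g^{-3/2})$ outer bound, so the task reduces to bounding, up to logarithmic factors in $1/\epsilon_g$, the per-call MINRES work in \cref{alg:newton-mr-two-metric}.

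The essential difference with \cref{cor:first-order operational complexity minimal assumptions} is that here the MINRES inexactness tolerance is $\eta = \theta \epsilon_k = \theta \sqrt{\epsilon_g}$, and so it shrinks with $\epsilon_g$ rather than being held at a constant. Substituting this choice into the uniform per-call MINRES bound $T = \min\{T_{\text{NPC}}, T_{\text{SOL}}\}$ recalled above, the NPC branch is $\sO(1)$ and unaffected. For $T_{\text{SOL}}$, the $\eta$-dependence enters only through the factor $\log\!\bigl( 4 / ( \eta^2/(L_g^2+\eta^2) - (1-\nu) ) \bigr)$; substituting $\eta^2 = \theta^2 \epsilon_g$ and using that $\eta^2/(L_g^2+\eta^2) = \Theta(\epsilon_g)$ for small $\epsilon_g$ gives a contribution of order $\log(1/\epsilon_g)$. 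Provided $d$ is large enough that the grade truncation $g$ in the definition of $T_{\text{SOL}}$ does not bind, this yields $T_{\text{SOL}} = \sO(\log(1/\epsilon_g))$.

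Combining these pieces, the cost per outer iteration is $1 + \tilde{\sO}(1)$ oracle calls, and multiplying by the $\sO(\epsilon_g^{-3/2})$ outer bound gives the claimed total oracle complexity $\tilde{\sO}(\epsilon_g^{-3/2})$, with the logarithmic factor in $1/\epsilon_g$ absorbed by the tilde.

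The main subtlety will be verifying that \cref{ass:uniform minres operational bound} holds with a $\nu$ compatible with the shrinking $\eta$: the assumption requires $\nu > L_g^2/(L_g^2+\eta^2)$, and since $L_g^2/(L_g^2+\eta^2) \to 1$ as $\eta \to 0$, one must ensure $\nu$ can be taken close enough to $1$ uniformly across the iterates so that $\eta^2/(L_g^2+\eta^2) - (1-\nu)$ does not collapse. This should be handled by noting that cases (ii) and (iii) of the assumption admit $\nu = 1$ outright, while case (i), when it is the operative branch, need only hold for $\eta$ below some fixed positive threshold, a condition that $\theta\sqrt{\epsilon_g}$ satisfies once $\epsilon_g$ is small enough; as in \cref{cor:first-order operational complexity minimal assumptions}, the precondition ``for $d$ sufficiently large'' is precisely what allows the grade truncation in $T_{\text{SOL}}$ to be discarded.
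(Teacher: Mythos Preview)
Your approach matches the paper's: invoke \cref{thm:first-order iteration complexity} for the $\sO(\epsilon_g^{-3/2})$ outer-iteration bound, then observe that the per-iteration MINRES cost is $\tilde{\sO}(1)$ because $\eta = \theta\sqrt{\epsilon_g}$ enters $T_{\text{SOL}}$ only logarithmically, and multiply. The paper's own proof is a two-line sketch that does not engage at all with the $\nu$-compatibility issue you raise in your final paragraph; that concern is legitimate, but your proposed resolution is shaky (cases (ii)--(iii) of \cref{ass:uniform minres operational bound} do not automatically admit $\nu=1$, since $\nu=1$ there would force \emph{all} $\bgg$-relevant eigenmass onto one sign), and the paper simply takes the assumption as stated and moves on without comment.
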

\begin{proof}
    The result is similar to \cref{cor:first-order operational complexity minimal assumptions}. We utilise \cref{thm:first-order iteration complexity} to bound the total number of outer iterations as $\sO(\epsilon_g^{-3/2})$. For \cref{alg:newton-mr-two-metric}, the MINRES termination tolerance is $\eta = \theta \sqrt{\epsilon_g}$, so we bound the total number of Hessian vector products as $\tilde{\sO}(1)$ for $d$ large. The conclusion follows.
\end{proof}

\section{Local Convergence} \label{apx:local convergence}

\begin{algorithm}[ht]
    \begin{algorithmic}[1]
        \FOR{$ k= 0, 1, \ldots$}
        \smallskip
        \STATE Update sets $\sA(\xx_k, \delta_k)$ and $\sI(\xx_k, \delta_k)$ as in \cref{eqn:inactive and active set definition}.
        \smallskip
        \IF{ Some termination condition is satisfied}
            \smallskip
            \STATE \textbf{Terminate}.
            \smallskip
        \ENDIF
        \smallskip
        \STATE $\ppk: \left\{\begin{array}{ll}
             \pp^{\sA}_k \gets - \bgg^{\sA}_k,&\\\\
            (\pp_k^\sI, \ \Dtype) \gets \text{MINRES}(\HH_k^\sI, \bgg_k^\sI, \eta, 0) \qquad \text{\# See \cref{remark:local convergence} regarding the choices for $\eta$.} &
        \end{array}\right.$

        \smallskip

        \smallskip
        \STATE $\alpha_{k} \gets$ \cref{alg:back tracking line search} with $\alpha_0=1$ and \eqref{eqn:line search criteria}.
        \smallskip
        \STATE $\xx_{k+1} = \sP(\xx_k + \alpha_k \pp_k)$
        \ENDFOR
    \end{algorithmic}
    \caption{Newton-MR TMP (Local Phase Version)}
    \label{alg:newton-mr-two-metric-local-phase}
\end{algorithm}

In this section we provide the detailed proof for \cref{thm:active set identification}. Our proof follows a similar line of reasoning as that in \citet[Proposition 3]{ProjectionNewtonMethodsForOptimizationProblemsBertsekas} but with several modifications and alterations specific to our setting and methodology. We assume in this section that $\sI(\xx_*, 0) \neq \emptyset$ as otherwise the analysis boils down to convergence of projected gradient to a trivial solution $\xx_* = 0$. Our main aim is to show that after a finite number of iterations, the iterates eventually end up in the following subspace
\begin{align*}
    X_* = \{ \xx \in \real^d \ | \ \xx^i = 0, \ i \in \sA(\xx_*, 0)\}.
\end{align*}
We start with a lemma to show that, by choosing our inexactness tolerance $\delta_k = \delta$, with 
\begin{align*}
    0 < \delta < \frac{1}{2}\min_{i \in \sI(\xx_*, 0)} \xx_*^i, \tageq\label{eqn:local convergence delta choice}
\end{align*} 
where $\xx_*$ is some local minima, we can properly ``separate'' the true active and inactive set if $\xx_k$ is close enough to $\xx_*$. That is, we apply the correct update to the true active and inactive indices.

\begin{lemma} \label{lemma:local phase active set separation}
    Let $\xx_*$ be a local minima of \cref{eqn:nonnegative constrained problem} and $\xx_k$ be an iterate of \cref{alg:newton-mr-two-metric-local-phase} with $\delta$ chosen according to \cref{eqn:local convergence delta choice}. There exists $\Delta_{\text{sep}}$ such that if $\xx_k \in B(\xx_*, \Delta_{\text{sep}})$, then $\sA(\xx_k, \delta) = \sA(\xx_*, 0)$.
\end{lemma}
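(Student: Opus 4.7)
The plan is to exploit the choice of $\delta$ in \cref{eqn:local convergence delta choice}, which creates a clean ``gap'' between the true active and inactive coordinates of $\xx_*$: the true-active coordinates equal $0$, while every true-inactive coordinate exceeds $2\delta$. Component-wise proximity of $\xx_k$ to $\xx_*$ (controlled by the Euclidean ball radius) will then preserve this gap and force the thresholded active set $\sA(\xx_k, \delta)$ to coincide with $\sA(\xx_*, 0)$.

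Concretely, I would set $\Delta_{\text{sep}} = \delta$ (any value in $(0, \delta]$ works). Assuming $\xx_k \in B(\xx_*, \Delta_{\text{sep}})$, we have $|\xx_k^i - \xx_*^i| \leq \|\xx_k - \xx_*\| < \delta$ for every coordinate $i \in [d]$. I would then split into two cases:
\begin{itemize}
    \item If $i \in \sA(\xx_*, 0)$, then $\xx_*^i = 0$, so $\xx_k^i < \delta$. Combined with feasibility $\xx_k^i \geq 0$ maintained by \cref{alg:newton-mr-two-metric-local-phase} (each iterate is a projection onto $\real^d_+$), this yields $0 \leq \xx_k^i \leq \delta$, hence $i \in \sA(\xx_k, \delta)$.
    \item If $i \in \sI(\xx_*, 0)$, then by \cref{eqn:local convergence delta choice}, $\xx_*^i \geq \min_{j \in \sI(\xx_*, 0)} \xx_*^j > 2\delta$, and so $\xx_k^i > \xx_*^i - \delta > \delta$, which gives $i \in \sI(\xx_k, \delta)$.
\end{itemize}
The two inclusions together with $\sA(\xx_k, \delta) \cup \sI(\xx_k, \delta) = [d]$ and $\sA(\xx_*, 0) \cup \sI(\xx_*, 0) = [d]$ then give $\sA(\xx_k, \delta) = \sA(\xx_*, 0)$.

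There is no real obstacle here; the result is essentially a definitional consequence of the gap built into $\delta$ plus feasibility of the iterates. The only subtlety worth flagging is that we do need $\sI(\xx_*, 0) \neq \emptyset$ so that the minimum in \cref{eqn:local convergence delta choice} is over a nonempty set (hence strictly positive), but this is already assumed at the beginning of the section. The more substantial work, which I expect to carry out in the subsequent lemmas leading to \cref{thm:active set identification}, lies in showing that once $\xx_k$ is close to $\xx_*$, the next iterate $\xx_{k+1}$ not only stays close but in fact satisfies the stronger condition $\sA(\xx_{k+1}, 0) = \sA(\xx_*, 0)$, i.e., the projection step actually \emph{snaps} the active coordinates exactly to zero.
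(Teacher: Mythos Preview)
Your proof is correct and follows essentially the same approach as the paper: both exploit the gap created by \cref{eqn:local convergence delta choice} together with componentwise proximity $|\xx_k^i - \xx_*^i| < \Delta_{\text{sep}}$ to separately verify the two inclusions. Your choice $\Delta_{\text{sep}} = \delta$ is in fact a bit cleaner than the paper's $\Delta_{\text{sep}} = \min\{\tfrac{1}{2}(\min_{i \in \sI(\xx_*, 0)} \xx_*^i - \delta), \delta\}$, but the argument is otherwise identical.
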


\begin{proof}
Define
\begin{align*}
        \Delta_{\text{sep}} \defeq \min \left\{\frac{1}{2}\left( \min_{i \in \sI(\xx_*, 0)} \xx_*^i - \delta \right), \delta \right\} > 0.
    \end{align*}
We first we prove $\sI(\xx_k, \delta_k) \supseteq \sI(\xx_*, 0)$. For any $i \in \sI(\xx_*, 0)$ and $\xx_k \in B(\xx_*, \Delta_{\text{sep}})$ we have 
\begin{align*}
    \xx_*^i - \xx^i_k < \Delta_{\text{sep}} &\leq \frac{\xx_*^i - \delta}{2}\\
    \implies  \frac{\xx_*^i}{2} - \xx^i_k &\leq -\frac{\delta}{2} \\
    \implies   \frac{\xx_*^i}{2} + \frac{\delta}{2} &\leq \xx^i_k \\
    \implies \frac{3\delta}{2} &\leq \xx^i_k \\
    \implies \delta &< \xx^i_k,
\end{align*}
where the second to last line follows from \cref{eqn:local convergence delta choice}. 

Next we show that $\sI(\xx_k, \delta) \subseteq \sI(\xx_*, 0)$. In particular, we prove the contrapositive $i \in \sA(\xx_*, 0) \implies i \in \sA(\xx_k, \delta)$. For $i \in \sA(\xx_*, 0)$ we know that $\xx_*^i = 0$ and so for $\xx_k \in B(\xx_*, \Delta_{\text{sep}})$ we have
\begin{align*}
    \xx_k^i  = \xx_k^i - \xx_* < \Delta_{\text{sep}} \leq \delta .
\end{align*}   
That is, $i \in \sA(\xx_k, \delta)$.
\end{proof}

With this result in hand, we know that we apply the ``correct'' update to $\xx_k$. That is, true active indices receive a projected gradient update, while indices in the true inactive set receive a Newton-type step.   

Recall the the second-order sufficient condition in \cref{ass:local minima second-order sufficiency}. This condition is equivalent to 
\begin{align*}
    \langle \zz, \grad^2 f(\xx_*) \zz \rangle > 0, \ \zz \in X_*.
\end{align*}
By the continuity of the Hessian, we are free to choose $\Delta_{\text{cvx}} > 0$ such that for any $\xx \in B(\xx_*, \Delta_{\text{cvx}})$ the Hessian remains strongly positive definite on the subspace $X_*$. In other words, the constant, $\mu$, satisfying
\begin{align*}
    \mu \defeq \min_{\zz \in X_*, \ \xx \in B(\xx_*, \Delta_{\text{cvx}})} \frac{ \langle \zz, \grad^2 f(\xx) \zz \rangle }{\| \zz \|^2} > 0, \tageq\label{eqn:local strong convexity constant}
\end{align*}
is well defined. In the current notation, even if $\sI(\xx_k, \delta) = \sI(\xx_*, 0)$ we have $\pp_k^{\sI(\xx_k, \delta)} \notin X_*$ as $\pp_k^{\sI(\xx_k, \delta)} \in \real^{|{\sI(\xx_k, \delta)}|}$ is a subvector. Therefore, as a notational convenience, in this section we take subvectors and submatrices corresponding to a certain subset of indices, e.g., $\pp_k^{\sI(\xx_k, \delta)}$, to be padded with zeros in the removed indices. Note that this implies that $\pp_k^{\sI(\xx_k, \delta)}, \pp_k^{\sA(\xx_k, \delta)} \in \real^d $ and $\pp_k = \pp_k^{\sI(\xx_k, \delta)} + \pp_k^{\sA(\xx_k, \delta)}$ but leaves the mechanics of \cref{alg:newton-mr-two-metric-local-phase} unchanged. Now if $\sI(\xx_k, \delta) = \sI(\xx_*, 0)$ we have 
\begin{align*}
    \pp_k^{\sI(\xx_k, \delta)} = \pp_k^{\sI(\xx_*, 0)}  \in X_*.
\end{align*}
Indeed, it is easy to see that for any $t=0,\ldots, g$
\begin{align*}
    \sK_t(\HH_k^{\sI(\xx_k, \delta)}, \bgg_{k}^{\sI(\xx_k, \delta)}) \subseteq X_*.
\end{align*}
In addition, if $\xx_k \in B(\xx_*, \Delta_{\text{cvx}})$ then, by $\HH_k^{\sI(\xx_k, \delta)} = \HH_k^{\sI(\xx_*, 0)} $, $\mu$ plays the role of Krylov subspace regularity constant, $\sigma$, (cf. \cref{ass:KrylovSubspaceRegularity}) on $\sK_t(\HH_k^{\sI(\xx_*, 0)}, \bgg_k^{\sI(\xx_*, 0)})
$. Indeed, together, these results imply that, for any $t=0, \ldots, g$, we have
\begin{align*}
    \bss \in \sK_t(\HH_k^{\sI(\xx_k, \delta)}, \bgg_{k}^{\sI(\xx_k, \delta)}) \implies \langle \bss, \HH_k^{\sI(\xx_k, \delta)} \bss \rangle \geq \mu \| \bss \|^2. \tageq\label{eqn:local phase Krylov subspace regularity}
\end{align*}

From \cref{eqn:local phase Krylov subspace regularity} it is clear that we can do away with the $\NPC$ case and \cref{ass:KrylovSubspaceRegularity}. With this in mind, we now demonstrate that the step size produced by the line search in \cref{alg:newton-mr-two-metric-local-phase} is bounded. 

\begin{lemma}\label{lemma:local phase step size lower bound}
    Assume that $f$ satisfies \cref{ass:LipschitzGradient} and $\xx_*$ is a local minima of \cref{eqn:nonnegative constrained problem} satisfying \cref{ass:local minima second-order sufficiency}. Then if $\xx_k \in B(\xx_*, \min\{ \Delta_{\text{cvx}}, \Delta_{\text{sep}}\})$ the step size produced by the line search in \cref{alg:newton-mr-two-metric-local-phase} satisfies $\alpha_k \in [\bar{\alpha}, 1]$ where
    \begin{align*}
        \bar{\alpha} \defeq \min \left\{1, \frac{2(1 - \rho)\mu}{L_{g}} ,   \frac{\delta}{\| \pp_k^\sI \|}\right\}. \tageq\label{eqn:local phase step lower bound}
    \end{align*}
\end{lemma}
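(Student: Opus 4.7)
The plan is to reduce the statement to (essentially) the analysis already carried out in Lemma \ref{lemma:type 1 line search termination}, leveraging the two hypotheses that $\xx_k$ lies in both $B(\xx_*,\Delta_{\text{sep}})$ and $B(\xx_*,\Delta_{\text{cvx}})$. First, Lemma \ref{lemma:local phase active set separation} gives $\sA(\xx_k,\delta)=\sA(\xx_*,0)$ and therefore $\sI(\xx_k,\delta)=\sI(\xx_*,0)$. Combined with \eqref{eqn:local phase Krylov subspace regularity}, this guarantees that every vector in $\sK_t(\HH_k^{\sI},\bgg_k^{\sI})$ has $\mu$-strongly positive curvature under $\HH_k^{\sI}$. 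In particular, MINRES cannot trigger the NPC condition, so the returned direction must be the SOL step $\pp_k^{\sI}=\bss_k^{\sI}$, and $\mu$ plays the role of $\sigma$ throughout the subsequent analysis.

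The upper bound $\alpha_k\le 1$ is immediate: the backtracking line search of Algorithm \ref{alg:newton-mr-two-metric-local-phase} initialises at $\alpha_0=1$ and only decreases, so if the sufficient-decrease condition \eqref{eqn:line search criteria} already holds at $\alpha=1$, then $\alpha_k=1$; otherwise $\alpha_k<1$. For the lower bound, I would proceed exactly as in the proof of Lemma \ref{lemma:type 1 line search termination}. Assuming $\alpha\le\delta/\|\pp_k^{\sI}\|$, the inactive components satisfy $\sP(\xx_k^{\sI}+\alpha\pp_k^{\sI})=\xx_k^{\sI}+\alpha\pp_k^{\sI}$ (strict feasibility of $\sI$), so applying \cref{ass:LipschitzGradient} via \eqref{eqn:Lipschitz gradient upper bound} to $\xx_k(\alpha)$ reduces the line search criterion to the nonpositivity of expression \eqref{eqn:stage1-termination-condition}.

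Next I would split \eqref{eqn:stage1-termination-condition} into its active and inactive contributions. For the active part, the projection inequality $\|\sP(\xx)-\sP(\yy)\|^2\le\langle \xx-\yy,\sP(\xx)-\sP(\yy)\rangle$ combined with $\sP(\xx_k^{\sA})=\xx_k^{\sA}$ and the nonpositivity of $\langle \bgg_k^{\sA},\sP(\xx_k^{\sA}-\alpha\bgg_k^{\sA})-\xx_k^{\sA}\rangle$ yields nonpositivity as soon as $\alpha\le 2(1-\rho)/L_g$. For the inactive part, using \eqref{eqn:SOL MINRES descent curvature condition} together with the local curvature lower bound \eqref{eqn:local phase Krylov subspace regularity} (replacing the global $\sigma$) shows that the inactive contribution is nonpositive provided $\alpha\le 2(1-\rho)\mu/L_g$. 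Taking the minimum of $2(1-\rho)/L_g$ and $2(1-\rho)\mu/L_g$ simplifies to $2(1-\rho)\mu/L_g$ under $\mu\le 1$, and absorbing the constraint $\alpha\le\delta/\|\pp_k^{\sI}\|$ together with the trivial upper bound $\alpha_k\le 1$ gives \eqref{eqn:local phase step lower bound}.

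The substantive content has already been done in Lemma \ref{lemma:type 1 line search termination}; the only non-mechanical steps are (i) justifying that the SOL branch is the only one active (handled by \eqref{eqn:local phase Krylov subspace regularity} plus the MINRES NPC criterion) and (ii) cleanly combining the $\min\{1,\cdot,\cdot\}$ expression, since the statement here keeps the explicit $1$ coming from the initial trial step of backtracking that Lemma \ref{lemma:type 1 line search termination} does not need. Neither is a real obstacle, so I expect the proof to amount to invoking Lemma \ref{lemma:local phase active set separation} and \eqref{eqn:local phase Krylov subspace regularity}, citing the active/inactive splitting of Lemma \ref{lemma:type 1 line search termination} with $\sigma\!\to\!\mu$, and appending the $\alpha_k\le 1$ remark at the end.
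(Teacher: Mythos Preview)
Your proposal is correct and follows essentially the same route as the paper: invoke Lemma \ref{lemma:local phase active set separation} to identify $\sI(\xx_k,\delta)=\sI(\xx_*,0)$, use \eqref{eqn:local phase Krylov subspace regularity} to conclude MINRES must return a SOL step with $\mu$ playing the role of $\sigma$, and then cite the SOL-case analysis of Lemma \ref{lemma:type 1 line search termination}. The paper's proof is in fact even terser than yours---it simply states these reductions without reproducing the active/inactive splitting---so your more detailed walkthrough is a faithful expansion of the same argument.
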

\begin{proof}
    $\xx_k \in B(\xx_*, \min\{ \Delta_{\text{cvx}}, \Delta_{\text{sep}}\})$ implies that $\sI(\xx_k, \delta) = \sI(\xx_*,  0) \neq \emptyset$ and \cref{eqn:local phase Krylov subspace regularity} holds. It follows that MINRES always selects $\SOL$ step.
    
    The result follows from the step size selection procedure in \cref{alg:newton-mr-two-metric-local-phase} and the analysis in the SOL case of \cref{lemma:type 1 line search termination}.

\end{proof}

Building on \cref{lemma:local phase active set separation}, our next result, \cref{lemma:close enough to minima lemma}, will show that, close enough to $\xx_*$, the active set update will be \textit{large enough} and the inactive set update \textit{small enough} that the zero bound constraints at $\xx_{k+1}$ coincide with the zero bound constraints at $\xx_*$, i.e., $\sA(\xx_{k+1}, 0) = \sA(\xx^*, 0)$. The intuition for this result is that the gradient and hence the step (cf. \cref{eqn:SOL step gradient related}) in the inactive indices should be going to zero as $\xx_k$ approaches $\xx_*$. By contrast, in the active set, a non-degeneracy condition (\cref{ass:local minima nondegeneracy}) ensures there is positive gradient in the active indices arbitrarily close to the boundary. When $\sA(\xx_{k+1}, 0) = \sA(\xx^*, 0)$, the fixed active set and small inactive step can also be used to ensure that our iterates do not drift too far from the starting point. This is the second part of \cref{lemma:close enough to minima lemma}.

\begin{lemma}
    \label{lemma:close enough to minima lemma}
    Suppose that $f$ satisfies \cref{ass:LipschitzGradient}. Let $\xx_*$ be a local minima satisfying \cref{ass:local minima nondegeneracy,ass:local minima second-order sufficiency}. If $\delta$ is chosen according to \cref{eqn:local convergence delta choice}, then the following two results hold:
    
    1.  There exists $\Delta_{\text{bnd}} > 0$ such that $\xx_k \in B(\xx_*, \Delta_{\text{bnd}})$ implies $\sA(\xx_k, \delta) = \sA(\xx_{k+1}, 0) = \sA(\xx_*, 0)$.

    2. Given a $\Delta > 0$, we can choose $\Delta_{\text{cls}} \in (0, \Delta_{\text{bnd}})$ such that $\|\xx_{k} - \xx_*\| < \Delta_{\text{cls}}$ implies that $\| \xx_{k+1} -\xx_* \| < \Delta$.
\end{lemma}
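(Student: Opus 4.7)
The plan is to first shrink $\Delta_{\text{bnd}}\leq \min\{\Delta_{\text{sep}},\Delta_{\text{cvx}}\}$ so that \cref{lemma:local phase active set separation} gives $\sA(\xx_k,\delta)=\sA(\xx_*,0)$ and the local strong convexity constant $\mu$ from \cref{eqn:local strong convexity constant} applies. In this regime MINRES must return a SOL step, and combining the upper bound in \cref{eqn:SOL step gradient related} (with $\varrho=\mu$) with \cref{ass:LipschitzGradient} and the KKT condition $\bgg^{\sI(\xx_*,0)}(\xx_*)=\zero$ yields
\[
\|\bss_k^\sI\|\leq \|\bgg_k^\sI\|/\mu \leq (L_g/\mu)\|\xx_k-\xx_*\|.
\]
A further shrinkage of $\Delta_{\text{bnd}}$ then makes the term $\delta/\|\pp_k^\sI\|$ in \cref{eqn:local phase step lower bound} non-binding, so the step-size admits the uniform lower bound $\alpha_k\geq \tilde\alpha\defeq \min\{1,2(1-\rho)\mu/L_g\}>0$. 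Establishing this uniform $\tilde\alpha$ is the central technical step of the proof; once it is in hand, the rest is bookkeeping of finitely many upper bounds on $\Delta_{\text{bnd}}$.

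For Part 1, I would then treat the active and inactive indices of $\xx_*$ separately. For $i\in\sA(\xx_*,0)$, \cref{ass:local minima nondegeneracy} and the continuity of $\bgg$ imply $\bgg_k^i\geq \gamma\defeq \tfrac12\min_{j\in\sA(\xx_*,0)}[\bgg(\xx_*)]^j>0$ whenever $\xx_k$ is close enough. Combined with $0\leq \xx_k^i\leq \|\xx_k-\xx_*\|\leq \Delta_{\text{bnd}}$ and $\alpha_k\geq\tilde\alpha$, this gives $\xx_k^i-\alpha_k\bgg_k^i\leq \Delta_{\text{bnd}}-\tilde\alpha\gamma \leq 0$ provided $\Delta_{\text{bnd}}\leq \tilde\alpha\gamma$, so $\xx_{k+1}^i=\sP(\xx_k^i-\alpha_k\bgg_k^i)=0$. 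For $i\in\sI(\xx_*,0)$, set $\xi\defeq \min_{j\in\sI(\xx_*,0)}\xx_*^j>0$, so that $\xx_k^i\geq \xi-\Delta_{\text{bnd}}$; the inactive step satisfies $\alpha_k|\pp_k^i|\leq L_g\Delta_{\text{bnd}}/\mu$, hence $\xx_{k+1}^i\geq \xi-\Delta_{\text{bnd}}(1+L_g/\mu)>0$ provided $\Delta_{\text{bnd}}<\xi/(1+L_g/\mu)$. Collecting all the upper bounds on $\Delta_{\text{bnd}}$ then delivers Part 1.

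For Part 2, Part 1 makes the active contribution to the error vanish, $\xx_{k+1}^i=0=\xx_*^i$ for $i\in\sA(\xx_*,0)$, so $\|\xx_{k+1}-\xx_*\|$ is determined solely by the inactive coordinates, on which the projection is inactive by the same step-size computation used above. A direct bound then gives
\[
\|\xx_{k+1}-\xx_*\|=\|\xx_{k+1}^\sI-\xx_*^\sI\|\leq \|\xx_k^\sI-\xx_*^\sI\|+\alpha_k\|\bss_k^\sI\|\leq (1+L_g/\mu)\|\xx_k-\xx_*\|,
\]
so setting $\Delta_{\text{cls}}\defeq \min\{\Delta_{\text{bnd}},\Delta/(1+L_g/\mu)\}$ finishes the argument. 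The main obstacle, as flagged above, is ruling out the $\delta/\|\pp_k^\sI\|$ branch in the step-size lower bound uniformly over a neighbourhood of $\xx_*$; once that is done, the remaining manipulations are essentially linear-in-$\|\xx_k-\xx_*\|$ estimates.
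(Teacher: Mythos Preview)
Your proposal is correct and follows essentially the same approach as the paper's proof: shrink the neighbourhood so that \cref{lemma:local phase active set separation} and the local strong convexity constant $\mu$ apply, bound $\|\pp_k^\sI\|$ via the inactive gradient to render the $\delta/\|\pp_k^\sI\|$ branch of \cref{eqn:local phase step lower bound} non-binding (giving the uniform $\tilde\alpha$), and then treat active and inactive indices separately using nondegeneracy and the small inactive step respectively. The only cosmetic difference is that you use the explicit Lipschitz bound $\|\bgg_k^\sI\|\leq L_g\|\xx_k-\xx_*\|$ throughout---which yields the clean multiplicative estimate $(1+L_g/\mu)$ in Part~2---whereas the paper sometimes invokes mere continuity of $\bgg$ to choose the intermediate neighbourhoods implicitly; both routes give the same conclusion.
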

\begin{proof}

    We stipulate that $\Delta_{\text{bnd}} \leq \min\{\Delta_{\text{sep}}, \Delta_{\text{cvx}} \}$. Note that, in this case, $\sA(\xx_k, \delta) = \sA(\xx_*, 0)$ by \cref{lemma:local phase active set separation} and $\alpha_k \in [\bar{\alpha}, 1]$ where
   \begin{align*}
        \bar{\alpha} \defeq \min \left\{1, \frac{2(1 - \rho)\mu}{L_{g}} ,   \frac{\delta}{\| \pp_k^\sI \|}\right\}. \tageq\label{eqn:local phase step lower bound}
    \end{align*}
    by \cref{lemma:local phase step size lower bound}. It is also clear that MINRES selects $\SOL$. We first show that the step size, $\alpha_k$, can be uniformly lower bounded for $\xx_k$ close enough to $\xx_*$. We do this by showing that the step $\| \pp_k^\sI\|$ can be upper bounded. Specifically, due to \cref{eqn:local phase Krylov subspace regularity} the step, $\pp_k^\sI$, is upper bounded by the gradient magnitude (cf. \cref{eqn:SOL step gradient related} with $\varrho = \mu$)
   \begin{align*}
      \| \pp_k^\sI \| \leq \| \bgg_k^\sI\|/\mu. \tageq\label{eqn:local phase pk gradient bound}
   \end{align*}
   Next we use the continuity of $\grad f(\xx)$ and the fact that $\bgg_*^{\sI(\xx_*, 0)} = 0$ to choose $\Delta_0 \leq \min\{\Delta_{\text{sep}}, \Delta_{\text{cvx}} \}$ such that $\xx_k \in B(\xx_*, \Delta_0)$ implies 
   \begin{align*}
       \|\bgg_k^{\sI(\xx_k, \delta)}\|= \| \bgg_k^{\sI(\xx_*, 0)} \| \leq  \frac{\mu\delta}{2},
   \end{align*}
   where in the first equality we used \cref{lemma:local phase active set separation}. This implies
   \begin{align*}
       \| \pp_k^\sI \| \leq \delta/2, \tageq\label{eqn:local phase MR step bound}
   \end{align*}
   and hence by \cref{eqn:local phase step lower bound}
   \begin{align*}
       \bar{\alpha} \defeq \min \left\{1, \frac{2(1 - \rho)\mu}{L_{g}} \right\} .
   \end{align*}
   
   We now show that $\sA(\xx^*, 0) \subseteq \sA(\xx_{k+1}, 0)$. Let $i \in \sA(\xx^*, 0)$. Define $\ee_k = \xx_k - \xx_*$. By \cref{ass:local minima nondegeneracy} and the continuity of $\grad f(\xx)$, there exists $\Delta_1$ such that, for $\| \ee_k \| \leq \Delta_1$,
    \begin{align*}
        (\bgg(\xx_k))^j = (\bgg(\xx_* + \ee_k))^j > \frac{\gamma}{2}, \ \forall j \in \sA(\xx^*, 0).
    \end{align*}
    Consider $\Delta_2 = \min \left\{\Delta_0, \Delta_1, \overline{\alpha}\gamma/2 \right\}$. If $\xx_k \in B(\xx_*, \Delta_2)$, we have 
    \begin{align*}
        \xx_k^i = \xx_k^i - \xx_*^i < \Delta_2 \leq  \frac{\overline{\alpha}\gamma}{2}.
    \end{align*}
    Using this bound and the lower bound for the gradient and step size we compute the update as
    \begin{align*}
        \xx_k^i - \alpha_k \bgg_k^i  \leq \xx_k^i - \frac{\overline{\alpha} \gamma }{2} \leq 0 \implies \xx_{k+1}^i = \sP( \xx_k^i + \alpha_k \pp_k^i) = 0,
    \end{align*}
    which implies $i \in \sA(\xx_{k+1}, 0)$. 
    
    Next, we show $\sA(\xx^*, 0) \supseteq \sA(\xx_{k+1}, 0)$. In particular, we prove the contrapositive statement $i \in \sI(\xx_*, 0) \implies i \in \sI(\xx_{k+1}, 0)$. Suppose $i \in \sI(\xx_*, 0)$ the result will follow by showing that the $\xx_{k+1}$ remains bounded away from zero. Let $\Delta_{\text{bnd}} = \min\{ \Delta_0, \Delta_1, \Delta_2, \Delta_{\text{sep}}, \Delta_{\text{cvx}} \}$. Having $\xx_k \in B(\xx_*, \Delta_{\text{bnd}})$ implies $\sI(\xx_*, 0) = \sI(\xx_k, \delta) $. Additionally, the bound \cref{eqn:local phase MR step bound} applies and so $\alpha \in [\overline{\alpha}, 1]$ implies
    \begin{align*}
        \alpha |\pp_k^i|  \leq  \| \pp_k^{\sI(\xx_k, \delta)} \| \leq \delta/2,
    \end{align*}
    which yields
    \begin{align*}
        \xx_k^i + \alpha\pp_k^i \geq \xx_k^i - \alpha | \pp_k^i | \geq \xx_k^i - \frac{\delta}{2} > \frac{\delta}{2}, \tageq\label{eqn:local phase inactive step lower bound}
    \end{align*}
    where the final inequality follows from $i \in \sI(\xx_*, 0) =  \sI(\xx_k, \delta) \implies \xx_k^i > \delta$. Finally we compute the step as
    \begin{align*}
        \xx_{k+1} = \sP(\xx_k^i + \alpha \pp_k^i) = \xx_k^i + \alpha\pp_k^i > 0,  
    \end{align*}
    which is the result.  \\
    
    Now for the second part of the result. Fix $\Delta > 0$. From the first part of the result we know that for $\xx_k \in B(\xx_*, \Delta_{\text{bnd}})$ we have $\sA(\xx_k, \delta) = \sA(\xx_{k+1}, 0) = \sA(\xx_*, 0)$, which implies $\xx_{k+1}^{\sA(\xx_k, \delta)} = \xx_{k+1}^{\sA(\xx_{k+1}, 0)} = 0$ and $\xx_*^{\sA(\xx_k, \delta)} = \xx_*^{\sA(\xx_*, 0)} = 0$. Applying these equalities we obtain
    \begin{align*}
        \|\xx_{k+1} - \xx_*\| &= \left\|\xx_{k+1}^{\sI(\xx_k, \delta)}- \xx_*^{\sI(\xx_k, \delta)}\right\| \\
        &= \left\| [\sP(\xx_{k}  + \alpha_k\pp_k)]^{\sI(\xx_k,\delta)} -  \xx_*^{\sI(\xx_k, \delta)}\right\|   \\
        &= \left\| [\xx_{k}  + \alpha_k\pp_k]^{\sI(\xx_k, \delta)} -  \xx_*^{\sI(\xx_k, \delta)}\right\|  \\
        &\leq \left\| \xx_k^{\sI(\xx_k, \delta)} - \xx_*^{\sI(\xx_k, \delta)} \right\| + \alpha_k \left\|\pp_k^{\sI(\xx_k, \delta)} \right\| \\
        &\leq \|\xx_k - \xx_*\| + \left\|\bgg_k^{\sI(\xx_k, \delta)}\right\|/\mu,
    \end{align*}
    where we drop the projection on line three due to $[\xx_k + \alpha_k \pp_k]^{\sI(\xx_k,\delta)} > \delta/2$ when $\xx_k \in B(\xx_*, \Delta_{\text{bnd}}) $ (cf. \cref{eqn:local phase inactive step lower bound}). Again, $\bgg_k^{\sI(\xx_k, \delta)} = \bgg_k^{\sI(\xx_*, 0)}$ so, by the continuity of $\grad f(\xx)$, we are free to choose $\Delta_{3}$ so that $\xx_k \in B(\xx_*, \Delta_3)$ implies
    \begin{align*}
        \left\| \bgg_k^{\sI(\xx_k, \delta)} \right\| = \left\| \bgg_k^{\sI(\xx_*, 0)} \right\| < \frac{\mu \Delta}{2  },
    \end{align*}
    Finally, we can choose  $\Delta_{\text{cls}} = \min\{ \Delta_{\text{bnd}}, \Delta_3, \Delta/2 \}$ so that, if $\xx_k \in B(\xx_*, \Delta_{\text{cls}})$, we have 
    \begin{align*}
        \|\xx_{k+1} - \xx_*\| &\leq \|\xx_k - \xx_*\| + \left\|\bgg_k^{\sI(\xx_k, \delta)}\right\|/\mu \\
        &< \frac{\Delta}{2} + \frac{\Delta}{2} \\
        &= \Delta.
    \end{align*}
    
\end{proof}

The second part of \cref{lemma:close enough to minima lemma} can be used with the choice $\Delta = \Delta_{\text{bnd}}$ to obtain 
\begin{align*}
    \| \xx_{k} - \xx_* \| < \Delta_{\text{cls}} \implies \| \xx_{k+1} - \xx_* \| < \Delta_{\text{bnd}}.
\end{align*}
In this case, we can guarantee, due to $\Delta_{\text{cls}} \leq \Delta_{\text{bnd}}$ and the first part of \cref{lemma:close enough to minima lemma} applied to $\xx_{k}$, that 
\begin{align*}
    \sA(\xx_{k}, \delta) = \sA(\xx_{k+1}, 0) = \sA(\xx_*, 0),
\end{align*}
and from $\xx_{k+1} \in B(\xx_*, \Delta_{\text{bnd}}) $ and the first part of \cref{lemma:close enough to minima lemma} again
\begin{align*}
    \sA(\xx_{k+1}, \delta) = \sA(\xx_{k+2}, 0) = \sA(\xx_*, 0).
\end{align*}
Together, these results show that $\xx_k \in B(\xx_*, \Delta_{\text{cls}})$ implies $\xx_{k+1}, \xx_{k+2} \in X_*$. This means that the iterates of our algorithm essentially ``looks'' like unconstrained minimisation in this subspace. Unfortunately, with the results we have so far, we cannot guarantee that the iterates continue to stay close enough to the minima beyond iteration $k+2$. \cref{lemma:neighbourhood of minima lemma} will overcome this problem by using the second-order sufficient condition and adapting an unconstrained optimisation result \citep[Proposition 1.12]{bertsekasConstrained}. The main idea is that the ``strict convexity'' on $X_*$ induced by \cref{ass:local minima second-order sufficiency} (cf. \cref{eqn:local strong convexity constant}) implies that there exists a small ``basin'' (restricted to $X_*$) that the iterates will not leave once they enter. We can then use Lemma \ref{lemma:close enough to minima lemma} to show that our iterates eventually enter $X_*$ and the corresponding basin.

\begin{lemma}
    \label{lemma:neighbourhood of minima lemma}
    Let $f$ satisfy \cref{ass:LipschitzGradient} and $\xx_*$ be a local minima satisfying \cref{ass:local minima nondegeneracy,ass:local minima second-order sufficiency}. Let $\delta$ be chosen according to \cref{eqn:local convergence delta choice}. If there is an iterate, $\xx_k$, of \cref{alg:newton-mr-two-metric-local-phase} such that $\sA(\xx_k, \delta) = \sA(\xx_k, 0) = \sA(\xx_*, 0)$ and $\sA(\xx_{k+1}, 0) = \sA(\xx_*, 0)$ (i.e. $\xx_{k}, \xx_{k+1} \in X_*$) then there exists a neighbourhood (restricted to $X_*$) of $\xx_*$, $\sN(\xx_*)$, such that if $\xx_{k} \in \sN(\xx_*)$ and then $\xx_{k+1} \in \sN(\xx_*)$. Additionally, $\sN(\xx_*)$ is independent of the iterates and can be chosen arbitrarily small, i.e., for any $\Delta > 0$ we have $\sN(\xx_*) \subset B(\xx_*, \Delta)$.
\end{lemma}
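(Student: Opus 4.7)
The plan is to define $\sN(\xx_*)$ as a sublevel set of $f$ restricted to $X_*$ and exploit the monotonic decrease guaranteed by the line search \cref{eqn:line search criteria}, together with the local strong convexity of $f$ on $X_*$ implied by \cref{ass:local minima second-order sufficiency}. The hypothesis that $\xx_k, \xx_{k+1} \in X_*$ is crucial because it eliminates the influence of the projection on the inactive components of the update, reducing the step from $\xx_k$ to $\xx_{k+1}$ to an unconstrained inexact Newton-MR step on the restriction of $f$ to $X_*$. This allows us to mimic the classical Bertsekas argument for unconstrained minimisation.

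First, I would fix $\Delta_{\text{cvx}}$ as the neighbourhood radius on which $\grad^{2}f$ restricted to $X_*$ is uniformly $\mu$-positive definite (cf.\ \cref{eqn:local strong convexity constant}). Because $\xx_*$ satisfies \cref{eqn:nonnegative KKT conditions} and $\bgg(\xx_*)^{\sI(\xx_*,0)}=\zero$, the restriction of $f$ to $X_*$ is both $\mu$-strongly convex and $L_g$-smooth on $B(\xx_*,\Delta_{\text{cvx}})\cap X_*$, yielding the two-sided bound
\begin{align*}
\tfrac{\mu}{2}\|\xx-\xx_*\|^2 \;\leq\; f(\xx)-f_* \;\leq\; \tfrac{L_g}{2}\|\xx-\xx_*\|^2,
\qquad \xx \in B(\xx_*,\Delta_{\text{cvx}})\cap X_*.
\end{align*}
For a constant $c>0$ to be chosen, I would then define
\begin{align*}
\sN(\xx_*) \;=\; \bigl\{\xx \in X_* \cap B(\xx_*,\Delta_{\text{cvx}}) : f(\xx)-f_* < c\bigr\}.
\end{align*}

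Next, I would verify that $\sN(\xx_*)$ is invariant under the iteration. Since $\xx_k \in X_*$ with $\sA(\xx_k,\delta)=\sA(\xx_*,0)$, by \cref{eqn:local phase Krylov subspace regularity} MINRES necessarily returns a SOL step and, following \cref{lemma:local phase step size lower bound} and \cref{eqn:local phase pk gradient bound}, the inactive component of the step obeys $\|\pp_k^\sI\|\leq \|\bgg_k^\sI\|/\mu$. Using $\bgg(\xx_*)^\sI=\zero$ and \cref{ass:LipschitzGradient}, this further yields $\|\pp_k^\sI\| \leq (L_g/\mu)\|\xx_k-\xx_*\|$, so from $\alpha_k\leq 1$,
\begin{align*}
\|\xx_{k+1}-\xx_*\| \;\leq\; \bigl(1 + L_g/\mu\bigr)\,\|\xx_k-\xx_*\|.
\end{align*}
The key restriction on $c$ then falls out: using the strong-convexity lower bound to control $\|\xx_k-\xx_*\|$ by $\sqrt{2c/\mu}$, I would impose
\begin{align*}
c \;\leq\; \frac{\mu\,\Delta_{\text{cvx}}^{2}}{2\,(1+L_g/\mu)^{2}},
\end{align*}
so that $\xx_k\in\sN(\xx_*)$ forces $\xx_{k+1}\in B(\xx_*,\Delta_{\text{cvx}})$. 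Combined with the hypothesis $\xx_{k+1}\in X_*$ and the line-search descent property $f(\xx_{k+1})\leq f(\xx_k)<f_*+c$, this delivers $\xx_{k+1}\in \sN(\xx_*)$.

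Finally, to obtain the arbitrarily-small property, given any $\Delta>0$ I would further shrink $c$ to also satisfy $c < \mu\Delta^{2}/2$, so the strong-convexity lower bound forces $\sN(\xx_*)\subset B(\xx_*,\Delta)$. The resulting neighbourhood depends only on $\xx_*$, $\mu$, $L_g$, $\Delta_{\text{cvx}}$ and $\Delta$, and is independent of the iterate index $k$. The main technical obstacle I anticipate is the coupling between ``staying inside the strong-convexity ball'' and ``decrease of $f$'': without care, one could use strong convexity to control $\xx_k$ but have no a priori guarantee that $\xx_{k+1}$ remains in the ball where strong convexity applies. The argument above resolves this by tying $c$ to both $\Delta_{\text{cvx}}$ (through the step-expansion bound $1+L_g/\mu$) and to $\Delta$, so that both the invariance and the ``arbitrarily small'' conclusions hold simultaneously.
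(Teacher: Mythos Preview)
Your proposal is correct and follows essentially the same approach as the paper: define $\sN(\xx_*)$ as a sublevel set of $f$ on $X_*\cap B(\xx_*,\Delta_{\text{cvx}})$, use local strong convexity \cref{eqn:local strong convexity constant} to bound $\|\xx_k-\xx_*\|$ by the function gap, use the step bound $\|\pp_k^\sI\|\leq L_g\|\xx_k-\xx_*\|/\mu$ to get $\|\xx_{k+1}-\xx_*\|\leq(1+L_g/\mu)\|\xx_k-\xx_*\|$, and then combine this with the line-search descent $f(\xx_{k+1})\leq f(\xx_k)$ to close the invariance argument. The only cosmetic difference is that the paper parametrises directly by the ball radius $\Delta$ and sets the sublevel threshold to $\tfrac{\mu}{2}\bigl(\Delta/(1+L_g/\mu)\bigr)^2$, whereas you parametrise by a generic threshold $c$ and then constrain it; the two are equivalent.
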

\begin{proof}
    We fix $\Delta \leq \Delta_{\text{cvx}}$ and define
    \begin{align*}
        \sN(\xx_*) = \left\{ \xx \in B(\xx_*, \Delta) \cap X_* \ | \ f(\xx) \leq f(\xx_*) + \frac{ \mu }{2} \left( \frac{\Delta}{1 + L_g/\mu} \right)^2  \right\}.
    \end{align*}
    We will show that this set is the desired neighbourhood on $X_*$ in the sense there exists an open ball in the relative interior of $X_*$. The mean value theorem implies that there is a constant $t \in (0, 1)$ such that for any $\xx, \yy \in \real^d$
    \begin{align*}
        f(\yy) = f(\xx) + \langle \grad f(\xx), \yy - \xx \rangle + \frac{1}{2} \langle \yy - \xx , \grad^2 f (\xx + t(\yy - \xx)) (\yy - \xx) \rangle. \tageq\label{eqn:mean value theorem}
    \end{align*}
    We obtain, by \cref{ass:LipschitzGradient},
    \begin{align*}
        f(\yy) \leq f(\xx) + \langle \grad f(\xx), \yy - \xx \rangle + \frac{L_g}{2}\| \yy - \xx \|^2 .
    \end{align*}
    Let $\xx = \xx_*$ and $\yy \in B(\xx_*, \Delta) \cap X_*$. The fact that $\yy$ and $\xx_*$ only have nonzero components in $\sI(\xx_*, 0)$, while the optimality condition \cref{eqn:inactive set termination condition} implies $\grad f(\xx_*)$ only has zero components in $\sI(\xx_*, 0)$, allows us to write
    \begin{align*}
        f(\yy) \leq f(\xx_*) + \frac{L_g}{2} \| \yy - \xx_* \|^2,
    \end{align*}
    so by choosing $\yy$ close enough to $\xx_*$ we have $\yy \in \sN(\xx_*)$. This implies that $\sN(\xx_*)$ is a neighbourhood of $\xx_*$, in $X_*$.

    Let $\xx = \xx_*$ and $\yy = \xx_k$ for $\xx_k \in B(\xx_*, \Delta) \cap X_* $. Then, $\xx_* + t(\xx_k - \xx_*) \in B(\xx_*, \Delta) \cap X_* $ for any $t \in [0, 1]$. Hence, $\cref{eqn:local strong convexity constant}$ applied to \cref{eqn:mean value theorem} yields
    \begin{align*}
        \frac{\mu}{2} \| \xx_k - \xx_* \|^2 \leq f(\xx_k) - f(\xx_*). \tageq\label{eqn:fstar lower bound }
    \end{align*}
    Next, we seek to bound the distance between subsequent errors. Since $\sA(\xx_k, \delta) = \sA(\xx_{k+1}, 0) = \sA(\xx_*, 0)$, we have
    \begin{align*}
        \xx_{k+1}^{\sA(\xx_{k}, \delta)} = \xx_{k+1}^{\sA(\xx_{k+1}, 0)} = 0,
    \end{align*}
    and 
    \begin{align*}
        \xx_*^{\sA(\xx_k, \delta)} = \xx_*^{\sA(\xx_*, 0)} = 0.
    \end{align*}
    We compute 
    \begin{align*}
        \| \xx_{k+1} - \xx_* \| &= \| \xx_{k+1}^{\sI(\xx_{k+1}, 0)} - \xx_*^{\sI(\xx_*, 0)} \| \\
        &= \| \xx_{k+1}^{\sI(\xx_k, \delta)} - \xx_*^{\sI(\xx_k, \delta)} \| \\
        &= \| [\sP(\xx_k + \alpha_k \pp_k)]^{\sI(\xx_{k}, \delta)} - \xx_*^{\sI(\xx_k, \delta)} \| \\
        &= \| \xx_k^{\sI(\xx_k, \delta)} - \xx_*^{\sI(\xx_k, \delta)} + \alpha_k \pp_k^{\sI(\xx_k, \delta)}  \| \\
        &\leq \| \xx_k^{\sI(\xx_k, \delta)} - \xx_*^{\sI(\xx_k, \delta)}\| + \alpha_k\|\pp_k^{\sI(\xx_k, \delta)} \|, \tageq\label{eqn:local phase error bound}
    \end{align*}
    where the fourth line follows from $\sI(\xx_k, \delta) = \sI(\xx_{k+1}, 0)$ and $i \in \sI(\xx_{k+1}, 0)$ implying that $0 < \xx_{k+1}^i = \sP(\xx_k^i + \alpha_k \pp_k^i) \implies \sP(\xx_k^i + \alpha_k \pp_k^i) = \xx_{k}^i + \alpha_k \pp_k^i $. Since $\sI(\xx_k, \delta) = \sI(\xx_*, 0)$ and $\xx_k \in B(\xx_*, \Delta_{\text{cvx}})$, we know \cref{eqn:local phase pk gradient bound} holds. We can refine \cref{eqn:local phase pk gradient bound} by combining \cref{ass:LipschitzGradient}, $\sI(\xx_k, \delta) = \sI(\xx_k, 0) = \sI(\xx_*, 0)$ and \cref{eqn:inactive set termination condition} to obtain
    \begin{align*}
        \left\| \bgg_k^{\sI(\xx_k, \delta)}\right\| = \left\| \bgg_k^{\sI(\xx_k, 0)} - \bgg_*^{\sI(\xx_*,0)} \right\| \leq \left\| \bgg_k- \bgg_*\right\| \leq L_g\| \xx_k - \xx_*\| = L_g\left\| \xx_k^{\sI(\xx_k, \delta)} - \xx_*^{\sI(\xx_k, \delta)} \right\|.
    \end{align*}
    Combining this bound, $\alpha_k \leq 1$, \cref{eqn:local phase pk gradient bound,,eqn:local phase error bound} we have 
    \begin{align*}
        \| \xx_{k+1} - \xx_* \| &\leq \| \xx_k^{\sI(\xx_k, \delta)} - \xx_*^{\sI(\xx_k, \delta)}\| + \frac{L_g}{\mu}\left\| \xx_k^{\sI(\xx_k, \delta)} - \xx_*^{\sI(\xx_k, \delta)} \right\| \\
        &= \left( 1 + \frac{L_g}{\mu}\right)\| \xx_k^{\sI(\xx_k, \delta)} - \xx_*^{\sI(\xx_k, \delta)}\|\\
        &= \left( 1 + \frac{L_g}{\mu}\right)\| \xx_k - \xx_*\|. \tageq\label{eqn:xk+1-xstar bound}
    \end{align*}
    We now show that this is enough to guarantee that $\xx_{k+1} \in \sN(\xx_*)$. In particular, if $\xx_k \in \sN(\xx_*)$ then $\xx_k \in B(\xx_*, \Delta_{cvx})$, so by combining the definition of $\sN(\xx_*)$ and \cref{eqn:fstar lower bound } we have 
    \begin{align*}
        \frac{\mu}{2} \| \xx_k - \xx_*  \|^2 \leq f(\xx_k) - f(\xx_*) \leq \frac{\mu}{2} \left( \frac{\Delta}{1 + L_g/\mu} \right)^2  
        \implies  \| \xx_k - \xx_*  \| < \frac{\Delta }{1 + L_g/\mu}.
    \end{align*}
    Applying \cref{eqn:xk+1-xstar bound} we have
    \begin{align*}
        \| \xx_{k+1} - \xx_*\| < \Delta,
    \end{align*}
    which implies that $\xx_{k+1}\in B(\xx_*, \Delta) \cap X_* $. In addition, $\alpha_k$ satisfies the line search criterion, which guarantees that $f(\xx_{k+1}) \leq f(\xx_k)$ and so
    \begin{align*}
        f(\xx_{k+1}) - f(\xx_*) \leq f(\xx_k) - f(\xx_*) \leq \frac{\mu}{2} \left( \frac{\Delta}{1 + L_g/\mu} \right)^2,
    \end{align*}
    which implies $\xx_{k+1} \in \sN(\xx_*)$, as needed. In the above argument we are free to replace $\Delta$ with any $\Delta' \in (0, \Delta]$ which implies that we can always choose $\sN(\xx_*)$ sufficiently small.
\end{proof}

We our now ready to prove \cref{thm:active set identification}.

\begin{proof}[Proof of \cref{thm:active set identification}]
    Note that we are free to choose the neighbourhood $\sN(\xx_*)$ of $\xx_*$ on $X_*$ from Lemma \ref{lemma:neighbourhood of minima lemma} small. We therefore select $\Delta_0 < \Delta_{\text{cvx}}$ and $ \sN(\xx_*)$ to satisfy the following inclusions
    \begin{align*}
        B(\xx_*, \Delta_0) \cap X_* \subseteq \sN(\xx_*) \subseteq B(\xx_*\Delta_{\text{bnd}}) \cap X_*. \tageq\label{eqn:local phase neighbourhood inclusions}
    \end{align*}
    By the second part of Lemma \ref{lemma:close enough to minima lemma}, there exists $\Delta_{\text{cls}} \leq \Delta_{\text{bnd}}$ such that the following inclusions hold
    \begin{align*}
        \xx_{k} \in B(\xx_*, \Delta_{\text{cls}}) \implies \xx_{k+1} \in B(\xx_*, \Delta_{0}). \tageq\label{eqn:local phase xk close inclusions}
    \end{align*}
    Choose $\Delta_{\text{actv}} = \Delta_{\text{cls}}$ and suppose that $\xx_{\bar{k}} \in B(\xx_*, \Delta_{\text{actv}})$. The first inclusion of \cref{eqn:local phase xk close inclusions}, implies $\sA(\xx_{\bar{k}}, \delta) = \sA(\xx_{\bar{k}+1}, 0) = \sA(\xx_*, 0)$, i.e. $\xx_{\bar{k}+1} \in X_*$,  by $\Delta_\text{cls} \leq \Delta_{\text{bnd}}$ and the first part of Lemma \ref{lemma:close enough to minima lemma}. This fact and the second inclusion of \cref{eqn:local phase xk close inclusions}, implies $\xx_{\bar{k}+1} \in \sN(\xx_*)$ and therefore, by the second inclusion of \cref{eqn:local phase neighbourhood inclusions}, $\sA(\xx_{\bar{k}+1}, \delta) = \sA(\xx_{\bar{k}+2}, 0) = \sA(\xx_*, 0)$, Again by the first part of Lemma \ref{lemma:close enough to minima lemma}. Combining what we have so far, we obtain $\sA(\xx_{\bar{k}+1}, \delta) = \sA(\xx_{\bar{k}+1}, 0) = \sA(\xx_*, 0)$, which is the result for $\bar{k}+1$. Additionally, however, we can apply \cref{lemma:neighbourhood of minima lemma} applied to the iterate $\bar{k} + 1$ to obtain $\xx_{\bar{k}+2} \in \sN(\xx_*)$. The argument for $\bar{k} + 1$ may now be repeated for $k \geq \bar{k} + 2$. For instance, \cref{eqn:local phase neighbourhood inclusions} and $\xx_{\bar{k}+2} \in \sN(\xx_*)$ implies $\xx_{\bar{k}+2} \in B(\xx_*, \Delta_\text{bnd})$ and so $\sA(\xx_{\bar{k}+2}, \delta) = \sA(\xx_{\bar{k}+3}, 0) = \sA(\xx_*, 0)$ by Lemma \ref{lemma:close enough to minima lemma} and $\xx_{\bar{k}+3} \in \sN(\xx_*)$ by Lemma \ref{lemma:neighbourhood of minima lemma}, which yields the result for $\bar{k}+2$ and sets up the argument for $\xx_{\bar{k}+3}$. Continuing in this fashion yields the result for the given $\Delta_{\text{actv}}$. 
\end{proof}

\section{Further Details and Extended Numerical Results}\label{apx:Numerical Results}

In this section we provide some additional elements of our proposed methods, further details on our experimental setup, and also give a more complete description of various problems we consider for our numerical simulations.

\subsection{Line Search Algorithms}
Here, we gather the line search algorithms used for the theoretical analysis as well as the empirical evaluations of our methods. 
\begin{algorithm}[htbp]
    \begin{algorithmic}[1]
        \STATE \textbf{input} Initial step size $\alpha_0$, Line search criterion, Scaling parameter $0 < \zeta < 1$.
        \vspace{1mm}
        \STATE $\alpha \gets \alpha_0$.
        \vspace{1mm}
        \WHILE{Line search criterion is not satisfied}
        \vspace{1mm}
            \STATE $\alpha \gets \zeta \alpha$.
            \vspace{1mm}
        \ENDWHILE
        \vspace{1mm}
        \STATE \textbf{return} $\alpha$.
        \vspace{1mm}
    \end{algorithmic}
    \caption{Backward Tracking Line Search.}
    \label{alg:back tracking line search}
\end{algorithm}

\begin{algorithm}[htbp]
    \begin{algorithmic}[1]
        \STATE \textbf{input} Initial step size $\alpha_0$, Line search criterion, Scaling parameter $0 < \zeta < 1$.
        \vspace{1mm}
        \STATE $\alpha \gets \alpha_0$.
        \vspace{1mm}
        \IF {Line search criterion is not satisfied} 
		\vspace{1mm}
		\STATE \text{Call \cref{alg:back tracking line search}}
		\vspace{1mm}
		\ELSE
		\vspace{1mm}
		\WHILE {Line search criterion is satisfied}
		\vspace{1mm}
		\STATE $ \alpha = \alpha/\zeta $
		\vspace{1mm}
		\ENDWHILE
        \vspace{1mm}
		\STATE \textbf{return} $\zeta\alpha$.
		\vspace{1mm}
        \ENDIF
    \end{algorithmic}
    \caption{Forward/Backward Tracking Line Search}
    \label{alg:forward tracking line search}
\end{algorithm}

\subsection{Smooth Reformulation of Nonsmooth $\ell_1$ Regression} \label{apx:smooth L1 regularisation}

Consider $\ell_1$ regularisation of a smooth function, $f$, as given in \cref{eqn:L1 objective}. Unfortunately, even when $f$ is smooth, the objective \cref{eqn:L1 objective} is non-differentiable when $\xx^i = 0$ for some $i = 1, \ldots, d$. However, it was shown in \citet{Schmidz2007FastOptimizationMethodsForL1Regularization} that one can reformulate \cref{eqn:L1 objective} into a smooth problem by splitting $\xx$ into positive and negative parts, i.e., $\xx_+ = \max(0, \xx) $ and $\xx_- = - \min(0, \xx)$, where ``$\max$'' and ``$\min$'' are taken elementwise. Indeed, we have the identities 
\begin{align*}
    \xx^i = \xx_+^i - \xx_-^i,
\end{align*}
and 
\begin{align*}
    |\xx^i| = \xx_+^i + \xx_-^i,
\end{align*}
which we can use to reformulate \cref{eqn:smooth L1 objective} as a constrained problem on $\real^{2d}$. In particular, the following auxiliary function is equivalent to the objective of \cref{eqn:L1 objective} 
\begin{align*}
     F(\xx_+, \xx_-) \defeq f(\xx_+ - \xx_-) + \lambda \sum_{i=1}^d (\xx_+^i + \xx_-^i). 
\end{align*}
If we make the identification $\zz = [\xx_+, \xx_-] \in \real^{2d}$, we obtain the auxiliary minimisation problem defined by
\begin{align}
    \min_{\zz \in \real^{2d}}\; F(\zz) \quad \text{subject to } \quad \zz \geq \zero. \tageq\label{eqn:smooth L1 objective}
\end{align}
The nonpositivity condition in \cref{eqn:smooth L1 objective} ensures that $\zz$ can be interpreted as the positive and negative part of the underlying variable, $\xx$.
The gradient and Hessian of the auxiliary function, $F$, are given by
\begin{align*}
    \grad F(\xx_+, \xx_-) =  \begin{pmatrix}
        \grad  f(\xx_+ - \xx_-) + \lambda \one_{d \times 1} \\
        -\grad f(\xx_+ - \xx_-) + \lambda \one_{d \times 1}
    \end{pmatrix}, 
    \quad \grad^2 F(\xx_+, \xx_-) = \begin{pmatrix}
        \grad^2 f(\xx_+ - \xx_-) & -\grad^2 f(\xx_+ - \xx_-)\\
        -\grad^2 f(\xx_+ - \xx_-) & \grad^2 f(\xx_+ - \xx_-)
    \end{pmatrix}.
\end{align*}

\begin{remark}[Evaluating the gradients and Hessian-vector products]
    Clearly, evaluating the gradient of $F$ requires only a single evaluation of the original gradient, $\grad f $. On the other hand, for computing a Hessian-vector product of $F$ with a vector $\vv = ( \vv_1^\transpose, \vv_2^\transpose)^\transpose \in \real^{2d}$, we have
\begin{align*}
    \grad^2 F(\xx_+, \xx_-) \vv 
    &= \begin{pmatrix}
        \grad^2 f(\xx_+ - \xx_-)\vv_1 - \grad^2 f(\xx_+ - \xx_-)\vv_2\\
        -\grad^2 f(\xx_+ - \xx_-)\vv_1 + \grad^2 f(\xx_+ - \xx_-)\vv_2
    \end{pmatrix},
\end{align*}
which requires two Hessian-vector products of the original function $f$ in the form $\grad^2 f(\xx) \ww$ where $\ww \in \real^{d}$.
\end{remark}

As a sanity check, we show that the first order stationary points of \cref{eqn:smooth L1 objective,eqn:L1 objective} coincide. The first order necessary conditions for \cref{eqn:smooth L1 objective} imply that for $j = 1, \ldots, d $ and $i = j$,
\begin{align*}
    [\xx_+^*]^i \geq \zero, \quad \text{and} \quad
        \begin{cases}
            [\grad f(\xx^*_+ - \xx_-^*)]^i + \lambda  = 0, \ &\text{if } [\xx_+^*]^i > 0,  \\
            [\grad f(\xx^*_+ - \xx_-^*)]^i + \lambda \geq 0, \ &\text{if } [\xx_+^*]^i = 0,\\
        \end{cases} \tageq\label{eqn:smooth L1 optimality condition - positive}
\end{align*}
and for $j = d+1, \ldots, 2d$ and $i = j-d$,
\begin{align*}
    [\xx_-^*]^i \geq \zero, \quad \text{and} \quad
        \begin{cases}
            -[\grad f(\xx^*_+ - \xx_-^*)]^i + \lambda  = 0, \ &\text{if } [\xx_-^*]^i > 0, \\
            -[\grad f(\xx^*_+ - \xx_-^*)]^i + \lambda \geq 0, \ &\text{if } [\xx_-^*]^i = 0. 
        \end{cases} \tageq\label{eqn:smooth L1 optimality condition - negative}
\end{align*}
On the other hand, the first order stationary points of the problem \eqref{eqn:L1 objective} can be expressed in terms of  the Clarke subdifferential \citep[Chapter 2]{clarke1990optimization} as those points $\xx^*$ for which $\zero \in \grad f(\xx^*) + \partial \| \xx^*\|_1$. That is, for $i =1, \ldots, d$, we have
\begin{align}
      \begin{cases}
        [\grad f(\xx^*)]^i + \lambda = 0   \ &\text{if } [\xx^*]^i > 0, \\
        [\grad f(\xx^*)]^i - \lambda = 0  \ &\text{if } [\xx^*]^i < 0, \\
        \left| [\grad f(\xx^*)]^i \right|  \leq \lambda  \ &\text{if } [\xx^*]^i = 0. 
    \end{cases}    \label{eqn:L1 optimality condition}
\end{align}
We first show that if $\zz^* = [\xx_+^*, \xx_-^*]$ satisfies \cref{eqn:smooth L1 optimality condition - positive,eqn:smooth L1 optimality condition - negative} then $\xx^* = \xx^*_+ - \xx^*_-$ satisfies \eqref{eqn:L1 optimality condition}. First, suppose $[\xx^*]^i > 0$. In this case, we must have $[\xx^*]^{i} = [\xx_+^*]^i > 0 = [\xx_-^*]^i$, which from the first case of \cref{eqn:smooth L1 optimality condition - positive} implies the first case of \cref{eqn:L1 optimality condition}. When $[\xx^*]^i < 0$, since $[\xx^*]^{i} = [\xx_-^*]^i > 0 = [\xx_+^*]^i$, the first case of \cref{eqn:smooth L1 optimality condition - negative} implies the first case of \cref{eqn:L1 optimality condition}. Finally, when $[\xx^*]^i = 0$, we have $[\xx_+^*]^i = [\xx_-^*]^i = 0$, and we appeal to the second case of both \cref{eqn:smooth L1 optimality condition - positive,eqn:smooth L1 optimality condition - negative} to obtain 
\begin{align*}
    [\grad f(\xx_*)]^i \geq - \lambda, \quad \text{and}  \quad [\grad f(\xx_*)]^i \leq \lambda,
\end{align*}
which implies $\left|[\grad f(\xx_*)]^i\right| \leq \lambda$, i.e., the third case of \cref{eqn:L1 optimality condition}.

We now show  that if  $\xx^* = \xx^*_+ - \xx^*_-$ satisfies \eqref{eqn:L1 optimality condition}, then if $\zz^* = [\xx_+^*, \xx_-^*]$ satisfies \cref{eqn:smooth L1 optimality condition - positive,eqn:smooth L1 optimality condition - negative}. Consider the first case of \eqref{eqn:L1 optimality condition}. Noting again that $[\xx^*]^{i} = [\xx_+^*]^i > 0 = [\xx_-^*]^i$, it clearly implies the first and the second cases of \cref{eqn:smooth L1 optimality condition - positive,,eqn:smooth L1 optimality condition - negative}, respectively (recall $\lambda > 0$). Similarly, the second case of \eqref{eqn:L1 optimality condition} implies the second and the first cases of \cref{eqn:smooth L1 optimality condition - positive,,eqn:smooth L1 optimality condition - negative}, respectively. Finally, it is clear that the third case of \eqref{eqn:L1 optimality condition} implies the second case for both \cref{eqn:smooth L1 optimality condition - positive,,eqn:smooth L1 optimality condition - negative}.

\subsection{Additional Experimental Details}\label{apx:parameter settings}
\paragraph{Oracle Calls as Complexity Measure}
Following the typical convection in the optimisation literature, in all our experiments, we plot the objective value against the total number of oracle calls for function, gradient, and Hessian-vector product evaluations. We adopt this approach because the measurement of ``wall-clock'' time can be heavily dependent on specific implementation details and computational platform. In contrast, counting the number of equivalent function evaluations, as an implementation and system independent unit of complexity is more appropriate and fair. More specifically, upon evaluating the function, computing its gradient is equivalent to one additional function evaluation, and computing a Hessian-vector product requires two additional function evaluations compared to a gradient evaluation \cite{pearlmutter1994fast}. For example, in neural networks, for a given data at the input layer, evaluation of network's output, i.e., function evaluation, involves one forward propagation. The corresponding gradient is computed by performing one additional backward propagation. After computing the gradient, an additional forward followed by a backward propagation give the corresponding Hessian-vector product \cite{goodfellow2016deep}. 

\paragraph{Parameter Settings} In all experiments we set $\epsilon_k = \delta_k = \sqrt{\epsilon_g}$ as per \cref{thm:first-order iteration complexity}. For the Newton-MR TMP we set the inexactness condition for MINRES, i.e., \cref{eqn:MINRES termination tolerance}, to $\eta = 10^{-2}$ for convex problems and $\eta = 1$ for nonconvex problems. We apply a less stringent tolerance in the nonconvex case to maximise the chances of terminating early with a ''good enough'' SOL type solution. Indeed, running the solver too long increases the odds that spurious negative curvature direction will arise as part of iterations. Since such directions never occur in convex settings, one can afford to solve the subproblems more accurately. 

For projected Newton-CG, we use the parameter settings from the experiments in \citet{XieWright2021ComplexityOfProjectedNewtonCG}. Specifically, in the notation of \citet{XieWright2021ComplexityOfProjectedNewtonCG}, we set the accuracy parameter and back tracking parameter to $\zeta=\theta=0.5$ and the step acceptance parameter to $\eta =0.2$. Furthermore, following the algorithmic description of \citet{XieWright2021ComplexityOfProjectedNewtonCG}, and to have equivalent termination conditions, we modify the gradient negativity check from $\bgg_k^i < -\epsilon_k^{3/2}$ to $\bgg_k^i < -\epsilon_k$ for this method. 

For projected gradient and Newton-MR TMP, we set the scaling parameter in \cref{alg:back tracking line search,alg:forward tracking line search} to $\zeta = 0.5$ and the sufficient decrease parameter to $\rho=10^{-4}$. All line searches are initialised from $\alpha_0 = 1$. We note that, for both FISTA and PGM, we terminate the iterations when $|(f(\xx_{k}) + \lambda\| \xx_k\|_1 - (f(\xx_{k-1}) + \lambda\| \xx_{k-1}\|_1)| < 10^{-8}$ on the $\ell_1$ problem and $|f(\xx_{k}) - f(\xx_{k-1})| < 10^{-8}$ otherwise. We set the momentum term in PGM to $\beta=0.9$ and select the fixed step size by starting from $\alpha=1$ and successively shrinking the step size by a factor of 10 until the iterates are stable for the duration of the experiment, i.e., no divergence or large scale oscillations. This procedure resulted in a step size of $\alpha=10^{-3}$ for the $\ell_1$ MLP (\cref{fig:MLP fashion MNIST}) and $\alpha=1$ for the NNMF problems (\cref{fig:NNMF cosine text,fig:NNMF nonconvex regularisation}). 

We now give a more complete description of each of the objective functions.

\paragraph{Multinomial Regression}

We first consider is the problems of multinomial regression on $C$ classes. Specifically, consider a set of data items $\{\aa_i, b_i \}_{i=1}^n \subset \real^d \times \{1, \ldots C\}$. Denote the weights of each class as $\xx_1, \ldots, \xx_{C}$ and define $\xx = [ \xx_1, \ldots, \xx_{C-1}]$. We are free to take $\xx_C = \zero$ as class $C$ is identifiable from the weights of the other classes. The objective, $f$, is given by 
\begin{align*}
    f(\xx) = \frac{1}{n} \sum_{i=1}^n  \sum_{c=1}^{C-1} -\one(b_i = c) \log{(\text{softmax}(\xx_c, \aa_i))}, \tageq\label{eqn:multinomial regression objective}
\end{align*}
where $\one(\cdot)$ is the indicator function and 
\begin{align*}
    \text{softmax}(\xx_c, \aa_i) = \frac{\exp{(\langle \xx_c, \aa_i \rangle)}}{\sum_{c=1}^C \exp{(\langle \xx_c, \aa_i \rangle)}}.
\end{align*}
In this case, the objective is convex. We allow for a constant term in each set of weights, $\xx_c$, which we do not apply the $\ell_1$ penalisation to. 

All methods for this example are initialised from $\xx_{0} = \zero$.

\paragraph{Neural Network} Again, suppose we have a set of data items $\{\aa_i, b_i \}_{i=1}^n \subset \real^d \times \{1, \ldots C\}$. We consider a small two layer network with a smooth activation function. Specifically, we consider the sigmoid weighted linear unit (SiLU) activation \citep{elfwing2017sigmoidweighted} defined by
\begin{align*}
    \sigma(x) = \frac{x}{1 + e^{-x}}.
\end{align*}
We note that the SiLU activation is similar to ReLU and is the product of a linear activation with a standard sigmoid activation. We define a network, $\hh(\cdot; \xx)$ parameterised by the weights, $\xx$, with the following architecture 
\begin{align*}
    \text{Input (d)} \to \text{Linear (100)} \to \text{SiLU} \to \text{Linear (100)} \to \text{SiLU} \to \text{Linear (10)},
\end{align*}
where the number in brackets denotes the size of the output from the layer. Note that we allow for a bias term in each linear layer which we do not apply the $\ell_1$ penalty to. The objective function, $f$, is given by cross entropy loss incurred by the network over the entire dataset 
\begin{align*}
    f(\xx) = - \frac{1}{n}\sum_{i=1}^n \log \left( \frac{ \exp([\hh(\aa_i ; \xx)]^{b_i}) }{\sum_{c=1}^C \exp([\hh(\aa_i; \xx)]^c ) } \right). \tageq\label{eqn:MLP objective function}
\end{align*}
The weights for layer $i$, denoted $\xx_i$, are initialised with the default PyTorch initialisation, that is, via independent uniform draw
\begin{align*}
    \xx_i \sim U(-\sqrt{k}, \sqrt{k}),
\end{align*}
where $k = 1/(\#\text{Inputs})$ with $(\#\text{Inputs})$ the number of input features into the layer.

\paragraph{NNMF Problem}

A common choice for \cref{eqn:NNMF} is a standard Euclidean distance function
\begin{align*}
    D(\YY, \WW \HH) = \frac{1}{n m} \vnorm{\YY - \WW \HH}^2_F, \tageq\label{eqn:NNMF euclidean distance}
\end{align*}
where $\| \cdot \|_F$ is the Frobenius matrix norm. In this case, \cref{eqn:NNMF} is nonconvex in both $\WW$ and $\HH$, when considered simultaneously, but convex so long as one of the variables is held fixed. This motivates the standard approach to solving \cref{eqn:NNMF} based on alternating updates to $\WW$ and $\HH$ \citep{gillis2014NonnegativeMatrixFactorisation} where one variable is fixed while optimise over the other (e.g., alternating nonnegative least squares). 

By contrast, to test our algorithm, we specifically consider solving \cref{eqn:NNMF} as a nonconvex problem in $\WW$ and $\HH$ simultaneously\footnote{Our algorithm could be employed as a subproblem solver in alternating schemes on $\WW$ and $\HH$. Indeed, the original Bertsekas TMP has been applied for this purpose \citep{Kuang2012SymmetricNNMF}.}. For our first experiment (\cref{fig:NNMF cosine text}), we consider a text data application. When comparing text documents, we aim to have a similarity measure that is \textit{independent} of document length. Indeed, we consider documents similar if they have similar word frequency \textit{ratios}. This notion of similarity is naturally captured by measuring alignment between vectors, which motivates the use of a loss function based on cosine similarity as
\begin{align*}
    D(\YY, \WW\HH) = \frac{1}{n}\sum_{i=1}^n 1 - \cos{(\theta(\yy_i, (\WW\HH)_i) )}, \tageq\label{eqn:NNMF cosine loss}
\end{align*}
where $\theta(\yy_i, (\WW\HH)_i)$ is the angle between the $i$th predicted and true document. This loss function only considers the alignment between documents. Indeed, we can write
\begin{align*}
    \cos{(\theta(\yy_i, (\WW\HH)_i) )} = \frac{ \left \langle \YY_i, (\WW\HH)_i \right\rangle }{ \| \YY_i \| \| (\WW \HH)_i \| }.
\end{align*}
However, using this representation it is clear that, due to the nonnegativity of $\YY$ and $\WW \HH$, \cref{eqn:NNMF cosine loss} ranges between 0 and 1. It is also clear that \cref{eqn:NNMF cosine loss} is equivalent to a Euclidean distance with normalisation
\begin{align*}
    \frac{1}{n} \sum_{i=1}^n \left \| \frac{\YY_i}{\| \YY_i \|} - \frac{(\WW \HH)_i}{\|(\WW \HH)_i\|}\right \|^2.
\end{align*}

In our second example (\cref{fig:NNMF nonconvex regularisation}), we consider \cref{eqn:NNMF} with a standard Euclidean distance function \cref{eqn:NNMF euclidean distance} and a nonconvex regularisation term $R_\lambda$. Specifically, we consider a version of the smooth clipped absolute deviation regularisation (SCAD) first proposed in \citet{fan2001SCAD}. SCAD uses a quadratic function to smoothly interpolate between a regular $\ell_1$ penalty and a constant penalty
\begin{align*}
    \text{SCAD}_{\lambda, a} (x)  = \begin{cases}
        \lambda |x|, \quad  & |x| < \lambda, \\
        \frac{a \lambda |x| - x^2 - \lambda^2}{a-1}, \quad & \lambda \leq |x| < a \lambda,  \\
        \frac{\lambda^2(a + 1)}{2}, \quad & |x| \geq a \lambda. 
    \end{cases}
\end{align*}
The SCAD penalty reduces the downward bias on large parameters typical of the $\ell_1$ penalty while still allowing for sparsification of small parameters. We consider a twice smooth clipped absolute deviation, which we call TSCAD. TSCAD replaces the quadratic interpolant with a quartic, $Q_{\lambda, a}(x)$, which allows for a twice continuously differentiable penalty
\begin{align*}
    \text{TSCAD}_{\lambda, a}(x) = \begin{cases} 
        \lambda |x|,  \quad & |x| < \lambda, \\
        Q_{\lambda, a}(x), \quad & \lambda \leq | x| < a \lambda, \\
        \frac{(a + 1) \lambda^2}{2}, \quad & |x| \geq a \lambda.
    \end{cases}
\end{align*}
The regularisation term is simply given by
\begin{align*}
    R_\lambda( \WW, \HH)  = \sum_{i, j} \text{TSCAD}_{\lambda, a}(\WW_{ij}) + \sum_{i, j} \text{TSCAD}_{\lambda, a}(\HH_{ij}).
\end{align*}
Due to the inherent nonconvexity of the NNMF problem, initialisation is key to obtaining good results. We utilised a simple half normal initialisation. Indeed, because the data matrix for each NNMF example (\cref{fig:NNMF cosine text,fig:NNMF nonconvex regularisation}) satisfies $0 \leq \YY \leq 1$, we produced the initialisation by drawing $(\WW_0')_{ij} \sim \sN(0, 1)$ and $(\HH_0')_{ij} \sim \sN(0, 1)$ and normalising in the following manner
\begin{align*}
     \WW_0 \gets \frac{|\WW_0'|}{\sqrt{\max{(|\WW_0'||\HH_0'|)} }}, \quad \HH_0 \gets \frac{|\HH_0'|}{\sqrt{\max{(|\WW_0'||\HH_0'|)}}},
\end{align*}
where $| \cdot|$ is taken elementwise. This initialisation was found to result in nontrivial solutions (i.e., visually reasonable low rank representations $\HH$) to \cref{eqn:NNMF}.

\subsection{Simulations For Fast Local Convergence} \label{apx:numerical local convergence}
In \cref{fig:local convergence of MNIST logistic regression,fig:local convergence of CIFAR multinomial regression}, we consider an extended version of the results in \cref{fig:MNIST logistic regression,fig:CIFAR multinomial}, respectively. Specifically, we plot the progress in each of the termination conditions \cref{eqn:epsilon optimality conditions}. Part (a) of all figures depict the gradient norm on the inactive set. For Newton-MR TMP, this is the termination condition associated with the Newton-MR portion of the step. We see in both \cref{fig:local convergence of MNIST logistic regression,fig:local convergence of CIFAR multinomial regression} that, for our method, the inactive set termination condition is steadily reduced until a point is reached where the convergence becomes extremely rapid. This is consistent with the theoretical predictions in \cref{thm:active set identification} and \cref{cor:local convergence guarantee}. We note that projected Newton-CG exhibits similar behaviour once it reaches Newton-CG step phase but to a lesser extent.

\begin{figure}[H]
    \centering
    \includegraphics[scale=0.5]{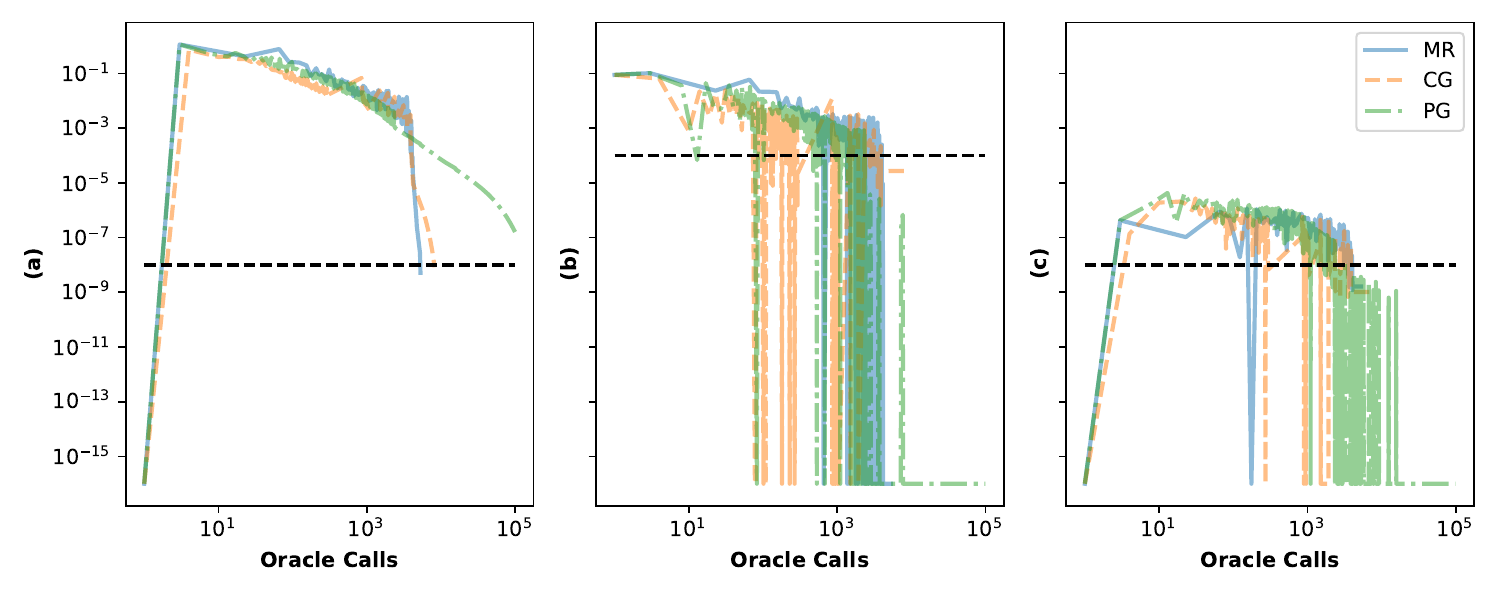}
    \caption{Termination conditions in \cref{eqn:epsilon optimality conditions} corresponding to experiment of \cref{fig:MNIST logistic regression}. (a) $\| \bgg_k^\sI \| $, (b) $-\min(\bgg_k^\sA, \zero)$ ($\min$ is taken elementwise) and (c) $\|\diag(\xx_k^\sA) \bgg_k^\sA \|$. The dashed line indicates the termination threshold for each of the respective conditions.}
    \label{fig:local convergence of MNIST logistic regression}
\end{figure}

\begin{figure}[H]
    \centering
    \includegraphics[scale=0.5]{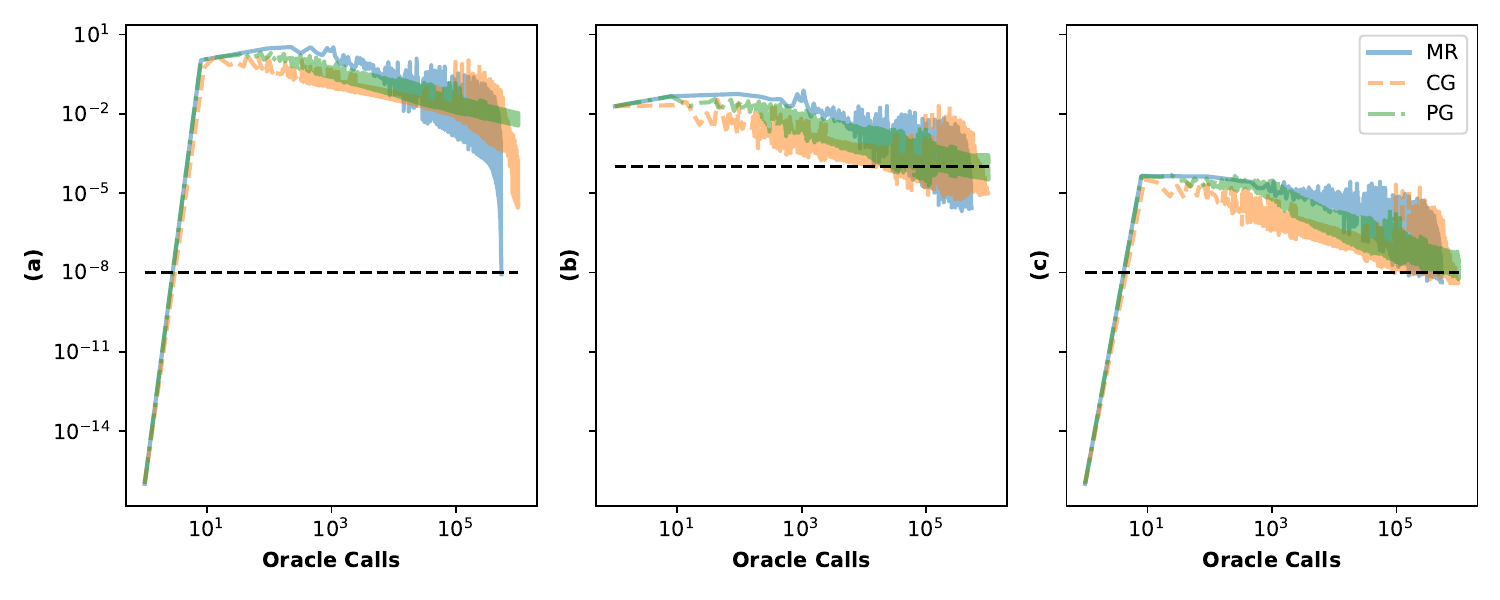}
    \caption{Termination conditions in \cref{eqn:epsilon optimality conditions} corresponding to experiment of  \cref{fig:CIFAR multinomial}. (a) $\| \bgg_k^\sI \| $, (b) $-\min(\bgg_k^\sA, \zero)$ ($\min$ is taken elementwise) and (c) $\|\diag(\xx_k^\sA) \bgg_k^\sA \|$. The dashed line indicates the termination threshold for each of the respective conditions.}
    \label{fig:local convergence of CIFAR multinomial regression}
\end{figure}

\subsection{Timing Results} \label{apx:timing results}

For completeness, in the following section we give results presented in \cref{sec:numerical results} in terms of ``wall-clock'' time. As noted earlier, wall-clock timing results are implementation and platform dependent. In particular, results are unreliable for small time scales. However, we note that, over larger time scales, the wall-clock time results generally conform with the corresponding oracle call results.

\begin{figure}[ht]
    \centering
    \includegraphics[scale=0.5]{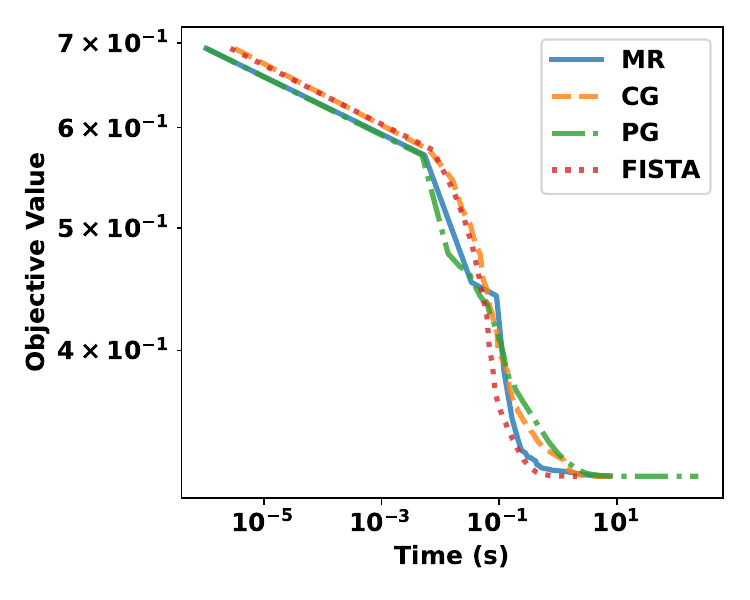}
    \caption{Wall-clock timing results for logistic regression ($C=2$) on the binarised \texttt{MNIST} dataset \citep{Lecun1998MNIST} ($d=785$) with $\lambda = 10^{-3}$.}
    \label{fig:MNIST logistic regression - time}
\end{figure}

\begin{figure}[ht]
    \centering
    \includegraphics[scale=0.5]{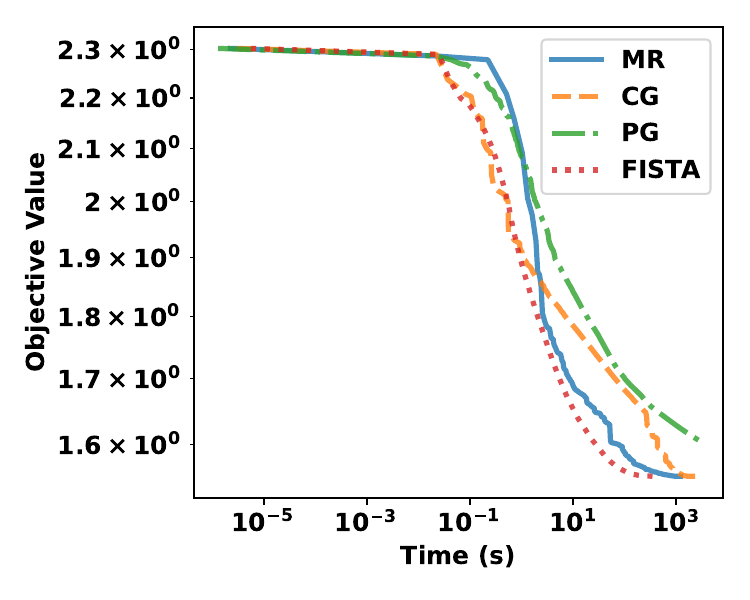}
    \caption{Wall-clock timing results for multinomial regression ($C=10$) on \texttt{CIFAR10} dataset \citep{Krizhevsky2009CIFAR10} ($d=27,657$) with $\lambda = 10^{-4}$. }
    \label{fig:CIFAR multinomial - time}
\end{figure}

\begin{figure}
    \centering
    \includegraphics[scale=0.5]{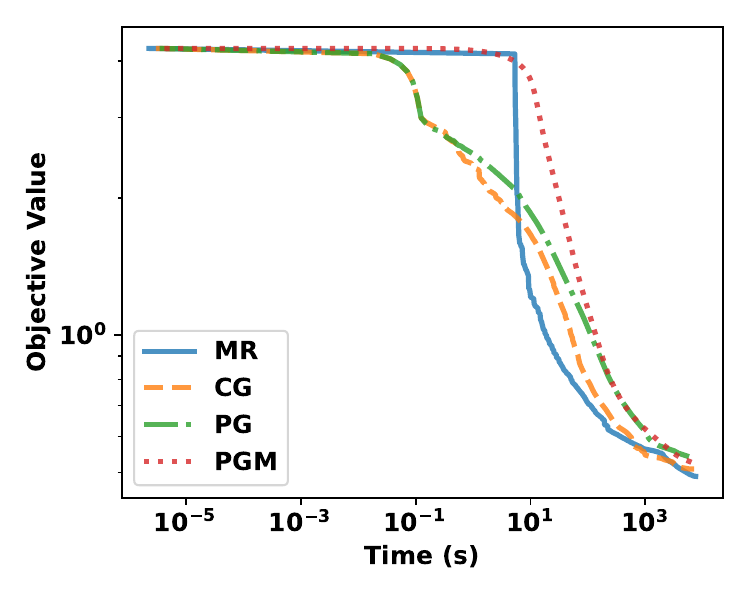}
    \caption{Wall-clock timing results for training a two-layer neural network on the \texttt{Fashion MNIST} dataset \citep{xiao2017fashionmnist} ($d=89,610$) with $\lambda = 10^{-3}$.} 
    \label{fig:MLP fashion MNIST - time}
\end{figure}

\begin{figure}
    \centering
    \includegraphics[scale=0.5]{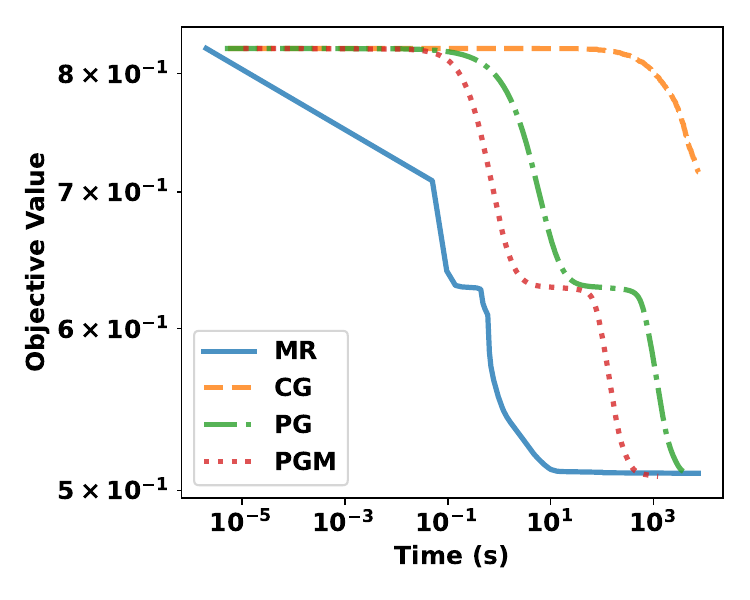}
    \caption{Wall-clock timing results for NNMF ($r=20$) with cosine distance on top 1000 TF-IDF features of the \texttt{20 Newsgroup} dataset \citep{20newsgroups} ($d=385,220$).} 
    \label{fig:NNMF cosine text - time}
\end{figure}

\begin{figure}
    \centering
    \includegraphics[scale=0.5]{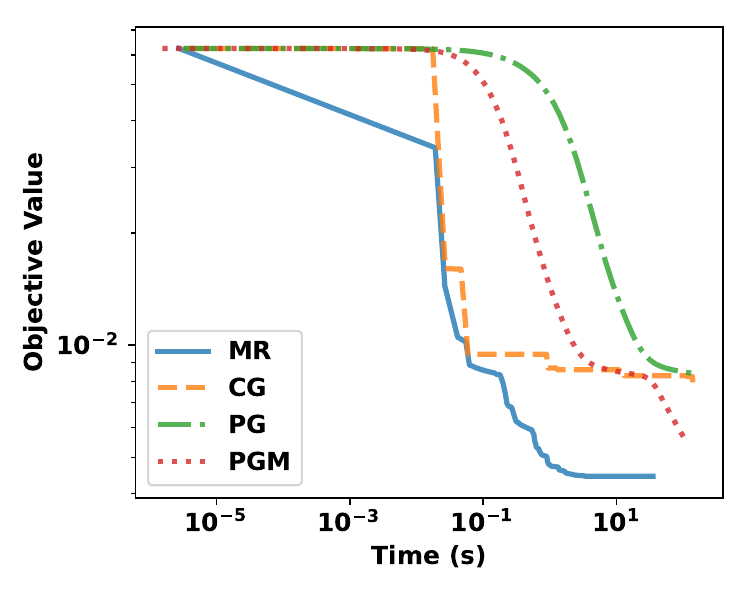}
    \caption{Wall-clock timing results for NNMF ($r=10$) with nonconvex TSCAD regulariser on the \texttt{Olivetti faces} dataset \citep{scikit-learn} ($d=44,960$). We used $a=3$ and $\lambda=10^{-4}$ for the TSCAD regulariser.}
    \label{fig:NNMF nonconvex regularisation - time}
\end{figure}

%%%%%%%%%%%%%%%%%%%%%%%%%%%%%%%%%%%%%%%%%%%%%%%%%%%%%%%%%%%%%%%%%%%%%%%%%%%%%%%
%%%%%%%%%%%%%%%%%%%%%%%%%%%%%%%%%%%%%%%%%%%%%%%%%%%%%%%%%%%%%%%%%%%%%%%%%%%%%%%

\end{document}